\providecommand{\U}[1]{\protect\rule{.1in}{.1in}}
\newtheorem{theorem}{Theorem} [section]
\newtheorem{corollary}{Corollary}[section]
\newtheorem{definition}{Definition} [section]
\newtheorem{lemma}{Lemma}[section]
\newtheorem{proposition}{Proposition} [section]
\newtheorem{remark}{Remark} [section]
\newenvironment{proof}[1][Proof]{\textbf{#1.} }{\ \rule{1em}{1em}}
\numberwithin{equation}{section}
\begin{document}

\title{Stability of rotating gaseous stars}
\author{Zhiwu Lin\\School of Mathematics\\Georgia Institute of Technology\\Atlanta, GA 30332, USA\\Yucong Wang\\School of Mathematical Sciences\\Xiamen University\\Xiamen, 361005, China. \\School of Mathematical Science\\Peking University\\Beijing, 100871, China}
\date{}
\maketitle

\begin{abstract}
We consider stability of rotating gaseous stars modeled by the Euler-Poisson
system with general equation of states. When the angular velocity of the star
is Rayleigh stable, we proved a sharp stability criterion for axi-symmetric
perturbations. We also obtained estimates for the number of unstable modes and
exponential trichotomy for the linearized Euler-Poisson system. By using this
stability criterion, we proved that for a family of slowly rotating stars
parameterized by the center density with fixed angular velocity, the turning
point principle is not true. That is, unlike the case of non-rotating stars,
the change of stability of the rotating stars does not occur at extrema points
of the total mass. By contrast, we proved that the turning point principle is
true for the family of slowly rotating stars with fixed angular momentum
distribution. When the angular velocity is Rayleigh unstable, we proved linear
instability of rotating stars. Moreover, we gave a complete description of the
spectra and sharp growth estimates for the linearized Euler-Poisson equation.

\end{abstract}

\section{Introduction}

Consider a self-gravitating gaseous star modeled by the Euler-Poisson system
of compressible fluids
\begin{equation}%
\begin{cases}
\rho_{t}+\nabla\cdot\left(  \rho v\right)  =0,\\
\rho\left(  v_{t}+v\cdot\nabla v\right)  +\nabla p=-\rho\nabla V,\\
\Delta V=4\pi\rho,\ \lim_{\left\vert x\right\vert \rightarrow\infty}V\left(
t,x\right)  =0,
\end{cases}
\label{EP}%
\end{equation}
where $x\in\mathbb{R}^{3},\ t>0,\ \rho\left(  x,t\right)  \geq0$ is the
density, $v\left(  x,t\right)  \in\mathbb{R}^{3}$ is the velocity, $p=P(\rho)$
is the pressure, and $V$ is the self-consistent gravitational potential.
Assume $P(\rho)$ satisfies:
\begin{equation}
P(s)=C^{1}(0,\infty),\quad P^{\prime}>0, \label{P1}%
\end{equation}
and there exists $\gamma_{0}\in(\frac{6}{5},2)$ such that
\begin{equation}
\lim_{s\rightarrow0+}s^{1-\gamma_{0}}P^{\prime}(s)=K>0. \label{P2}%
\end{equation}
The assumption (\ref{P2}) implies that the pressure $P(\rho)\approx
K\rho^{\gamma_{0}}$ for $\rho$ near 0. We note that $\gamma_{0}=\frac{5}{3}$
for realistic stars.

The Euler-Poisson system (\ref{EP}) has many steady solutions. The simplest
one is the spherically symmetric non-rotating star with $\left(  \rho
_{0},v_{0}\right)  =\left(  \rho_{0}\left(  \left\vert x\right\vert \right)
,0\right)  $. We refer to \cite{LZ2019} and references therein for the
existence and stability of non-rotating stars. A turning point principle (TPP)
was shown in \cite{LZ2019} that the stability of the non-rotating stars is
entirely determined by the mass-radius curve parameterized by the center
density. In particular, the stability of a non-rotating star can only change
at extrema (i.e. local maximum or minimum points) of the total mass.

We consider axi-symmetric rotating stars of the form
\[
\left(  \rho_{0},\vec{v}_{0}\right)  =\left(  \rho_{0}\left(  r,z\right)
,r\omega_{0}\left(  r\right)  \mathbf{e}_{\theta}\right)  ,
\]
where $\left(  r,\theta,z\right)  $ are the cylindrical coordinates,
$\omega_{0}\left(  r\right)  $ is the angular velocity and $\left(
\mathbf{e}_{r},\mathbf{e}_{\theta},\mathbf{e}_{z}\right)  $ denote unit
vectors along $r,\theta,z$ directions. We note that for barotropic equation of
states $P=P\left(  \rho\right)  $, it was known as Poincar\'{e}-Wavre theorem
(\cite[Section 4.3]{TJ1978}) that the angular velocity must be independent of
$z$. The existence and stability of rotating stars is a classical problem in
astrophysics. For homogeneous (i.e. constant density) rotating stars, it had
been extensively investigated since the work of Maclaurin in 1740s, by many
people including Dirichlet, Jacobi, Riemann, Poincar\'{e} and Chandrasekhar
etc. We refer to the books \cite{Chandra69,JWS2013} for history and results on
this topic. The compressible rotating stars are much less understood. From
1920s, Lichtenstein initiated a mathematical study of compressible rotating
stars, which was summarized in his monograph (\cite{Lich1933}). In particular,
he showed the existence of slowly rotating stars near non-rotating stars by
implicit function theorem. See also \cite{H1994,JJ2017,JJMT2019,JJ2019,SW2017}
for related results. The existence of rotating stars can also be established
by variational methods
(\cite{AGBR1971,CF1980,CL1994,FT1980,FT1981,LYY1991,LS2004,LS2009}), or global
bifurcation theory (\cite{AG1991,SW2019,SW2019R}). Compared with the existence
theory, there has been relatively few rigorous works on the stability of
rotating stars. In this paper, we consider the stability of rotating stars
under axi-symmetric perturbations. There are two natural questions to address:
1) Does TPP still hold for a family of rotating stars? 2) How does the
rotation affect the stability (instability) of rotating stars?

The answers to these two questions have been disputed in the astrophysical
literature. Bisnovaty-Kogan and Blinnikov \cite{BB1974} suggested that for a
family of rotating stars with fixed angular momentum distribution per unit
mass and parameterized by the center density $\mu$, TPP is true (i.e.
stability changes at the extrema of the total mass). They used heuristic
arguments (so called static method) as in the non-rotating case. Such
arguments suppose that at the transition point of stability, there must exist
a zero frequency mode which can only be obtained by infinitesimally
transforming equilibrium configurations near the given one, without changing
the total mass $M\left(  \mu\right)  $. Hence, the transition point is a
critical point of the total mass (i.e. $M^{\prime}\left(  \mu\right)  =0$). It
is reasonable to study the family of rotating stars with fixed angular
momentum distribution, which is invariant under Euler-Poisson dynamics. In
\cite{BB1974}, they also considered a family of rigidly rotating stars (i.e.
$\omega_{0}$ is constant) for a special equation of state similar to white
dwarf stars. By embedding each rigidly rotating star into a family with the
same angular momentum distribution and with some numerical help, it was found
that the transition of stability is not the extrema of mass. In
\cite{Stahler83}, for a family of rotating stars with fixed rotational
parameter (i.e. the ratio of rotational energy to gravitational energy),
similar arguments as in \cite{BB1974} were used to indicate that TPP is true
for this family and their numerical results suggested that instability occurs
beyond the first mass extrema. However, up to date there is no rigorous proof
or disproof of TPP for different families of rotating stars.

The issue that whether rotation can have a stabilizing effect on rotating
stars has long been in debate. For a long time, it was believed that rotation
is stabilizing for any angular velocity profile. This conviction was based on
conclusions drawn from perturbation analysis near neutral modes of
non-rotating stars, which was done by Ledoux \cite{Ledoux1945} for rigidly
rotating stars and by Lebovitz \cite{LNR1970} for general angular velocities.
However, the later works of Sidorov \cite{Sidorov1982,Sidorov1981} and
K\"{a}hler \cite{Kahler1994} showed that rotating could be destabilizing.
Hazlehurst \cite{Hazelhurst1994} argued that the advocates of destabilization
of rotation had used an argument that is open to criticism and disagreed that
rotation could be destabilizing.

In this paper, we answer above two questions in a rigorous way. To state our
results more precisely, we introduce some notations. Let $(\rho_{0}\left(
r,z\right)  ,\vec{v_{0}}=r\omega_{0}\left(  r\right)  \mathbf{e}_{\theta})$ be
an axi-symmetric rotating star solution of \eqref{EP}. The support of
$\rho_{0}$ is denoted by $\Omega$, which is an axi-symmetric bounded domain.
The rotating star solutions satisfy
\begin{equation}
\vec{v}_{0}\cdot\nabla\vec{v}_{0}+\nabla\Phi^{\prime}(\rho_{0})+\nabla
V=0\text{ in }\Omega,\label{ift1}%
\end{equation}%
\begin{equation}
V=-|x|^{-1}\ast\rho_{0}\text{ in }\mathbb{R}^{3},\label{ift3}%
\end{equation}
Equivalently,
\begin{equation}
\Phi^{\prime}(\rho_{0})-|x|^{-1}\ast\rho_{0}-\int_{0}^{r}\omega_{0}%
^{2}(s)s\ ds+c_{0}=0\text{ in }\Omega,\label{eqn-steady-state}%
\end{equation}
where $c_{0}>0$ is a constant.

Let $R_{0}$ be the maximum of $r$ such that $(r,z)\in\Omega$. We assume
$\omega_{0}\in C^{1}[0,R_{0}]$, $\partial\Omega$ is $C^{2}\ $with positive
curvature\ near $\left(  R_{0},0\right)  $, and for any $(r,z)$ near
$\partial\Omega$
\begin{equation}
\rho_{0}(r,z)\thickapprox\text{dist}((r,z),\partial\Omega)^{\frac
{1}{\mathbb{\gamma}_{0}-1}},\label{rhonearboundary}%
\end{equation}
which are satisfied for slowly rotating stars near non-rotating stars as
constructed in (\cite{H1994,JJ2017,JJ2019,SW2017}). Let $X=L_{\Phi
^{\prime\prime}(\rho_{0})}^{2}\times L_{\rho_{0}}^{2}$ and $Y=\left(
L_{\rho_{0}}^{2}\right)  ^{2}$, where $L_{\Phi^{\prime\prime}(\rho_{0})}^{2}$
and $L_{\rho_{0}}^{2}$ are axi-symmetric weighted spaces in $\Omega$ with
weights $\Phi^{\prime\prime}(\rho_{0})$ and $\rho_{0}$. The enthalpy
$\Phi(\rho)>0$ is defined by
\[
\Phi(0)=\Phi^{\prime}(0)=0,\quad\Phi^{\prime}(\rho)=\int_{0}^{\rho}%
\frac{P^{\prime}(s)}{s}ds.
\]
Denote $\mathbf{X}:=X\times Y$. Define the Rayleigh discriminant
$\Upsilon(r)=\frac{\partial_{r}(\omega_{0}^{2}r^{4})}{r^{3}}$.

For Rayleigh stable angular velocity $\omega_{0}$ satisfying $\Upsilon(r)>0$
for $r\in\lbrack0,R_{0}]$, the linearization of the axi-symmetric
Euler-Poisson equations at $(\rho_{0},\vec{v_{0}})$ can be written in a
Hamiltonian form
\begin{equation}
\frac{d}{dt}%
\begin{pmatrix}
u_{1}\\
u_{2}%
\end{pmatrix}
=\mathbf{J}\mathbf{L}%
\begin{pmatrix}
u_{1}\\
u_{2}%
\end{pmatrix}
,\label{hamiltonian-RS}%
\end{equation}
where $u_{1}=(\rho,v_{\theta})$ and $u_{2}=(v_{r},v_{z})$, and $\rho,\left(
v_{r},v_{\theta},v_{z}\right)  \ $are perturbations of density and $\left(
r,\theta,z\right)  $-components of velocity respectively. The operators
\begin{equation}
\mathbf{J}:=%
\begin{pmatrix}
0, & B\\
-B^{\prime}, & 0
\end{pmatrix}
:\mathbf{X}^{\ast}\rightarrow\mathbf{X},\quad\mathbf{L}:=%
\begin{pmatrix}
\mathbb{L}, & 0\\
0, & A
\end{pmatrix}
:\mathbf{X}\rightarrow\mathbf{X}^{\ast},\label{defn-JL}%
\end{equation}
are off-diagonal anti-self-dual and diagonal self-dual operators respectively,
where
\begin{equation}
\mathbb{L}=%
\begin{pmatrix}
L & 0\\
0 & A_{1}%
\end{pmatrix}
:X\rightarrow X^{\ast},\label{defn-cal-L}%
\end{equation}
with
\begin{equation}
L=\Phi^{\prime\prime}(\rho_{0})-4\pi(-\Delta)^{-1},\label{defn-L}%
\end{equation}%
\begin{equation}
B=(B_{1},B_{2})^{T},\ B_{1}=-\nabla\cdot,\ B_{2}=-\frac{\partial_{r}%
(\omega_{0}r^{2})}{r\rho_{0}}\mathbf{e}_{r},\label{defn-B}%
\end{equation}
$A=\rho_{0}$, and $A_{1}=\frac{4\omega_{0}^{2}r^{3}\rho_{0}}{\partial
_{r}(\omega_{0}^{2}r^{4})}=\frac{4\omega_{0}^{2}\rho_{0}}{\Upsilon(r)}$. More
precise definition and properties of these operators can be found in Section
\ref{HamiltonianEPassumption}.

Our main result for the Rayleigh stable case is the following.

\begin{theorem}
\label{Th: rayleigh stable}Assume $\omega_{0}\in C^{1}[0,R_{0}]$,
$\Upsilon(r)>0$, (\ref{rhonearboundary}), $\partial\Omega$ is $C^{2}$ and has
positive curvature near $(R_{0},0)$ . Then the operator $\mathbf{JL}$ defined
by (\ref{defn-JL}) generates a $C^{0}$ group $e^{t\mathbf{JL}}$ of bounded
linear operators on $\mathbf{X}=X\times Y$ and there exists a decomposition%
\[
\mathbf{X}=E^{u}\oplus E^{c}\oplus E^{s},\quad
\]
of closed subspaces $E^{u,s,c}$ satisfying the following properties:

i) $E^{c},E^{u},E^{s}$ are invariant under $e^{t\mathbf{JL}}$.

ii) $E^{u}\left(  E^{s}\right)  $ only consists of eigenvectors corresponding
to positive (negative) eigenvalues of $\mathbf{JL}$ and
\[
\dim E^{u}=\dim E^{s}=n^{-}\left(  \mathbb{L}|_{\overline{R\left(  B\right)
}}\right)  =n^{-}\left(  \mathcal{K}|_{R\left(  B_{1}\right)  }\right)  ,
\]
where $\langle\mathcal{K}\cdot,\cdot\rangle$ is a bounded bilinear quadratic
form on $L_{\Phi^{\prime\prime}(\rho_{0})}^{2}\ $defined by
\begin{equation}
\langle\mathcal{K}\delta\rho,\delta\rho\rangle=\langle L\delta\rho,\delta
\rho\rangle+2\pi\int_{0}^{R_{0}}\Upsilon(r)\frac{\left(  \int_{0}^{r}%
s\int_{-\infty}^{+\infty}\delta\rho(s,z)dzds\right)  ^{2}}{r\int_{-\infty
}^{+\infty}\rho_{0}({r},z)dz}dr, \label{defn-cal-k}%
\end{equation}
for any $\delta\rho\in L_{\Phi^{\prime\prime}(\rho_{0})}^{2}$ and
$n^{-}\left(  \mathcal{K}|_{R\left(  B_{1}\right)  }\right)  $ denotes the
number of negative modes of $\left\langle \mathcal{K}\cdot,\cdot\right\rangle
$ restricted to the subspace
\begin{equation}
R\left(  B_{1}\right)  =\left\{  \delta\rho\in L_{\Phi^{\prime\prime}(\rho
_{0})}^{2}\ |\ \int\delta\rho dx=0\right\}  . \label{defn-R-B1}%
\end{equation}

iii) The exponential trichotomy is true in the space $\mathbf{X}$ in the sense
of (\ref{estimate-stable-unstable}) and (\ref{estimate-center}).
\end{theorem}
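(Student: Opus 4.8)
The strategy is to recognize (\ref{hamiltonian-RS}) as a separable Hamiltonian system $\dot u = \mathbf{JL}u$ of the abstract type studied in the general framework of Lin--Zeng for Hamiltonian PDEs, verify the structural hypotheses of that framework, and then read off the conclusions. Concretely, I would proceed in four stages. First, establish the functional-analytic setup: show that $\mathbf{L}:\mathbf{X}\to\mathbf{X}^{\ast}$ is bounded and self-dual, that $\mathbf{J}:\mathbf{X}^{\ast}\to\mathbf{X}$ is bounded and anti-self-dual, and — crucially — that $\mathbf{L}$ has only finitely many negative directions while being uniformly positive on a closed complementary subspace. The negativity count comes from $\mathbb{L}=\mathrm{diag}(L,A_1)$: since $A_1=4\omega_0^2\rho_0/\Upsilon(r)>0$ by the Rayleigh stability assumption $\Upsilon>0$, all negative directions of $\mathbf{L}$ sit inside $L=\Phi''(\rho_0)-4\pi(-\Delta)^{-1}$, and $n^-(L)<\infty$ should follow from the compactness of $(-\Delta)^{-1}$ relative to the weighted $L^2_{\Phi''(\rho_0)}$ norm together with (\ref{rhonearboundary}) controlling the weight near $\partial\Omega$ (this is where the boundary regularity and the exponent $\tfrac{1}{\gamma_0-1}$ enter). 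The block $A=\rho_0>0$ on $Y$ contributes nothing negative.

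Second, I would invoke the abstract semigroup-generation and exponential-trichotomy theorem: once $\mathbf{JL}$ fits the template (bounded $\mathbf{J}$, self-dual $\mathbf{L}$ with finite $n^-(\mathbf{L})$ and positivity on the complement), that theorem yields the $C^0$-group $e^{t\mathbf{JL}}$, the invariant splitting $\mathbf{X}=E^u\oplus E^c\oplus E^s$, the fact that $E^{u},E^{s}$ consist of genuine eigenvectors with $\dim E^u=\dim E^s$, and the trichotomy estimates (\ref{estimate-stable-unstable})--(\ref{estimate-center}). The abstract index formula gives $\dim E^u=n^-(\mathbf{L}|_{\overline{R(\mathbf{J})}})$; because $\mathbf{J}$ is off-diagonal with the $B$-block, $\overline{R(\mathbf{J})}$ decomposes so that the relevant restriction is exactly $\mathbb{L}|_{\overline{R(B)}}$, giving the first equality $\dim E^u=n^-(\mathbb{L}|_{\overline{R(B)}})$.

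Third — and this is the part I expect to be the main obstacle — I must perform the reduction $n^-(\mathbb{L}|_{\overline{R(B)}})=n^-(\mathcal{K}|_{R(B_1)})$. The space $\overline{R(B)}\subset X=L^2_{\Phi''(\rho_0)}\times L^2_{\rho_0}$ consists of pairs $(\delta\rho,v_\theta)$ in the closed range of $B=(B_1,B_2)^T$; on this range one minimizes the quadratic form $\langle\mathbb{L}(\delta\rho,v_\theta),(\delta\rho,v_\theta)\rangle=\langle L\delta\rho,\delta\rho\rangle+\int A_1 v_\theta^2$ over the $v_\theta$-component for fixed $\delta\rho$. Because the constraint tying $v_\theta$ to $\delta\rho$ through $B$ is (essentially) that $\delta\rho=-\nabla\cdot(\text{something})$ and $v_\theta$ is the corresponding $\mathbf{e}_r$-component $-\frac{\partial_r(\omega_0 r^2)}{r\rho_0}(\cdots)$, carrying out this partial minimization in $v_\theta$ produces the Schur-complement form: the extra term $2\pi\int_0^{R_0}\Upsilon(r)\frac{(\int_0^r s\int \delta\rho\,dz\,ds)^2}{r\int\rho_0\,dz}\,dr$ in (\ref{defn-cal-k}). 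The $r$-antiderivative $\int_0^r s\int\delta\rho(s,z)\,dz\,ds$ arises precisely because $\delta\rho\in R(B_1)$ means it is a divergence, so integrating the continuity-type relation in the meridional plane and using axisymmetry collapses the $z$-dependence into the marginal $\int_{-\infty}^{\infty}\delta\rho\,dz$. The identity $\Upsilon(r)=\partial_r(\omega_0^2 r^4)/r^3$ and $A_1=4\omega_0^2\rho_0/\Upsilon$ are exactly what make the algebra close. One then checks that the partial minimization is attained in the correct spaces (so Rayleigh stability $\Upsilon>0$ guarantees coercivity in $v_\theta$ and legitimizes the Schur complement), that $\overline{R(B)}$ projects onto $R(B_1)$, and that negative indices are preserved under this nondegenerate reduction — standard but requiring care with the closures of ranges and the weighted spaces.

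Finally, I would assemble the pieces: combine the abstract trichotomy output with the two index identities to obtain statements (i)--(iii) verbatim. The chief technical hurdles are (a) verifying $n^-(L)<\infty$ and the spectral-gap/positivity condition for $\mathbf{L}$ on the complement of its negative cone, which rests on the relative compactness of $(-\Delta)^{-1}$ and the boundary behavior (\ref{rhonearboundary}); and (b) making the Schur-complement computation in stage three rigorous at the level of closed ranges in weighted $L^2$ spaces rather than formal integration by parts. Once those are in place, nothing else is more than bookkeeping within the established Hamiltonian-PDE machinery.
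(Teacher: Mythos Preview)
Your plan is essentially the paper's proof: verify the separable-Hamiltonian hypotheses (G1)--(G3), apply the abstract Lin--Zeng trichotomy theorem to get $\dim E^{u}=n^{-}(\mathbb{L}|_{\overline{R(B)}})$, and then reduce this to $n^{-}(\mathcal{K}|_{R(B_{1})})$ by minimizing $\langle A_{1}v_{\theta},v_{\theta}\rangle$ over $v_{\theta}$ for fixed $\delta\rho$.

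Two corrections worth flagging. First, $\mathbf{J}$ is \emph{not} bounded: $B_{1}=-\nabla\cdot$ is a genuine differential operator, so the relevant hypothesis is that $B$ is densely defined and closed (assumption (G1)), not bounded; the abstract theorem already accommodates this. Second, you misplace where the boundary assumptions do their work. The finiteness $n^{-}(L)<\infty$ is comparatively soft; the curvature condition on $\partial\Omega$ near $(R_{0},0)$ and the decay (\ref{rhonearboundary}) are actually needed in stage three, to prove that the minimizer $u_{\theta}^{\delta\rho}(r)=\dfrac{\partial_{r}(\omega_{0}r^{2})}{r^{2}}\,\dfrac{\int_{0}^{r}s\int\delta\rho\,dz\,ds}{\int\rho_{0}\,dz}$ lies in $L^{2}_{\rho_{0}}$ (equivalently, that $\mathcal{K}$ is a bounded form). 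This requires a Hardy-type estimate near $r=R_{0}$, for which one first shows $\int\rho_{0}^{\lambda}(r,z)\,dz\approx(R_{0}-r)^{\lambda/(\gamma_{0}-1)+1/2}$ via the osculating-circle approximation of $\partial\Omega$; this is the step you should expect to be the real analytic obstacle, not a mere bookkeeping issue.
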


\begin{corollary}
\label{cor-stability criterion}Under the assumptions of Theorem
\ref{Th: rayleigh stable}, the rotating star solution $(\rho_{0},\vec{v}_{0})$
is spectrally stable to axi-symmetric perturbations if and only if
\[
\langle\mathcal{K}\delta\rho,\delta\rho\rangle\geq0,
\]
for all $\delta\rho\in L_{\Phi^{\prime\prime}(\rho_{0})}^{2}$ with
$\int_{\mathbb{R}^{3}}\delta\rho dx=0$.
\end{corollary}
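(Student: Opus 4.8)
The plan is to deduce Corollary~\ref{cor-stability criterion} directly from parts (ii) and (iii) of Theorem~\ref{Th: rayleigh stable}: no further manipulation of the operators $\mathbf{J},\mathbf{L}$ is needed, and the whole argument is a short chain of equivalences translating spectral stability into positivity of $\langle\mathcal{K}\cdot,\cdot\rangle$ on $R(B_{1})$. Recall first that, since $\mathbf{JL}$ generates the $C^{0}$ group $e^{t\mathbf{JL}}$ and its spectrum carries the usual Hamiltonian symmetry, spectral stability of $(\rho_{0},\vec v_{0})$ to axi-symmetric perturbations means $\sigma(\mathbf{JL})\subset i\mathbb{R}$, equivalently that $\mathbf{JL}$ has no eigenvalue with positive real part.

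The first step is to show that spectral stability is equivalent to $E^{u}=\{0\}$. If the star is spectrally stable there is no positive eigenvalue of $\mathbf{JL}$; since by (ii) $E^{u}$ is spanned by eigenvectors associated to positive eigenvalues of $\mathbf{JL}$, this forces $E^{u}=\{0\}$. Conversely, if $E^{u}=\{0\}$ then by (ii) also $E^{s}=\{0\}$ (because $\dim E^{s}=\dim E^{u}$), so $\mathbf{X}=E^{c}$, and the trichotomy estimate (\ref{estimate-center}) says that on $E^{c}$ the group $e^{t\mathbf{JL}}$ grows at most sub-exponentially in $|t|$; a standard Laplace-transform/spectral-radius argument then gives $\sigma(\mathbf{JL})=\sigma(\mathbf{JL}|_{E^{c}})\subset i\mathbb{R}$, i.e.\ spectral stability. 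The second step is the dimension count from (ii): $\dim E^{u}=n^{-}(\mathcal{K}|_{R(B_{1})})$, so $E^{u}=\{0\}$ if and only if $n^{-}(\mathcal{K}|_{R(B_{1})})=0$, that is, $\langle\mathcal{K}\delta\rho,\delta\rho\rangle\ge 0$ for all $\delta\rho\in R(B_{1})$; and by (\ref{defn-R-B1}) the space $R(B_{1})$ is precisely $\{\delta\rho\in L^{2}_{\Phi''(\rho_{0})}:\int_{\mathbb{R}^{3}}\delta\rho\,dx=0\}$. Concatenating these equivalences yields the corollary.

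The only point requiring genuine care is the implication $E^{u}=\{0\}\Rightarrow$ spectral stability: one must make sure that the center subspace $E^{c}$ contributes no spectrum off the imaginary axis, and this is exactly where the quantitative sub-exponential bound in (\ref{estimate-center}) is used, rather than the mere invariance of $E^{c}$ under $e^{t\mathbf{JL}}$. The opposite direction and the translation to $\langle\mathcal{K}\cdot,\cdot\rangle$ are pure bookkeeping with Theorem~\ref{Th: rayleigh stable}.
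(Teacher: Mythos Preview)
Your proposal is correct and follows essentially the same approach as the paper: the corollary is an immediate consequence of Theorem~\ref{Th: rayleigh stable}(ii), via the identity $\dim E^{u}=n^{-}(\mathcal{K}|_{R(B_{1})})$ and the equivalence between spectral stability and $E^{u}=\{0\}$. The paper does not write out a separate proof (it states the corollary as a direct consequence and records the equivalence ``spectral stability $\Leftrightarrow$ $\mathbb{L}|_{\overline{R(B)}}\ge0$'' in the Remark following Theorem~\ref{T:abstract}); you actually supply slightly more detail than the paper by justifying the implication $E^{u}=\{0\}\Rightarrow\sigma(\mathbf{JL})\subset i\mathbb{R}$ through the sub-exponential bound~(\ref{estimate-center}).
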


Theorem \ref{Th: rayleigh stable} gives not only a sharp stability criteria
for rotating stars with Rayleigh stable angular velocity, but also more
detailed information on the spectra of the linearized Euler-Poisson operator
and exponential trichotomy estimates for the linearized Euler-Poisson system.
These will be useful for the future study of nonlinear dynamics near unstable
rotating stars, particularly, the construction of invariant (stable, unstable
and center) manifolds for the nonlinear Euler-Poisson system.

The sharp stability criterion in Corollary \ref{cor-stability criterion} is
used to study the stability of two families of slowly rotating stars. For the
first family of slowly rotating stars with fixed Rayleigh stable angular
velocity and parameterized by the center density, we show that TPP is not true
and the transition of stability does not occur at the first mass extrema. More
precisely, for fixed $\kappa\omega_{0}\left(  r\right)  \in C^{1,\beta}$, for
some $\beta\in(0,1)$, satisfying $\Upsilon(r)>0$ and $\kappa$ small enough, by
implicit function theorem as in \cite{H1994,JJMT2019,SW2017}, there exists a
family of slowly rotating stars $\left(  \rho_{\mu,\kappa},\kappa r\omega
_{0}\left(  r\right)  \mathbf{e}_{\theta}\right)  $ parameterized by the
center density $\mu$. We show that the transition of stability for this family
is not at the first extrema of the total mass $M_{\mu,\kappa}$. In particular,
when $\gamma_{0}>\frac{4}{3}$, the slowly rotating stars are stable for small
center density and remain stable slightly beyond the first mass maximum. This
is consistent with the numerical evidence in \cite{BB1974} (Figure 10, p. 400)
for the example of rigidly rotating stars and an equation of state with
$\gamma_{0}=\frac{5}{3}$. It shows that Rayleigh stable rotation is indeed
stabilizing for rotating stars. By contrast, for the second family of slowly
rotating stars with fixed monotone increasing angular momentum distribution
(equivalently Rayleigh stable angular velocity), we show that TPP is indeed
true. More precisely, for fixed $j\left(  p,q\right)  \in C^{1,\beta}\left(
\mathbb{R}^{+}\times\mathbb{R}^{+}\right)  $ satisfying $\partial_{p}%
(j^{2}\left(  p,q\right)  )>0$, $j(0,q)=\partial_{p}j(0,q)=0$ and
$\varepsilon$ sufficiently small, there exists a family of slowly rotating
stars $\left(  \rho_{\mu,\varepsilon},\frac{\varepsilon}{r}j\left(
m_{\rho_{\mu,\varepsilon}},M_{\mu,\varepsilon}\right)  \mathbf{e}_{\theta
}\right)  $ parameterized by the center density $\mu$, where
\[
m_{\rho_{\mu,\varepsilon}}(r)=\int_{0}^{r}s\int_{-\infty}^{\infty}\rho
_{\mu,\varepsilon}(s,z)dsdz\
\]
is the mass distribution in the cylinder, and $M_{\mu,\varepsilon}$ is the
total mass. We show that the transition of stability for this family of
rotating stars exactly occurs at the first extrema of the total mass
$M_{\mu,\varepsilon}$. This not only confirms the claim in \cite{BB1974} based
on heuristic arguments when $j\left(  m,M\right)  =\frac{1}{M}j(\frac{m}{M})$,
but also can apply to other examples studied in the literature,
including$\ j\left(  m,M\right)  =j\left(  m\right)  $ (see
\cite{AGBR1971,JJMT2019,LS2004,LTSJ2008}) and $j\left(  m,M\right)
=j(\frac{m}{M})$ (see \cite{OM1968}).

The issue of TPP is also not so clear for relativistic rotating stars. For
relativistic stars, TPP was shown for the secular stability of a family of
rigidly rotating stars (\cite{FI1988}), while numerical results in
\cite{TRY2011} indicated that the transition of dynamic instability does not
occur at the mass maximum (i.e. TPP is not true) for such a family. Our
approach for the Newtonian case might be useful for studying the relativistic case.

For the Rayleigh stable case, the stability of rotating stars is studied by
using the separable Hamiltonian framework as in the non-rotating stars
(\cite{LZ2019}). However, there are fundamental differences between these two
cases. For the non-rotating stars, the stability condition is reduced to find
$n^{-}\left(  L|_{R\left(  B_{1}\right)  }\right)  $, that is, the number of
negative modes of $\left\langle L\cdot,\cdot\right\rangle $ restricted to
$R\left(  B_{1}\right)  $, where $L$ and $R\left(  B_{1}\right)  $ are defined
in (\ref{defn-L}) and (\ref{defn-R-B1}) respectively. We note that the
dynamically accessible space $R\left(  B_{1}\right)  $ (for density
perturbation) is one co-dimensional with only the mass constraint. For the
rotating stars, by using the separable Hamiltonian formulation
(\ref{hamiltonian-RS}), the stability is reduced to find $n^{-}\left(
\mathbb{L}|_{\overline{R\left(  B\right)  }}\right)  $, where $\mathbb{L},B$
are defined in (\ref{defn-cal-L}) and (\ref{defn-B}) respectively. Here, the
dynamically accessible space $\overline{R\left(  B\right)  }$ (for density and
$\theta$-component of velocity) is infinite co-dimensional, which corresponds
to perturbations preserving infinitely many generalized total angular momentum
(\ref{defn-general-momentum}) in the first order. It is hard to compute the
negative modes of $\left\langle \mathbb{L\cdot},\mathbb{\cdot}\right\rangle $
with such infinitely many constraints. A key point in our proof is to find a
reduced functional $\mathcal{K}$ defined in (\ref{defn-cal-k}) for density
perturbation such that $n^{-}\left(  \mathbb{L}|_{\overline{R\left(  B\right)
}}\right)  =n^{-}\left(  \mathcal{K}|_{R\left(  B_{1}\right)  }\right)  $,
where $R\left(  B_{1}\right)  $ denotes the density perturbations preserving
the mass as in the non-rotating case. Therefore, the computation of negative
modes of $\mathbb{L}|_{\overline{R\left(  B\right)  }}$ with infinitely many
constraints is reduced to study $\mathcal{K}|_{R\left(  B_{1}\right)  }$ with
only one mass constraint. This reduced stability criterion in terms of
$\mathcal{K}|_{R\left(  B_{1}\right)  }$ is crucial to prove or disprove TPP
for different families of rotating stars.

Next we consider rotating stars with Rayleigh unstable angular velocity
$\omega_{0}\left(  r\right)  $. That is, there exists a point $r_{0}%
\in(0,R_{0})$ such that $\Upsilon(r_{0})=\frac{\partial_{r}(\omega_{0}%
^{2}r^{4})}{r^{3}}\big|_{r=r_{0}}<0$. In this case, we cannot write the
linearized Euler-Poisson system as a separable linear Hamiltonian PDEs since
$A_{1}=\frac{4\omega_{0}^{2}r^{3}\rho_{0}}{\partial_{r}(\omega_{0}^{2}r^{4})}$
is not defined at $r_{0}$. Instead, we use the following second order system
for $u_{2}=(v_{r},v_{z})$%
\begin{equation}
\partial_{tt}u_{2}=-(\mathbb{L}_{1}+\mathbb{L}_{2})u_{2}:=-\mathbb{\tilde{L}%
}u_{2},\label{2nd order-rayleigh unstable}%
\end{equation}
where $\mathbb{\tilde{L}=L}_{1}+\mathbb{L}_{2},$
\[
\mathbb{L}_{1}u_{2}=\nabla\lbrack\Phi^{\prime\prime}(\rho_{0})(\nabla
\cdot(\rho_{0}u_{2}))-4\pi(-\Delta)^{-1}(\nabla\cdot(\rho_{0}u_{2})],
\]%
\[
\mathbb{L}_{2}u_{2}=%
\begin{pmatrix}
\Upsilon(r)v_{r}\\
0
\end{pmatrix}
,
\]
are self-adjoint operators on $Y$. The following properties of the spectra of
$\mathbb{\tilde{L}}$ are obtained in Proposition \ref{prop-spectra-l-tilde}:
i) $\sigma_{ess}(\tilde{\mathbb{L}})=range(\Upsilon(r))=[-a,b],\ $where
$a>0,b\geq0$; ii) There are finitely many negative eigenvalues and infinitely
many positive eigenvalues outside the interval $[-a,b]$. In particular, the
infimum of $\sigma(\tilde{\mathbb{L}})$ is negative, which might correspond to
either discrete or continuous spectrum.

Define the space
\[
Z=\left\{  u_{2}\in Y\ |\ \nabla\cdot(\rho_{0}u_{2})\in L_{\Phi^{\prime\prime
}(\rho_{0})}^{2}\right\}  ,
\]
with the norm
\begin{equation}
\left\Vert u_{2}\right\Vert _{Z}=\left\Vert u_{2}\right\Vert _{Y}+\left\Vert
\nabla\cdot(\rho_{0}u_{2})\right\Vert _{L_{\Phi^{\prime\prime}(\rho_{0})}^{2}%
}. \label{norm-Z}%
\end{equation}

\begin{theorem}
\label{th: rayleigh unstable}

Assume $\omega_{0}\in C^{1}[0,R_{0}]$, (\ref{rhonearboundary}) and $\inf
_{r\in\lbrack0,R_{0}]}\Upsilon(r)<0$. Let $\eta_{0}\leq-a$ be the minimum of
$\lambda\in\sigma(\tilde{\mathbb{L}})$. Then we have:

i) Equation (\ref{2nd order-rayleigh unstable}) defines a $C^{0}$ group
$T(t)$, $t\in\mathbf{R}$, on $Z\times Y$. There exists $C>0$ such that for any
$\left(  u_{2}\left(  0\right)  ,u_{2t}\left(  0\right)  \right)  \in Z\times
Y$,
\begin{equation}
\left\Vert u_{2}\left(  t\right)  \right\Vert _{Z}+\left\Vert u_{2t}\left(
t\right)  \right\Vert _{Y}\leq Ce^{\sqrt{-\eta_{0}}t}\left(  \left\Vert
u_{2}\left(  0\right)  \right\Vert _{Z}+\left\Vert u_{2t}\left(  0\right)
\right\Vert _{Y}\right)  ,\ \forall t>0. \label{estimate-upper-growth}%
\end{equation}
The flow $T(t)$ conserves the total energy
\begin{equation}
E(u_{2},u_{2t})=\left\Vert u_{2t}\right\Vert _{Y}^{2}+\langle\mathbb{\tilde
{L}}u_{2},u_{2}\rangle. \label{energy}%
\end{equation}

ii) For any $\varepsilon>0$, there exists initial data $u_{2}^{\varepsilon
}\left(  0\right)  \in Z,u_{2t}^{\varepsilon}\left(  0\right)  =0$ such that
\begin{equation}
\left\Vert u_{2}^{\varepsilon}\left(  t\right)  \right\Vert _{Y}\gtrsim
e^{\sqrt{-\eta_{0}+\varepsilon}t}\left\Vert u_{2}^{\varepsilon}\left(
0\right)  \right\Vert _{Z},\ \forall t>0. \label{estimate-lower-growth}%
\end{equation}

\end{theorem}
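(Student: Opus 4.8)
The plan is to treat \eqref{2nd order-rayleigh unstable} as a second–order abstract wave equation $\partial_{tt}u_2 = -\tilde{\mathbb{L}}u_2$ with a self–adjoint, bounded–below operator $\tilde{\mathbb{L}}$ on $Y$, and to build the group $T(t)$ on the energy space $Z\times Y$ by writing it in first–order form $\partial_t(u_2,w)=(w,-\tilde{\mathbb{L}}u_2)$. First I would record the functional–analytic setup carried over from Proposition \ref{prop-spectra-l-tilde}: $\tilde{\mathbb{L}}=\mathbb{L}_1+\mathbb{L}_2$ is self–adjoint on $Y$, its form domain is exactly $Z$ (since $\mathbb{L}_2$ is bounded and $\langle\mathbb{L}_1 u_2,u_2\rangle = \|\Phi''(\rho_0)^{1/2}\nabla\cdot(\rho_0 u_2)\|^2 - 4\pi\|(-\Delta)^{-1/2}(\nabla\cdot(\rho_0 u_2))\|^2$ controls $\|\nabla\cdot(\rho_0 u_2)\|_{L^2_{\Phi''(\rho_0)}}$ up to lower order), and $\inf\sigma(\tilde{\mathbb{L}})=\eta_0<0$. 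Shifting by a constant $c>-\eta_0$, the operator $\tilde{\mathbb{L}}+c$ is positive and self–adjoint, $(\tilde{\mathbb{L}}+c)^{1/2}$ has domain $Z$, and $Z$ equipped with $\|u_2\|_Z^2$ is equivalent to the graph norm of $(\tilde{\mathbb{L}}+c)^{1/2}$ — this equivalence is what makes $\|\cdot\|_Z$ in \eqref{norm-Z} the natural energy norm.

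For part (i), generation of $T(t)$: on $Z\times Y$ define the operator $\mathcal{A}(u_2,w)=(w,-\tilde{\mathbb{L}}u_2)$ with domain $\{(u_2,w): u_2\in D(\tilde{\mathbb{L}}),\, w\in Z\}$. The standard trick is to use the conserved quantity \eqref{energy}: since $\tilde{\mathbb{L}}+c\geq 0$, the shifted energy $E_c(u_2,w)=\|w\|_Y^2+\langle(\tilde{\mathbb{L}}+c)u_2,u_2\rangle$ is an equivalent norm on $Z\times Y$, and in this norm $\mathcal{A}+c\begin{psmallmatrix}0&0\\ -I&0\end{psmallmatrix}$-type perturbations make the generator quasi–dissipative; more cleanly, one checks directly that $\pm\mathcal{A}$ are dissipative with respect to $E_c$ after a bounded perturbation, so by Lumer–Phillips $\mathcal{A}$ generates a $C^0$ group. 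Conservation of $E$ in \eqref{energy} is then immediate for classical solutions and extends by density. The growth bound \eqref{estimate-upper-growth}: decompose $Y = Y_- \oplus Y_+$ via the spectral projection of $\tilde{\mathbb{L}}$ onto $(-\infty,0)$ and $[0,\infty)$; on $Y_+$ the solution of $\ddot u = -\tilde{\mathbb{L}}u$ is bounded in the energy norm (it is a "dispersive" part, $\|u(t)\|_Z+\|\dot u(t)\|_Y \lesssim \|u(0)\|_Z+\|\dot u(0)\|_Y$, using functional calculus: $\cos(t\sqrt{\tilde{\mathbb{L}}_+})$ and $\tilde{\mathbb{L}}_+^{-1/2}\sin(t\sqrt{\tilde{\mathbb{L}}_+})$ are bounded uniformly, with the low–frequency part handled by the shift), while on $Y_-$, writing $\tilde{\mathbb{L}}_- = -|\tilde{\mathbb{L}}_-|$ with $\|\,|\tilde{\mathbb{L}}_-|\,\|\le -\eta_0$, the solution is a combination of $\cosh(t\sqrt{|\tilde{\mathbb{L}}_-|})$ and $\sinh(t\sqrt{|\tilde{\mathbb{L}}_-|})/\sqrt{|\tilde{\mathbb{L}}_-|}$, giving the bound $Ce^{\sqrt{-\eta_0}\,t}$ by the spectral mapping theorem for the bounded operator $|\tilde{\mathbb{L}}_-|$. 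Summing the two contributions yields \eqref{estimate-upper-growth}.

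For part (ii), the sharp lower bound: the point is that $\eta_0=\inf\sigma(\tilde{\mathbb{L}})$ may be continuous spectrum, so there need be no eigenfunction realizing the fastest growth; one must lose an arbitrarily small $\varepsilon$. Fix $\varepsilon>0$; by definition of $\inf\sigma$ there is a spectral subspace $Y_\varepsilon := \mathbf{1}_{[\eta_0,\,\eta_0+\varepsilon/2]}(\tilde{\mathbb{L}})Y \neq \{0\}$. Pick $0\neq \phi\in Y_\varepsilon \cap Z$ (the spectral subspace intersects $Z$ densely since $Z$ is a form core) and take $u_2^\varepsilon(0)=\phi$, $u_{2t}^\varepsilon(0)=0$, so $u_2^\varepsilon(t)=\cos(t\sqrt{\tilde{\mathbb{L}}})\phi$ in the sense of functional calculus, which on $Y_\varepsilon$ means $u_2^\varepsilon(t)=\cosh(t\sqrt{|\tilde{\mathbb{L}}|})\phi$ with $|\tilde{\mathbb{L}}|$ restricted to $[\,-\eta_0-\varepsilon/2,\,-\eta_0\,]$. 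Then $\|u_2^\varepsilon(t)\|_Y^2 = \int \cosh^2(t\sqrt{s})\,d\langle E_s\phi,\phi\rangle \gtrsim e^{2\sqrt{-\eta_0-\varepsilon/2}\,t}\|\phi\|_Y^2 \geq e^{2\sqrt{-\eta_0+\varepsilon}\,t}\cdot(\text{const})\,\|\phi\|_Y^2$ after adjusting constants — more carefully one chooses the spectral window $[\eta_0,\eta_0+\delta]$ with $\delta=\delta(\varepsilon)$ small enough that $\sqrt{-\eta_0-\delta}\ge\sqrt{-\eta_0}-\varepsilon'$ translates, after the elementary inequality $\sqrt{-\eta_0}-\varepsilon' \ge \sqrt{-\eta_0+\varepsilon}$ is arranged, into the stated exponent. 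Finally $\|\phi\|_Z\le C_\varepsilon\|\phi\|_Y$ on the bounded spectral subspace, giving \eqref{estimate-lower-growth}.

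I expect the main obstacle to be part (ii) when $\eta_0$ lies in the essential spectrum: one cannot use a genuine eigenfunction, and must argue via spectral subspaces together with the (slightly delicate) point that these subspaces meet the energy space $Z$ — this is where the characterization of $Z$ as the form domain of $\tilde{\mathbb{L}}$, hence a core on which spectral projections act boundedly into $Z$, is essential. A secondary technical point is verifying that $Z$ with norm \eqref{norm-Z} is genuinely equivalent to the $(\tilde{\mathbb{L}}+c)^{1/2}$ graph norm, which requires the coercivity estimate for $\mathbb{L}_1$ near $\partial\Omega$ and uses the weight behavior \eqref{rhonearboundary}; this is routine given the analysis already set up for $\mathbb{L}$ in the Rayleigh stable case but should be stated explicitly.
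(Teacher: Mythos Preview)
Your approach is correct and essentially the same as the paper's: both treat \eqref{2nd order-rayleigh unstable} via the spectral resolution of the self-adjoint operator $\tilde{\mathbb{L}}$, using the functional calculus ($\cos/\sin$ on the nonnegative spectral part, $\cosh/\sinh$ on the negative part) to get the upper bound \eqref{estimate-upper-growth}, and a spectral projection onto a short window $[\eta_0,\eta_0+\delta]$ to produce the near-maximal growth in \eqref{estimate-lower-growth}; your identification of $Z$ with the form domain of $\tilde{\mathbb{L}}$ is exactly the content of the paper's Lemma~\ref{lemma-lower-bound-quadratic} (equation \eqref{equivalence-norm-Z}). The only presentational differences are that the paper writes down the explicit solution formula (equation~\eqref{formula-u-2-t}) rather than invoking Lumer--Phillips, and in part~(ii) it separates the case where $\eta_0<-a$ is a discrete eigenvalue (where the eigenfunction gives exact growth $e^{\sqrt{-\eta_0}\,t}$) from the case $\eta_0=-a$ in the essential spectrum, whereas your spectral-window argument covers both at once; note also that on a bounded spectral interval the range of the projection already lies in $D(\tilde{\mathbb{L}})\subset Z$, so your worry about $Y_\varepsilon\cap Z$ is moot.
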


The above theorem shows that rotating stars with Rayleigh unstable angular
velocity are always linearly unstable. The maximal growth rate is obtained
either by a discrete eigenvalue beyond the range of $\Upsilon(r)$ or by
unstable continuous spectrum due to Rayleigh instability (i.e. negative
$\Upsilon(r)$). In \cite{LNR1970}, it was shown that for slowly rotating stars
with any angular velocity profile, discrete unstable modes cannot be perturbed
from neutral modes of non-rotating stars. However, the unstable continuous
spectrum was not considered there.

We briefly mention some recent mathematical works on the stability of rotating
gaseous stars. The conditional Lyapunov stability of some rotating star
constructed by variational methods had been obtained by Luo and Smoller
\cite{LS2004,LTSJ2008,LS2009,LTSJ2011} under Rayleigh stability assumption,
also called S\"{o}lberg stability criterion in their works.

The paper is organized as follows. In Section 2, we study rotating stars with
Rayleigh stable angular velocity and prove the sharp stability criterion. In
Section 3, we use the stability criterion to prove/disprove TPP for two
families of slowly rotating stars. In Section 4, we prove linear instability
of rotating stars with Rayleigh unstable angular velocity.

Throughout this paper, for $a,b>0\ $we use $a\lesssim b$ to denote the
estimate $a\leq Cb$ for some constant $C$ independent of $a,b,$,
$a\thickapprox b$ to denote the estimate $C_{1}a\leq b\leq C_{2}b$ for some
constants $C_{1},C_{2}>0$, and $a\sim b$ to denote $|a-b|<\epsilon$ for some
$\epsilon>0$ small enough.

\section{\label{HandM}Stability criterion for Rayleigh Stable case}

In this section, we consider rotating stars with Rayleigh stable angular
velocity profiles. The linearized Euler-Poisson system is studied by using a
framework of separable Hamiltonian systems in \cite{LZ2019}. First, we give a
summary of the abstract theory in \cite{LZ2019}.

\subsection{\label{section-abstract}Separable Linear Hamiltonian PDEs}

Consider a linear Hamiltonian PDEs of the separable form%
\begin{equation}
\partial_{t}\left(
\begin{array}
[c]{c}%
u\\
v
\end{array}
\right)  =\left(
\begin{array}
[c]{cc}%
0 & B\\
-B^{\prime} & 0
\end{array}
\right)  \left(
\begin{array}
[c]{cc}%
L & 0\\
0 & A
\end{array}
\right)  \left(
\begin{array}
[c]{c}%
u\\
v
\end{array}
\right)  =\mathbf{JL}\left(
\begin{array}
[c]{c}%
u\\
v
\end{array}
\right)  , \label{hamiltonian-separated}%
\end{equation}
where $u\in X,\ v\in Y$ and $X,Y$ are real Hilbert spaces. We briefly describe
the results in \cite{LZ2019} about general separable Hamiltonian PDEs
(\ref{hamiltonian-separated}). The triple $\left(  L,A,B\right)  $ is assumed
to satisfy assumptions:

\begin{enumerate}
\item[(\textbf{G1})] The operator $B:Y^{\ast}\supset D(B)\rightarrow X$ and
its dual operator $B^{\prime}:X^{\ast}\supset D(B^{\prime})\rightarrow Y\ $are
densely defined and closed (and thus $B^{\prime\prime}=B$).

\item[(\textbf{G2})] The operator $A:Y\rightarrow Y^{\ast}$ is bounded and
self-dual (i.e. $A^{\prime}=A$ and thus $\left\langle Au,v\right\rangle $ is a
bounded symmetric bilinear form on $Y$). Moreover, there exist $\delta>0$ such
that
\[
\langle Au,u\rangle\geq\delta\left\Vert u\right\Vert _{Y}^{2},\;\forall u\in
Y.
\]

\item[(\textbf{G3})] The operator $L:X\rightarrow X^{\ast}$ is bounded and
self-dual (i.e. $L^{\prime}=L$ \textit{etc.}) and there exists a decomposition
of $X$ into the direct sum of three closed subspaces
\[
X=X_{-}\oplus\ker L\oplus X_{+},\ \dim\ker L<\infty,\ \ n^{-}(L)\triangleq\dim
X_{-}<\infty,
\]
satisfying

\begin{enumerate}
\item[(\textbf{G3.a})] $\left\langle Lu,u\right\rangle <0$ for all $u\in
X_{-}\backslash\{0\}$;

\item[(\textbf{G3.b})] there exists $\delta>0$ such that
\[
\left\langle Lu,u\right\rangle \geq\delta\left\Vert u\right\Vert ^{2}\ ,\text{
for any }u\in X_{+}.
\]

\end{enumerate}
\end{enumerate}

We note that the assumptions $\dim\ker L<\infty$ and $A>0\ $can be relaxed
(see \cite{LZ2019}). But these simplified assumptions are enough for the
applications to Euler-Poisson system studied in this section under the
Rayleigh stability assumption (i.e. $\Upsilon(r)>0$ for all $r\in
\lbrack0,R_{0}]$). If the Rayleigh unstable assumption holds (i.e.
$\Upsilon(r_{0})<0$ for some $r_{0}\in\lbrack0,R_{0}]$), then $n^{-}%
(L)=\infty$ and we will discuss this in Section \ref{Rayleigh-instability}.

\begin{theorem}
\label{T:abstract} \cite{LZ2019}Assume (\textbf{G1-3}) for
(\ref{hamiltonian-separated}). The operator $\mathbf{JL}$ generates a $C^{0}$
group $e^{t\mathbf{JL}}$ of bounded linear operators on $\mathbf{X}=X\times Y$
and there exists a decomposition%
\[
\mathbf{X}=E^{u}\oplus E^{c}\oplus E^{s},\quad
\]
of closed subspaces $E^{u,s,c}$ with the following properties:

i) $E^{c},E^{u},E^{s}$ are invariant under $e^{t\mathbf{JL}}$.

ii) $E^{u}\left(  E^{s}\right)  $ only consists of eigenvectors corresponding
to negative (positive) eigenvalues of $\mathbf{JL}$ and
\[
\dim E^{u}=\dim E^{s}=n^{-}\left(  L|_{\overline{R\left(  B\right)  }}\right)
,
\]
where $n^{-}\left(  L|_{\overline{R\left(  B\right)  }}\right)  $ denotes the
number of negative modes of
$\left\langle L\cdot,\cdot\right\rangle |_{\overline{R\left(  B\right)  }}$.
If $n^{-}\left(  L|_{\overline{R\left(  B\right)  }}\right)  >0$,
then there exists $M>0$ such that
\begin{equation}
\left\vert e^{t\mathbf{JL}}|_{E^{s}}\right\vert \leq Me^{-\lambda_{u}%
t},\;t\geq0;\quad\left\vert e^{t\mathbf{JL}}|_{E^{u}}\right\vert \leq
Me^{\lambda_{u}t},\;t\leq0, \label{estimate-stable-unstable}%
\end{equation}
where $\lambda_{u}=\min\{\lambda\mid\lambda\in\sigma(\mathbf{JL}|_{E^{u}%
})\}>0$.

iii) The quadratic form $\left\langle \mathbf{L}\cdot,\cdot\right\rangle
$ vanishes on $E^{u,s}$, i.e. $\langle\mathbf{L}\mathbf{u},\mathbf{u}%
\rangle=0$ for all $\mathbf{u}\in E^{u,s}$, but is non-degenerate on
$E^{u}\oplus E^{s}$, and
\[
E^{c}=\left\{  \mathbf{u}\in\mathbf{X}\mid\left\langle \mathbf{\mathbf{L}%
u,v}\right\rangle =0,\ \forall\ \mathbf{v}\in E^{s}\oplus E^{u}\right\}  .
\]
There exists $M>0$ such that
\begin{equation}
|e^{t\mathbf{J}\mathbf{L}}|_{E^{c}}|\leq M(1+t^{2}),\text{ for all }%
t\in\mathbb{R}. \label{estimate-center}%
\end{equation}

iv) Suppose $\left\langle L\cdot,\cdot\right\rangle $ is non-degenerate on
$\overline{R\left(  B\right)  }$, then $|e^{t\mathbf{JL}}|_{E^{c}}|\leq M$ for
some $M>0$. Namely, there is Lyapunov stability on the center space $E^{c}$.
\end{theorem}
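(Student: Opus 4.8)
This is the abstract trichotomy theorem for separable Hamiltonian PDEs of \cite{LZ2019}; here is the strategy of its proof. The key is to use \emph{separability} to collapse the first-order system to a single self-adjoint second-order equation. Writing $\mathbf u=(u,v)$, the equations are $\partial_t u=BAv$, $\partial_t v=-B'Lu$, whence $\partial_{tt}v=-B'LBA\,v$. By (\textbf{G2}) the operator $A$ has a bounded, boundedly invertible square root $A^{1/2}$, and the substitution $w=A^{1/2}v$ turns the $v$-equation into
\[
\partial_{tt}w=-\mathcal T w,\qquad \mathcal T:=A^{1/2}B'LBA^{1/2}=C'LC,\quad C:=BA^{1/2},
\]
where $\mathcal T$ is a (generically unbounded) \emph{self-adjoint} operator on $Y$, using $B''=B$ from (\textbf{G1}) and $L'=L$ from (\textbf{G3}). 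Since $\mathcal T=C'LC$ and $A^{1/2}$ carries $D(C)$ bijectively onto $D(B)$, one has $R(C)=R(B)$; a min--max argument (any subspace on which $\langle\mathcal T\cdot,\cdot\rangle<0$ is mapped injectively by $C$ into $\overline{R(B)}$, and conversely modulo a small perturbation) then yields $n^-(\mathcal T)=n^-\!\bigl(L|_{\overline{R(B)}}\bigr)=:n<\infty$. Hence the negative spectrum of $\mathcal T$ consists of finitely many eigenvalues $-\nu_1,\dots$ of total multiplicity $n$, all $\nu_j>0$, while the rest of $\sigma(\mathcal T)$ lies in $[0,\infty)$.

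Everything then follows from the spectral theorem for $\mathcal T$. Split $Y=Y_-\oplus\ker\mathcal T\oplus Y_+$ into spectral subspaces for the negative eigenvalues ($\dim Y_-=n$), the kernel, and the strictly positive part. On the finite-dimensional $Y_-$, $\mathcal T|_{Y_-}$ is negative definite, so $\partial_{tt}w=-\mathcal T w$ has solutions spanned by $e^{\pm\sqrt{\nu_j}\,t}\phi_j$ with $\mathcal T\phi_j=-\nu_j\phi_j$ and $C\phi_j\ne0$ (as $\langle LC\phi_j,C\phi_j\rangle=-\nu_j\|\phi_j\|^2<0$); this produces an $n$-dimensional forward-growing and an $n$-dimensional backward-growing invariant space with spectral gap $\lambda_u=\min_j\sqrt{\nu_j}>0$. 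On $\ker\mathcal T\oplus Y_+$, where $\mathcal T\ge0$, the conserved energy $\|\partial_t w\|_Y^2+\langle\mathcal T w,w\rangle$ bounds $\|\partial_t w(t)\|_Y$, so $\|w(t)\|_Y$ grows at most linearly in $t$, the growth coming solely from the $\ker\mathcal T$ component; functional calculus ($\cos(t\sqrt{\mathcal T})$ and $\tfrac{\sin(t\sqrt{\mathcal T})}{\sqrt{\mathcal T}}$ on $Y_+$, $1$ and $t$ on $\ker\mathcal T$) gives the solution operator. That $e^{t\mathbf{JL}}$ itself is a $C^0$ group on $\mathbf X=X\times Y$ is obtained, as in \cite{LZ2019}, by treating $\pm\mathbf{JL}$ as a finite-rank-bounded perturbation of an operator that is skew-symmetric for the equivalent inner product built from $\langle\mathbf L\cdot,\cdot\rangle$ after reversing its $n$ negative directions.

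Next one lifts to the first-order system: from a solution $w(t)$ set $v=A^{-1/2}w$ and $u(t)=u_0+\int_0^t BAv\,ds$ with $u_0$ chosen so that $\partial_t v(0)=-B'Lu_0$; differentiating and integrating back shows $\partial_t v=-B'Lu$ identically. Under the correspondence $(u_0,v_0)\leftrightarrow(A^{1/2}v_0,\,-C'Lu_0)$ the two exponential spaces of the $w$-flow lift to $E^u,E^s$, each $n$-dimensional and spanned by the explicit genuine eigenvectors $\mathbf u_j^{\pm}:=(\pm\nu_j^{-1/2}C\phi_j,\,A^{-1/2}\phi_j)$ of $\mathbf{JL}$ for the real eigenvalues $\pm\sqrt{\nu_j}$ (no Jordan blocks), giving \eqref{estimate-stable-unstable}, while $E^c$ is their complement; the extra integration $\int_0^t$ when reconstructing $u$ from $v$ raises the polynomial order by one (for purely $\ker\mathcal T$ data one gets $u(t)=u_0+tBAv_0-\tfrac{t^2}{2}BAB'Lu_0$ plus a bounded oscillation), so the linear growth of the second-order center flow becomes the $O(1+t^2)$ bound \eqref{estimate-center} — valid on a dense subspace of $E^c$ and hence on all of $E^c$ since each $e^{t\mathbf{JL}}$ is bounded on $\mathbf X$. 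For (iii), $\langle\mathbf L e^{t\mathbf{JL}}\mathbf u,e^{t\mathbf{JL}}\mathbf u\rangle$ is constant because $(\mathbf{JL})'\mathbf L+\mathbf L(\mathbf{JL})=\mathbf L(\mathbf J'+\mathbf J)\mathbf L=0$; letting $t\to\mp\infty$ makes it vanish on $E^{u}$ and on $E^s$ separately, while the computation $\langle\mathbf L\mathbf u_j^{\pm},\mathbf u_k^{\pm}\rangle=0$, $\langle\mathbf L\mathbf u_j^{+},\mathbf u_k^{-}\rangle=2\delta_{jk}$ shows non-degeneracy on $E^u\oplus E^s$, and $E^c=(E^u\oplus E^s)^{\perp_{\mathbf L}}$ by invariance and a dimension count. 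Finally (iv): if $\langle L\cdot,\cdot\rangle$ is non-degenerate on $\overline{R(B)}$ then $\ker\mathcal T\subset\ker C$ and one checks $\partial_t w(0)=0$ on all realizable center data, so the $\ker\mathcal T$-part of $E^c$ is stationary (it equals $\ker\mathbf{JL}$) and the center flow is bounded.

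The main obstacle is making the second-order reduction and the lifting rigorous: one must prove the passage between the first-order system on $X\times Y$ and the self-adjoint $w$-equation is a true equivalence once the kernels $\ker(B'L)\subset X$ and $\ker(BA)\subset Y$ are accounted for, check all domain conditions so $e^{t\mathbf{JL}}$ is genuinely a $C^0$ group on the stated space (the phase space $X\times Y$ does not literally coincide with the energy space $D(|\mathcal T|^{1/2})\times Y$ of the reduced equation), and — most delicately — show the Jordan chain at the eigenvalue $0$ has height at most two and the continuous imaginary spectrum contributes no growth, which is what gives exactly $(1+t^2)$; the non-negativity of $\mathcal T$ on $\ker\mathcal T\oplus Y_+$ is exactly what is used there.
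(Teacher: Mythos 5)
This theorem is not proved in the present paper at all: it is quoted from \cite{LZ2019}, so the only meaningful comparison is with the argument of that reference. Your outline does follow the same architecture as \cite{LZ2019}: pass to the second-order equation for $w=A^{1/2}v$, study the symmetric operator $\mathcal{T}=A^{1/2}B'LBA^{1/2}$, identify its negative count with $n^{-}\left(L|_{\overline{R(B)}}\right)$, lift eigenfunctions of negative eigenvalues to genuine eigenvectors $\left(\pm\nu_j^{-1/2}BA^{1/2}\phi_j,\ A^{-1/2}\phi_j\right)$ of $\mathbf{JL}$, and extract the polynomial bound on the complement; the reference combines exactly this reduction with the general index/exponential-trichotomy theory of \cite{lin-zeng-hamiltonian} for the group generation and the estimates. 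So the skeleton is right.

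However, the steps you assert are precisely the nontrivial ones. First, self-adjointness of $\mathcal{T}$ does not follow from $B''=B$ and $L'=L$: these give only symmetry, and since $L$ is indefinite and $B$ unbounded, one must prove that the form $\langle LBA^{1/2}y,BA^{1/2}y\rangle$ is bounded below and closed (here $\dim(X_{-}\oplus\ker L)<\infty$ and the weak closedness of the graph of $BA^{1/2}$ are what save the day, and this is the content of the corresponding lemma in \cite{LZ2019}); without it your claim that $\sigma(\mathcal{T})$ consists of finitely many negative eigenvalues plus a part in $[0,\infty)$ has no basis. Second, the assertion that $e^{t\mathbf{JL}}$ is a $C^{0}$ group because $\mathbf{JL}$ is a ``finite-rank-bounded perturbation'' of a skew-symmetric operator is wrong as stated: $\mathbf{J}$ is unbounded, so $\mathbf{J}$ composed with the finite-rank correction of $\mathbf{L}$ need not be bounded, and the generation argument in \cite{LZ2019,lin-zeng-hamiltonian} is genuinely more involved. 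Third, the heart of the theorem---that the invariant decomposition $E^{u}\oplus E^{c}\oplus E^{s}$ exists in $X\times Y$ itself, that $E^{u,s}$ capture all growth (so no nonreal or embedded unstable spectrum of $\mathbf{JL}$ escapes the reduction), that there are no Jordan blocks, and that the $(1+t^{2})$ bound holds on all of $E^{c}$, including directions in $\ker(BA)$ and the $u$-components invisible to $B'L$---is exactly the rigorous equivalence between the first-order flow and the reduced second-order flow, which you explicitly defer as ``the main obstacle'' instead of proving. As it stands, the proposal is a correct strategic outline with the right eigenvector formulas, but these three points are genuine gaps rather than routine verifications.
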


\begin{remark}
Above theorem shows that the solutions of (\ref{Lep}) are spectrally stable if
and only if $L|_{\overline{R\left(  B\right)  }}\geq0$. Moreover,
$n^{-}\left(  L|_{\overline{R\left(  B\right)  }}\right)  $ equals to the
number of unstable modes. The exponential trichotomy estimates
(\ref{estimate-stable-unstable})-(\ref{estimate-center}) are important in the
study of nonlinear dynamics near an unstable steady state, such as the proof
of nonlinear instability or the construction of invariant (stable, unstable
and center) manifolds. The exponential trichotomy can be lifted to more
regular spaces if the spaces $E^{u,s}$ have higher regularity. We refer to
Theorem 2.2 in \cite{lin-zeng-hamiltonian} for more precise statements.
\end{remark}

\subsection{Hamiltonian formulation of linearized EP system}

\label{HamiltonianEPassumption}

Consider an axi-symmetric rotating star solution $(\rho_{0}\left(  r,z\right)
,\vec{v_{0}}=v_{0}\mathbf{e}_{\theta}=r\omega_{0}\left(  r\right)
\mathbf{e}_{\theta})$. The support of density $\rho_{0}$ is denoted by
$\Omega$, which is an axi-symmetric bounded domain. Let $R_{0}$ be support
radius in $r$, that is, the maximum of $r$ such that $(r,z)\in\Omega$. We
choose the coordinate system such that $(R_{0},0)\in\partial\Omega$. We make
the following assumptions: \newline i) $\omega_{0}\in C^{1}[0,R_{0}]$
satisfies the Rayleigh stability condition (i.e. $\Upsilon(r)>0$ for
$r\in\lbrack0,R_{0}]$);\newline ii) $\partial\Omega$ is $C^{2}$ near
$(R_{0},0)\ $and has positive curvature (equivalently $\Omega$ is locally
convex) at $(R_{0},0)$;\newline iii) $\rho_{0}$ satisfies
(\ref{rhonearboundary}). \newline The following lemma will be used later.

\begin{lemma}
\label{lemma-rho-int}Under Assumptions ii) and iii) above, for $\varepsilon>0$
small enough we have
\[
\int_{-\infty}^{+\infty}\rho_{0}^{\lambda}(r,z)dz\thickapprox(R_{0}%
-r)^{\frac{\lambda}{\mathbb{\gamma}_{0}-1}+\frac{1}{2}},
\]
for any $\lambda>0$ and $r\in\left(  R_{0}-\varepsilon,R_{0}\right)  $.
\end{lemma}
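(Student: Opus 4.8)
The plan is to reduce the statement to a one-dimensional computation by exploiting the local geometry of $\partial\Omega$ near $(R_0,0)$. Fix $r\in(R_0-\varepsilon,R_0)$. By Assumption ii), near $(R_0,0)$ the boundary $\partial\Omega$ is a $C^2$ curve with positive curvature, so after possibly shrinking $\varepsilon$ we may write the portion of $\Omega$ with $r$-coordinate close to $R_0$ as $\{(r,z):|z|<Z(r)\}$, where $Z(r)$ is the half-height of the slice. The positive-curvature assumption gives that $Z(r)\thickapprox (R_0-r)^{1/2}$ as $r\to R_0^-$: indeed, writing the boundary locally as a graph $r = R_0 - g(z)$ with $g(0)=g'(0)=0$ and $g''(0)>0$ (the curvature), Taylor expansion yields $g(z)\thickapprox z^2$ for small $z$, hence $z\thickapprox(R_0-r)^{1/2}$ on $\partial\Omega$, i.e. $Z(r)\thickapprox (R_0-r)^{1/2}$.

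Next I would control the integrand. By Assumption iii), $\rho_0(r,z)\thickapprox \mathrm{dist}((r,z),\partial\Omega)^{1/(\gamma_0-1)}$. For $(r,z)$ in the slice with $|z|<Z(r)$, the distance to $\partial\Omega$ is comparable to the "vertical" distance $Z(r)-|z|$ up to constants depending only on the local $C^2$ geometry (again using that the boundary is a uniformly convex $C^2$ graph, so the nearest boundary point is comparable in both coordinates); more precisely $\mathrm{dist}((r,z),\partial\Omega)\thickapprox \min\{R_0-r,\ Z(r)-|z|\}$, and since we will integrate in $z$ over the full slice the relevant scale is captured by $Z(r)-|z|$ together with the overall factor. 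Thus
\[
\int_{-\infty}^{+\infty}\rho_0^\lambda(r,z)\,dz \thickapprox \int_{-Z(r)}^{Z(r)} \big(\mathrm{dist}((r,z),\partial\Omega)\big)^{\frac{\lambda}{\gamma_0-1}}\,dz.
\]
Splitting the slice into the region near $z=0$ (where the distance is $\thickapprox R_0-r\thickapprox Z(r)^2$) and the two end regions near $z=\pm Z(r)$ (where the distance is $\thickapprox Z(r)-|z|$), a direct one-dimensional integration gives a contribution $\thickapprox Z(r)\cdot (R_0-r)^{\frac{\lambda}{\gamma_0-1}}$ from the middle and $\thickapprox \int_0^{Z(r)} t^{\frac{\lambda}{\gamma_0-1}}\,dt \thickapprox Z(r)^{\frac{\lambda}{\gamma_0-1}+1}$ from the ends; since $Z(r)\thickapprox (R_0-r)^{1/2}$ both are $\thickapprox (R_0-r)^{\frac{\lambda}{2(\gamma_0-1)}+\frac12}$. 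Wait — I must be careful: $(R_0-r)^{\frac{\lambda}{\gamma_0-1}}\cdot (R_0-r)^{1/2}$ has exponent $\frac{\lambda}{\gamma_0-1}+\frac12$, which is what the lemma claims, whereas $Z(r)^{\frac{\lambda}{\gamma_0-1}+1}\thickapprox (R_0-r)^{\frac{\lambda}{2(\gamma_0-1)}+\frac12}$ has a smaller exponent (hence is larger), so in fact the end regions dominate; either way the correct comparison to extract is $\int_{-\infty}^{+\infty}\rho_0^\lambda\,dz\thickapprox (R_0-r)^{\frac{\lambda}{\gamma_0-1}+\frac12}$ provided the distance is genuinely comparable to $R_0-r$ on a fixed fraction of the slice — which is the point that needs the uniform convexity. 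Let me restate: on the sub-slice $|z|<\frac12 Z(r)$ one has $\mathrm{dist}((r,z),\partial\Omega)\thickapprox R_0-r$, giving lower bound $\gtrsim Z(r)(R_0-r)^{\frac{\lambda}{\gamma_0-1}}\thickapprox (R_0-r)^{\frac{\lambda}{\gamma_0-1}+\frac12}$; and on the whole slice $\mathrm{dist}\le R_0-r$ up to a constant (distance to the boundary is at most distance to the point $(R_0,0)$ side, comparable to $R_0-r$ by convexity), giving the matching upper bound $\lesssim Z(r)(R_0-r)^{\frac{\lambda}{\gamma_0-1}}$. That closes the estimate.

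The main obstacle is the geometric comparison in the two preceding paragraphs: turning the qualitative hypotheses "$\partial\Omega$ is $C^2$ with positive curvature near $(R_0,0)$" and "$\rho_0\thickapprox \mathrm{dist}(\cdot,\partial\Omega)^{1/(\gamma_0-1)}$" into the quantitative statements $Z(r)\thickapprox (R_0-r)^{1/2}$ and $\mathrm{dist}((r,z),\partial\Omega)\thickapprox R_0-r$ on a definite portion of the slice $\{|z|<Z(r)\}$, uniformly for $r\in(R_0-\varepsilon,R_0)$. Once the boundary is written as a uniformly convex $C^2$ graph $r=R_0-g(z)$ over a small $z$-interval, with $g(0)=g'(0)=0$ and $g''$ bounded above and below by positive constants, both facts follow from elementary Taylor estimates, but the bookkeeping — in particular checking that the implied constants in $\thickapprox$ can be taken independent of $r$ once $\varepsilon$ is fixed small — is where the care is needed. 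Everything after that is the one-variable integral $\int_0^{Z} t^{\alpha}\,dt\thickapprox Z^{\alpha+1}$ with $\alpha=\frac{\lambda}{\gamma_0-1}>0$, combined with $Z\thickapprox (R_0-r)^{1/2}$.
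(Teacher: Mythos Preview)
Your restated argument (after you abandon the initial, incorrect comparison $\mathrm{dist}\thickapprox\min\{R_0-r,\,Z(r)-|z|\}$) is essentially correct: on the sub-slice $|z|<\tfrac12 Z(r)$ one has $\mathrm{dist}((r,z),\partial\Omega)\thickapprox R_0-r$, giving the lower bound $\gtrsim Z(r)(R_0-r)^{\lambda/(\gamma_0-1)}\thickapprox(R_0-r)^{\lambda/(\gamma_0-1)+1/2}$, and on the whole slice $\mathrm{dist}\le C(R_0-r)$ gives the matching upper bound. One small correction: your stated justification for the upper bound (``distance to the point $(R_0,0)$'') actually yields only $\sqrt{(R_0-r)^2+z^2}\thickapprox\sqrt{R_0-r}$, which is too weak; the clean one-line argument is the \emph{horizontal} distance to the boundary at the same height, $\mathrm{dist}((r,z),\partial\Omega)\le (R_0-g(z))-r\le R_0-r$.

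The paper takes a genuinely different route. Instead of sandwiching $\mathrm{dist}$ by constants times $R_0-r$ on sub-slices, it replaces $\partial\Omega$ near $(R_0,0)$ by its osculating circle $\Gamma$ of radius $r_0$, observes $\mathrm{dist}(\cdot,\partial\Omega)=\mathrm{dist}(\cdot,\Gamma)(1+o(1))$, and then computes the integral explicitly for the circle using
\[
r_0-\sqrt{(r-R_0+r_0)^2+z^2}\thickapprox r_0^2-(r-R_0+r_0)^2-z^2
\]
followed by the scaling $z=u\sqrt{r_0^2-(r-R_0+r_0)^2}$, which reduces everything to $\int_0^1(1-u^2)^{\lambda/(\gamma_0-1)}du$. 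The paper's approach packages both bounds into a single explicit computation and makes the role of the curvature (via $r_0$) transparent; your approach is more bare-hands and avoids the osculating-circle comparison, at the cost of separate upper/lower arguments and the geometric bookkeeping you correctly flag as the main obstacle.
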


\begin{proof}
By (\ref{rhonearboundary}),
\[
\int_{-\infty}^{+\infty}\rho_{0}^{\lambda}(r,z)dz\thickapprox\int_{\left(
r,z\right)  \in\Omega}\text{dist}((r,z),\partial\Omega)^{\frac{\lambda
}{\mathbb{\gamma}_{0}-1}}dz.
\]
First, we consider the case when $\Omega$ is the ball $\left\{  r^{2}%
+z^{2}<R_{0}^{2}\right\}  $. Then for $r$ close to $R_{0}$
\begin{align}
\int_{\left(  r,z\right)  \in\Omega}\text{dist}((r,z),\partial\Omega
)^{\frac{\lambda}{\mathbb{\gamma}_{0}-1}}dz &  =2\int_{0}^{\sqrt{R_{0}%
^{2}-r^{2}}}\left(  R_{0}-\sqrt{r^{2}+z^{2}}\right)  ^{\frac{\lambda
}{\mathbb{\gamma}_{0}-1}}dz\label{estimate-sphere}\\
&  \thickapprox\int_{0}^{\sqrt{R_{0}^{2}-r^{2}}}\left(  R_{0}^{2}-r^{2}%
-z^{2}\right)  ^{\frac{\lambda}{\mathbb{\gamma}_{0}-1}}dz\nonumber\\
&  =\left(  R_{0}^{2}-r^{2}\right)  ^{\frac{\lambda}{\mathbb{\gamma}_{0}%
-1}+\frac{1}{2}}\int_{0}^{1}\left(  1-u^{2}\right)  ^{\frac{\lambda
}{\mathbb{\gamma}_{0}-1}}du\nonumber\\
&  \thickapprox(R_{0}-r)^{\frac{\lambda}{\mathbb{\gamma}_{0}-1}+\frac{1}{2}%
}.\nonumber
\end{align}
For general $\Omega$, let $\frac{1}{r_{0}}>0$ be the curvature of
$\partial\Omega$ at $\left(  R_{0},0\right)  $ and
\[
\Gamma=\left\{  \left(  r,z\right)  \ |\ \left(  r-R_{0}+r_{0}\right)
^{2}+z^{2}=r_{0}^{2}\right\}  ,
\]
be the osculating circle at $\left(  R_{0},0\right)  $. Then near $\left(
R_{0},0\right)  $, $\partial\Omega\ $is approximated by $\Gamma$ to the 2nd
order. For any $r\in\left(  R_{0}-\varepsilon,R_{0}\right)  $, let $\left(
r,-z_{1}\left(  r\right)  \right)  ,\ \left(  r,z_{2}\left(  r\right)
\right)  \ $be the intersection of $\partial\Omega$ with the vertical line
$r^{\prime}=r$, where $z_{1}\left(  r\right)  ,z_{2}\left(  r\right)  >0$.
Then for $\varepsilon$ small enough, we have
\[
z_{1}\left(  r\right)  ,z_{2}\left(  r\right)  =\sqrt{r_{0}^{2}-\left(
r-R_{0}+r_{0}\right)  ^{2}}+o\left(  \sqrt{r_{0}^{2}-\left(  r-R_{0}%
+r_{0}\right)  ^{2}}\right)  .
\]
And for $\left(  r,z\right)  \in\Omega$ with $r\in\left(  R_{0}-\varepsilon
,R_{0}\right)  $,%
\begin{align*}
\text{dist}((r,z),\partial\Omega) &  =\text{dist}((r,z),\Gamma)+o\left(
\text{dist}((r,z),\Gamma)\right)  \\
&  =\left(  r_{0}-\sqrt{\left(  r-R_{0}+r_{0}\right)  ^{2}+z^{2}}\right)
+o\left(  \left(  r_{0}-\sqrt{\left(  r-R_{0}+r_{0}\right)  ^{2}+z^{2}%
}\right)  \right)  .
\end{align*}
Then similar to (\ref{estimate-sphere}), we have
\[
\int_{-\infty}^{+\infty}\rho_{0}^{\lambda}(r,z)dz\thickapprox\left(  r_{0}%
^{2}-\left(  r-R_{0}+r_{0}\right)  ^{2}\right)  ^{\frac{\lambda}%
{\mathbb{\gamma}_{0}-1}+\frac{1}{2}}\thickapprox(R_{0}-r)^{\frac{\lambda
}{\mathbb{\gamma}_{0}-1}+\frac{1}{2}}.
\]

\end{proof}

Let $X_{1}:=L_{\Phi^{\prime\prime}(\rho_{0})}^{2}$, $X_{2}=L_{\rho_{0}}^{2}$,
$X=X_{1}\times X_{2}$, $Y=\left(  L_{\rho_{0}}^{2}\right)  ^{2}$ and
$\mathbf{X}:=X\times Y$. The linearized Euler-Poisson system for axi-symmetric
perturbations around the rotating star solution $(\rho_{0}\left(  r,z\right)
,\omega_{0}\left(  r\right)  r\mathbf{e}_{\theta})$ is
\begin{equation}%
\begin{cases}
\partial_{t}v_{r} & =2\omega_{0}\left(  r\right)  v_{\theta}-\partial_{r}%
(\Phi^{\prime\prime}(\rho_{0})\rho+V(\rho)),\\
\partial_{t}v_{z} & =-\partial_{z}(\Phi^{\prime\prime}(\rho_{0})\rho
+V(\rho)),\\
\partial_{t}v_{\theta} & =-\frac{1}{r}\partial_{r}(\omega_{0}r^{2})v_{r},\\
\partial_{t}\rho & =-\nabla\cdot(\rho_{0}v)=-\nabla\cdot(\rho_{0}%
(v_{r},0,v_{z})),
\end{cases}
\label{linearized-EP}%
\end{equation}
with $\Delta V=4\pi\rho$. Here, $(\rho,\vec{v}=(v_{r},v_{\theta},v_{z}%
))\in\mathbf{X}$ are perturbations of density and velocity.

Define the operators
\[
L:=\Phi^{\prime\prime}(\rho_{0})-4\pi(-\Delta)^{-1}: X_{1}\rightarrow
(X_{1})^{\ast}, \quad A=\rho_{0}:Y\rightarrow Y^{\ast},
\]

\[
A_{1}:=\frac{4\omega_{0}^{2}r^{3}\rho_{0}}{\partial_{r}(\omega_{0}^{2}r^{4}%
)}=\frac{4\omega_{0}^{2}\rho_{0}}{\Upsilon(r)}:X_{2} \rightarrow(X_{2})^{\ast
},
\]
and
\begin{equation}
B=%
\begin{pmatrix}
B_{1}\\
B_{2}%
\end{pmatrix}
:D(B)\subset Y^{\ast}\rightarrow X,\quad B^{\prime}=\left(  B_{1}^{\prime
},B_{2}^{\prime}\right)  :X^{\ast}\supset D(B^{\prime})\rightarrow Y,
\label{defn-b}%
\end{equation}
where
\begin{equation}
B_{1}%
\begin{pmatrix}
v_{r}\\
v_{z}%
\end{pmatrix}
=-\text{$\nabla\cdot$}(v_{r},0,v_{z}),\quad B_{1}^{\prime}\rho=%
\begin{pmatrix}
\partial_{r}\rho\\
\partial_{z}\rho
\end{pmatrix}
, \label{defn-B1}%
\end{equation}
and
\begin{equation}
B_{2}%
\begin{pmatrix}
v_{r}\\
v_{z}%
\end{pmatrix}
=-\frac{\partial_{r}(\omega_{0}r^{2})}{r\rho_{0}}v_{r},\quad(B_{2})^{\prime
}v_{\theta}=%
\begin{pmatrix}
-\frac{\partial_{r}(\omega_{0}r^{2})}{r\rho_{0}}v_{\theta}\\
0
\end{pmatrix}
. \label{defn-B2}%
\end{equation}
Then the linearized Euler-Poisson system (\ref{linearized-EP}) can be written
in a separable Hamiltonian form
\begin{equation}
\frac{d}{dt}%
\begin{pmatrix}
u_{1}\\
u_{2}%
\end{pmatrix}
=\mathbf{J}\mathbf{L}%
\begin{pmatrix}
u_{1}\\
u_{2}%
\end{pmatrix}
, \label{Lep}%
\end{equation}
where $u_{1}=(\rho,v_{\theta})$ and $u_{2}=(v_{r},v_{z})$. The operators
\[
\mathbf{J}:=%
\begin{pmatrix}
0, & B\\
-B^{\prime}, & 0
\end{pmatrix}
:\mathbf{X}^{\ast}\rightarrow\mathbf{X},\quad\mathbf{L}:=%
\begin{pmatrix}
\mathbb{L}, & 0\\
0, & A
\end{pmatrix}
:\mathbf{X}\rightarrow\mathbf{X}^{\ast},
\]
are off-diagonal anti-self-dual and diagonal self-dual respectively, where
\[
\mathbb{L}=%
\begin{pmatrix}
L, & 0\\
0, & A_{1}%
\end{pmatrix}
:X\rightarrow X^{\ast}.
\]

First, we check that $\left(  \mathbb{L},A,B\right)  $ in (\ref{Lep}) satisfy
the assumptions (\textbf{G1})-(\textbf{G3}) for the abstract theory in Section
\ref{section-abstract}. The assumptions (\textbf{G1}) and (\textbf{G2}) can be
shown by the same arguments in the proof of Lemma 3.5 in \cite{LZ2019} and
that $B_{2}$ is bounded. The Rayleigh stability condition $\Upsilon
(r)>0\ $implies that the operator $A_{1}$ is bounded, positive and self-dual.
By the same proof of Lemma 3.6 in \cite{LZ2019}, we have the following lemma.

\begin{lemma}
\label{lemma-decom-L}There exists a direct sum decomposition $L_{\Phi
^{\prime\prime}(\rho_{0})}^{2}=X_{-}\oplus\ker L\oplus X_{+}$ and $\delta
_{0}>0\ $such that:

i) $\dim\left(  X_{-}\right)  ,\dim\ker L<\infty;\ $

ii) $L|_{X_{-}}<0,\ L|_{X_{+}}\geq\delta_{0}$ and $X_{-}\perp X_{+}$ in the
inner product of $L_{\Phi^{\prime\prime}(\rho_{0})}^{2}$.
\end{lemma}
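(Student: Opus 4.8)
The plan is to observe that the operator $L=\Phi''(\rho_0)-4\pi(-\Delta)^{-1}$ is exactly the one that appears for non-rotating stars, so the required decomposition is furnished word for word by the proof of Lemma~3.6 in \cite{LZ2019}; rotation enters the Hamiltonian structure only through $A_1$ and $B_2$, which do not touch $L$. I will nonetheless recall the mechanism, which is to realize $\langle L\cdot,\cdot\rangle$ through a compact self-adjoint operator on the weighted space $X_1=L^2_{\Phi''(\rho_0)}$ and then invoke the spectral theorem.

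First I would record the pointwise lower bound $\Phi''(\rho_0)=P'(\rho_0)/\rho_0\geq c_0>0$ on $\overline{\Omega}$: by (\ref{P2}) one has $\Phi''(s)=P'(s)/s\thickapprox K s^{\gamma_0-2}$ as $s\to 0^+$, which tends to $+\infty$ because $\gamma_0<2$, so $\Phi''$ is bounded below near the vacuum boundary; away from it, $\rho_0$ is bounded away from $0$ and $\Phi''(\rho_0)$ is continuous and strictly positive. This yields the continuous embedding $X_1\hookrightarrow L^2(\Omega)$, and since $4\pi(-\Delta)^{-1}$ maps $L^2(\Omega)$ (functions extended by zero) boundedly into $H^2(\mathbb{R}^3)$, the form $4\pi\int\delta\rho\,(-\Delta)^{-1}\delta\rho$ is bounded on $X_1$; hence $L:X_1\to X_1^*$ is bounded and self-dual. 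I would then define $\mathcal{K}$ on $X_1$ by $\mathcal{K}\delta\rho=\tfrac{4\pi}{\Phi''(\rho_0)}\big((-\Delta)^{-1}\delta\rho\big)\big|_{\Omega}$, so that, after identifying $X_1^*$ with $X_1$ via the weighted inner product, $\langle L\delta\rho,\delta\rho\rangle=\langle(I-\mathcal{K})\delta\rho,\delta\rho\rangle_{X_1}$, with $\mathcal{K}$ bounded and self-adjoint.

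The crux of the argument is the compactness of $\mathcal{K}$ on $X_1$: a bounded sequence in $X_1$ is bounded in $L^2(\Omega)$, so its image under $(-\Delta)^{-1}$ is bounded in $H^2(\Omega)$ and, by Rellich's theorem on the bounded domain $\Omega$, precompact in $L^2(\Omega)$; multiplication by the bounded function $4\pi/\Phi''(\rho_0)$ together with $\Phi''(\rho_0)\geq c_0$ then upgrades this to precompactness in $X_1$. Granting compactness, the spectral theorem produces an $X_1$-orthonormal eigenbasis of $\mathcal{K}$ with real eigenvalues accumulating only at $0$; I would set $X_-$ to be the finite-dimensional span of the eigenvectors with eigenvalue $>1$, $\ker L$ the (finite-dimensional) eigenspace for eigenvalue $1$, and $X_+$ the orthogonal complement, i.e. the closed span of the eigenvectors with eigenvalue $<1$. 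Then $X_-\perp X_+$ in $X_1$ is automatic, $\langle L\cdot,\cdot\rangle=\langle(I-\mathcal{K})\cdot,\cdot\rangle_{X_1}$ is negative definite on $X_-$ and vanishes exactly on $\ker L$, and on $X_+$ it is bounded below by $\delta_0:=1-\mu_*>0$, where $\mu_*:=\sup\{\mu\in\sigma(\mathcal{K}):\mu<1\}$; the strict inequality $\mu_*<1$ is forced because the eigenvalues of the compact operator $\mathcal{K}$ can accumulate only at $0$, so only finitely many lie in $[1-\varepsilon,\infty)$ for each $\varepsilon>0$.

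The only step that is not pure bookkeeping is the compactness of $\mathcal{K}$, and even that is routine once the lower bound $\Phi''(\rho_0)\geq c_0$ and the boundedness of $\Omega$ are in hand; everything downstream is the spectral theorem for compact self-adjoint operators. As noted, this is precisely the content of Lemma~3.6 in \cite{LZ2019}, whose proof applies here unchanged.
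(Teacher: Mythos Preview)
Your proposal is correct and is exactly the approach the paper takes: the paper itself gives no proof and simply notes ``By the same proof of Lemma 3.6 in \cite{LZ2019}'', which is precisely the compactness-of-$4\pi(-\Delta)^{-1}/\Phi''(\rho_0)$ plus spectral-theorem argument you have sketched. One small notational caveat: the paper reserves the symbol $\mathcal{K}$ for the reduced functional (\ref{defn-cal-k}), so if you keep this write-up you should rename your auxiliary compact operator to avoid a clash.
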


The assumption (\textbf{G3}) readily follows from above lemma. Therefore, we
can apply Theorem \ref{T:abstract} to the linearized Euler-Poisson system
(\ref{Lep}). This proves the conclusions in Theorem \ref{Th: rayleigh stable}
except for the formula $n^{-}\left(  \mathbb{L}|_{\overline{R\left(  B\right)
}}\right)  =n^{-}\left(  \mathcal{K}|_{R\left(  B_{1}\right)  }\right)  $,
which will be shown later. Here, $\overline{R\left(  B\right)  }$ is the
closure of $R(B)$ in $X$, and the operators $B,B_{1}$ are defined in
(\ref{defn-b})-(\ref{defn-B2}).

\begin{remark}
In some literature \cite{LS2004,LTSJ2008,LS2009,LTSJ2011}, the Rayleigh
stability condition is $\Upsilon(r)\geq0$ for all $r\in\lbrack0,R_{0}]$. Here,
we used the stability condition $\Upsilon(r)>0$ for all $r\in\lbrack0,R_{0}]$
as in the astrophysical literature such as \cite{BB1974,TJL2000}. If
$\Upsilon(r)\geq0$ for all $r\in\lbrack0,R_{0}]$ and $\Upsilon(r)=0$ only at
some isolated points, let $\Lambda\left(  r,z\right)  =\frac{4\omega_{0}%
^{2}\rho_{0}}{\Upsilon(r)}$ and the operator $A_{1}:L_{\Lambda}^{2}%
\rightarrow(L_{\Lambda}^{2})^{\ast}$\ is bounded and positive. The linearized
Euler-Poisson system can still be studied in the framework of separable
Hamiltonian systems and similar results as in Theorem
\ref{Th: rayleigh stable} can be obtained.
\end{remark}

\subsection{Dynamically accessible perturbations}

By Theorem \ref{Th: rayleigh stable}, the solutions of (\ref{Lep}) are
spectrally stable (i.e. nonexistence of exponentially growing solution) if and
only if $\mathbb{L}|_{\overline{R(B)}}\geq0$. More precisely, we have

\begin{corollary}
\label{snprpp} Assume $\omega_{0}\in C^{1}[0,R_{0}]$, (\ref{rhonearboundary}),
and $\inf_{r\in\lbrack0,R_{0}]}\Upsilon(r)>0$. The rotating star solution
$\left(  \rho_{0}\left(  r,z\right)  ,\vec{v_{0}}=r\omega_{0}\left(  r\right)
\mathbf{e}_{\theta}\right)  \ $of Euler-Poisson system is spectrally stable if
and only if
\begin{equation}
\langle L\delta\rho,\delta\rho\rangle+\langle A_{1}\delta v_{\theta},\delta
v_{\theta}\rangle\geq0\text{ for all }\left(  \delta\rho,\delta v_{\theta
}\right)  \in\overline{R(B)}.\label{stability-criterion}%
\end{equation}

\end{corollary}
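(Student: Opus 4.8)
The plan is to derive Corollary \ref{snprpp} directly from Theorem \ref{Th: rayleigh stable} (equivalently, part ii) and iii) of Theorem \ref{T:abstract}) applied to the separable Hamiltonian form \eqref{Lep}. Recall that we have already verified assumptions (\textbf{G1})--(\textbf{G3}) for the triple $(\mathbb{L},A,B)$: in particular $A=\rho_0$ satisfies (\textbf{G2}) by the Rayleigh stability assumption $\inf_{r}\Upsilon(r)>0$, which also makes $A_1$ bounded and positive, and Lemma \ref{lemma-decom-L} gives the decomposition required by (\textbf{G3}). Thus by Theorem \ref{T:abstract} ii), $\dim E^u = \dim E^s = n^{-}(\mathbb{L}|_{\overline{R(B)}})$, where $\mathbb{L}=\mathrm{diag}(L,A_1)$, so that $\langle \mathbb{L}(\delta\rho,\delta v_\theta),(\delta\rho,\delta v_\theta)\rangle = \langle L\delta\rho,\delta\rho\rangle + \langle A_1\delta v_\theta,\delta v_\theta\rangle$. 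The corollary is then the statement that $n^{-}(\mathbb{L}|_{\overline{R(B)}})=0$ if and only if the displayed quadratic form \eqref{stability-criterion} is nonnegative on $\overline{R(B)}$.

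First I would recall the definition of spectral stability as used here: the solutions of \eqref{Lep} are spectrally stable precisely when there is no exponentially growing solution, i.e. when $\sigma(\mathbf{JL})$ contains no eigenvalue with nonzero real part. By Theorem \ref{Th: rayleigh stable} ii), $E^u$ and $E^s$ consist exactly of eigenvectors for positive and negative eigenvalues of $\mathbf{JL}$, and $\dim E^u=\dim E^s=n^{-}(\mathbb{L}|_{\overline{R(B)}})$. So if $n^{-}(\mathbb{L}|_{\overline{R(B)}})=0$, then $E^u=E^s=\{0\}$, $\mathbf{X}=E^c$, and by the estimate \eqref{estimate-center} all solutions grow at most polynomially, hence no exponential growth: the star is spectrally stable. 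Conversely, if $n^{-}(\mathbb{L}|_{\overline{R(B)}})>0$, then $\dim E^u>0$ and by \eqref{estimate-stable-unstable} there is a genuine exponentially growing solution (with rate $\lambda_u>0$), so the star is spectrally unstable. This establishes the equivalence: spectral stability $\iff n^{-}(\mathbb{L}|_{\overline{R(B)}})=0 \iff \langle\mathbb{L}w,w\rangle\geq 0$ for all $w\in\overline{R(B)}$, which is exactly \eqref{stability-criterion} after expanding $w=(\delta\rho,\delta v_\theta)$ and using the block-diagonal form of $\mathbb{L}$.

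The only point requiring a small remark is the translation between "$n^{-}(\mathbb{L}|_{\overline{R(B)}})=0$" and "$\langle\mathbb{L}\cdot,\cdot\rangle\geq 0$ on $\overline{R(B)}$": $n^{-}$ counts the maximal dimension of a subspace on which the form is negative definite, so it vanishes exactly when the form has no negative directions, i.e. is nonnegative on the whole subspace $\overline{R(B)}$. There is no genuine obstacle here; the content is entirely contained in Theorem \ref{Th: rayleigh stable}, and this corollary is essentially a restatement tailored to the Euler-Poisson operators, recording explicitly that the relevant quadratic form is $\langle L\delta\rho,\delta\rho\rangle+\langle A_1\delta v_\theta,\delta v_\theta\rangle$ evaluated on the dynamically accessible space $\overline{R(B)}$. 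If anything, the step most worth stating carefully is simply that $R(B)$ (and hence its closure) is the correct constraint space: it encodes the perturbations of $(\rho,v_\theta)$ that are reachable under the linearized flow, which is why stability is governed by the sign of $\mathbb{L}$ restricted there rather than on all of $X$.
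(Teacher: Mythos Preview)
Your proposal is correct and matches the paper's own approach: the paper simply states (in the sentence immediately preceding the corollary) that by Theorem \ref{Th: rayleigh stable} spectral stability is equivalent to $\mathbb{L}|_{\overline{R(B)}}\geq 0$, and the corollary is just this statement written out with $\langle\mathbb{L}\cdot,\cdot\rangle=\langle L\delta\rho,\delta\rho\rangle+\langle A_1\delta v_\theta,\delta v_\theta\rangle$. Your argument via $\dim E^u=n^{-}(\mathbb{L}|_{\overline{R(B)}})$ and the trichotomy estimates is exactly the intended justification.
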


In this section, we discuss the physical meaning of above stability criterion
by using the variational structure of the rotating stars.

For any solution $(\rho,v)$ of the axi-symmetric Euler-Poisson system
(\ref{EP}), define the angular momentum $j=v_{\theta}r\ $and the generalized
total angular momentum
\begin{equation}
A_{g}(\rho,v_{\theta})=\int_{\mathbb{R}^{3}}\rho g(v_{\theta}%
r)dx,\ \label{defn-general-momentum}%
\end{equation}
for any function $g\in C^{1}\left(  \mathbb{R}\right)  $.$\ $

\begin{lemma}
For any $g\in C^{1}(\mathbb{R})$, the functional $A_{g}(\rho,v_{\theta})$ is
conserved for the Euler-Poisson system (\ref{EP}).
\end{lemma}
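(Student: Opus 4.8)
The plan is to verify the conservation of $A_g(\rho,v_\theta)=\int_{\mathbb{R}^3}\rho\,g(v_\theta r)\,dx$ directly from the Euler–Poisson equations by computing $\frac{d}{dt}A_g$ and showing it vanishes. The key observation is that $A_g$ is a transport-type invariant: the specific angular momentum $j=v_\theta r$ is advected by the flow, and $\rho$ satisfies a continuity equation, so $\rho g(j)$ behaves like a density of a conserved quantity. Concretely, I would first recall from the $\theta$-component of the momentum equation in \eqref{EP} (for axi-symmetric $v$ with no $\theta$-dependence of the fields) that $j=rv_\theta$ satisfies the pure transport equation $\partial_t j + (v\cdot\nabla) j = 0$, where $v=(v_r,v_\theta,v_z)$ but only $v_r\partial_r+v_z\partial_z$ acts nontrivially on the axi-symmetric function $j$; indeed $\rho(\partial_t v_\theta + v_r\partial_r v_\theta + v_z\partial_z v_\theta + \tfrac{v_rv_\theta}{r}) = -\tfrac{1}{r}\partial_\theta(p+\rho V)=0$, and multiplying by $r$ converts the left side into $\rho(\partial_t + v_r\partial_r + v_z\partial_z)(rv_\theta)$.

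Next I would combine this with the continuity equation $\partial_t\rho+\nabla\cdot(\rho v)=0$. For any $C^1$ function $g$, the chain rule gives $\partial_t(g(j)) + (v\cdot\nabla)(g(j)) = g'(j)\big(\partial_t j + (v\cdot\nabla)j\big) = 0$. Therefore
\[
\partial_t\big(\rho g(j)\big) = g(j)\,\partial_t\rho + \rho\,\partial_t g(j) = -g(j)\,\nabla\cdot(\rho v) - \rho (v\cdot\nabla)g(j) = -\nabla\cdot\big(\rho g(j)\,v\big).
\]
Integrating over $\mathbb{R}^3$ and using that $\rho$ has compact support (or decays sufficiently fast) so that the divergence term integrates to zero by the divergence theorem, we obtain $\frac{d}{dt}A_g(\rho,v_\theta)=\frac{d}{dt}\int_{\mathbb{R}^3}\rho g(v_\theta r)\,dx = 0$.

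The only subtlety — and the step I expect to require the most care — is the derivation of the transport equation for $j=rv_\theta$ from the axi-symmetric form of the momentum equation, namely correctly accounting for the geometric (centrifugal) term $v_rv_\theta/r$ in cylindrical coordinates and checking that the pressure and gravitational forcing contribute nothing in the $\theta$-direction under axi-symmetry (all fields independent of $\theta$, so $\partial_\theta(p+\rho V)=0$). Once that identity is in hand, the rest is the routine chain-rule-plus-continuity-equation computation above together with the boundary-term vanishing, which is justified by the compact support of $\rho$ for the solutions under consideration. I would present the $j$-transport identity as a short preliminary computation, then state the one-line divergence-form consequence and integrate.
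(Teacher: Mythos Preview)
Your argument is correct. The paper takes the Lagrangian route instead of your Eulerian one: it observes that $j=rv_\theta$ is invariant along particle trajectories under the axi-symmetric force field, writes $\rho(\varphi(x,t),t)J(x,t)=\rho(x,0)$ for the flow map $\varphi$ and its Jacobian $J$, and then changes variables to get $A_g(0)=A_g(t)$ in one line. Your approach instead derives the transport equation $\partial_t j+(v\cdot\nabla)j=0$ from the $\theta$-momentum equation, combines it with continuity to obtain the conservation law $\partial_t(\rho g(j))+\nabla\cdot(\rho g(j)v)=0$, and integrates. Both proofs rest on the same physical fact (material invariance of $j$); the paper's version is more compact and avoids writing out the cylindrical-coordinate momentum equation, while yours is self-contained and makes the cancellation of the pressure and potential terms in the $\theta$-direction explicit. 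Either is acceptable here.
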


\begin{proof}
First, we note that the angular momentum $j$ is an invariant of the particle
trajectory under the axi-symmetric force field $-\nabla V-\nabla\Phi^{\prime
}(\rho)$. Let $\varphi\left(  x,t\right)  $ be the flow map of the velocity
field $v$ with initial position $x$, and $J\left(  x,t\right)  $ be the
Jacobian of $\varphi$. Then $\rho\left(  \varphi\left(  x,t\right)  ,t\right)
J\left(  x,t\right)  =\rho\left(  x,0\right)  $ and
\begin{align*}
A_{g}(\rho,v_{\theta})\left(  0\right)   &  =\int_{\mathbb{R}^{3}}\rho\left(
x,0\right)  g(j\left(  x\right)  )dx\\
&  =\int_{\mathbb{R}^{3}}\rho\left(  \varphi\left(  x,t\right)  ,t\right)
J\left(  x,t\right)  \ g(j\left(  \varphi\left(  x,t\right)  \right)  )dx\\
&  =\int_{\mathbb{R}^{3}}\rho\left(  y,t\right)  g(j\left(  y\right)
)dx=A_{g}(\rho,v_{\theta})\left(  t\right)  .
\end{align*}

\end{proof}

The steady state $(\rho_{0},\omega_{0}r\mathbf{e}_{\theta})$ has the following
variational structure. By the steady state equation (\ref{eqn-steady-state}),
we have%
\begin{equation}
\frac{1}{2}\omega_{0}^{2}r^{2}+\Phi^{\prime}(\rho_{0})-|x|^{-1}\ast\rho
_{0}+g_{0}\left(  \omega_{0}r^{2}\right)  +c_{0}=0\ \text{in }\Omega,
\label{eqn-steady-variational}%
\end{equation}
where $c_{0}>0$ is the constant in (\ref{eqn-steady-state}) and $g_{0}\in
C^{1}\left(  \mathbb{R}\right)  $ satisfies the equation
\begin{equation}
g_{0}^{\prime}\left(  \omega_{0}\left(  r\right)  r^{2}\right)  =-\omega
_{0}\left(  r\right)  ,\ \ \ \forall\ r\in\left[  0,R_{0}\right]  .
\label{eqn-g-0}%
\end{equation}
The existence of $g_{0}$ satisfying (\ref{eqn-g-0}) is ensured by the Rayleigh
stable condition $\Upsilon(r)>0$ which implies that $\omega_{0}\left(
r\right)  r^{2}$ is monotone to $r$. The equations (\ref{eqn-steady-state})
and (\ref{eqn-steady-variational}) are equivalent since
\[
g_{0}\left(  \omega_{0}\left(  r\right)  r^{2}\right)  =-\frac{1}{2}\omega
_{0}^{2}r^{2}-\int_{0}^{r}\omega_{0}^{2}(s)s\ ds
\]
due to (\ref{eqn-g-0}) and integration by parts. Denote the the total energy
by
\[
H(\rho,v)=\int_{\mathbb{R}^{3}}\frac{1}{2}\rho v^{2}+\Phi(\rho)-\frac{1}{8\pi
}|\nabla V|^{2}dx,\ \ \Delta V=4\pi\rho,\
\]
which is conserved for the Euler-Poisson system (\ref{EP}). Define the
energy-Casimir functional
\[
H_{c}(\rho,v)=H(\rho,v)+c_{0}\int_{\mathbb{R}^{3}}\rho\ dx+\int_{\mathbb{R}%
^{3}}\rho g_{0}(v_{\theta}r)\ dx,
\]
where $c_{0}\ $and $g_{0}$ are as in (\ref{eqn-steady-variational}). Then
$(\rho_{0},\omega_{0}r\mathbf{e}_{\theta})$ is a critical point of $H_{c}%
(\rho,v)$, since
\begin{align*}
\langle DH_{c}(\rho_{0},\omega_{0}r\mathbf{e}_{\theta}),(\delta\rho,\delta
v)\rangle=  &  \int_{\mathbb{R}^{3}}\left[  \frac{1}{2}\omega_{0}^{2}%
r^{2}+\Phi^{\prime}(\rho_{0})+V(\rho_{0})+c_{0}+g_{0}(\omega_{0}r^{2})\right]
\delta\rho\ dx\\
&  +\int_{\mathbb{R}^{3}}[\rho_{0}\omega_{0}r+\rho_{0}g_{0}^{\prime}%
(\omega_{0}r^{2})r]\delta v_{\theta}\ dx=0
\end{align*}
by equations (\ref{eqn-steady-variational}) and (\ref{eqn-g-0}). By direct
computations,
\begin{align}
&  \langle D^{2}H_{c}(\rho,v)[\rho_{0},\omega_{0}r\mathbf{e}_{\theta}%
](\delta\rho,\delta v),(\delta\rho,\delta v)\rangle\label{2nd variation Hc}\\
&  =\int_{\mathbb{R}^{3}}(\Phi^{\prime\prime}(\rho_{0})\left(  \delta
\rho\right)  ^{2}-4\pi(-\Delta^{-1}\delta\rho)\delta\rho+\rho_{0}\left(
\delta v_{r}\right)  ^{2}+\rho_{0}\left(  \delta v_{z}\right)  ^{2}%
dx\nonumber\\
&  \quad+\int_{\mathbb{R}^{3}}\rho_{0}(1+g_{0}^{\prime\prime}(\omega_{0}%
r^{2})r^{2})\left(  \delta v_{\theta}\right)  ^{2}dx\nonumber\\
&  =\langle L\delta\rho,\delta\rho\rangle+\langle A_{1}\delta v_{\theta
},\delta v_{\theta}\rangle+\left\langle A\left(  \delta v_{r},\delta
v_{z}\right)  ,\left(  \delta v_{r},\delta v_{z}\right)  \right\rangle
,\nonumber
\end{align}
where we used the identity%
\[
1+g_{0}^{\prime\prime}(\omega_{0}r^{2})r^{2}=1-\frac{\omega_{0}^{\prime}r^{2}%
}{\frac{d}{dr}\left(  \omega_{0}r^{2}\right)  }=\frac{4\omega_{0}^{2}r^{3}%
}{\frac{d}{dr}\left(  \omega_{0}^{2}r^{4}\right)  }=\frac{4\omega_{0}^{2}%
}{\Upsilon(r)}.
\]

The functional (\ref{2nd variation Hc}) is a conserved quantity of the
linearized Euler-Poisson system (\ref{Lep}) due to the Hamiltonian structure.
We note that the number of negative directions of (\ref{2nd variation Hc}) is
given by $n^{-}\left(  \mathbb{L}\right)  $.

We now turn to the spaces of $\delta\rho$ and $\left(  \delta\rho,\delta
v_{\theta}\right)  $.

\begin{lemma}
\label{lemma-R-B1}It holds that
\[
R\left(  B_{1}\right)  =\overline{R\left(  B_{1}\right)  }=\left\{  \delta
\rho\in L_{\Phi^{\prime\prime}(\rho_{0})}^{2}\ \bigg|\ \int_{\mathbb{R}^{3}%
}\delta\rho dx=0\right\}  .
\]

\end{lemma}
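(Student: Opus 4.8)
The plan is to show $R(B_{1})=\overline{R(B_{1})}=\mathcal{M}$ where $\mathcal{M}:=\{\delta\rho\in L^{2}_{\Phi''(\rho_{0})}\mid\int_{\mathbb{R}^{3}}\delta\rho\,dx=0\}$, in two stages: identify the closure $\overline{R(B_{1})}$ with $\mathcal{M}$, and then show that $R(B_{1})$ is already closed. First note that $\mathcal{M}$ is a closed subspace of $X_{1}=L^{2}_{\Phi''(\rho_{0})}$, since $\Phi''(\rho_{0})=P'(\rho_{0})/\rho_{0}\thickapprox\rho_{0}^{\gamma_{0}-2}$ near $\partial\Omega$ and $\gamma_{0}<2$ force $1/\Phi''(\rho_{0})$ to be bounded on the bounded set $\Omega$, so $X_{1}\hookrightarrow L^{1}(\Omega)$ and $\delta\rho\mapsto\int\delta\rho\,dx$ is a bounded linear functional on $X_{1}$.

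For the first stage I would use that, since $B_{1}$ is closed and densely defined with dual $B_{1}'$ by (\textbf{G1}), $\overline{R(B_{1})}$ equals the pre-annihilator ${}^{\perp}\ker(B_{1}')$; thus it suffices to show $\ker(B_{1}')=\mathbb{R}\cdot1$. Since $B_{1}'$ acts as $\nabla$, its kernel is a subspace of the constants, and the constant function $1$ does belong to $X_{1}^{\ast}=L^{2}_{1/\Phi''(\rho_{0})}$ and to $D(B_{1}')$ with $B_{1}'1=0$: here one uses $\rho_{0}\thickapprox\mathrm{dist}(\cdot,\partial\Omega)^{1/(\gamma_{0}-1)}$ with $1/(\gamma_{0}-1)>1$, which makes the cutoffs $\phi_{n}$ of $1$ outside the collar $\{\mathrm{dist}(\cdot,\partial\Omega)<1/n\}$ satisfy $\phi_{n}\to1$ in $L^{2}_{1/\Phi''(\rho_{0})}$ and $\|\nabla\phi_{n}\|_{L^{2}_{\rho_{0}}}^{2}\lesssim n^{2}\cdot n^{-1}\cdot n^{-1/(\gamma_{0}-1)}\to0$. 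By connectedness of $\Omega$ there is nothing else, so $\overline{R(B_{1})}=\{\delta\rho\in X_{1}\mid\langle\delta\rho,1\rangle=0\}=\mathcal{M}$; in particular $R(B_{1})\subseteq\mathcal{M}$ follows directly from $\langle B_{1}u,1\rangle=\langle u,B_{1}'1\rangle=0$ for $u\in D(B_{1})$.

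For the second stage I would appeal to the closed range theorem: $R(B_{1})$ is closed iff $R(B_{1}')$ is closed in $Y$, and the latter, after factoring out the one-dimensional kernel, is equivalent to the weighted Poincar\'{e} inequality $\inf_{c\in\mathbb{R}}\int_{\Omega}\Phi''(\rho_{0})^{-1}|\rho-c|^{2}\,dx\le C\int_{\Omega}\rho_{0}|\nabla\rho|^{2}\,dx$ for $\rho\in D(B_{1}')$. On compact subsets of $\Omega$ the two weights are bounded above and below and this is classical, so the point is a collar of $\partial\Omega$. Writing $d=\mathrm{dist}(\cdot,\partial\Omega)$ and $\beta:=1/(\gamma_{0}-1)>1$, one has $\Phi''(\rho_{0})^{-1}\thickapprox d^{\,\beta-1}$ and $\rho_{0}\thickapprox d^{\,\beta}$, so the local statement is a Hardy-type inequality $\int d^{\beta-1}|\rho-c|^{2}\lesssim\int d^{\beta}|\nabla\rho|^{2}$. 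Because $\partial\Omega$ is $C^{2}$ with positive curvature near $(R_{0},0)$, a collar of $\partial\Omega$ is smoothly foliated by the level sets $\{d=t\}$, and on each fiber I would apply the one-variable estimate $\int_{0}^{\delta}t^{\beta-1}|g(t)-g(\delta)|^{2}\,dt\le\frac{\delta}{\beta-1}\int_{0}^{\delta}t^{\beta}|g'(t)|^{2}\,dt$, which holds for $\beta>1$ by Cauchy--Schwarz applied to $g(\delta)-g(t)=\int_{t}^{\delta}g'$ together with $\int_{t}^{\delta}s^{-\beta}\,ds\le\frac{t^{1-\beta}}{\beta-1}$; integrating over the fibers and patching to the interior Poincar\'{e} inequality yields the claim. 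Then $R(B_{1}')$, and hence $R(B_{1})$, is closed, so $R(B_{1})=\overline{R(B_{1})}=\mathcal{M}$. (Equivalently, one could construct a weighted right inverse of the divergence mapping $\mathcal{M}$ into $Y^{\ast}$, but the Poincar\'{e} route covers the whole range $\gamma_{0}\in(\tfrac{6}{5},2)$ cleanly.)

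The hard part is the collar estimate in the second stage: foliating a neighborhood of $\partial\Omega$ and patching the fiberwise Hardy bounds to the interior Poincar\'{e} inequality with a constant independent of $\rho$, which is precisely where the boundary regularity and the density asymptotics (\ref{rhonearboundary}) are used. Since $B_{1}$ is the same operator as in the non-rotating problem, I expect this to reduce to the corresponding argument in \cite{LZ2019}.
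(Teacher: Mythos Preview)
Your proof is correct, and the first stage agrees with the paper: both identify $\overline{R(B_{1})}=(\ker B_{1}')^{\perp}$ and compute $\ker B_{1}'=\mathbb{R}\cdot 1$, though you supply more detail on why the constant function lies in $D(B_{1}')$.

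The second stage takes a genuinely different route. The paper works on the primal side: using the Helmholtz decomposition $L^{2}_{\rho_{0}}=\ker(B_{1}A)\oplus W$ with $W=\{\nabla p\}$ (Lemma~3.15 of \cite{LZ2019}), it invokes the estimate $\|\nabla p\|_{L^{2}_{\rho_{0}}}\lesssim\|\nabla\!\cdot(\rho_{0}\nabla p)\|_{L^{2}_{\Phi''(\rho_{0})}}$ from Proposition~12 of \cite{JM2020} and then applies Kato's closed-range criterion. You instead go to the dual side via the closed range theorem and prove the weighted Poincar\'{e} inequality for $B_{1}'$ directly, by a fiberwise Hardy bound in the boundary collar patched to the classical Poincar\'{e} inequality in the interior. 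The two inequalities are equivalent (testing $-\nabla\!\cdot(\rho_{0}\nabla p)=\delta\rho$ against $p$ turns your Poincar\'{e} into the paper's estimate), so the analytic content is the same; the paper's route is shorter because it outsources the hard estimate to \cite{JM2020}, while yours is more self-contained but only sketched at the patching step. One caution: your collar foliation implicitly uses $C^{2}$ regularity of $\partial\Omega$ along the whole boundary, whereas the paper only states this near $(R_{0},0)$; in practice the rotating stars under consideration have globally smooth boundary, but you should flag the assumption. Also, the reference you anticipate for the boundary estimate is \cite{JM2020} rather than \cite{LZ2019}.
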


\begin{proof}
Since $\ker B_{1}^{\prime}=\ker\nabla$ is spanned by constant functions, we
have
\[
\overline{R\left(  B_{1}\right)  }=\left(  \ker B_{1}^{\prime}\right)
^{\perp}=\left\{  \delta\rho\in L_{\Phi^{\prime\prime}(\rho_{0})}%
^{2}\ \bigg|\ \int_{\mathbb{R}^{3}}\delta\rho dx=0\right\}  .
\]
It remains to show $R\left(  B_{1}\right)  =\overline{R\left(  B_{1}\right)
}$ which is equivalent to $R\left(  B_{1}A\right)  =\overline{R\left(
B_{1}A\right)  }$. By Lemma 3.15 in \cite{LZ2019}, we have the orthogonal
decomposition
\[
L_{\rho_{0}}^{2}=\ker\left(  B_{1}A\right)  \oplus W,
\]
where $W=\left\{  w=\nabla p\in L_{\rho_{0}}^{2}\right\}  $. For any
$\delta\rho\in R\left(  B_{1}A\right)  $, by the proof of Lemma 3.15 in
\cite{LZ2019}, there exists a unique gradient field $\nabla p\in L_{\rho_{0}%
}^{2}$ such that
\[
B_{1}A\nabla p=\nabla\cdot\left(  \rho_{0}\nabla p\right)  =\delta\rho.
\]
By Proposition 12 in \cite{JM2020}, we have
\begin{equation}
\left\Vert \nabla p\right\Vert _{L_{\rho_{0}}^{2}}\lesssim\left\Vert
\nabla\cdot\left(  \rho_{0}\nabla p\right)  \right\Vert _{L_{\Phi
^{\prime\prime}(\rho_{0})}^{2}}=\left\Vert \delta\rho\right\Vert
_{L_{\Phi^{\prime\prime}(\rho_{0})}^{2}}. \label{estimate-gradient}%
\end{equation}
For any $u\in D\left(  B_{1}A\right)  $, let $v\in W$ be the projection of $u$
to $W$. Then above estimate (\ref{estimate-gradient}) implies that
\[
dist\left(  u,\ker\left(  B_{1}A\right)  \right)  =\inf_{z\in\ker\left(
B_{1}A\right)  }\left\Vert u-z\right\Vert _{L_{\rho_{0}}^{2}}=\left\Vert
v\right\Vert _{L_{\rho_{0}}^{2}}\lesssim\left\Vert B_{1}Au\right\Vert
_{L_{\Phi^{\prime\prime}(\rho_{0})}^{2}}.
\]
By Theorem 5.2 in \cite[P. 231]{Katobook1995}, this implies that $R\left(
B_{1}\right)  =\overline{R\left(  B_{1}\right)  }.$
\end{proof}

\begin{definition}
\label{defper} The perturbation $\left(  \delta\rho,\delta v_{\theta}\right)
\in X$ is called dynamically accessible if $\left(  \delta\rho,\delta
v_{\theta}\right)  \in\overline{R(B)}$.
\end{definition}

In the next lemma, we give two equivalent characterizations of the dynamically
accessible perturbations.

\begin{lemma}
\label{lemma-dyna-acce} For $\left(  \delta\rho,\delta v_{\theta}\right)  \in
X$, the following statements are equivalent.

(i) $\left(  \delta\rho,\delta v_{\theta}\right)  \in\overline{R(B)};$

(ii)
\begin{equation}
\int_{\mathbb{R}^{3}} g(\omega_{0}r^{2})\delta\rho\ dx+\!\int_{\mathbb{R}^{3}%
}\rho_{0}rg^{\prime}(\omega_{0}r^{2})\delta v_{\theta}\ dx=0,\ \forall g\in
C^{1}\left(  \mathbb{R}\right)  ; \label{dyna-accessible-inte}%
\end{equation}

(iii) $\int_{\mathbb{R}^{3}}\delta\rho\ dx=0$ and
\begin{equation}
\int_{-\infty}^{+\infty}\delta v_{\theta}\rho_{0}\left(  r,z\right)
dz=\frac{\partial_{r}\left(  \omega_{0}r^{2}\right)  }{r^{2}}\int_{0}^{r}%
s\int_{-\infty}^{+\infty}\delta\rho(s,z)dzds.
\label{dyna-accessible-projection}%
\end{equation}

\end{lemma}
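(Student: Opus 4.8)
The plan is to prove the cyclic chain of implications (i) $\Rightarrow$ (ii) $\Rightarrow$ (iii) $\Rightarrow$ (i). The central observation, already present in the computation of $D^2 H_c$, is that for $g \in C^1(\mathbb{R})$ the vector $\left(g(\omega_0 r^2), \rho_0 r\, g'(\omega_0 r^2)\right)$ is essentially a generic element of $\ker B'$ acting on $X^*$; more precisely, since $A_g$ is a conserved Casimir (by the earlier lemma), its first variation at $(\rho_0, \omega_0 r \mathbf{e}_\theta)$ must annihilate the linearized dynamics, and one checks directly that this first variation is exactly the left-hand side of \eqref{dyna-accessible-inte}. So (i) $\Rightarrow$ (ii) follows because elements of $\overline{R(B)}$ are, by definition of the dual/adjoint pairing, orthogonal to $\ker B'$, and I will show $\left(g(\omega_0 r^2), \rho_0 r g'(\omega_0 r^2)\right) \in \ker B'$ by a direct calculation using $B' = (B_1', B_2')$ from \eqref{defn-B1}--\eqref{defn-B2}: one needs $B_1'$ applied to the $\rho$-slot plus $B_2'$ applied to the $v_\theta$-slot to vanish, i.e. $\nabla\big(g(\omega_0 r^2)\big) = \frac{\partial_r(\omega_0 r^2)}{r\rho_0}\big(\rho_0 r g'(\omega_0 r^2)\big)\mathbf{e}_r$, which is just the chain rule since $\omega_0 r^2$ depends on $r$ alone. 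Taking closures is harmless because the pairing is continuous.

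For (ii) $\Rightarrow$ (iii): first take $g \equiv 1$ (so $g' \equiv 0$) to get $\int \delta\rho\, dx = 0$. For the second identity, the idea is to choose $g$ so that $g(\omega_0 r^2)$ localizes near a fixed radius. Since $\Upsilon(r) > 0$ forces $s \mapsto \omega_0(s) s^2$ to be strictly monotone on $[0,R_0]$, the change of variables $p = \omega_0(s)s^2$ is a $C^1$ diffeomorphism onto its image; writing $g$ as an arbitrary $C^1$ function of $p$ and rewriting both integrals in \eqref{dyna-accessible-inte} as integrals in $p$ against the measures obtained by pushing forward $\int_{-\infty}^\infty(\cdot)\,\rho_0\,dz\, s\,ds$, I integrate the $g'$-term by parts in $p$. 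The boundary terms vanish (at $r = 0$ because $s = 0$ kills the $s\,ds$ factor, and one uses $g(\omega_0 R_0^2)$ free plus the mass constraint to cancel the outer endpoint, or simply works with $g$ compactly supported in the open interval and then removes the restriction). Because $g'$ (equivalently $g$ after the integration by parts, as a function of $p$) is arbitrary, the integrands must agree pointwise in $p$, hence pointwise in $r$; unwinding the change of variables and using $\frac{dp}{dr} = \frac{\partial_r(\omega_0 r^2)}{1}$ gives exactly \eqref{dyna-accessible-projection}. The differentiation-in-$r$ of the accumulated mass $\int_0^r s\int_{-\infty}^\infty \delta\rho\,dz\,ds$ produces the factor $r\int_{-\infty}^\infty\delta\rho(r,z)dz$ that matches the $\delta\rho$-side.

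For (iii) $\Rightarrow$ (i): I argue that $\overline{R(B)} = (\ker B')^\perp$ (standard, since $B'' = B$ by (\textbf{G1})), so it suffices to show any $(\delta\rho,\delta v_\theta)$ satisfying (iii) is orthogonal to every element of $\ker B'$. An element $(f,h) \in X^* \cap \ker B'$ satisfies $\nabla f = \frac{\partial_r(\omega_0 r^2)}{r\rho_0} h\, \mathbf{e}_r$, which forces $f$ to be independent of $z$ and to depend on $r$ only through $\omega_0 r^2$ (by the monotonicity again), i.e. $f = g(\omega_0 r^2)$ and then $h = \rho_0 r g'(\omega_0 r^2)$ for some $C^1$ function $g$ — so $\ker B'$ is exactly the set of such pairs, and (i) $\Leftrightarrow$ (ii) is in fact an equivalence, closing the loop. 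The one subtlety is regularity/density: $\ker B'$ as just described consists of smooth-in-$r$ elements, and I should confirm that \eqref{dyna-accessible-inte} for all $C^1$ $g$ genuinely captures orthogonality to all of $\overline{R(B)}$; this follows because $D(B')$ is dense and the pairing extends continuously, combined with Lemma \ref{lemma-R-B1} which already identifies $R(B_1) = \overline{R(B_1)}$ as the closed codimension-one mass-zero subspace.

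The main obstacle I anticipate is the integration-by-parts step in (ii) $\Rightarrow$ (iii): one must be careful that the weighted measures $\int_{-\infty}^\infty \rho_0(s,z)\,dz$ and $\int_{-\infty}^\infty \delta\rho(s,z)\,dz$ are well-behaved enough (integrable, with the right vanishing at $r = 0$ and $r = R_0$) for the endpoint terms to drop and for "arbitrary $g'$ forces pointwise equality" to be legitimate — here Lemma \ref{lemma-rho-int} is exactly what guarantees $\int \rho_0\,dz \approx (R_0 - r)^{1/(\gamma_0-1) + 1/2}$ is positive and integrable near the boundary, so the denominator $\int_{-\infty}^\infty \rho_0(r,z)\,dz$ in \eqref{defn-cal-k} and \eqref{dyna-accessible-projection} makes sense and division is justified. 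The rest is bookkeeping with the change of variables $p = \omega_0(r)r^2$.
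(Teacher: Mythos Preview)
Your proposal is correct and follows essentially the same approach as the paper: both identify $\overline{R(B)} = (\ker B')^\perp$, characterize $\ker B'$ as (the closure of) pairs $\big(g(\omega_0 r^2),\, \rho_0 r\, g'(\omega_0 r^2)\big)$ using the monotonicity of $\omega_0 r^2$, and then pass from (ii) to (iii) by taking $g\equiv 1$ and integrating by parts so that arbitrariness of $g'$ forces the pointwise identity \eqref{dyna-accessible-projection}. The only cosmetic difference is that the paper integrates by parts directly in $r$ (arriving at $\int_0^{R_0}[\,\cdots\,] g'(\omega_0 r^2)\,dr = 0$) rather than first changing variables to $p = \omega_0(r)r^2$, and it organizes the argument as two equivalences (i)$\Leftrightarrow$(ii) and (ii)$\Leftrightarrow$(iii) rather than a cycle; your extra care with boundary terms and the invocation of Lemma~\ref{lemma-R-B1} are not needed but do no harm.
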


\begin{proof}
First, we show (i) and (ii) are equivalent. We have $\overline{R(B)}=\left(
\ker B^{\prime}\right)  ^{\perp}$, where the dual operator $B^{\prime}:$
$X^{\ast}\rightarrow Y\ $is defined in (\ref{defn-B1})-(\ref{defn-B2}). Let
$\left(  \rho,v_{\theta}\right)  $ be a $C^{1}$ function in $\ker B^{\prime}$,
then
\[
B^{\prime}%
\begin{pmatrix}
\rho\\
v_{\theta}%
\end{pmatrix}
=%
\begin{pmatrix}
\partial_{r}\rho-\frac{\partial_{r}(\omega_{0}r^{2})}{r\rho_{0}}v_{\theta}\\
\partial_{z}\rho
\end{pmatrix}
=%
\begin{pmatrix}
0\\
0
\end{pmatrix}
.
\]
Since $\partial_{z}\rho=0$ and $\omega_{0}r^{2}$ is monotone to $r$ by the
Rayleigh stability condition, we can write $\rho=g\left(  \omega_{0}%
r^{2}\right)  $ for some function $g\in C^{1}$. Then $\partial_{r}\rho
-\frac{\partial_{r}(\omega_{0}r^{2})}{r\rho_{0}}v_{\theta}=0$ implies that
$v_{\theta}=\rho_{0}rg^{\prime}(\omega_{0}r^{2})$. Thus $\ker B^{\prime}$ is
the closure of the set
\[
\left\{  \left(  g\left(  \omega_{0}r^{2}\right)  ,\rho_{0}rg^{\prime}%
(\omega_{0}r^{2})\right)  ,\ g\in C^{1}\left(  \mathbb{R}\right)  \right\}  ,
\]
in $X^{\ast}$. Therefore, $\left(  \delta\rho,\delta v_{\theta}\right)
\in\overline{R(B)}=\left(  \ker B^{\prime}\right)  ^{\perp}$ if and only if
(\ref{dyna-accessible-inte}) is satisfied.

Next, we show (ii) and (iii) are equivalent. If (ii) is satisfied, by choosing
$g=1$ we get $\int\delta\rho\ dx=0$. Then by (\ref{dyna-accessible-inte}) and
integration by parts, we have
\[
\int_{0}^{R_{0}}\left[  r^{2}\int_{-\infty}^{+\infty}\delta v_{\theta}\rho
_{0}(r,z)dz-\partial_{r}(\omega_{0}r^{2})\left(  \int_{0}^{r}s\int_{-\infty
}^{+\infty}\delta\rho(s,z)dzds\right)  \right]  g^{\prime}(\omega_{0}%
r^{2})dr=0.
\]
which implies (\ref{dyna-accessible-projection}) since $g\in C^{1}\left(
\mathbb{R}\right)  $ is arbitrary. On the other hand, by reversing the above
computation, (ii) follows from (iii).
\end{proof}

The statement (ii) above implies that for any $\left(  \delta\rho,\delta
v_{\theta}\right)  \in\overline{R(B)}$, we have
\[
\langle DA_{g}(\rho_{0},\omega_{0}r),(\delta\rho,\delta v_{\theta})\rangle=0,
\]
where the generalized angular momentum $A_{g}$ is defined in
(\ref{defn-general-momentum}). That is, a dynamically accessible perturbation
$(\delta\rho,\delta v_{\theta})$ must lie on the tangent space of the
functional $A_{g}$ at the equilibrium $(\rho_{0},\omega_{0}r\ \mathbf{e}%
_{\theta})$. Since $g$ is arbitrary, this implies infinite many constraints
for dynamically accessible perturbations. The stability criterion
(\ref{stability-criterion}) implies that that rotating stars are stable if and
only if they are local minimizers of energy-Casimir functional $H(\rho,v)$
under the constraints of fixed generalized angular momentum $A_{g}$ for all
$g$. This contrasts significantly with the case of non-rotating stars. It was
shown in (\cite{LZ2019}) that non-rotating stars are stable if and only if
they are local minimizers of the energy-Casimir functional under the only
constraint of fixed total mass. The stability criterion
(\ref{stability-criterion}) for rotating stars involves infinitely many
constraints and is much more difficult to check. In the next section, we give
an equivalent stability criterion in terms of a reduced functional
(\ref{defn-cal-k}) under only the mass constraint.

\begin{remark}
For non-rotating stars, the dynamically accessible perturbations are given by
$R\left(  B_{1}\right)  =\overline{R\left(  B_{1}\right)  }$ which is the
perturbations preserving the mass (see Lemma \ref{lemma-R-B1}). For rotating
stars, the dynamically accessible space $\overline{R(B)}$ is different from
$R\left(  B\right)  $.
\end{remark}

\subsection{Reduced functional and the equivalent Stability Criterion}

In this section, we prove the formula $n^{-}\left(  \mathbb{L}|_{\overline
{R\left(  B\right)  }}\right)  =n^{-}\left(  \mathcal{K}|_{R\left(
B_{1}\right)  }\right)  $ and complete the proof of Theorem
\ref{Th: rayleigh stable}.

\begin{lemma}
\label{udeltarhoinL2} For any $\delta\rho\in R\left(  B_{1}\right)  $, define
\begin{equation}
u_{\theta}^{\delta\rho}=\frac{\partial_{r}(\omega_{0}r^{2})}{r^{2}}\frac
{\int_{0}^{r}s\int_{-\infty}^{+\infty}\delta\rho(s,z)dzds}{\int_{-\infty
}^{+\infty}{\rho_{0}}(r,z)dz}\text{. } \label{defn-u-theta-rho}%
\end{equation}
Then $\left(  \delta\rho,u_{\theta}^{\delta\rho}\right)  \in\overline{R\left(
B\right)  }$ and $\left\Vert u_{\theta}^{\delta\rho}\right\Vert _{L_{\rho_{0}%
}^{2}}\ \lesssim\left\Vert \delta\rho\right\Vert _{L_{\Phi^{\prime\prime}%
(\rho_{0})}^{2}}.$
\end{lemma}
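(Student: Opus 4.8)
\textbf{Proof proposal for Lemma \ref{udeltarhoinL2}.}

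The plan is to split the argument into two parts: first verifying the membership $\left(\delta\rho,u_\theta^{\delta\rho}\right)\in\overline{R(B)}$, and then establishing the norm estimate. For the membership, I would invoke the equivalent characterization (iii) in Lemma \ref{lemma-dyna-acce}. Since $\delta\rho\in R(B_1)=\overline{R(B_1)}$, Lemma \ref{lemma-R-B1} already gives $\int_{\mathbb{R}^3}\delta\rho\,dx=0$, which is the first condition in (iii). For the second condition, observe that multiplying the definition (\ref{defn-u-theta-rho}) of $u_\theta^{\delta\rho}$ by $\int_{-\infty}^{+\infty}\rho_0(r,z)\,dz$ gives exactly (\ref{dyna-accessible-projection}); so $u_\theta^{\delta\rho}$ is precisely the function forced by the dynamical accessibility constraint, and membership in $\overline{R(B)}$ is immediate.

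The substance of the lemma is the norm bound $\|u_\theta^{\delta\rho}\|_{L_{\rho_0}^2}\lesssim\|\delta\rho\|_{L_{\Phi''(\rho_0)}^2}$. Writing $m(r)=\int_0^r s\int_{-\infty}^{+\infty}\delta\rho(s,z)\,dz\,ds$ and $I_0(r)=\int_{-\infty}^{+\infty}\rho_0(r,z)\,dz$, we have
\[
\|u_\theta^{\delta\rho}\|_{L_{\rho_0}^2}^2\thickapprox\int_0^{R_0}\frac{(\partial_r(\omega_0r^2))^2}{r^4}\,\frac{m(r)^2}{I_0(r)^2}\,I_0(r)\,r\,dr=\int_0^{R_0}\frac{(\partial_r(\omega_0r^2))^2}{r^3}\,\frac{m(r)^2}{I_0(r)}\,dr.
\]
Since $\omega_0\in C^1[0,R_0]$, the factor $(\partial_r(\omega_0r^2))^2/r^3$ is bounded on $(0,R_0]$ (near $r=0$ it behaves like a bounded multiple of $r$, using $\partial_r(\omega_0r^2)=\omega_0'r^2+2\omega_0 r$), so it suffices to bound $\int_0^{R_0}m(r)^2/I_0(r)\,dr$. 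I would control this in two regions. Away from the boundary, say on $[0,R_0-\varepsilon]$, $I_0(r)$ is bounded below by a positive constant, $m(r)^2\le\left(\int_0^{R_0}s\int\rho_0\,dz\,ds\right)\cdot\left(\int_0^r s\int\frac{(\delta\rho)^2}{\rho_0}\,dz\,ds\right)$ by Cauchy–Schwarz with weight $\rho_0$, and the last integral is $\lesssim\|\delta\rho\|_{L^2_{\rho_0^{-1}}}^2$; but I actually want the $L_{\Phi''(\rho_0)}^2$ norm, so I would instead use that near the support $\Phi''(\rho_0)=P'(\rho_0)/\rho_0\thickapprox\rho_0^{\gamma_0-2}$ and on the bulk $\Phi''(\rho_0)$ is comparable to a constant times an integrable weight, giving $m(r)^2\lesssim\|\delta\rho\|_{L_{\Phi''(\rho_0)}^2}^2$ directly. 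Near the boundary $r\in(R_0-\varepsilon,R_0)$, I would use Lemma \ref{lemma-rho-int}: $I_0(r)\thickapprox(R_0-r)^{\frac{1}{\gamma_0-1}+\frac12}$, and estimate $m(r)^2$ by a Cauchy–Schwarz step
\[
m(r)^2=\left(\int_0^r s\int_{-\infty}^{+\infty}\Phi''(\rho_0)^{1/2}\delta\rho\cdot\Phi''(\rho_0)^{-1/2}\,dz\,ds\right)^2\le\|\delta\rho\|_{L_{\Phi''(\rho_0)}^2}^2\cdot\int_0^r s\int_{-\infty}^{+\infty}\Phi''(\rho_0)^{-1}\,dz\,ds,
\]
and then check, again via Lemma \ref{lemma-rho-int} applied with $\Phi''(\rho_0)^{-1}\thickapprox\rho_0^{2-\gamma_0}$ (so $\lambda=2-\gamma_0>0$), that $\int_0^r s\int\Phi''(\rho_0)^{-1}\,dz\,ds$ is finite and that the resulting power of $(R_0-r)$ divided by $I_0(r)$ is integrable near $R_0$. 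The exponent bookkeeping is where $\gamma_0\in(6/5,2)$ enters.

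The main obstacle I anticipate is precisely this last weighted-integrability check near the boundary: one must confirm that the ratio
\[
\frac{1}{I_0(r)}\int_0^r s\int_{-\infty}^{+\infty}\Phi''(\rho_0)^{-1}(s,z)\,dz\,ds
\]
stays bounded (or at worst integrable) as $r\to R_0^-$, which requires the power $\frac{2-\gamma_0}{\gamma_0-1}+\frac12$ coming from $\int\Phi''(\rho_0)^{-1}\,dz$ to interact correctly with the power $\frac{1}{\gamma_0-1}+\frac12$ from $I_0$. This is a routine but delicate computation using Lemma \ref{lemma-rho-int} and the hypotheses on $\partial\Omega$; a cleaner alternative, which I would pursue if the direct estimate is awkward, is to recognize that the reduced functional $\mathcal{K}$ in (\ref{defn-cal-k}) is exactly $\langle L\delta\rho,\delta\rho\rangle+2\pi\int_0^{R_0}\Upsilon(r)\frac{m(r)^2}{r I_0(r)}\,dr$, and the boundedness of its second term on $L_{\Phi''(\rho_0)}^2$ is essentially the same estimate, so I can borrow whichever Hardy-type inequality is used to make $\mathcal{K}$ a bounded quadratic form. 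In fact the norm bound in this lemma and the well-definedness of $\mathcal{K}$ stand or fall together, and I would prove the Hardy inequality $\int_0^{R_0}\frac{m(r)^2}{rI_0(r)}\,dr\lesssim\|\delta\rho\|_{L_{\Phi''(\rho_0)}^2}^2$ once and cite it in both places.
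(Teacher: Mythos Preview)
Your membership argument via Lemma \ref{lemma-dyna-acce}(iii) is exactly what the paper does, and is correct.

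The norm estimate, however, has two genuine gaps.

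\textbf{Near $r=0$.} Your claim that $(\partial_r(\omega_0 r^2))^2/r^3$ is bounded and ``behaves like a bounded multiple of $r$'' near the origin is false: writing $\partial_r(\omega_0 r^2)=r(\omega_0' r+2\omega_0)$ gives
\[
\frac{(\partial_r(\omega_0 r^2))^2}{r^3}=\frac{(\omega_0' r+2\omega_0)^2}{r}\sim\frac{4\omega_0(0)^2}{r}\quad(r\to 0^+),
\]
and $\omega_0(0)\neq 0$ under the Rayleigh condition since $\Upsilon(0)=4\omega_0(0)^2>0$. So you cannot discard the $1/r$ singularity; the paper keeps it and uses Hardy's inequality at the origin ($\int_0^{R_0-\varepsilon}r^{-2}m(r)^2\,dr\lesssim\int_0^{R_0-\varepsilon}m'(r)^2\,dr$, noting $m(0)=0$).

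\textbf{Near $r=R_0$.} This is the more serious gap. Your Cauchy--Schwarz step yields only
\[
m(r)^2\le \|\delta\rho\|_{L^2_{\Phi''(\rho_0)}}^2\cdot\int_0^{r}s\!\int\Phi''(\rho_0)^{-1}\,dz\,ds,
\]
and the second factor tends to a \emph{finite nonzero limit} as $r\to R_0^-$ (the inner $z$-integral is $\thickapprox(R_0-s)^{\frac{2-\gamma_0}{\gamma_0-1}+\frac12}$, which is integrable in $s$). Hence your bound on $m(r)^2$ carries no decay at all at the boundary. Meanwhile $1/I_0(r)\thickapprox(R_0-r)^{-\frac{1}{\gamma_0-1}-\frac12}$, and for every $\gamma_0\in(6/5,2)$ the exponent $\frac{1}{\gamma_0-1}+\frac12$ exceeds $1$, so $\int_{R_0-\varepsilon}^{R_0}1/I_0(r)\,dr=+\infty$. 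The ``exponent bookkeeping'' you anticipate would therefore fail, not succeed.

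What is actually needed is a Hardy inequality at the right endpoint, which in turn requires the boundary condition $m(R_0)=0$; this is exactly where the hypothesis $\delta\rho\in R(B_1)$ (i.e.\ $\int\delta\rho\,dx=0$) is used in the estimate, not merely in the membership statement. The paper applies Hardy near $R_0$ to obtain
\[
\int_{R_0-\varepsilon}^{R_0}\frac{m(r)^2}{(R_0-r)^{\frac{1}{\gamma_0-1}+\frac12}}\,dr
\lesssim\int_{R_0-\varepsilon}^{R_0}\Bigl(\int\delta\rho\,dz\Bigr)^{2}(R_0-r)^{-\frac{1}{\gamma_0-1}+\frac32}\,dr,
\]
and only then uses Cauchy--Schwarz in $z$ together with $\int\Phi''(\rho_0)^{-1}\,dz\thickapprox(R_0-r)^{\frac{2-\gamma_0}{\gamma_0-1}+\frac12}$, which makes the combined power of $(R_0-r)$ equal to $1$ and closes the estimate. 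Your fallback remark that ``whichever Hardy-type inequality is used to make $\mathcal{K}$ a bounded quadratic form'' should be borrowed is correct in spirit, but you should recognize that Hardy is not an alternative route here---it is the essential missing ingredient, and the mass constraint is what makes it applicable.
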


\begin{proof}
We have
\begin{align*}
&  \left\Vert u_{\theta}^{\delta\rho}\right\Vert _{L_{\rho_{0}}^{2}}%
^{2}\ \lesssim\int_{\mathbb{R}^{3}}{\rho_{0}}\left(  \frac{\int_{0}^{r}%
s\int_{-\infty}^{+\infty}\delta\rho(s,z)dzds}{r\int_{-\infty}^{+\infty}%
{\rho_{0}}(r,z)dz}\right)  ^{2}dx=2\pi\int_{0}^{R_{0}}\frac{\left(  \int
_{0}^{r}s\int_{-\infty}^{+\infty}\delta\rho(s,z)dzds\right)  ^{2}}%
{r\int_{-\infty}^{+\infty}{\rho_{0}}(r,z)dz}dr\\
&  =2\pi\int_{0}^{R_{0}-\varepsilon}\frac{\left(  \int_{0}^{r}s\int_{-\infty
}^{+\infty}\delta\rho(s,z)dzds\right)  ^{2}}{r\int_{-\infty}^{+\infty}%
{\rho_{0}}(r,z)dz}dr+2\pi\int_{R_{0}-\varepsilon}^{R_{0}}\frac{\left(
\int_{0}^{r}s\int_{-\infty}^{+\infty}\delta\rho(s,z)dzds\right)  ^{2}}%
{r\int_{-\infty}^{+\infty}{\rho_{0}}(r,z)dz}dr\\
&  =I+II,
\end{align*}
where $\varepsilon>0$ is chosen such that Lemma \ref{lemma-rho-int} holds.
Since the function $h_{1}\left(  r\right)  =\int_{-\infty}^{+\infty}{\rho_{0}%
}(r,z)dz$ has a positive lower bound in $\left[  0,R_{0}-\varepsilon\right]  $
and $h_{2}\left(  r\right)  =\int_{-\infty}^{+\infty}\frac{1}{\Phi
^{\prime\prime}(\rho_{0}(r,z))}dz$ is bounded, by Hardy's inequality (see
Lemma 3.21 in \cite{LZ2019}) we have
\begin{align*}
I &  \lesssim\int_{0}^{R_{0}-\varepsilon}r^{-2}\left(  \int_{0}^{r}%
s\int_{-\infty}^{+\infty}\delta\rho(s,z)dzds\right)  ^{2}dr\\
&  \lesssim\int_{0}^{R_{0}-\varepsilon}r^{2}\left(  \int_{-\infty}^{+\infty
}\delta\rho(r,z)dz\right)  ^{2}dr\\
&  \lesssim\int_{0}^{R_{0}-\varepsilon}r^{2}\left(  \int_{-\infty}^{+\infty
}\Phi^{\prime\prime}(\rho_{0})\left(  \delta\rho\right)  ^{2}(r,z)dz\right)
\left(  \int_{-\infty}^{+\infty}\frac{1}{\Phi^{\prime\prime}(\rho_{0}%
(r,z))}dz\right)  dr\\
&  \lesssim\int_{0}^{R_{0}-\varepsilon}r\int_{-\infty}^{+\infty}\Phi
^{\prime\prime}(\rho_{0})\left(  \delta\rho\right)  ^{2}(r,z)dzdr\lesssim
\Vert\delta\rho\Vert_{L_{\Phi^{\prime\prime}(\rho_{0})}^{2}}^{2}.
\end{align*}
By Hardy's inequality and Lemma \ref{lemma-rho-int}, we have
\begin{align*}
II &  =2\pi\int_{R_{0}-\varepsilon}^{R_{0}}\frac{\left(  \int_{0}^{r}%
s\int_{-\infty}^{+\infty}\delta\rho(s,z)dzds\right)  ^{2}}{r\int_{-\infty
}^{+\infty}{\rho_{0}}(r,z)dz}dr\\
&  \lesssim\int_{R_{0}-\varepsilon}^{R_{0}}\frac{\left(  \int_{0}^{r}%
s\int_{-\infty}^{+\infty}\delta\rho(s,z)dzds\right)  ^{2}}{(R_{0}-r)^{\frac
{1}{\mathbb{\gamma}_{0}-1}+\frac{1}{2}}}dr\\
&  \lesssim\int_{R_{0}-\varepsilon}^{R_{0}}\left(  \int_{-\infty}^{+\infty
}\delta\rho(r,z)dz\right)  ^{2}(R_{0}-r)^{-\frac{1}{\gamma_{0}-1}+\frac{3}{2}%
}dr\\
&  \lesssim\int_{R_{0}-\varepsilon}^{R_{0}}\left(  \int_{-\infty}^{+\infty
}\Phi^{\prime\prime}(\rho_{0})(\delta\rho)^{2}dz\right)  \left(  \int
_{-\infty}^{+\infty}\frac{1}{\Phi^{\prime\prime}(\rho_{0})}dz\right)
(R_{0}-r)^{-\frac{1}{\mathbb{\gamma}_{0}-1}+\frac{3}{2}}dr\\
&  \lesssim\int_{R_{0}-\varepsilon}^{R_{0}}\left(  \int_{-\infty}^{+\infty
}\Phi^{\prime\prime}(\rho_{0})(\delta\rho)^{2}dz\right)  (R_{0}-r)dr\\
&  \lesssim\Vert\delta\rho\Vert_{L_{\Phi^{\prime\prime}(\rho_{0})}^{2}}^{2},
\end{align*}
where we used the estimate
\[
\int_{-\infty}^{+\infty}\frac{1}{\Phi^{\prime\prime}(\rho_{0})}dz\thickapprox
\int_{-\infty}^{+\infty}\rho_{0}^{2-\gamma_{0}}dz\thickapprox(R_{0}%
-r)^{\frac{2-\mathbb{\gamma}_{0}}{\mathbb{\gamma}_{0}-1}+\frac{1}{2}},
\]
since $\Phi^{\prime\prime}\left(  s\right)  \approx s^{\mathbb{\gamma}_{0}-2}$
for $s$ small. This proves $\left\Vert u_{\theta}^{\delta\rho}\right\Vert
_{L_{\rho_{0}}^{2}}\ \lesssim\left\Vert \delta\rho\right\Vert _{L_{\Phi
^{\prime\prime}(\rho_{0})}^{2}}$.

The statement $\left(  \delta\rho,u_{\theta}^{\delta\rho}\right)  \in
\overline{R\left(  B\right)  }$ follows from Lemma \ref{lemma-dyna-acce} since
$\int_{\mathbb{R}^{3}}\delta\rho\ dx=0$ for $\delta\rho\in R\left(
B_{1}\right)  $ and $u_{\theta}^{\delta\rho}$ obviously satisfies
(\ref{dyna-accessible-projection}).
\end{proof}

With the help of lemma \ref{udeltarhoinL2} we can finish the proof of Theorem
\ref{Th: rayleigh stable}.

\begin{proof}
[Proof of Theorem \ref{Th: rayleigh stable}]We only need to show $n^{-}\left(
\mathbb{L}|_{\overline{R\left(  B\right)  }}\right)  =n^{-}\left(
\mathcal{K}|_{R\left(  B_{1}\right)  }\right)  $. First, we have%
\begin{equation}
\left\langle \mathbb{L}%
\begin{pmatrix}
\delta\rho\\
\delta v_{\theta}%
\end{pmatrix}
,%
\begin{pmatrix}
\delta\rho\\
\delta v_{\theta}%
\end{pmatrix}
\right\rangle \geq\langle\mathcal{K}\delta\rho,\delta\rho\rangle
,\ \ \forall\left(  \delta\rho,\delta v_{\theta}\right)  \in\overline
{R(B)},\label{inequality-L-K}%
\end{equation}
since
\begin{align*}
\langle A_{1}\delta v_{\theta},\delta v_{\theta}\rangle &  =\int
_{\mathbb{R}^{3}}\frac{4\omega_{0}^{2}}{\Upsilon(r)}\rho_{0}\left(  \delta
v_{\theta}\right)  ^{2}\ dx=2\pi\int_{0}^{R_{0}}\frac{4\omega_{0}^{2}%
r}{\Upsilon(r)}\int_{-\infty}^{+\infty}\rho_{0}\left(  \delta v_{\theta
}\right)  ^{2}dz\ dr\\
&  =2\pi\int_{0}^{R_{0}}\frac{4\omega_{0}^{2}r}{\Upsilon(r)}\int_{-\infty
}^{+\infty}\rho_{0}\left(  u_{\theta}^{\delta\rho}\right)  ^{2}dz\ dr+2\pi
\int_{0}^{R_{0}}\frac{4\omega_{0}^{2}r}{\Upsilon(r)}\int_{-\infty}^{+\infty
}\rho_{0}\left(  \delta v_{\theta}-u_{\theta}^{\delta\rho}\right)
^{2}dz\ dr\\
&  \geq2\pi\int_{0}^{R_{0}}\frac{4\omega_{0}^{2}r}{\Upsilon(r)}\int_{-\infty
}^{+\infty}\rho_{0}\left(  u_{\theta}^{\delta\rho}\right)  ^{2}dz\ dr\\
&  =2\pi\int_{0}^{R_{0}}\Upsilon(r)\frac{\left(  \int_{0}^{r}s\int_{-\infty
}^{+\infty}\delta\rho(s,z)dzds\right)  ^{2}}{r\int_{-\infty}^{+\infty}\rho
_{0}({r},z)dz}dr.
\end{align*}
In the above, we used the observation that
\[
\int_{-\infty}^{+\infty}\rho_{0}\left(  \delta v_{\theta}-u_{\theta}%
^{\delta\rho}\right)  dz\ =\int_{-\infty}^{+\infty}\rho_{0}\delta v_{\theta
}dz-\ u_{\theta}^{\delta\rho}\left(  r\right)  \int_{-\infty}^{+\infty}%
\rho_{0}dz=0,
\]
since
\[
\int_{-\infty}^{+\infty}\rho_{0}\delta v_{\theta}dz=u_{\theta}^{\delta\rho
}\left(  r\right)  \int_{-\infty}^{+\infty}\rho_{0}dz=\frac{\partial
_{r}\left(  \omega_{0}r^{2}\right)  }{r^{2}}\int_{0}^{r}s\int_{-\infty
}^{+\infty}\delta\rho(s,z)dzds
\]
due to (\ref{dyna-accessible-projection}) and (\ref{defn-u-theta-rho}). Since
$\delta\rho\in R\left(  B_{1}\right)  $, it follows from (\ref{inequality-L-K}%
) that $n^{-}\left(  \mathcal{K}|_{R\left(  B_{1}\right)  }\right)  \geq
n^{-}\left(  \mathbb{L}|_{\overline{R\left(  B\right)  }}\right)  $. On the
other hand, we also have $n^{-}\left(  \mathcal{K}|_{R\left(  B_{1}\right)
}\right)  \leq n^{-}\left(  \mathbb{L}|_{\overline{R\left(  B\right)  }%
}\right)  ,$ since
\[
\langle\mathcal{K}\delta\rho,\delta\rho\rangle=\left\langle \mathbb{L}%
\begin{pmatrix}
\delta\rho\\
u_{\theta}^{\delta\rho}%
\end{pmatrix}
,%
\begin{pmatrix}
\delta\rho\\
u_{\theta}^{\delta\rho}%
\end{pmatrix}
\right\rangle .
\]
Thus $n^{-}\left(  \mathcal{K}|_{R\left(  B_{1}\right)  }\right)
=n^{-}\left(  \mathbb{L}|_{\overline{R\left(  B\right)  }}\right)  $. This
finishes the proof of Theorem \ref{Th: rayleigh stable}.
\end{proof}

\section{\label{2Examples}TPP for slowly rotating stars}

In this section, we use the stability criterion in Theorem
\ref{Th: rayleigh stable} to study two families of slowly rotating stars
parameterized by the center density.

\subsection{The case of fixed angular velocity}

In this subsection, we consider a family of slowly rotating stars with fixed
angular velocity.

Under the assumptions (\ref{P1})-(\ref{P2}), for some $\mu_{\max}>0$, there
exists a family of nonrotating stars with radially symmetric density
$\rho_{\mu}(|x|)$ parametrized by the center density $\mu\in(0,\mu_{\max})$.
We refer to \cite{LZ2019} and references therein for such results.
Let$\ R_{\mu}$ be the support radius of $\rho_{\mu}$ and $B_{\mu}=B(0,R_{\mu
})$ be the support of $\rho_{\mu}$. The radial density $\rho_{\mu}$ satisfies
\[
\Delta(\Phi^{\prime}(\rho_{\mu}))+4\pi\rho_{\mu}=0,\ \ \text{in }B_{\mu},
\]
with $\rho_{\mu}(0)=\mu$. For the general equations of state satisfying
(\ref{P1})-(\ref{P2}) with $\gamma_{0}\geq4/3$ , it was shown in \cite{HU2003}
that $\mu_{\max}=+\infty$.

Let $\omega(r)\in C^{1,\beta}[0,\infty)$ be fixed for some $\beta\in(0,1)$. We
construct a family of rotating stars for Euler-Poisson system with the
following form
\[%
\begin{cases}
\rho_{0}=\rho_{\mu,\kappa}(r,z)=\rho_{\mu}(g_{\zeta_{\mu,\kappa}}%
^{-1}((r,z))),\\
\vec{v}_{0}=\kappa r\omega_{0}\left(  r\right)  \mathbf{e}_{\theta},
\end{cases}
\]
where the dilating function is
\[
g_{\zeta_{\mu,\kappa}}=x\left(  1+\frac{\zeta_{\mu,\kappa}}{|x|^{2}}\right)  ,
\]
and $\zeta_{\mu,\kappa}(x):B_{\mu}\rightarrow\mathbb{R}$ is axi-symmetric and
even in $z$.

The existence of rotating stars $\left(  \rho_{\mu,\kappa},\kappa r\omega
_{0}\left(  r\right)  \mathbf{e}_{\theta}\right)  \ $is reduced to the
following equations for $\rho_{\mu,\kappa}$:
\begin{equation}
-\kappa^{2}\int_{0}^{r}\omega^{2}(s)sds+\Phi^{\prime}(\rho_{\mu,\kappa
})+V_{\mu,\kappa}+c_{\mu,\kappa}=0\text{ in }\Omega_{\mu,\kappa},\label{iftw1}%
\end{equation}%
\[
V_{\mu,\kappa}=-|x|^{-1}\ast\rho_{\mu,\kappa}\text{ in }\mathbb{R}^{3},
\]
where $c_{\mu,\kappa}$ is a constant and $\Omega_{\mu,\kappa}=g_{\zeta
_{\mu,\kappa}}(B_{\mu})$ is the support of the density $\rho_{\mu,\kappa}\ $of
the rotating star solution.

By similar arguments as in \cite{H1994, SW2017, JJ2019}, we can get the
following existence theorem.

\begin{theorem}
\label{IFTlocal} Let $\mu\in\lbrack\mu_{0},\mu_{1}]\subset(0,\mu_{\max})$,
$P(\rho)$ satisfy \eqref{P1}-\eqref{P2}, and $\omega(r)\in C^{1,\beta
}[0,\infty)$. Then there exist $\tilde{\kappa}>0$ and solutions $\rho
_{\mu,\kappa}$ of \eqref{iftw1} for all $|\kappa|<\tilde{\kappa}$, satisfying
the following properties:\newline1) $\rho_{\mu,\kappa}\in C_{c}^{1,\alpha
}(\mathbb{R}^{3})$, where $\alpha=\min(\frac{2-\mathbb{\gamma}_{0}%
}{\mathbb{\gamma}_{0}-1},1)$. \newline2) $\rho_{\mu,\kappa}$ is axi-symmetric
and even in $z$. \newline3) $\rho_{\mu,\kappa}(0)=\mu$. \newline4) $\rho
_{\mu,\kappa}\geq0$ has compact support $g_{\zeta_{\mu,\kappa}}(B_{\mu})$.
\newline5) For all $\mu\in\lbrack\mu_{0},\mu_{1}]$, the mapping $\kappa
\rightarrow\rho_{\mu,\kappa}$ is continuous from $(-\tilde{\kappa}%
,\tilde{\kappa})$ into $C_{c}^{1}(\mathbb{R}^{3})$.

When $\kappa=0$, $\rho_{\mu,0}=$ $\rho_{\mu}(|x|)$ is the nonrotating star
solution with $\rho_{\mu}(0)=\mu$.

\end{theorem}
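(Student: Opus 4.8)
The plan is to construct the family $\rho_{\mu,\kappa}$ by an implicit function theorem argument in a suitably weighted H\"older space, following the scheme of \cite{H1994,SW2017,JJ2019} but tracking dependence on the parameter $\mu$ over the compact interval $[\mu_0,\mu_1]$. First I would reformulate \eqref{iftw1} as a fixed-point/zero-finding problem for the perturbation field $\zeta_{\mu,\kappa}$: writing $\rho_{\mu,\kappa}(x)=\rho_\mu(g_{\zeta_{\mu,\kappa}}^{-1}(x))$ automatically encodes the correct support $g_{\zeta_{\mu,\kappa}}(B_\mu)$ and the correct central value $\rho_{\mu,\kappa}(0)=\mu$ (since $g_{\zeta}$ fixes the origin to leading order, after we also solve for the constant $c_{\mu,\kappa}$). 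Substituting into the Euler--Poisson steady equation and using $\Delta(\Phi'(\rho_\mu))=-4\pi\rho_\mu$ for the nonrotating profile, the equation \eqref{iftw1} becomes, after composing with $g_{\zeta_{\mu,\kappa}}$, an equation of the schematic form
\[
F(\zeta,c;\mu,\kappa)=\Phi'(\rho_\mu)\circ g_\zeta^{-1}\circ g_\zeta - \Phi'(\rho_\mu) + \big(V_{\mu,\kappa}-V_\mu\big) - \kappa^2\!\!\int_0^r\!\omega^2(s)s\,ds + (c-c_\mu)=0
\]
on $B_\mu$, where $F(0,c_\mu;\mu,0)=0$. The unknowns are $(\zeta,c)$ and the parameters are $(\mu,\kappa)$; the Newtonian potential difference $V_{\mu,\kappa}-V_\mu$ is the nonlocal term that must be handled as a compact perturbation.

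The key steps, in order, are: (1) set up the correct function spaces --- $\zeta$ in a weighted space adapted to the degenerate behavior \eqref{rhonearboundary} of $\rho_\mu$ near $\partial B_\mu$, so that $\Phi'(\rho_\mu)\in C^{1,\alpha}$ with $\alpha=\min(\frac{2-\gamma_0}{\gamma_0-1},1)$ and composition with near-identity diffeomorphisms is a smooth operation; (2) compute the linearization $D_{(\zeta,c)}F$ at $(\zeta,c;\mu,\kappa)=(0,c_\mu;\mu,0)$ and show it is an isomorphism between the chosen spaces --- this is essentially the linearized operator $L$ of \eqref{defn-L} restricted appropriately, whose invertibility for nonrotating stars is classical and uses that $\Phi''(\rho_\mu)$ is a positive (degenerate) multiplier plus the compact operator $4\pi(-\Delta)^{-1}$ with the mass/center-density normalization removing the kernel; (3) verify $F$ is $C^1$ jointly in all arguments, with the $\kappa$-dependence entering only through the smooth term $-\kappa^2\int_0^r\omega^2 s\,ds$ and through $V_{\mu,\kappa}-V_\mu$, which is controlled via standard elliptic/Newtonian-potential estimates and is $O(\kappa^2)$; (4) apply the implicit function theorem to obtain, for each $\mu$, a solution $(\zeta_{\mu,\kappa},c_{\mu,\kappa})$ for $|\kappa|<\tilde\kappa(\mu)$, and then use compactness of $[\mu_0,\mu_1]$ together with continuity of the linearized inverse in $\mu$ to choose $\tilde\kappa>0$ uniform in $\mu$; (5) read off properties 1)--5) from the construction: regularity $\rho_{\mu,\kappa}\in C^{1,\alpha}_c$ from elliptic regularity and the H\"older class of $\rho_\mu$ and $\omega$, axi-symmetry and evenness in $z$ from the corresponding symmetry of the data $\omega^2(r)$ and the fact that the IFT solution inherits symmetries of the equation, compact support from $\Omega_{\mu,\kappa}=g_{\zeta_{\mu,\kappa}}(B_\mu)$, and continuity $\kappa\mapsto\rho_{\mu,\kappa}$ in $C^1_c$ from the IFT; (6) observe $\kappa=0$ forces $\zeta_{\mu,0}=0$, $c_{\mu,0}=c_\mu$ by uniqueness, recovering $\rho_{\mu,0}=\rho_\mu(|x|)$.

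I expect the main obstacle to be step (2) combined with the uniformity in step (4): one must verify that the linearized operator is an isomorphism in the degenerate weighted H\"older spaces forced by \eqref{rhonearboundary} (the weight exponent $\frac{1}{\gamma_0-1}$ near the boundary makes $\Phi'(\rho_\mu)$ only $C^{1,\alpha}$, not smoother, when $\gamma_0<3/2$), and that the norm of its inverse stays bounded as $\mu$ ranges over the compact interval $[\mu_0,\mu_1]\subset(0,\mu_{\max})$ --- this requires knowing that $\rho_\mu$, $R_\mu$, and the relevant spectral gap of $L$ depend continuously (hence boundedly) on $\mu$ away from $\mu_{\max}$, which is exactly the kind of statement established in the nonrotating theory of \cite{LZ2019,HU2003}. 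A secondary technical point is handling the nonlocal Newtonian term $V_{\mu,\kappa}$ under the change of variables $g_{\zeta}$ so that it remains a $C^1$ (and compact) perturbation; this is routine given the decay of $|x|^{-1}$ but must be done carefully in the weighted spaces. Once these are in place, the remaining verifications of 1)--5) are straightforward bookkeeping, and I would simply cite \cite{H1994,SW2017,JJ2019} for the parts of the argument identical to the existing slowly-rotating-star constructions, emphasizing only the modifications needed for uniformity over $\mu\in[\mu_0,\mu_1]$.
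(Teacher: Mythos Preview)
Your proposal is correct and follows essentially the same approach the paper indicates: the paper gives no proof of this theorem at all, stating only that it follows ``by similar arguments as in \cite{H1994, SW2017, JJ2019}.'' Your sketch of the implicit function theorem argument in weighted H\"older spaces, with the dilating diffeomorphism $g_{\zeta_{\mu,\kappa}}$ encoding the deformed support and the uniformity over $\mu\in[\mu_0,\mu_1]$ obtained by compactness, is precisely the method of those references, and your identification of the linearized operator and the degenerate boundary behavior as the main technical points is accurate.
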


Now we use Theorem \ref{Th: rayleigh stable} to study the stability of above
rotating star solutions $(\rho_{\mu,\kappa},\kappa\omega(r)r\mathbf{e}%
_{\theta})$, for $\mu\in\lbrack\mu_{0},\mu_{1}]$, $\kappa$ small enough, and
$\omega\in C^{1,\beta}[0,\infty)$ satisfying the Rayleigh condition
$\Upsilon(r):=\frac{\partial_{r}(\omega^{2}r^{4})}{r^{3}}>0$. First, we check
the assumptions in Theorem \ref{Th: rayleigh stable}. Let $R_{\mu,\kappa}$ be
the support radius in $r$ for $\Omega_{\mu,\kappa}=g_{\zeta_{\mu,\kappa}%
}(B_{\mu})$. Since $g_{\zeta_{\mu,\kappa}}\in C^{2}(B_{\mu})$ dependents
continuously on $\kappa$, it is easy to check the assumptions on $\Omega
_{\mu,\kappa}$ for $\kappa$ small enough. That is, $\partial\Omega_{\mu
,\kappa}$ is $C^{2}$ and has positive curvature near $(R_{\mu,\kappa},0)$.
Next, we check the assumption \eqref{rhonearboundary}. For nonrotating stars,
it is known (\cite{CS1939,JJ2014,LS1997, LZ2019}) that
\[
\rho_{\mu}(r,z)\approx((R_{\mu}-\sqrt{r^{2}+z^{2}})^{\frac{1}{\mathbb{\gamma
}_{0}-1}})\text{ for }\sqrt{r^{2}+z^{2}}\sim R_{\mu}.
\]
For $\kappa$ small enough, by the definition of the dilating function
$g_{\zeta_{\mu,\kappa}}$, we have
\begin{align*}
\rho_{\mu,\kappa}(r,z) &  =\rho_{\mu}(g_{\zeta_{\mu,\kappa}}^{-1}(r,z))\\
&  \approx((R_{\mu}-|g_{\zeta_{\mu,\kappa}}^{-1}(r,z)|)^{\frac{1}%
{\mathbb{\gamma}_{0}-1}})\\
&  \approx\text{dist}((r,z),\partial g_{\zeta_{\mu,\kappa}}(B_{\mu}%
))^{\frac{1}{\mathbb{\gamma}_{0}-1}},
\end{align*}
for $\left(  r,z\right)  $ near $(R_{\mu,\kappa},0)=g_{\zeta_{\mu,\kappa}%
}(R_{\mu},0)$.

Below, for rotating stars $\left(  \rho_{\mu,\kappa},r\omega_{0}\left(
r\right)  \mathbf{e}_{\theta}\right)  $ we use $X_{\mu,\kappa}$, $X_{1}%
^{\mu,\kappa}$, $Y_{\mu,\kappa}$, $L_{\mu,\kappa}$, $A_{1}^{\mu,\kappa}$,
$B_{1}^{\mu,\kappa}$, $B_{2}^{\mu,\kappa}$, $K_{\mu,\kappa}$, etc., to denote
the corresponding spaces $X$, $X_{1}$, $Y$, and operators $L$, $A_{1}$,
$B_{1}$, $B_{2}$, $\mathcal{K}$ etc. defined in Section 2.

By Theorem \ref{Th: rayleigh stable}, the rotating star $(\rho_{\mu,\kappa
},\kappa\omega(r)r\mathbf{e}_{\theta})$ is spectrally stable if and only if
\begin{equation}
\langle K_{\mu,\kappa}\delta\rho,\delta\rho\rangle=\langle L_{\mu,\kappa
}\delta\rho,\delta\rho\rangle+2\kappa^{2}\pi\int_{0}^{R_{\mu,\kappa}}%
\Upsilon(r)\frac{\left(  \int_{0}^{r}s\int_{-\infty}^{+\infty}\delta
\rho(s,z)dzds\right)  ^{2}}{r\int_{-\infty}^{+\infty}\rho_{\mu,\kappa}%
({r},z)dz}dr\geq0, \label{NSCSkappa}%
\end{equation}
for all
\[
\delta\rho\in R(B_{1}^{\mu,\kappa})=\left\{  \delta\rho\in X_{1}^{\mu,\kappa
}|\int_{\mathbb{R}^{3}}\delta\rho dx=0\right\}  .
\]
Moreover, the number of unstable modes equals $n^{-}\left(  K_{\mu,\kappa
}|_{R(B_{1}^{\mu,\kappa})}\right)  $. The following is an easy corollary of
the stability criterion.

\begin{corollary}
\label{C2}(Sufficient condition for instability) \newline Let $I\subset
\lbrack\mu_{0},\mu_{1}]$ be an interval such that the non-rotating star
$(\rho_{\mu},0)$ is unstable for any $\mu\in I$. Then for any $\omega\in
C^{1,\beta}[0,\infty)$ satisfies $\Upsilon(r)>0$, there exists $\kappa_{0}>0$
such that the rotating star $(\rho_{\mu,\kappa},\kappa\omega(r)r\mathbf{e}%
_{\theta})$ is unstable for any $0<\kappa<\kappa_{0}$ and $\mu\in I$.
\end{corollary}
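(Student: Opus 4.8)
The plan is to use the stability criterion established in Theorem~\ref{Th: rayleigh stable}: the rotating star $(\rho_{\mu,\kappa},\kappa\omega(r)r\mathbf{e}_{\theta})$ is unstable precisely when the quadratic form $\langle K_{\mu,\kappa}\delta\rho,\delta\rho\rangle$ has a negative direction on $R(B_1^{\mu,\kappa})$, i.e.\ when $n^-(K_{\mu,\kappa}|_{R(B_1^{\mu,\kappa})})>0$. Since $(\rho_\mu,0)$ is unstable for $\mu\in I$, by the non-rotating theory (\cite{LZ2019}) there is a density perturbation $\delta\rho_\mu$ with $\int_{\mathbb{R}^3}\delta\rho_\mu\,dx=0$ and $\langle L_\mu\delta\rho_\mu,\delta\rho_\mu\rangle<0$. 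The idea is that for $\kappa$ small, $K_{\mu,\kappa}$ is a small perturbation of $L_\mu$ (in a suitable sense), so it should still have a negative direction. Concretely, the extra term in $\langle K_{\mu,\kappa}\delta\rho,\delta\rho\rangle$ carries a factor $\kappa^2$, hence is $O(\kappa^2)$ relative to the $L^2_{\Phi''(\rho_{\mu,\kappa})}$-norm by the estimate in Lemma~\ref{udeltarhoinL2} (the term equals $\langle A_1^{\mu,\kappa}u_\theta^{\delta\rho},u_\theta^{\delta\rho}\rangle$ up to constants, and $\|u_\theta^{\delta\rho}\|_{L^2_{\rho_0}}\lesssim\|\delta\rho\|_{L^2_{\Phi''(\rho_0)}}$ with a constant that is uniformly bounded for small $\kappa$).

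The steps, in order, would be: (1) Fix $\mu\in I$; by instability of the non-rotating star, pick a test function $\delta\rho_\mu\in C_c^\infty$, $\int\delta\rho_\mu=0$, with $\langle L_\mu\delta\rho_\mu,\delta\rho_\mu\rangle=-c_\mu<0$, normalized so that $\|\delta\rho_\mu\|_{L^2}=1$. (2) Transplant $\delta\rho_\mu$ to the rotating domain via the dilating map $g_{\zeta_{\mu,\kappa}}$ (or simply use that $\Omega_{\mu,\kappa}\to B_\mu$ and $\rho_{\mu,\kappa}\to\rho_\mu$ in $C^1_c$ as $\kappa\to0$, by Theorem~\ref{IFTlocal}) to produce a test function $\delta\rho_{\mu,\kappa}\in R(B_1^{\mu,\kappa})$ depending continuously on $\kappa$ with $\delta\rho_{\mu,0}=\delta\rho_\mu$. (3) Show $\langle L_{\mu,\kappa}\delta\rho_{\mu,\kappa},\delta\rho_{\mu,\kappa}\rangle\to\langle L_\mu\delta\rho_\mu,\delta\rho_\mu\rangle=-c_\mu$ as $\kappa\to0$, using continuity of $\rho_{\mu,\kappa}$, hence of $\Phi''(\rho_{\mu,\kappa})$ and of the Newtonian potential operator, in the relevant topologies. (4) Bound the rotational correction: $0\le 2\kappa^2\pi\int_0^{R_{\mu,\kappa}}\Upsilon(r)\frac{(\int_0^r s\int\delta\rho_{\mu,\kappa}\,dz\,ds)^2}{r\int\rho_{\mu,\kappa}(r,z)\,dz}\,dr\le C\kappa^2\|\delta\rho_{\mu,\kappa}\|_{L^2_{\Phi''(\rho_{\mu,\kappa})}}^2\le C'\kappa^2$, with $C,C'$ uniform in $\mu\in I$ and small $\kappa$, by Lemma~\ref{udeltarhoinL2} and the uniform geometry/decay estimates (Lemma~\ref{lemma-rho-int}) applied to the family $\rho_{\mu,\kappa}$. (5) Conclude that for $\kappa$ small, $\langle K_{\mu,\kappa}\delta\rho_{\mu,\kappa},\delta\rho_{\mu,\kappa}\rangle\le -c_\mu/2<0$, so $n^-(K_{\mu,\kappa}|_{R(B_1^{\mu,\kappa})})\ge1$, giving instability. (6) Upgrade to uniformity in $\mu\in I$: since $I$ is compact and all the constants ($c_\mu$, $C$, $C'$) can be taken uniform on $I$ — using continuity of the non-rotating eigenvalue in $\mu$ and the uniform bounds of Theorem~\ref{IFTlocal} — one obtains a single $\kappa_0>0$ that works for all $\mu\in I$.

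The main obstacle is Step~(3) together with the uniformity in Step~(6): one must show that $\langle L_{\mu,\kappa}\cdot,\cdot\rangle$ evaluated on the transplanted test function converges to the non-rotating value \emph{uniformly} over the compact parameter interval $I$. This requires care because the weight $\Phi''(\rho_{\mu,\kappa})$ degenerates near $\partial\Omega_{\mu,\kappa}$, so the convergence $\Phi''(\rho_{\mu,\kappa})\to\Phi''(\rho_\mu)$ must be controlled in a weighted norm; choosing $\delta\rho_\mu$ compactly supported inside $B_\mu$ (away from the boundary) sidesteps the degeneracy, since then on the support of $\delta\rho_{\mu,\kappa}$ the densities are bounded below uniformly. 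One also needs that the non-rotating star is uniformly unstable on $I$ — i.e.\ $\sup_{\mu\in I}\,\inf\sigma(L_\mu|_{R(B_1^\mu)})<0$ — which follows from continuity of $\rho_\mu$ in $\mu$ and hence of $L_\mu|_{R(B_1^\mu)}$ (and is implicitly how one should read the hypothesis that $(\rho_\mu,0)$ is unstable for \emph{every} $\mu\in I$). The convergence of the Newtonian term $-4\pi(-\Delta)^{-1}$ is comparatively harmless since $\delta\rho_{\mu,\kappa}$ is supported in a fixed bounded region and depends continuously on $\kappa$ in $L^2$, giving continuity of the convolution with $|x|^{-1}$.
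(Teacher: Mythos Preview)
Your proposal is correct and follows essentially the same route as the paper's proof: pick a negative direction $\delta\rho_{\mu}$ for $L_{\mu}$ on $R(B_1^{\mu,0})$, transplant it to $\Omega_{\mu,\kappa}$ via the dilating map $g_{\zeta_{\mu,\kappa}}$, use $\rho_{\mu,\kappa}\to\rho_\mu$ in $C^1_c$ to get $\langle L_{\mu,\kappa}\delta\rho_{\mu,\kappa},\delta\rho_{\mu,\kappa}\rangle\to -2\epsilon<0$, and absorb the $O(\kappa^2)$ rotational term. The only concrete detail the paper adds that you leave implicit is the mean-zero correction: after pulling back by $g_{\zeta_{\mu,\kappa}}^{-1}$ the integral is no longer exactly zero, so the paper subtracts the multiple $\big(M_{\mu,\kappa}^{-1}\!\int_{B_\mu}\delta\rho_{\mu,0}\det Dg_{\zeta_{\mu,\kappa}}\,dx\big)\rho_{\mu,\kappa}$, whose coefficient tends to $0$ as $\kappa\to0$; your alternative of taking $\delta\rho_\mu\in C_c^\infty$ compactly supported inside $B_\mu$ and then correcting works equally well.
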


\begin{proof}
The instability of $(\rho_{\mu},0)$ implies that $n^{-}(L_{\mu,0}%
|_{R(B_{1}^{\mu,0})})>0$ for $\mu\in I$. Thus there exists some $\epsilon>0$
(independent of $\mu$) and $\delta\rho_{\mu,0}=\delta\rho_{\mu,0}(|x|)\in
R(B_{1}^{\mu,0})\ $such that $\langle L_{\mu,0}\delta\rho_{\mu,0},\delta
\rho_{\mu,0}\rangle=-2\epsilon<0$ for $\mu\in I$. Let
\[
\delta\rho_{\mu,\kappa}(r,z)=\delta\rho_{\mu,0}(g_{\zeta_{\mu,\kappa}%
}(r,z))-\frac{\int_{B_{\mu}}\delta\rho_{\mu,0}(|x|)\det Dg_{\zeta_{\mu,\kappa
}}(x)dx}{M_{\mu,\kappa}}\rho_{\mu,\kappa}(r,z),
\]
then $\delta\rho_{\mu,\kappa}(r,z)\in R(B_{1}^{\mu,\kappa})$. Noticing that
\[
\lim_{\kappa\rightarrow0}\int_{B_{\mu}}\delta\rho_{\mu,0}(|x|)\det
Dg_{\zeta_{\mu,\kappa}}(x)dx=\int_{B_{\mu}}\delta\rho_{\mu,0}(|x|)dx=0,
\]
we have
\[
\lim_{\kappa\rightarrow0}\langle L_{\mu,\kappa}\delta\rho_{\mu,\kappa}%
,\delta\rho_{\mu,\kappa}\rangle=\langle L_{\mu,0}\delta\rho_{\mu,0},\delta
\rho_{\mu,0}\rangle=-2\epsilon<0.
\]
Thus, there exists $\kappa_{0}>0$ such that when $0<\kappa<\kappa_{0}$
\begin{align*}
&  \langle K_{\mu,\kappa}\delta\rho_{\mu,\kappa},\delta\rho_{\mu,\kappa
}\rangle\\
&  =\langle L_{\mu,\kappa}\delta\rho_{\mu,\kappa},\delta\rho_{\mu,\kappa
}\rangle+2\kappa^{2}\pi\int_{0}^{R_{\mu,\kappa}}\Upsilon(r)\frac{\left(
\int_{0}^{r}s\int_{-\infty}^{+\infty}\delta\rho_{\mu,\kappa}(s,z)dzds\right)
^{2}}{r\int_{-\infty}^{+\infty}\rho_{\mu,\kappa}({r},z)dz}dr<-\epsilon<0.
\end{align*}
The linear instability of $(\rho_{\mu,\kappa},\kappa\omega(r)r\mathbf{e}%
_{\theta})$ follows.
\end{proof}

Let $\tilde{\mu}$ be the first critical point of the mass-radius ratio
$\frac{M_{\mu}}{R_{\mu}}$ for the nonrotating stars and set $\tilde{\mu
}=+\infty$ if $\frac{M_{\mu}}{R_{\mu}}$ has no critical point. Consider the
rotating stars $(\rho_{\mu,\kappa},\kappa\omega(r)r\mathbf{e}_{\theta})$ for
$\mu\in$ $[\mu_{0},\mu_{1}]\subset\left(  0,\tilde{\mu}\right)  $ and $\kappa$
small. We have the following sufficient condition for stability.

\begin{theorem}
\label{Th: stability-fixed velocity}(Sufficient condition for stability)
\newline Suppose $P(\rho)$ satisfies \eqref{P1}-\eqref{P2}, and $\omega\in
C^{1,\beta}[0,\infty)$ satisfies $\Upsilon(r)>0$. For any $\mu\in\lbrack
\mu_{0},\mu_{1}]\subset\left(  0,\tilde{\mu}\right)  $ and $\kappa$ small
enough, if $\frac{dM_{\mu,\kappa}}{d\mu}\geq0,$ then the rotating star
$(\rho_{\mu,\kappa},\kappa\omega r\mathbf{e}_{\theta})$ is spectrally stable.
\end{theorem}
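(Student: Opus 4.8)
The plan is to use the reduced stability criterion from Theorem~\ref{Th: rayleigh stable}: the rotating star $(\rho_{\mu,\kappa},\kappa\omega r\mathbf{e}_\theta)$ is spectrally stable if and only if $\langle K_{\mu,\kappa}\delta\rho,\delta\rho\rangle\geq 0$ for all $\delta\rho\in R(B_1^{\mu,\kappa})$, i.e.\ all $\delta\rho$ with $\int_{\mathbb{R}^3}\delta\rho\,dx=0$. Since $\langle K_{\mu,\kappa}\cdot,\cdot\rangle = \langle L_{\mu,\kappa}\cdot,\cdot\rangle + (\text{nonnegative rotational term})$ because $\Upsilon(r)>0$, it suffices to show $\langle L_{\mu,\kappa}\delta\rho,\delta\rho\rangle\geq 0$ on mass-preserving $\delta\rho$ --- equivalently $n^-(L_{\mu,\kappa}|_{R(B_1^{\mu,\kappa})})=0$. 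Thus the rotational term is only ``helping,'' and the whole problem reduces to controlling the \emph{non-rotating-type} quadratic form $L_{\mu,\kappa}$ on the dilated star.

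First I would recall from \cite{LZ2019} that for the non-rotating star $\rho_\mu$ with $\mu<\tilde\mu$ (before the first critical point of the mass-radius ratio $M_\mu/R_\mu$), one has $n^-(L_{\mu,0}|_{R(B_1^{\mu,0})})=0$; moreover, by the turning-point analysis there, the relevant quantity is governed by the sign of $\frac{dM_\mu}{d\mu}$ (or of $\frac{d}{d\mu}(M_\mu/R_\mu)$), and on the stable branch one in fact has a quantitative lower bound: there is $\delta>0$ such that $\langle L_{\mu,0}\delta\rho,\delta\rho\rangle \geq \delta\|\delta\rho\|^2_{L^2_{\Phi''(\rho_\mu)}}$ modulo the one-dimensional ``mass direction,'' i.e.\ $L_{\mu,0}$ is nonnegative with at most a kernel on $R(B_1^{\mu,0})$. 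The hypothesis $\frac{dM_{\mu,\kappa}}{d\mu}\geq 0$ is the analogue of the condition ensuring one does not cross the turning point along the perturbed family; the idea is to transplant the non-rotating estimate to $L_{\mu,\kappa}$ using the dilation map $g_{\zeta_{\mu,\kappa}}$, which converges to the identity in $C^2$ as $\kappa\to 0$.

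The key steps, in order: (1) Change variables by $g_{\zeta_{\mu,\kappa}}$ so that $\langle L_{\mu,\kappa}\delta\rho,\delta\rho\rangle$ on the star $\Omega_{\mu,\kappa}$ becomes a perturbation of $\langle L_{\mu,0}\,\widetilde{\delta\rho},\widetilde{\delta\rho}\rangle$ on $B_\mu$, with error terms $O(\kappa)$ in operator norm (here one uses that $\zeta_{\mu,\kappa}$, $Dg_{\zeta_{\mu,\kappa}}$, and $\rho_{\mu,\kappa}\circ g_{\zeta_{\mu,\kappa}}$ all converge to their $\kappa=0$ counterparts, uniformly for $\mu$ in the compact interval $[\mu_0,\mu_1]$). (2) Handle the mass constraint: $R(B_1^{\mu,\kappa})$ and $R(B_1^{\mu,0})$ are each one-codimensional (mass zero), and the pullback of the mass functional is a small perturbation of the original, so a bounded correction term (subtracting a multiple of $\rho_{\mu,\kappa}$, exactly as in the proof of Corollary~\ref{C2}) maps $R(B_1^{\mu,0})$ into $R(B_1^{\mu,\kappa})$ with norm $1+O(\kappa)$. (3) The subtle point is the kernel: $L_{\mu,0}$ restricted to $R(B_1^{\mu,0})$ is nonnegative but may be degenerate precisely in the ``scaling'' direction $\partial_\mu\rho_\mu$ (projected to mass zero), which is exactly where $\frac{dM_\mu}{d\mu}$ enters. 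One must use the condition $\frac{dM_{\mu,\kappa}}{d\mu}\geq 0$ to show that the rotational term $2\kappa^2\pi\int\Upsilon(r)(\cdots)^2/(\cdots)\,dr$ in $K_{\mu,\kappa}$, which is of order $\kappa^2$ and strictly positive on that degenerate direction, dominates the $O(\kappa^2)$ (or sign-definite) contribution of $L_{\mu,\kappa}$ there --- this is the first-order-in-$\kappa^2$ competition that forces one to track the sign, not just the size, of $M'_{\mu,\kappa}$.

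The main obstacle I anticipate is step (3): on the complement of the degenerate direction $L_{\mu,0}$ has a spectral gap, so $O(\kappa)$ perturbations are harmless there and positivity is immediate; but along $\partial_\mu\rho_\mu$ both $\langle L_{\mu,\kappa}\delta\rho,\delta\rho\rangle$ and the rotational term are $O(\kappa^2)$, so one needs an \emph{exact} second-order expansion relating $\langle L_{\mu,\kappa}\,(\partial_\mu\rho_{\mu,\kappa}),\partial_\mu\rho_{\mu,\kappa}\rangle$ to $\frac{dM_{\mu,\kappa}}{d\mu}$ (via differentiating the steady-state equation \eqref{iftw1} in $\mu$ and pairing), and then to check that adding the manifestly nonnegative rotational term preserves the inequality whenever $\frac{dM_{\mu,\kappa}}{d\mu}\geq 0$. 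Concretely I would write any test function as $\delta\rho = c\,\partial_\mu\rho_{\mu,\kappa} + \delta\rho^\perp$ (mass-zero parts), bound $\langle K_{\mu,\kappa}\delta\rho^\perp,\delta\rho^\perp\rangle$ from below by a gap constant, bound the cross term by Cauchy-Schwarz, and reduce to a $2\times 2$ quadratic form in $(c,\|\delta\rho^\perp\|)$ whose nonnegativity follows from $\langle K_{\mu,\kappa}\,\partial_\mu\rho_{\mu,\kappa},\partial_\mu\rho_{\mu,\kappa}\rangle \geq 0$, which in turn is where the mass-monotonicity hypothesis is consumed.
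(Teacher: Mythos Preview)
Your reduction $K_{\mu,\kappa}\geq L_{\mu,\kappa}$ (so it suffices to show $n^{-}(L_{\mu,\kappa}|_{R(B_1^{\mu,\kappa})})=0$) and your plan to differentiate \eqref{iftw1} in $\mu$ both match the paper. But the endgame you describe has a genuine error. You claim the argument reduces to checking $\langle K_{\mu,\kappa}\,\partial_\mu\rho_{\mu,\kappa},\partial_\mu\rho_{\mu,\kappa}\rangle\geq 0$. This is false: differentiating \eqref{iftw1} gives $L_{\mu,\kappa}\,\partial_\mu\rho_{\mu,\kappa}=-\frac{dc_{\mu,\kappa}}{d\mu}$ (a constant), hence
\[
\left\langle L_{\mu,\kappa}\frac{d\rho_{\mu,\kappa}}{d\mu},\frac{d\rho_{\mu,\kappa}}{d\mu}\right\rangle=-\frac{dc_{\mu,\kappa}}{d\mu}\cdot\frac{dM_{\mu,\kappa}}{d\mu},
\]
and by Lemma~\ref{N-Dmukappa} one has $\frac{dc_{\mu,\kappa}}{d\mu}>0$ for $\mu<\tilde\mu$. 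So when $\frac{dM_{\mu,\kappa}}{d\mu}>0$ this is strictly \emph{negative} of order one, and the $O(\kappa^2)$ rotational term cannot reverse the sign. Your assertion that ``along $\partial_\mu\rho_\mu$ both $\langle L_{\mu,\kappa}\delta\rho,\delta\rho\rangle$ and the rotational term are $O(\kappa^2)$'' is simply incorrect, and the $2\times 2$ form you build is not nonnegative.

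The structural point you are missing is that $\partial_\mu\rho_{\mu,\kappa}$ does \emph{not} lie in $R(B_1^{\mu,\kappa})$ when $M'>0$; rather, precisely because $L_{\mu,\kappa}\,\partial_\mu\rho_{\mu,\kappa}$ is constant, the mass-zero subspace $X_{ev}^{\mu,\kappa}\cap R(B_1^{\mu,\kappa})$ is exactly the $\langle L_{\mu,\kappa}\cdot,\cdot\rangle$-orthogonal complement of $\partial_\mu\rho_{\mu,\kappa}$ in $X_{ev}^{\mu,\kappa}$. Combined with $n^{-}(L_{\mu,\kappa}|_{X_{ev}^{\mu,\kappa}})=1$, $\ker L_{\mu,\kappa}|_{X_{ev}^{\mu,\kappa}}=\{0\}$ (Lemma~\ref{N-Dmukappa}), and the negativity just computed, a standard constrained-index argument gives $n^{-}(L_{\mu,\kappa}|_{X_{ev}^{\mu,\kappa}\cap R(B_1^{\mu,\kappa})})=0$ immediately. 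No perturbative transplantation back to $\kappa=0$ is needed, and the rotational term is never used beyond the trivial inequality $K\geq L$. The even/odd splitting (the kernel $\partial_z\rho_{\mu,\kappa}$ is odd, the single negative direction is even) is what localizes the problem to $X_{ev}^{\mu,\kappa}$; you should make that explicit. The borderline case $\frac{dM_{\mu,\kappa}}{d\mu}=0$ is handled separately: then $\partial_\mu\rho_{\mu,\kappa}\in R(B_1^{\mu,\kappa})$ itself, and the same index count (as in the proof of Theorem~1.1 of \cite{LZ2019}) still yields $n^{-}=0$.
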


For the proof of above Theorem, first we compute $n^{-}\left(  L_{\mu,\kappa
}|_{X_{\mu,\kappa}}\right)  $. Let $\dot{H}_{ax}^{1}$ and $\dot{H}_{ax}^{-1}$
be the axi-symmetric subspaces of $\dot{H}^{1}(\mathbb{R}^{3})$ and $\dot
{H}^{-1}(\mathbb{R}^{3})$ respectively. Define the reduced operator
$D_{\mu,\kappa}:\dot{H}_{ax}^{1}\rightarrow\dot{H}_{ax}^{-1}$ by
\[
D_{\mu,\kappa}:=-\Delta-\frac{4\pi}{\Phi^{\prime\prime}(\rho_{\mu,\kappa})}.
\]
Then
\[
\langle D_{\mu,\kappa}\psi,\psi\rangle=\int_{\mathbb{R}^{3}}|\nabla\psi
|^{2}dx-4\pi\int_{\mathbb{R}^{3}}\frac{|\psi|^{2}}{\Phi^{\prime\prime}%
(\rho_{\mu,\kappa})}dx,\ \psi\in\dot{H}_{ax}^{1},
\]
defines a bounded bilinear symmetric form on $\dot{H}_{ax}^{1}$. By the same
proof of Lemma 3.7 in \cite{LZ2019}, we have

\begin{lemma}
\label{lemma-equivalent-operators} It holds that $n^{-}\left(  L_{\mu,\kappa
}|_{X_{1}^{\mu,\kappa}}\right)  =n^{-}\left(  D_{\mu,\kappa}\right)  $ and
$\dim\ker L_{\mu,\kappa}=\dim\ker D_{\mu,\kappa}$.
\end{lemma}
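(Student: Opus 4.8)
The plan is to follow the argument of \cite[Lemma 3.7]{LZ2019}, realizing both $L_{\mu,\kappa}|_{X_{1}^{\mu,\kappa}}$ and $D_{\mu,\kappa}$ as Schur complements of a single joint quadratic form. On the Hilbert space $X_{1}^{\mu,\kappa}\times\dot{H}_{ax}^{1}$ define
\[
Q(\delta\rho,\psi)=\int_{\mathbb{R}^{3}}\Phi^{\prime\prime}(\rho_{\mu,\kappa})(\delta\rho)^{2}\,dx+\frac{1}{4\pi}\int_{\mathbb{R}^{3}}|\nabla\psi|^{2}\,dx-2\int_{\mathbb{R}^{3}}\psi\,\delta\rho\,dx .
\]
First I would check that $Q$ is a bounded symmetric bilinear form: the two diagonal blocks are precisely the squared norms of $X_{1}^{\mu,\kappa}$ and of $\dot{H}_{ax}^{1}$, hence coercive, and the cross term is controlled by $\|\psi\|_{L_{1/\Phi^{\prime\prime}(\rho_{\mu,\kappa})}^{2}}\,\|\delta\rho\|_{X_{1}^{\mu,\kappa}}$ together with the weighted estimate $\|\psi\|_{L_{1/\Phi^{\prime\prime}(\rho_{\mu,\kappa})}^{2}}\lesssim\|\psi\|_{\dot{H}^{1}}$. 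This last inequality is the same Hardy-type bound used in \cite{LZ2019}; it applies here because near $\partial\Omega_{\mu,\kappa}$ one has $1/\Phi^{\prime\prime}(\rho_{\mu,\kappa})\thickapprox\rho_{\mu,\kappa}^{2-\gamma_{0}}\thickapprox\mathrm{dist}((r,z),\partial\Omega_{\mu,\kappa})^{\frac{2-\gamma_{0}}{\gamma_{0}-1}}$ (using the boundary asymptotics for $\rho_{\mu,\kappa}$ verified just above and the estimates in the proof of Lemma~\ref{lemma-rho-int}), and $\gamma_{0}<2$ makes the weight integrable against a bounded support.

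Next I would carry out the two one-variable minimizations. Minimizing $Q(\delta\rho,\cdot)$ over $\psi\in\dot{H}_{ax}^{1}$ with $\delta\rho$ fixed, the unique minimizer is the induced Newtonian potential $\psi=-V(\delta\rho)=4\pi(-\Delta)^{-1}\delta\rho$, and a short computation (integration by parts, $\tfrac{1}{4\pi}\|\nabla V\|_{L^{2}}^{2}=-\int V\delta\rho$) gives
\[
\min_{\psi}Q(\delta\rho,\psi)=\int_{\mathbb{R}^{3}}\Phi^{\prime\prime}(\rho_{\mu,\kappa})(\delta\rho)^{2}\,dx+\int_{\mathbb{R}^{3}}V(\delta\rho)\,\delta\rho\,dx=\langle L_{\mu,\kappa}\delta\rho,\delta\rho\rangle .
\]
Minimizing $Q(\cdot,\psi)$ over $\delta\rho\in X_{1}^{\mu,\kappa}$ with $\psi$ fixed, the unique minimizer is $\delta\rho=\psi/\Phi^{\prime\prime}(\rho_{\mu,\kappa})$, and
\[
\min_{\delta\rho}Q(\delta\rho,\psi)=\frac{1}{4\pi}\int_{\mathbb{R}^{3}}|\nabla\psi|^{2}\,dx-\int_{\mathbb{R}^{3}}\frac{\psi^{2}}{\Phi^{\prime\prime}(\rho_{\mu,\kappa})}\,dx=\frac{1}{4\pi}\langle D_{\mu,\kappa}\psi,\psi\rangle .
\]
Since in each case the eliminated diagonal block is coercive, the standard fact on indices of quadratic forms with a positive definite block (as used in \cite{LZ2019}) yields $n^{-}(Q)=n^{-}\big(L_{\mu,\kappa}|_{X_{1}^{\mu,\kappa}}\big)=n^{-}\big(\tfrac{1}{4\pi}D_{\mu,\kappa}\big)=n^{-}(D_{\mu,\kappa})$; moreover a pair lies in $\ker Q$ iff its free component lies in the kernel of the corresponding reduced form and its other component equals the minimizer, so $\dim\ker L_{\mu,\kappa}=\dim\ker Q=\dim\ker D_{\mu,\kappa}$.

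The abstract mechanism is identical to the non-rotating case, so the only step requiring real care is the verification that the weighted embedding $\dot{H}_{ax}^{1}\hookrightarrow L_{1/\Phi^{\prime\prime}(\rho_{\mu,\kappa})}^{2}$ and the coercivity of the diagonal blocks persist for the slowly rotating densities $\rho_{\mu,\kappa}$, uniformly for $\mu\in[\mu_{0},\mu_{1}]$ and $\kappa$ small. This is exactly where the $C^{2}$ regularity and positive curvature of $\partial\Omega_{\mu,\kappa}=g_{\zeta_{\mu,\kappa}}(B_{\mu})$ near $(R_{\mu,\kappa},0)$ and the boundary behavior $\rho_{\mu,\kappa}(r,z)\thickapprox\mathrm{dist}((r,z),\partial\Omega_{\mu,\kappa})^{\frac{1}{\gamma_{0}-1}}$ enter — all of which were established just above for $\kappa$ small via the continuity of the dilation map $g_{\zeta_{\mu,\kappa}}$ in $\kappa$. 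Granting these inputs, no idea beyond \cite[Lemma 3.7]{LZ2019} is needed.
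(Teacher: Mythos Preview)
Your proposal is correct and takes essentially the same approach as the paper: the paper's proof consists solely of the sentence ``By the same proof of Lemma 3.7 in \cite{LZ2019}'', and you have accurately reproduced that Schur-complement argument with the joint quadratic form $Q$. Your verification that the needed weighted embedding and coercivity carry over to the rotating densities $\rho_{\mu,\kappa}$ is a reasonable elaboration of what the paper leaves implicit; note that since $1/\Phi''(\rho_{\mu,\kappa})\thickapprox\rho_{\mu,\kappa}^{2-\gamma_0}$ is bounded with compact support (as $\gamma_0<2$), the embedding $\dot H^1_{ax}\hookrightarrow L^2_{1/\Phi''(\rho_{\mu,\kappa})}$ follows directly from Sobolev without needing the finer boundary asymptotics.
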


Since the rotating star solution $(\rho_{\mu,\kappa},\kappa\omega
(r)r\mathbf{e}_{\theta})$ is even in $z$, we can compute $n^{-}\left(
L_{\mu,\kappa}|_{X_{1}^{\mu,\kappa}}\right)  $ and $n^{-}\left(  D_{\mu
,\kappa}\right)  $ on the even and odd (in $z$) subspaces respectively.
Define
\begin{align}
&  X_{od}^{\mu,\kappa}:=\{\rho\in X_{1}^{\mu,\kappa}|\ \rho(r,z)=-\rho
(r,-z)\},\ X_{ev}^{\mu,\kappa}:=\{\rho\in X_{1}^{\mu,\kappa}\ |\ \rho
(r,z)=\rho(r,-z)\},\label{defn-spaces-parity}\\
&  H^{od}:=\{\varphi\in\dot{H}_{ax}^{1}\ |\varphi(r,z)=-\varphi
(r,-z)\},\ H^{ev}:=\{\varphi\in\dot{H}_{ax}^{1}\ |\ \varphi(r,z)=\varphi
(r,-z)\}.\nonumber
\end{align}

\begin{lemma}
\label{N-Dmukappa} Assume $P(\rho)$ satisfies \eqref{P1}-\eqref{P2},
$\omega\in C^{1,\beta}[0,\infty)$ satisfies $\Upsilon(r)>0$. Then for any
$\mu\in\lbrack\mu_{0},\mu_{1}]\subset\left(  0,\tilde{\mu}\right)  $ and
$\kappa$ small enough, we have $n^{-}(L_{\mu,\kappa})=n^{-}(L_{\mu,0})=1$ and
$\ker L_{\mu,\kappa}=span\{\partial_{z}\rho_{\mu,\kappa}\}$. Moreover, we have
the following direct sum decompositions for $X_{ev}^{\mu,\kappa}$ and
$X_{ev}^{\mu,\kappa}:$
\[
X_{ev}^{\mu,\kappa}=X_{-,ev}^{\mu,\kappa}\oplus X_{+,ev}^{\mu,\kappa},\ \ \dim
X_{-,ev}^{\mu,\kappa}=1,
\]
and
\[
X_{od}^{\mu,\kappa}=span\{\partial_{z}\rho_{\mu,\kappa}\}\oplus X_{+,od}%
^{\mu,\kappa},
\]
satisfying: i) $L_{\mu,\kappa}|_{X_{-,ev}^{\mu,\kappa}}<0;$

ii) there exists $\delta>0$ such that
\[
\left\langle L_{\mu,\kappa}u,u\right\rangle \geq\delta\left\Vert u\right\Vert
_{L_{\Phi^{\prime\prime}(\rho_{\mu,\kappa})}^{2}}^{2}\ ,\text{ for any }u\in
X_{+,ev}^{\mu,\kappa}\oplus X_{+,od}^{\mu,\kappa},
\]
where $\delta$ is independent of $\mu$ and $\kappa$.

The same decompositions are also true for $K_{\mu,\kappa}\ $on $X_{ev}%
^{\mu,\kappa}$ and $X_{od}^{\mu,\kappa}$. In addition, for any $\mu\in
\lbrack\mu_{0},\mu_{1}]$, it holds that $\frac{dV_{\mu,\kappa}(0,Z_{\mu
,\kappa})}{d\mu}<0$ for $\kappa$ small enough.

\end{lemma}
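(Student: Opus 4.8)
The plan is to obtain the rotating star $(\rho_{\mu,\kappa},\kappa\omega r\mathbf{e}_{\theta})$ as a perturbation of the non‑rotating star $(\rho_\mu,0)$ (the case $\kappa=0$, where $K_{\mu,0}=L_{\mu,0}$), and to transfer the spectral picture of $L_{\mu,0}$ and $K_{\mu,0}$ to $L_{\mu,\kappa}$ and $K_{\mu,\kappa}$, with all estimates uniform over the compact set $\mu\in[\mu_0,\mu_1]$. At $\kappa=0$ I would argue as follows. By Lemma~\ref{lemma-equivalent-operators}, $n^-(L_{\mu,0})=n^-(D_{\mu,0})$ and $\dim\ker L_{\mu,0}=\dim\ker D_{\mu,0}$ with $D_{\mu,0}=-\Delta-4\pi/\Phi''(\rho_\mu)$ on $\dot H^1_{ax}$; decompose $D_{\mu,0}$ into spherical harmonics. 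On the $\ell=1$ sector $\partial_r V_{\mu,0}(r)=M_\mu(r)/r^2$ (with $M_\mu(r)=\int_{|x|<r}\rho_\mu$) is a strictly positive zero mode, hence the ground state, so the $\ell\ge1$ sectors are $\ge0$ with trivial kernel for $\ell\ge2$; on the $\ell=0$ (radial) sector, since $\mu<\tilde\mu$, \cite{LZ2019} gives exactly one negative eigenvalue and no zero mode. Thus $n^-(L_{\mu,0})=1$, the unique negative direction $\phi_\mu$ is radial, hence even in $z$, and the axi‑symmetric kernel of $L_{\mu,0}$ is spanned by the $(\ell=1,m=0)$ mode, which under Lemma~\ref{lemma-equivalent-operators} corresponds to $\partial_z\rho_\mu$, odd in $z$. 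Compactness of $[\mu_0,\mu_1]$ then yields $\varepsilon_0,\delta_0>0$ with $\langle L_{\mu,0}\phi_\mu,\phi_\mu\rangle\le-2\varepsilon_0\|\phi_\mu\|_{L^2_{\Phi''(\rho_\mu)}}^2$ and $\langle L_{\mu,0}\rho,\rho\rangle\ge\delta_0\|\rho\|^2$ whenever $\rho\perp span\{\phi_\mu,\partial_z\rho_\mu\}$, so $0$ is isolated in $\sigma(L_{\mu,0})$ with gap $\eta_0:=\min(\delta_0,2\varepsilon_0)$; this settles the lemma for $\kappa=0$.

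For the perturbation in $\kappa$, first note that $\partial_z\rho_{\mu,\kappa}\in\ker L_{\mu,\kappa}$ for every $\kappa$: differentiating \eqref{iftw1} in $z$ gives $\Phi''(\rho_{\mu,\kappa})\partial_z\rho_{\mu,\kappa}+\partial_z V_{\mu,\kappa}=0$ with $\partial_z V_{\mu,\kappa}=-4\pi(-\Delta)^{-1}\partial_z\rho_{\mu,\kappa}$, hence $L_{\mu,\kappa}\partial_z\rho_{\mu,\kappa}=0$; moreover $\partial_z\rho_{\mu,\kappa}$ is odd in $z$, nonzero, and lies in $L^2_{\Phi''(\rho_{\mu,\kappa})}$ by \eqref{rhonearboundary}. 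To control the rest of the low‑lying spectrum I would pull $L_{\mu,\kappa}$ back to the fixed weighted space $L^2_{\Phi''(\rho_\mu)}(B_\mu)$ by the diffeomorphism $g_{\zeta_{\mu,\kappa}}$; using Theorem~\ref{IFTlocal}(5) and \eqref{rhonearboundary}, the pulled‑back operators are all of the form $I-(\text{compact})$ with a uniform bound and converge to $L_{\mu,0}$ as $\kappa\to0$, uniformly for $\mu\in[\mu_0,\mu_1]$. Standard perturbation of the isolated point $0\in\sigma(L_{\mu,0})$ (gap $\eta_0$) then shows that for $|\kappa|<\kappa_0$, $\sigma(L_{\mu,\kappa})\cap(-\eta_0/2,\eta_0/2)$ is a single eigenvalue of multiplicity one — which by the a priori kernel element above must be $0$ with eigenvector $\partial_z\rho_{\mu,\kappa}$ — while $L_{\mu,\kappa}$ has exactly one eigenvalue below $-\eta_0/2$ (negative, near $\le-2\varepsilon_0$) and everything else $\ge\eta_0/2$; equivalently one may argue by a compactness/contradiction along sequences $(\mu_j,\kappa_j)$ with $\kappa_j\to0$. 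Since $L_{\mu,\kappa}$ commutes with $z\mapsto-z$, its negative eigenvector is even and $\partial_z\rho_{\mu,\kappa}$ is odd, which gives the decompositions $X_{ev}^{\mu,\kappa}=X_{-,ev}^{\mu,\kappa}\oplus X_{+,ev}^{\mu,\kappa}$ with $\dim X_{-,ev}^{\mu,\kappa}=1$ and $X_{od}^{\mu,\kappa}=span\{\partial_z\rho_{\mu,\kappa}\}\oplus X_{+,od}^{\mu,\kappa}$, with (i)--(ii) and $\delta:=\eta_0/2$ independent of $\mu,\kappa$; in particular $n^-(L_{\mu,\kappa})=1$ and $\ker L_{\mu,\kappa}=span\{\partial_z\rho_{\mu,\kappa}\}$.

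To transfer this to $K_{\mu,\kappa}$, use \eqref{defn-cal-k} and \eqref{NSCSkappa}:
\[
\langle K_{\mu,\kappa}\delta\rho,\delta\rho\rangle=\langle L_{\mu,\kappa}\delta\rho,\delta\rho\rangle+2\kappa^{2}\pi\int_{0}^{R_{\mu,\kappa}}\Upsilon(r)\frac{\left(\int_{0}^{r}s\int_{-\infty}^{+\infty}\delta\rho\,dz\,ds\right)^{2}}{r\int_{-\infty}^{+\infty}\rho_{\mu,\kappa}\,dz}\,dr,
\]
where by the estimate in the proof of Lemma~\ref{udeltarhoinL2} (uniform in $\mu,\kappa$, since $\Upsilon$ is bounded on $[0,R_0]$) the last term is $\le C\kappa^{2}\|\delta\rho\|^{2}_{L^2_{\Phi''(\rho_{\mu,\kappa})}}$, so $L_{\mu,\kappa}\le K_{\mu,\kappa}\le L_{\mu,\kappa}+C\kappa^{2}I$. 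The left inequality gives $n^-(K_{\mu,\kappa})\le1$ and $K_{\mu,\kappa}\ge\delta I$ on $X_{+,ev}^{\mu,\kappa}\oplus X_{+,od}^{\mu,\kappa}$; testing the right inequality on the negative eigenvector of $L_{\mu,\kappa}$ gives $n^-(K_{\mu,\kappa})=1$ with $K_{\mu,\kappa}<0$ on the same one‑dimensional $X_{-,ev}^{\mu,\kappa}$ for small $\kappa$. Since $\int_{-\infty}^{+\infty}\partial_z\rho_{\mu,\kappa}(s,z)\,dz=0$ (compact support), \eqref{defn-u-theta-rho} gives $u_{\theta}^{\partial_z\rho_{\mu,\kappa}}=0$, hence $\langle K_{\mu,\kappa}\partial_z\rho_{\mu,\kappa},\partial_z\rho_{\mu,\kappa}\rangle=0$; with $K_{\mu,\kappa}\ge\delta$ on $X_{+,od}^{\mu,\kappa}$ and $n^-(K_{\mu,\kappa}|_{X_{od}^{\mu,\kappa}})=0$, this forces $\partial_z\rho_{\mu,\kappa}\in\ker K_{\mu,\kappa}$ and the stated decomposition of $X_{od}^{\mu,\kappa}$. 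Finally, at $\kappa=0$ one has $Z_{\mu,0}=R_\mu$ and, by Newton's theorem, $V_{\mu,0}(0,R_\mu)=-M_\mu/R_\mu$, so $\tfrac{d}{d\mu}V_{\mu,0}(0,R_\mu)=-\tfrac{d}{d\mu}(M_\mu/R_\mu)<0$ because $M_\mu/R_\mu\to0^+$ as $\mu\to0^+$ and $\tilde\mu$ is its first critical point; the maps $\mu\mapsto\rho_{\mu,\kappa}$, $\kappa\mapsto Z_{\mu,\kappa}=g_{\zeta_{\mu,\kappa}}(0,R_\mu)$ and $\kappa\mapsto V_{\mu,\kappa}=-|x|^{-1}\ast\rho_{\mu,\kappa}$ are $C^1$ and continuous in $\kappa$ near $0$ (from the implicit function theorem construction), so $\tfrac{d}{d\mu}V_{\mu,\kappa}(0,Z_{\mu,\kappa})\to\tfrac{d}{d\mu}V_{\mu,0}(0,R_\mu)<0$ uniformly for $\mu\in[\mu_0,\mu_1]$, hence remains negative for $\kappa$ small.

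The main obstacle is making the $\kappa\to0$ limit in the second step rigorous with uniformity in $\mu$: the weight $\Phi''(\rho_{\mu,\kappa})$ degenerates at the moving free boundary $\partial\Omega_{\mu,\kappa}$, so a statement like ``$L_{\mu,\kappa}\to L_{\mu,0}$'' must first be given a meaning on comparable spaces. The remedy is the dilation $g_{\zeta_{\mu,\kappa}}$, which fixes the domain — or, equivalently, passing to the reduced operator $D_{\mu,\kappa}$ of Lemma~\ref{lemma-equivalent-operators}, whose ``potential'' $4\pi/\Phi''(\rho_{\mu,\kappa})\thickapprox\rho_{\mu,\kappa}^{2-\gamma_0}$ is bounded and vanishes at the boundary. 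After this reduction the $C^1_c$‑convergence of $\rho_{\mu,\kappa}$ from Theorem~\ref{IFTlocal}, together with the compactness built into ``$I-\text{compact}$'' (respectively the relative compactness of the potential for $-\Delta$), makes the spectral perturbation and the uniform gap estimate routine; the remaining ingredients (the a priori kernel element, parity, the monotonicity $\tfrac{d}{d\mu}(M_\mu/R_\mu)>0$ on $(0,\tilde\mu)$, and the $O(\kappa^2)$ bound on the extra term in $K$) are either elementary or already available from the cited results.
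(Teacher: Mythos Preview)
Your proposal is correct and follows essentially the same strategy as the paper: establish the spectral picture at $\kappa=0$ via the reduced operator $D_{\mu,0}$ (the paper simply cites \cite{LZ2019} for $n^-(D_{\mu,0})=1$, $\ker D_{\mu,0}=\mathrm{span}\{\partial_z V_\mu\}$, whereas you sketch the spherical-harmonics argument), then perturb in $\kappa$, and finally pass to $K_{\mu,\kappa}$ using the $O(\kappa^2)$ bound on the extra term. The paper executes the perturbation step through the second of your two routes, namely the reduced operator $D_{\mu,\kappa}$ on the fixed space $\dot H^1_{ax}$, with the explicit form estimate
\[
\langle(D_{\mu,\kappa}-D_{\mu,0})\psi,\psi\rangle
\le\Bigl\|\tfrac{4\pi}{\Phi''(\rho_{\mu,\kappa})}-\tfrac{4\pi}{\Phi''(\rho_{\mu})}\Bigr\|_{L^{3/2}}\|\psi\|_{L^6}^2
=O(\kappa)\|\nabla\psi\|_{L^2}^2,
\]
and then lifts back to $L_{\mu,\kappa}$ via the inequality $\langle L_{\mu,\kappa}\rho,\rho\rangle\ge\tfrac{1}{4\pi}\langle D_{\mu,\kappa}\psi,\psi\rangle$ with $\psi=(4\pi)^{-1}\Delta^{-1}\rho$; this sidesteps exactly the moving-boundary obstacle you flag, without needing the pull-back by $g_{\zeta_{\mu,\kappa}}$. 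The remaining points (the a priori kernel element $\partial_z\rho_{\mu,\kappa}$, the parity splitting, the $K$-perturbation, and the sign of $\tfrac{d}{d\mu}V_{\mu,\kappa}(0,Z_{\mu,\kappa})$ via $-\tfrac{d}{d\mu}(M_\mu/R_\mu)$) are handled just as you describe.
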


\begin{proof}
It was showed in \cite{LZ2019} that: for any $\mu\in\left(  0,\tilde{\mu
}\right)  $, we have $n^{-}(D_{\mu,0})=1$ and $\ker D_{\mu,0}=span\{\partial
_{z}V_{\mu}\}\ $in the axi-symmetric function space. Here, $V_{\mu}=$
$-|x|^{-1}\ast\rho_{\mu}\ $is the gravitational potential of the non-rotating
star. Since $\partial_{z}V_{\mu}$ is odd in $z$, it follows that for any
$\mu\in\left(  0,\tilde{\mu}\right)  $: \ i) on $H^{ev}$, $n^{-}(D_{\mu,0}%
)=1$, $\ker D_{\mu,0}=\{0\}$; ii) on $H^{od}$, $\ker D_{\mu,0}=span\{\partial
_{z}V_{\mu}\}$ and $n^{-}(D_{\mu,0})=0$. Moreover, for $\mu\in\lbrack\mu
_{0},\mu_{1}]\subset\left(  0,\tilde{\mu}\right)  $, there exists $\delta
_{0}>0$ (independent of $\mu$) and decompositions $H^{ev}=H_{-,\mu}^{ev}\oplus
H_{+,\mu}^{ev}$ and $H^{od}=span\{\partial_{z}V_{\mu}\}\oplus H_{+,\mu}^{od}$
satisfying that: i) $\dim H_{-,\mu}^{ev}=1,\ D_{\mu,0}|_{H_{-,\mu}^{ev}%
}<-\delta_{0}$; ii) $D_{\mu,0}|_{H_{+,\mu}^{ev}\oplus H_{+,\mu}^{od}}%
\geq\delta_{0}$. Since $\partial_{z}V_{\mu,\kappa}\in H^{od}\cap\ker
D_{\mu,\kappa}$ and
\begin{align*}
\langle(D_{\mu,\kappa}-D_{\mu,0})\psi,\psi\rangle &  =\int\left(  \frac{4\pi
}{\Phi^{\prime\prime}(\rho_{\mu,\kappa})}-\frac{4\pi}{\Phi^{\prime\prime}%
(\rho_{\mu})}\right)  \psi^{2}dx\\
&  \lesssim\left(  \int\left(  \frac{4\pi}{\Phi^{\prime\prime}(\rho
_{\mu,\kappa})}-\frac{4\pi}{\Phi^{\prime\prime}(\rho_{\mu})}\right)
^{\frac{3}{2}}dx\right)  ^{\frac{2}{3}}\Vert\psi\Vert_{L^{6}}^{2}\\
&  \lesssim O(\kappa)\Vert\nabla\psi\Vert_{L^{2}}^{2}\rightarrow0,\text{ as
}\kappa\rightarrow0,
\end{align*}
by the perturbation arguments (e.g. Corollary 2.19 in \cite{LZ2019}) it
follows that for $\mu\in\lbrack\mu_{0},\mu_{1}]$ and $\kappa$ sufficiently
small, the decompositions $H^{ev}=H_{-,\mu}^{ev}\oplus H_{+,\mu}^{ev}$ and
$H^{od}=span\{\partial_{z}V_{\mu,\kappa}\}\oplus H_{+,\mu}^{ev}$ satisfy: i)
$\dim H_{-,\mu}^{ev}=1,\ D_{\mu,\kappa}|_{H_{-,\mu}^{ev}}<-\frac{1}{2}%
\delta_{0}$; ii) $D_{\mu,\kappa}|_{H_{+,\mu}^{ev}\oplus H_{+,\mu}^{ev}}%
\geq\frac{1}{2}\delta_{0}$.

By the proof of Lemma 3.4 in \cite{LZ2019}, for any $\rho\in X_{1}^{\mu
,\kappa}\ $we have
\begin{equation}
\left\langle L_{\mu,\kappa}\rho,\rho\right\rangle =\left\Vert \rho\right\Vert
_{L_{\Phi^{\prime\prime}(\rho_{\mu,\kappa})}^{2}}^{2}-\frac{1}{4\pi}%
\Vert\nabla\psi\Vert_{L^{2}}^{2}\geq\frac{1}{4\pi}\left\langle D_{\mu,\kappa
}\psi,\psi\right\rangle ,\ \ \label{quadratic lower bound}%
\end{equation}
where $\psi=\frac{1}{4\pi}\Delta^{-1}\rho$. We note that $\partial_{z}%
\rho_{\mu,\kappa}\in\ker L_{\mu,\kappa}\cap X_{od}^{\mu,\kappa}$ and
$\partial_{z}V_{\mu,\kappa}=\frac{1}{4\pi}\Delta^{-1}\rho_{\mu,\kappa}$. The
existence of decompositions for $X_{ev}^{\mu,\kappa}$ and $X_{od}^{\mu,\kappa
}$ as stated in the lemma follows readily from (\ref{quadratic lower bound})
and above decompositions for $H^{od}$ and $H^{ev}$.

Since
\[
\left\vert \left\langle \left(  L_{\mu,\kappa}-K_{\mu,\kappa}\right)
\rho,\rho\right\rangle \right\vert \lesssim o(\kappa^{2})\left\Vert
\rho\right\Vert _{L_{\Phi^{\prime\prime}(\rho_{\mu,\kappa})}^{2}}%
^{2},\ \ \forall\rho\in X_{1}^{\mu,\kappa},
\]
and $\partial_{z}\rho_{\mu,\kappa}\in\ker K_{\mu,\kappa}\cap X_{od}%
^{\mu,\kappa}$, we have the same decompositions for $K_{\mu,\kappa}$ on
$X_{ev}^{\mu,\kappa}$ and $X_{od}^{\mu,\kappa}$.

Since $\gamma_{0}\in(6/5,2)$, it is known that (see \cite{LZ2019})
\[
\frac{dV_{\mu}(0,R_{\mu})}{d\mu}=-\frac{d}{d\mu}\left(  \frac{M_{\mu}}{R_{\mu
}}\right)  <0
\]
for $\mu$ small. Recall that $\tilde{\mu}$ is the first critical point of
$\frac{M_{\mu}}{R_{\mu}}$. Therefore, when $\mu\in\lbrack\mu_{0},\mu
_{1}]\subset\left(  0,\tilde{\mu}\right)  $, we have $\frac{dV_{\mu}(0,R_{\mu
})}{d\mu}<-\epsilon_{0}$ for some constant $\epsilon_{0}>0$ independent of
$\mu$. Since $\left\vert \frac{dV_{\mu,\kappa}(0,Z_{\mu,\kappa})}{d\mu}%
-\frac{dV_{\mu}(0,R_{\mu})}{d\mu}\right\vert =O(\kappa)$, we have
$\frac{dV_{\mu,\kappa}(0,Z_{\mu,\kappa})}{d\mu}<0$ for any $\mu\in\lbrack
\mu_{0},\mu_{1}]$ and $\kappa$ small enough. This finishes the proof of the lemma.
\end{proof}

\begin{proof}
[Proof of Theorem \ref{Th: stability-fixed velocity}]The spectral stability of
$(\rho_{\mu,\kappa},\kappa\omega r\mathbf{e}_{\theta})$ is equivalent to show
$n^{-}\left(  K_{\mu,\kappa}|_{R(B_{1}^{\mu,\kappa})}\right)  =0$. By Lemma
\ref{N-Dmukappa} and the fact that $K_{\mu,\kappa}=L_{\mu,\kappa}\ $on
$X_{od}^{\mu,\kappa}$, we have
\[
n^{-}(K_{\mu,\kappa}|_{X_{od}^{\mu,\kappa}\cap R(B_{1}^{\mu,\kappa})}%
)=n^{-}(L_{\mu,\kappa}|_{X_{od}^{\mu,\kappa}\cap R(B_{1}^{\mu,\kappa})})\leq
n^{-}(L_{\mu,\kappa}|_{X_{od}^{\mu,\kappa}})=0.
\]
Since $K_{\mu,\kappa}\geq L_{\mu,\kappa}$ on $X_{ev}^{\mu,\kappa}\ $due to
$\Upsilon(r)>0$, for spectral stability it suffices to show $n^{-}\left(
L_{\mu,\kappa}|_{X_{ev}^{\mu,\kappa}\cap R(B_{1}^{\mu,\kappa})}\right)  =0$.

Applying $\frac{d}{d\mu}$ to \eqref{iftw1}, we obtain that
\[
L_{\mu,\kappa}\frac{d\rho_{\mu,\kappa}}{d\mu}=-\frac{dc_{\mu,\kappa}}{d\mu}.
\]
From \eqref{iftw1} we know that $c_{\mu,\kappa}=-V_{\mu,\kappa}(R_{\mu,\kappa
},0)$. By Lemma \ref{N-Dmukappa}, $\frac{dc_{\mu,\kappa}}{d\mu}>0$ for $\mu
\in\lbrack\mu_{0},\mu_{1}]$ and $\kappa$ small enough. Therefore,
\[
X_{ev}^{\mu,\kappa}\cap R(B_{1}^{\mu,\kappa})=\left\{  \delta\rho\in
X_{ev}^{\mu,\kappa}\ |\ \left\langle L_{\mu,\kappa}\frac{d\rho_{\mu,\kappa}%
}{d\mu},\delta\rho\right\rangle =0\right\}  ,
\]
i.e. $\delta\rho$ is orthogonal to $\frac{d\rho_{\mu,\kappa}}{d\mu}\ $in
$\left\langle L_{\mu,\kappa}\cdot,\cdot\right\rangle $.

When $\frac{dM_{\mu,\kappa}}{d\mu}>0$, we have
\[
\left\langle L_{\mu,\kappa}\frac{d\rho_{\mu,\kappa}}{d\mu},\frac{d\rho
_{\mu,\kappa}}{d\mu}\right\rangle =-\frac{dc_{\mu,\kappa}}{d\mu}\int
_{g_{\zeta_{\mu,\kappa}}(B_{\mu})}\frac{d\rho_{\mu,\kappa}}{d\mu}%
dx=\frac{dV_{\mu,\kappa}(0,Z_{\mu,\kappa})}{d\mu}\frac{dM_{\mu,\kappa}}{d\mu
}<0.
\]
Combining above with Lemma \ref{N-Dmukappa}, we get $n^{-}(L_{\mu,\kappa
}|_{X_{ev}^{\mu,\kappa}\cap R(B_{1}^{\mu,\kappa})})=0$. Hence we get the
spectrally stability.

When $\frac{dM_{\mu,\kappa}}{d\mu}=0$, since
\[
\frac{dM_{\mu,\kappa}}{d\mu}=\int\frac{d\rho_{\mu,\kappa}}{d\mu}dx=0,
\]
we have $\frac{d\rho_{\mu,\kappa}}{d\mu}\in X_{ev}^{\mu,\kappa}\cap
R(B_{1}^{\mu,\kappa})$. Meanwhile, since $\ker L_{\mu,\kappa}=\{0\}$ on
$X_{ev}^{\mu,\kappa}$, by the same argument as in the proof of Theorem 1.1 in
\cite{LZ2019}, we have $n^{-}(L_{\mu,\kappa}|_{X_{ev}^{\mu,\kappa}\cap
R(B_{1}^{\mu,\kappa})})=0$. The spectral stability is again true.
\end{proof}

It is natural to ask if extrema points of the total mass $M_{\mu,\kappa}\ $of
the rotating stars $(\rho_{\mu,\kappa},\kappa\omega r\mathbf{e}_{\theta}%
)\ $are the transition points for stability as in the case of nonrotating
stars. Below, we show that this is not true.

First, we give conditions to ensure that the first extrema point of total mass
$M_{\mu,\kappa}$ is obtained at a center density $\mu_{\ast}^{\kappa}$ before
$\tilde{\mu}$ (the first critical point of $M_{\mu}/R_{\mu}$). Assume
$P(\rho)\ $satisfies the following asymptotically polytropic conditions:

H1)
\begin{equation}
P(\rho)=c_{-}\rho^{\gamma_{0}}(1+O(\rho^{a_{0}}))\text{ when }\rho
\rightarrow0,\label{approxiP1}%
\end{equation}
for some $\gamma_{0}\in(\frac{4}{3},2)$ and $c_{-},\ a_{0}>0$;\newline

H2)
\begin{equation}
P(\rho)=c_{+}\rho^{\gamma_{\infty}}(1+O(\rho^{-a_{\infty}}))\text{ when }%
\rho\rightarrow+\infty,\label{approxiP2}%
\end{equation}
for some $\gamma_{\infty}\in(1,6/5)\cup(6/5,4/3)$ and $c_{+},\ a_{\infty}>0$.

Under assumptions H1)-H2), it was shown in \cite{HU2003} that the total mass
$M_{\mu}$ of the non-rotating stars has extrema points. Moreover, the first
extrema point of $M_{\mu}$, which is a maximum point denoted by $\mu_{\ast}$,
must be less than $\tilde{\mu}$ (see Lemma 3.14 in \cite{LZ2019}). For any
$\mu_{0}<\mu_{\ast}<\mu_{1}<$ $\tilde{\mu}$, we have $M_{\mu,\kappa
}\rightarrow M_{\mu}$ in $C^{1}\left[  \mu_{0},\mu_{1}\right]  $ when
$\kappa\rightarrow0$. Thus when $\kappa$ is small enough, the function
$M_{\mu,\kappa}$ has the first maximum $\mu_{\ast}^{\kappa}\in\left(  \mu
_{0},\mu_{1}\right)  $ and $\lim_{\kappa\rightarrow0}\mu_{\ast}^{\kappa}%
=\mu_{\ast}$. By Theorem \ref{Th: stability-fixed velocity}, the rotating
stars $(\rho_{\mu,\kappa},\kappa\omega(r)r\mathbf{e}_{\theta})\ $are stable
for $\mu\in\left[  \mu_{0},\mu_{\ast}^{\kappa}\right]  $. It is shown below
that the transition of stability occurs beyond $\mu_{\ast}^{\kappa}$.

\begin{theorem}
\label{Th: TPP-fixed velocity} Suppose $P(\rho)$ satisfies
\eqref{approxiP1}-\eqref{approxiP2}, $\omega\in C^{1,\beta}[0,\infty)$
satisfies $\Upsilon(r)>0$. Fixed $\kappa$ small, let $\hat{\mu}_{\kappa}\ $be
the first transition point of stability of the rotating stars $(\rho
_{\mu,\kappa},\kappa\omega(r)r\mathbf{e}_{\theta})$. Then for any $\kappa
\neq0$ small enough, we have $\hat{\mu}_{\kappa}>\mu_{\ast}^{\kappa}.$
\end{theorem}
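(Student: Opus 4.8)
The plan is to show spectral stability of $(\rho_{\mu,\kappa},\kappa\omega r\mathbf{e}_{\theta})$ for all $\mu$ in an interval $[\mu_{0},\mu_{\ast}^{\kappa}+\epsilon)$ with some $\epsilon>0$; since Theorem \ref{Th: stability-fixed velocity} already gives stability on $[\mu_{0},\mu_{\ast}^{\kappa}]$, only the extension slightly past $\mu_{\ast}^{\kappa}$ is new, and by Theorem \ref{Th: rayleigh stable} it is equivalent to $n^{-}\!\big(K_{\mu,\kappa}|_{R(B_1^{\mu,\kappa})}\big)=0$ there. Splitting $X_{1}^{\mu,\kappa}=X_{ev}^{\mu,\kappa}\oplus X_{od}^{\mu,\kappa}$, on the odd subspace the rotation term in \eqref{NSCSkappa} vanishes (because $\int_{-\infty}^{+\infty}\delta\rho(s,z)\,dz=0$ for odd $\delta\rho$), so $K_{\mu,\kappa}=L_{\mu,\kappa}$ there, and by Lemma \ref{N-Dmukappa} $n^{-}(L_{\mu,\kappa}|_{X_{od}^{\mu,\kappa}})=0$ for every $\mu\in[\mu_{0},\mu_{1}]$, so the odd part never contributes. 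Everything thus reduces to the even subspace $V_{\mu}:=X_{ev}^{\mu,\kappa}\cap R(B_1^{\mu,\kappa})$.

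The heart of the argument is to show that at $\mu=\mu_{\ast}^{\kappa}$ the form $\langle K_{\mu_{\ast}^{\kappa},\kappa}\cdot,\cdot\rangle$ is \emph{uniformly} positive on $V_{\mu_{\ast}^{\kappa}}$, not merely $\geq 0$. From the proof of Theorem \ref{Th: stability-fixed velocity}, $\phi:=\frac{d\rho_{\mu,\kappa}}{d\mu}\big|_{\mu=\mu_{\ast}^{\kappa}}$ lies in $V_{\mu_{\ast}^{\kappa}}$ (it is even in $z$ and $\int\phi\,dx=\frac{dM_{\mu_{\ast}^{\kappa},\kappa}}{d\mu}=0$) and satisfies $L_{\mu_{\ast}^{\kappa},\kappa}\phi=-\frac{dc_{\mu_{\ast}^{\kappa},\kappa}}{d\mu}\mathbf 1$ with $\frac{dc_{\mu_{\ast}^{\kappa},\kappa}}{d\mu}>0$, hence $\langle L_{\mu_{\ast}^{\kappa},\kappa}\phi,v\rangle=0$ for all $v\in V_{\mu_{\ast}^{\kappa}}$. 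Feeding this, together with $\ker L_{\mu_{\ast}^{\kappa},\kappa}|_{X_{ev}^{\mu_{\ast}^{\kappa},\kappa}}=\{0\}$ (Lemma \ref{N-Dmukappa}), into the constrained index count of \cite{LZ2019} in its degenerate case (the quantity $-\frac{1}{dc/d\mu}\frac{dM}{d\mu}$, which plays the role of $\langle L^{-1}\mathbf 1,\mathbf 1\rangle$, vanishes at $\mu_{\ast}^{\kappa}$) gives: $L_{\mu_{\ast}^{\kappa},\kappa}|_{V_{\mu_{\ast}^{\kappa}}}\geq 0$ with $\ker=\mathrm{span}\{\phi\}$ exactly, and $\langle L_{\mu_{\ast}^{\kappa},\kappa}v,v\rangle\geq\delta_{0}\,\mathrm{dist}(v,\mathrm{span}\{\phi\})^{2}$ for a fixed $\delta_{0}>0$. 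Since $K_{\mu_{\ast}^{\kappa},\kappa}=L_{\mu_{\ast}^{\kappa},\kappa}+R_{\mu_{\ast}^{\kappa},\kappa}$ with $R_{\mu_{\ast}^{\kappa},\kappa}\geq 0$ bounded (Lemma \ref{udeltarhoinL2}), a compactness argument (if $\langle K v_{n},v_{n}\rangle\to 0$ with $\|v_{n}\|=1$ then $\langle L v_n,v_n\rangle\to0$, forcing $v_{n}\to t\phi$, and then $\langle R v_{n},v_{n}\rangle\to t^{2}\langle R\phi,\phi\rangle$) shows that uniform positivity of $K_{\mu_{\ast}^{\kappa},\kappa}|_{V_{\mu_{\ast}^{\kappa}}}$ is equivalent to the single inequality
\[
\langle R_{\mu_{\ast}^{\kappa},\kappa}\phi,\phi\rangle=2\kappa^{2}\pi\int_{0}^{R_{\mu_{\ast}^{\kappa},\kappa}}\Upsilon(r)\,\frac{\Big(\int_{0}^{r}s\int_{-\infty}^{+\infty}\phi(s,z)\,dz\,ds\Big)^{2}}{r\int_{-\infty}^{+\infty}\rho_{\mu_{\ast}^{\kappa},\kappa}(r,z)\,dz}\,dr>0 ,
\]
that is, $s\mapsto\int_{-\infty}^{+\infty}\phi(s,z)\,dz$ is not identically zero on $(0,R_{\mu_{\ast}^{\kappa},\kappa})$.

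I expect this last non-degeneracy to be the main obstacle. One has $\phi(0,0)=1$ since $\rho_{\mu,\kappa}(0)=\mu$, so $\phi$ is not identically zero; but one must rule out the cancellation $\int_{-\infty}^{+\infty}\phi(s,z)\,dz\equiv 0$. The natural route is the explicit dilation construction $\rho_{\mu,\kappa}(x)=\rho_{\mu}\big(g_{\zeta_{\mu,\kappa}}^{-1}(x)\big)$: for $\kappa$ small, $\int_{-\infty}^{+\infty}\phi(s,z)\,dz$ is an $O(\kappa)$ perturbation of $\frac{d}{d\mu}\int_{-\infty}^{+\infty}\rho_{\mu}(\sqrt{s^{2}+z^{2}})\,dz$, whose value near $s=0$ is essentially $\frac{d}{d\mu}\big(2\int_{0}^{R_{\mu}}\rho_{\mu}(t)\,dt\big)$, the $\mu$-derivative of the central column mass of the non-rotating star — which does not vanish (for polytropes it is $\propto\frac{d}{d\mu}\mu^{2-\gamma_{0}/2}>0$, and this persists for the asymptotically polytropic equations of state H1)--H2)). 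This is precisely where Rayleigh-stable rotation injects strictly positive energy into the marginal direction $\phi$ and thereby pushes the stability transition past the first mass maximum.

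Finally, transplanting $V_{\mu}$ and $\langle K_{\mu,\kappa}\cdot,\cdot\rangle$ to a $\mu$-independent reference domain via $g_{\zeta_{\mu,\kappa}}$ exactly as in the proof of Lemma \ref{N-Dmukappa}, everything depends continuously on $\mu$, and (as $\sigma_{ess}(K_{\mu,\kappa})$ stays bounded away from $0$ for $\kappa$ small, $K$ being a bounded perturbation of $L$) the bottom eigenvalue of $K_{\mu,\kappa}|_{V_{\mu}}$ varies continuously in $\mu$. Its positivity at $\mu_{\ast}^{\kappa}$ therefore persists on $[\mu_{\ast}^{\kappa},\mu_{\ast}^{\kappa}+\epsilon)$ for some $\epsilon>0$, so $n^{-}(K_{\mu,\kappa}|_{V_{\mu}})=0$ there; combined with the odd part this yields $n^{-}(K_{\mu,\kappa}|_{R(B_1^{\mu,\kappa})})=0$ on $[\mu_{0},\mu_{\ast}^{\kappa}+\epsilon)$, hence spectral stability by Theorem \ref{Th: rayleigh stable}, and therefore $\hat\mu_{\kappa}\geq\mu_{\ast}^{\kappa}+\epsilon>\mu_{\ast}^{\kappa}$.
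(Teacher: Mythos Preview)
Your overall architecture is the same as the paper's: reduce to the even subspace, recognize that stability at $\mu_*^\kappa$ hinges on whether the quadratic form $K_{\mu_*^\kappa,\kappa}$ has a null direction on $V_{\mu_*^\kappa}$, identify $\phi=\frac{d\rho_{\mu,\kappa}}{d\mu}\big|_{\mu=\mu_*^\kappa}$ as the only candidate (since $n^{\le 0}(L_{\mu_*^\kappa,\kappa}|_{X_{ev}})=1$), and conclude that the whole theorem reduces to the single non-degeneracy statement
\[
\int_{-\infty}^{+\infty}\phi(r,z)\,dz\not\equiv 0\quad\text{on }(0,R_{\mu_*^\kappa,\kappa}).
\]
Your compactness step and the continuity-in-$\mu$ wrap-up are fine; the paper packages the same content as a contradiction argument (assume $\mu_*^\kappa$ is the transition point, get $\inf K|_{V}=0$, force the minimizer to be a multiple of $\phi$, hence $\int\phi\,dz\equiv0$), but this is only a cosmetic difference.

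The genuine gap is in how you verify the non-degeneracy. You look near the \emph{center} $s=0$ and reduce to $\frac{d}{d\mu}\Big(2\int_0^{R_\mu}\rho_\mu(t)\,dt\Big)\neq 0$ at $\mu\approx\mu_*$. For pure polytropes this works (the column mass scales like $\mu^{\gamma/2}$, so the derivative is positive for all $\mu$), but under the hypotheses H1)--H2) the equation of state is only \emph{asymptotically} polytropic: for intermediate densities nothing controls the central column mass, and there is no reason its $\mu$-derivative cannot vanish precisely at $\mu_*$. Your sentence ``this persists for the asymptotically polytropic equations of state H1)--H2)'' is the unjustified step.

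The paper closes this gap by looking instead near the \emph{boundary} $r\to R_{\mu_*^\kappa,\kappa}$. There the behavior is universal, governed only by $\gamma_0$ through H1): one has $\frac{d\rho_\mu}{d\mu}(\mathbf r)=\frac{1}{\Phi''(\rho_\mu)}\frac{dy_\mu}{d\mu}(\mathbf r)\approx\rho_\mu^{2-\gamma_0}$ near $\partial\Omega$, and the non-vanishing of the prefactor $\frac{dy_\mu}{d\mu}(R_\mu)$ at $\mu=\mu_*$ reduces (via Lemma~3.13 of \cite{LZ2019}) to $\frac{d}{d\mu}\big(M_\mu/R_\mu\big)\big|_{\mu=\mu_*}\neq 0$, which is guaranteed by the structural fact $\mu_*<\tilde\mu$. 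Then Lemma~\ref{lemma-rho-int} gives $\int\phi(r,z)\,dz\approx(R_{\mu_*^\kappa,\kappa}-r)^{\frac{2-\gamma_0}{\gamma_0-1}+\frac12}\neq 0$ for $r$ near the boundary. In short: replace your center computation by the paper's boundary asymptotics, and your proof goes through.
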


\begin{proof}
As in the proof of Theorem \ref{Th: stability-fixed velocity}, the spectral
stability is equivalent to show $K_{\mu,\kappa}\geq0$ on $X_{ev}^{\mu,\kappa
}\cap R(B_{1}^{\mu,\kappa})$. Suppose the maxima point $\mu_{\ast}^{\kappa}$
of $M_{\mu,\kappa}$ is the first transition point for stability, then we have
\begin{equation}
\inf_{\rho\in X_{ev}^{\mu_{\ast}^{\kappa},\kappa}\cap R(B_{1}^{\mu_{\ast
}^{\kappa},\kappa})}\frac{\langle K_{\mu_{\ast}^{\kappa},\kappa}\rho
,\rho\rangle}{\Vert\rho\Vert_{L_{\Phi^{\prime\prime}(\rho_{\mu_{\ast}^{\kappa
},\kappa})}^{2}}}=0.\label{inf-even}%
\end{equation}
By Lemma \ref{N-Dmukappa}, when $\kappa$ is small enough, we have the
decomposition
\[
X_{ev}^{\mu_{\ast}^{\kappa},\kappa}=X_{-,ev}^{\mu_{\ast}^{\kappa},\kappa
}\oplus X_{+,ev}^{\mu_{\ast}^{\kappa},\kappa},\ \ \dim X_{-,ev}^{\mu_{\ast
}^{\kappa},\kappa}=1,
\]
satisfying: i) $K_{\mu_{\ast}^{\kappa},\kappa}|_{X_{-,ev}^{\mu_{\ast}^{\kappa
},\kappa}}<0;\ $ii) there exists $\delta>0$ such that
\[
\left\langle K_{\mu_{\ast}^{\kappa},\kappa}\rho,\rho\right\rangle \geq
\delta\left\Vert \rho\right\Vert _{L_{\Phi^{\prime\prime}(\rho_{\mu_{\ast
}^{\kappa},\kappa})}^{2}}^{2}\ ,\text{ for any }\rho\in X_{+,ev}^{\mu_{\ast
}^{\kappa},\kappa}.
\]
By using above decomposition, it is easy to show that the infimum in
(\ref{inf-even}) is obtained by some $\rho^{\ast}\in X_{ev}^{\mu_{\ast
}^{\kappa},\kappa}\cap R(B_{1}^{\mu_{\ast}^{\kappa},\kappa})$. Then
\[
\langle L_{\mu_{\ast}^{\kappa},\kappa}\rho^{\ast},\rho^{\ast}\rangle
\leq\langle K_{\mu_{\ast}^{\kappa},\kappa}\rho^{\ast},\rho^{\ast}\rangle=0.
\]
On the other hand, we have
\[
\left\langle L_{\mu_{\ast}^{\kappa},\kappa}\frac{d\rho_{\mu,\kappa}}{d\mu
}|_{\mu=\mu_{\ast}^{\kappa}},\frac{d\rho_{\mu,\kappa}}{d\mu}|_{\mu=\mu_{\ast
}^{\kappa}}\right\rangle =\frac{dV_{\mu,\kappa}(0,Z_{\mu,\kappa})}{d\mu}%
|_{\mu=\mu_{\ast}^{\kappa}}\frac{dM_{\mu,\kappa}}{d\mu}|_{\mu=\mu_{\ast
}^{\kappa}}=0,
\]
and
\[
\left\langle L_{\mu_{\ast}^{\kappa},\kappa}\frac{d\rho_{\mu,\kappa}}{d\mu
}|_{\mu=\mu_{\ast}^{\kappa}},\rho^{\ast}\right\rangle =\frac{dV_{\mu,\kappa
}(0,Z_{\mu,\kappa})}{d\mu}|_{\mu=\mu_{\ast}^{\kappa}}\int\rho^{\ast}dx=0.
\]
This implies that $\rho^{\ast}=c\frac{d\rho_{\mu,\kappa}}{d\mu}|_{\mu
=\mu_{\ast}^{\kappa}}$ for some constant $c\neq0$. Since otherwise,
\[
n^{\leq0}(L_{\mu_{\ast}^{\kappa},\kappa}|_{X_{ev}^{\mu_{\ast}^{\kappa},\kappa
}})\geq n^{\leq0}(L_{\mu_{\ast}^{\kappa},\kappa}|_{span\left\{  \frac
{d\rho_{\mu,\kappa}}{d\mu}|_{\mu=\mu_{\ast}^{\kappa}},\rho^{\ast}\right\}
})=2.
\]
which is in contradiction to $n^{\leq0}(L_{\mu_{\ast}^{\kappa},\kappa
}|_{X_{ev}^{\mu_{\ast}^{\kappa},\kappa}})=1$. Thus, we have
\begin{align*}
0 &  =\left\langle K_{\mu_{\ast}^{\kappa},\kappa}\frac{d\rho_{\mu,\kappa}%
}{d\mu}|_{\mu=\mu_{\ast}^{\kappa}},\frac{d\rho_{\mu,\kappa}}{d\mu}|_{\mu
=\mu_{\ast}^{\kappa}}\right\rangle \\
&  =2\pi\kappa^{2}\int_{0}^{+\infty}\Upsilon(r)\frac{\left(  \int_{0}^{r}%
s\int_{-\infty}^{+\infty}\frac{d\rho_{\mu,\kappa}}{d\mu}|_{\mu=\mu_{\ast
}^{\kappa}}(s,z)dzds\right)  ^{2}}{r\int_{-\infty}^{+\infty}\rho_{\mu_{\ast
}^{\kappa},\kappa}({r},z)dz}dr.
\end{align*}
and consequently
\begin{equation}
\int_{-\infty}^{+\infty}\frac{d\rho_{\mu,\kappa}}{d\mu}|_{\mu=\mu_{\ast
}^{\kappa}}(r,z)dz=0,\ \ \ \forall r\in\lbrack0,R_{\mu_{\ast}^{\kappa},\kappa
}].\label{iden zero}%
\end{equation}
Nevertheless, it is not true as shown below.

For non-rotating stars $(\rho_{\mu}(\mathbf{r}),0)$, we have
\[
\Delta V_{\mu}=\frac{1}{\mathbf{r}^{2}}\left(  \mathbf{r}^{2}\left(  V_{\mu
}(\mathbf{r})\right)  ^{\prime}\right)  ^{\prime}=4\pi\rho_{\mu},
\]
where $\mathbf{r}=\sqrt{r^{2}+z^{2}}$ and $V_{\mu}(\mathbf{r})$ is the
gravitational potential. Applying $\frac{d}{d\mu}$ to above equation, one has
\[
\frac{1}{\mathbf{r}^{2}}\left(  \mathbf{r}^{2}\left(  \frac{dV_{\mu
}(\mathbf{r})}{d\mu}\right)  ^{\prime}\right)  ^{\prime}=4\pi\frac{d\rho_{\mu
}}{d\mu}.
\]
When $\mathbf{r}\geq R_{\mu}$, since $\frac{d\rho_{\mu}}{d\mu}\left(
\mathbf{r}\right)  =0$ we have
\[
\mathbf{r}^{2}\left(  \frac{dV_{\mu}}{d\mu}\right)  ^{\prime}(\mathbf{r}%
)=R_{\mu}^{2}\left(  \frac{dV_{\mu}}{d\mu}\right)  ^{\prime}(R_{\mu})=4\pi
\int_{0}^{R_{\mu}}s^{2}\frac{d\rho_{\mu}}{d\mu}(s)ds=\frac{dM_{\mu}}{d\mu},
\]
and consequently%
\[
\frac{dV_{\mu}}{d\mu}(\mathbf{r})=-\frac{dM_{\mu}}{d\mu}\frac{1}{\mathbf{r}%
},\ \text{for }\mathbf{r}\geq R_{\mu}.\
\]
Since $\lim_{\kappa\rightarrow0}\mu_{\ast}^{\kappa}=\mu_{\ast}$, we have
$\lim_{\kappa\rightarrow0}\frac{dM_{\mu}}{d\mu}\left(  \mu_{\ast}^{\kappa
}\right)  =\frac{dM_{\mu}}{d\mu}\left(  \mu_{\ast}\right)  =0$. Thus
\[
\frac{dV_{\mu}}{d\mu}(R_{\mu})|_{\mu=\mu_{\ast}^{\kappa}}=-\frac{dM_{\mu}%
}{d\mu}\left(  \mu_{\ast}^{\kappa}\right)  \frac{1}{R_{\mu_{\ast}^{\kappa}}%
}\rightarrow0,\ \text{as }\kappa\rightarrow0\text{. }%
\]
Define $y_{\mu}(\mathbf{r})=V_{\mu}(R_{\mu})-V_{\mu}(\mathbf{r})=\Phi^{\prime
}(\rho_{\mu})$. Then by Lemma 3.13 in \cite{LZ2019}, we have
\begin{align}
\frac{dy_{\mu}}{d\mu}(R_{\mu})|_{\mu=\mu_{\ast}^{\kappa}} &  =-\frac{d}{d\mu
}\left(  \frac{M_{\mu}}{R_{\mu}}\right)  |_{\mu=\mu_{\ast}^{\kappa}}%
-\frac{dV_{\mu}}{d\mu}(R_{\mu})|_{\mu=\mu_{\ast}^{\kappa}}\label{dydmuneq0}\\
&  \rightarrow-\frac{d}{d\mu}\left(  \frac{M_{\mu}}{R_{\mu}}\right)
|_{\mu=\mu_{\ast}}\neq0\text{, as }\kappa\rightarrow0.\nonumber
\end{align}
Thus by \eqref{dydmuneq0}, we obtain
\[
\frac{d\rho_{\mu}}{d\mu}(\mathbf{r})=\frac{1}{\Phi^{\prime\prime}(\rho_{\mu}%
)}\frac{dy_{\mu}}{d\mu}(\mathbf{r})\approx\rho_{\mu}^{2-\gamma_{0}}%
\approx(R_{\mu}-\mathbf{r})^{\frac{2-\gamma_{0}}{\gamma_{0}-1}},
\]
for $\mathbf{r}\sim R_{\mu}$ and $\mu=\mu_{\ast}^{\kappa}$. By (3.36) and
(4.78) in \cite{SW2017}, we know
\[
\left\vert \frac{dg_{\zeta_{\mu,\kappa}}^{-1}}{d\mu}(y)\right\vert =\left\vert
\lim_{\mu_{1}\rightarrow\mu}\frac{g_{\zeta_{\mu_{1},\kappa}}^{-1}%
-g_{\zeta_{\mu,\kappa}}^{-1}}{\mu_{1}-\mu}(y)\right\vert \leq C\kappa,
\]
for some constant $C$ independent of $\mu$ and $\kappa$. Therefore,
\begin{align*}
\frac{d\rho_{\mu,\kappa}}{d\mu}(r,z) &  =\frac{d\rho_{\mu}(g_{\zeta
_{\mu,\kappa}}^{-1}(r,z))}{d\mu}=\frac{d\rho_{\mu}}{d\mu}(g_{\zeta_{\mu
,\kappa}}^{-1}(r,z))+\frac{d\rho_{\mu}(\mathbf{r})}{d\mathbf{r}}%
|_{\mathbf{r}=g_{\zeta_{\mu,\kappa}}^{-1}(r,z)}\frac{dg_{\zeta_{\mu,\kappa}%
}^{-1}}{d\mu}\\
&  \approx\rho_{\mu}(g_{\zeta_{\mu,\kappa}}^{-1}(r,z))^{2-\gamma_{0}}%
=\rho_{\mu,\kappa}(r,z)^{2-\gamma_{0}},
\end{align*}
for $g_{\zeta_{\mu,\kappa}}^{-1}(r,z)\sim R_{\mu}$ and $\mu=\mu_{\ast}%
^{\kappa}$. By Lemma \ref{lemma-rho-int}, we have
\[
\int_{-\infty}^{+\infty}\frac{d\rho_{\mu,\kappa}}{d\mu}|_{\mu=\mu_{\ast
}^{\kappa}}(r,z)dz\thickapprox\int_{-\infty}^{+\infty}\rho_{\mu,\kappa
}(r,z)^{2-\gamma_{0}}dz\thickapprox(R_{\mu_{\ast}^{\kappa},\kappa}%
-r)^{\frac{2-\gamma_{0}}{\mathbb{\gamma}_{0}-1}+\frac{1}{2}}\neq0,
\]
for $r\sim R_{\mu_{\ast}^{\kappa},\kappa}$. This is in contradiction to
(\ref{iden zero}) and finishes the proof of the theorem.
\end{proof}

\subsection{The case of fixed angular momentum distribution}

Let $j(p,q):\mathbb{R}^{2}\mapsto\mathbb{R}$ be a given function satisfying%

\begin{equation}
j(p,q)\in C^{1,\beta}(\mathbb{R}^{+}\times\mathbb{R}^{+})\text{ and
}j(0,q)=\partial_{p}j(0,q)=0. \label{j-smooth}%
\end{equation}
Define $J(p,q)=j^{2}(p,q)$. We construct a family of rotating stars of the
following form
\[%
\begin{cases}
\rho_{\mu,\varepsilon}(r,z)=\rho_{\mu}(g_{\zeta_{\mu,\varepsilon}}%
^{-1}((r,z))),\\
\vec{v}_{\mu,\varepsilon}=\varepsilon\frac{j(m_{\rho_{\mu,\varepsilon}%
}(r),M_{\mu,\varepsilon})}{r}\mathbf{e}_{\theta},
\end{cases}
\]
where
\[
m_{\rho_{\mu,\varepsilon}}(r)=\int_{0}^{r}s\int_{-\infty}^{\infty}\rho
_{\mu,\varepsilon}(s,z)dsdz,\ g_{\zeta_{\mu,\varepsilon}}=x\left(
1+\frac{\zeta_{\mu,\varepsilon}(x)}{|x|^{2}}\right)  ,
\]
and $\zeta_{\mu,\varepsilon}(x):B_{\mu}\rightarrow\mathbb{R}$ is axi-symmetric
and even in $z$.

The existence of rotating stars $\left(  \rho_{\mu,\varepsilon},\vec{v}%
_{\mu,\varepsilon}\right)  \ $is reduced to the following equations:
\begin{equation}
\Phi^{\prime}(\rho_{\mu,\varepsilon})+V_{\mu,\varepsilon}-\varepsilon^{2}%
\int_{0}^{r}J(m_{\rho_{\mu,\varepsilon}}(s),M_{\mu,\varepsilon})s^{-3}%
ds+c_{\mu,\varepsilon}=0,\quad\text{in }\Omega_{\mu,\varepsilon},\label{iftm1}%
\end{equation}%
\begin{equation}
V_{\mu,\varepsilon}=-|x|^{-1}\ast\rho_{\mu,\varepsilon}\text{ in }%
\mathbb{R}^{3},\label{iftm2}%
\end{equation}
where $\Omega_{\mu,\varepsilon}=g_{\zeta_{\mu,\varepsilon}}(B_{\mu})$ and
$c_{\mu,\varepsilon}$ is a constant.


Although \eqref{iftm1} is a little different from the steady state equations
in \cite{H1994} \cite{JJ2019}, the key linearized operator at the point
$\varepsilon=0$ is the same as \cite{H1994}. By similar arguments as
\cite{H1994,JJ2019,SW2017}, we can get the following existence theorem.

\begin{theorem}
\label{IFTlocaldistribution} Let $\mu\in\lbrack\mu_{0},\mu_{1}]\subset
(0,\mu_{\max})$, $P(\rho)$ satisfy \eqref{P1}-\eqref{P2} and $j(p,q)$ satisfy
(\ref{j-smooth}). Then there exist $\tilde{\varepsilon}>0$ and solutions
$\rho_{\mu,\varepsilon}$ of \eqref{iftm1} for all $|\varepsilon|<\tilde
{\varepsilon}$, with the following properties: \newline1) $\rho_{\mu
,\varepsilon}\in C_{c}^{1,\alpha}(\mathbb{R}^{3})$, where $\alpha=\min
(\frac{2-\mathbb{\gamma}_{0}}{\mathbb{\gamma}_{0}-1},1)$. \newline2)
$\rho_{\mu,\varepsilon}$ is axi-symmetric and even in $z$. \newline3)
$\rho_{\mu,\varepsilon}(0)=\mu$. \newline4) $\rho_{\mu,\varepsilon}\geq0$ has
compact support $g_{\zeta_{\mu,\varepsilon}}(B_{\mu})$. \newline5) For all
$\mu\in\lbrack\mu_{0},\mu_{1}]$, the mapping $\varepsilon\rightarrow\rho
_{\mu,\varepsilon}$ is continuous from $(-\tilde{\varepsilon},\tilde
{\varepsilon})$ into $C_{c}^{1}(\mathbb{R}^{3}).$

When $\varepsilon=0$, $\rho_{\mu,0}(x)=\rho_{\mu}(|x|)$ is the nonrotating
star solution with $\rho_{\mu}(0)=\mu$.
\end{theorem}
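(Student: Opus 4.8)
The plan is to construct the family $\rho_{\mu,\varepsilon}$ by the implicit function theorem, following the framework of Heilig \cite{H1994} (see also \cite{JJ2019,SW2017}). Fix $\mu\in[\mu_0,\mu_1]\subset(0,\mu_{\max})$ and seek the density in the dilated form $\rho_{\mu,\varepsilon}=\rho_\mu\circ g_{\zeta}^{-1}$, where $g_\zeta(x)=x\bigl(1+\zeta(x)/|x|^2\bigr)$ and $\zeta$ ranges over a small ball in the weighted Hölder space of axi-symmetric functions on $B_\mu$ that are even in $z$, chosen as in \cite{H1994} so that $g_\zeta$ is a well-defined $C^2$ diffeomorphism. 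This ansatz automatically builds in the regularity $\rho_{\mu,\varepsilon}\in C^{1,\alpha}_c(\mathbb{R}^3)$ with $\alpha=\min(\tfrac{2-\gamma_0}{\gamma_0-1},1)$, the vacuum-boundary behavior of $\rho_\mu$, the compact support $g_\zeta(B_\mu)$, axi-symmetry, evenness in $z$, and the normalization $\rho_{\mu,\varepsilon}(0)=\mu$. Substituting into \eqref{iftm1} and pulling back by $g_\zeta$ recasts the problem as a scalar equation $\mathcal{F}(\zeta,c,\varepsilon)=0$ for $(\zeta,c)$ in that Hölder space times $\mathbb{R}$, where $c$ is the constant in \eqref{iftm1} and the nonlocal quantities $m_{\rho_{\mu,\varepsilon}}$, $M_{\mu,\varepsilon}$ and $V_{\mu,\varepsilon}=-|x|^{-1}\ast\rho_{\mu,\varepsilon}$ are regarded as functions of $\zeta$; one has $\mathcal{F}(0,c_\mu,0)=0$ because at $\varepsilon=0$, $\zeta=0$ equation \eqref{iftm1} reduces to the non-rotating star equation $\Phi'(\rho_\mu)+V_\mu+c_\mu=0$.

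The first step is to verify that $\mathcal{F}$ is $C^1$ near $(0,c_\mu,0)$, the new point being the rotation term $\varepsilon^2\int_0^r J(m_{\rho_{\mu,\varepsilon}}(s),M_{\mu,\varepsilon})s^{-3}\,ds$. Here hypothesis \eqref{j-smooth} is essential: since $j\in C^{1,\beta}$ with $j(0,q)=\partial_p j(0,q)=0$, one gets $|j(p,q)|\lesssim|p|^{1+\beta}$ and hence $J=j^2=O(|p|^{2+2\beta})$ for small $p$; combined with $m_{\rho_{\mu,\varepsilon}}(s)\thickapprox s^2$ near the axis, the integrand is $O(s^{1+4\beta})$, the integral converges, and the angular velocity $\varepsilon\, j(m_{\rho_{\mu,\varepsilon}}(r),M_{\mu,\varepsilon})/r=O(\varepsilon r^{1+2\beta})$ stays finite on the axis. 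The maps $\zeta\mapsto m_{\rho_{\mu,\varepsilon}}$, $\zeta\mapsto M_{\mu,\varepsilon}$, $\zeta\mapsto V_{\mu,\varepsilon}$ into the relevant spaces are $C^1$ by the standard estimates for the dilation and the Newtonian potential used in \cite{H1994,SW2017}, and post-composition with $J\in C^{1,\beta}$ preserves $C^1$-regularity; hence so is $\mathcal{F}$.

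The second step is to show that $D_{(\zeta,c)}\mathcal{F}(0,c_\mu,0)$ is an isomorphism. Because the rotation term carries the prefactor $\varepsilon^2$, its $(\zeta,c)$-derivative vanishes at $\varepsilon=0$, so $D_{(\zeta,c)}\mathcal{F}(0,c_\mu,0)$ is exactly the linearized operator of the non-rotating problem about $\rho_\mu$, which is the operator of Heilig \cite{H1994}; its invertibility for $\mu$ in the given range is precisely what is established there, and the norm of the inverse is uniformly bounded for $\mu\in[\mu_0,\mu_1]$ since $\rho_\mu$ depends continuously on $\mu$ on the compact interval. The implicit function theorem then produces $\tilde\varepsilon>0$, independent of $\mu$, and a $C^1$ branch $\varepsilon\mapsto(\zeta_{\mu,\varepsilon},c_{\mu,\varepsilon})$ solving $\mathcal{F}=0$ for $|\varepsilon|<\tilde\varepsilon$; setting $\rho_{\mu,\varepsilon}=\rho_\mu\circ g_{\zeta_{\mu,\varepsilon}}^{-1}$ and reading properties 1)--5) off the construction — with the continuity in $\varepsilon$ in 5) coming from the $C^1$ dependence of $\zeta_{\mu,\varepsilon}$ — completes the proof.

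I expect the main obstacle to be the functional-analytic bookkeeping at the vacuum boundary: the density is only $C^{1,\alpha}$ with $\alpha<1$ once $\gamma_0>\tfrac{3}{2}$, so one must work in the carefully weighted Hölder spaces of \cite{H1994} in which the dilation map, the Newtonian potential, and the enthalpy nonlinearity $\Phi'$ are simultaneously $C^1$ and the linearized operator is invertible. The one place where genuinely new checking beyond \cite{H1994,JJ2019,SW2017} is needed is confirming that the fully nonlocal rotation term built from $j(m_{\rho},M)$ maps into the correct weighted space and depends on $\zeta$ in a $C^1$ fashion without spoiling these structures — which is exactly where the assumptions $j(0,q)=\partial_p j(0,q)=0$ and $j\in C^{1,\beta}$ are used.
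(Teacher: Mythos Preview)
Your proposal is correct and matches the paper's approach: the paper does not give a detailed proof of Theorem~\ref{IFTlocaldistribution} but simply notes that ``the key linearized operator at the point $\varepsilon=0$ is the same as \cite{H1994}'' and appeals to ``similar arguments as \cite{H1994,JJ2019,SW2017}'', which is exactly the implicit function theorem strategy you outline, including the crucial observation that the $\varepsilon^2$ prefactor kills the rotation term in the linearization at $\varepsilon=0$. Your additional remarks on why the hypothesis \eqref{j-smooth} guarantees integrability of $J(m_{\rho}(s),M)s^{-3}$ near the axis and smooth dependence of the nonlocal rotation term on $\zeta$ are precisely the new verifications the paper alludes to but does not spell out.
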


Now we use Theorem \ref{Th: rayleigh stable} to study the stability of
rotating star solutions $(\rho_{\mu,\varepsilon},\varepsilon j(m_{\rho
_{\mu,\varepsilon}}(r),M_{\mu,\varepsilon})/r\mathbf{e}_{\theta})$, where
$\varepsilon$ is small enough, $j(p,q)$ satisfies (\ref{j-smooth}) and the
Rayleigh stability condition $\partial_{p}J\left(  p,q\right)  >0~$(i.e.
$j\partial_{p}j>0$). As in Section 3.1, the assumptions in Theorem
\ref{Th: rayleigh stable} can be verified. That is, $\partial\Omega
_{\mu,\varepsilon}$ is $C^{2}$ and has positive curvature near $(R_{\mu
,\varepsilon},0)$ and \eqref{rhonearboundary} holds for any $\mu\in\lbrack
\mu_{0},\mu_{1}]$ and $\varepsilon$ small enough.

Below, for rotating stars $(\rho_{\mu,\varepsilon},\varepsilon j(m_{\rho
_{\mu,\varepsilon}}(r),M_{\mu,\varepsilon})/r\mathbf{e}_{\theta})$ we use
$X_{\mu,\varepsilon}$, $X_{1}^{\mu,\varepsilon}$, $Y_{\mu,\varepsilon}$,
$L_{\mu,\varepsilon}$, $A_{1}^{\mu,\varepsilon}$, $B_{1}^{\mu,\varepsilon}$,
$B_{2}^{\mu,\varepsilon}$, $K_{\mu,\varepsilon}$, etc., to denote the
corresponding spaces $X$, $X_{1}$, $Y$, and operators $L$, $A_{1}$, $B_{1}$,
$B_{2}$, $\mathcal{K}$ etc. defined in Section 2. Again, we denote $\tilde
{\mu}$ to be the first critical point of $M_{\mu}/R_{\mu}$ for non-rotating
stars. Define the spaces $X_{ev}^{\mu,\varepsilon}$ and $X_{ev}^{\mu
,\varepsilon}$ as in (\ref{defn-spaces-parity}). By the same proof of Lemma
\ref{N-Dmukappa}, we have the following.

\begin{lemma}
\label{Le-decom-3.2} Assume $P(\rho)$ satisfies\eqref{P1}-\eqref{P2} and
$j(p,q)$ satisfies (\ref{j-smooth}) and $\partial_{p}(j^{2}\left(  p,q\right)
)>0$. Then for any $\mu\in\lbrack\mu_{0},\mu_{1}]\subset(0,\tilde{\mu})$ and
$\varepsilon$ small enough, we have $n^{-}(K_{\mu,\varepsilon})=1$ and $\ker
K_{\mu,\varepsilon}=span\{\partial_{z}\rho_{\mu,\varepsilon}\}$. Moreover, we
have the following direct sum decompositions for $X_{ev}^{\mu,\varepsilon}$
and $X_{ev}^{\mu,\varepsilon}:$
\[
X_{ev}^{\mu,\varepsilon}=X_{-,ev}^{\mu,\varepsilon}\oplus X_{+,ev}%
^{\mu,\varepsilon},\ \ \dim X_{-,ev}^{\mu,\varepsilon}=1,
\]
and
\[
X_{od}^{\mu,\varepsilon}=span\{\partial_{z}\rho_{\mu,\varepsilon}\}\oplus
X_{+,od}^{\mu,\varepsilon},
\]
satisfying: i) $K_{\mu,\varepsilon}|_{X_{-,ev}^{\mu,\varepsilon}}<0;$

ii) there exists $\delta>0$ such that
\[
\left\langle K_{\mu,\varepsilon}u,u\right\rangle \geq\delta\left\Vert
u\right\Vert _{L_{\Phi^{\prime\prime}(\rho_{\mu,\varepsilon})}^{2}}%
^{2}\ ,\text{ }\forall\text{ }u\in X_{+,ev}^{\mu,\varepsilon}\oplus
X_{+,od}^{\mu,\varepsilon},
\]
where $\delta$ is independent of $\mu$ and $\varepsilon$.

In addition, for any $\mu\in\lbrack\mu_{0},\mu_{1}]$, it holds that
$\frac{dV_{\mu,\varepsilon}(R_{\mu,\varepsilon},0)}{d\mu}<0$ for $\varepsilon$ small.
\end{lemma}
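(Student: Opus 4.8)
The plan is to repeat the proof of Lemma \ref{N-Dmukappa} essentially verbatim, with the perturbation parameter $\varepsilon$ in place of $\kappa$; I will only flag the points where the fixed-angular-momentum setting demands something new.

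First I would recall from \cite{LZ2019} the spectral picture of the non-rotating reduced operator $D_{\mu,0}=-\Delta-4\pi/\Phi''(\rho_\mu)$: for $\mu\in(0,\tilde\mu)$ one has $n^-(D_{\mu,0})=1$ and $\ker D_{\mu,0}=span\{\partial_z V_\mu\}$ on $\dot H^1_{ax}$, and since $\partial_z V_\mu$ is odd in $z$ this splits across parity, so on $H^{ev}$ one has $n^-=1$, $\ker=\{0\}$, while on $H^{od}$ one has $n^-=0$, $\ker=span\{\partial_z V_\mu\}$, together with a spectral gap $\delta_0>0$ uniform for $\mu\in[\mu_0,\mu_1]$. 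Next I would set $D_{\mu,\varepsilon}=-\Delta-4\pi/\Phi''(\rho_{\mu,\varepsilon})$ and use Theorem \ref{IFTlocaldistribution} (continuity of $\varepsilon\mapsto\rho_{\mu,\varepsilon}$ in $C^1_c$, uniformly in $\mu$) together with H\"{o}lder's inequality and $\dot H^1\hookrightarrow L^6$ to get $|\langle(D_{\mu,\varepsilon}-D_{\mu,0})\psi,\psi\rangle|\lesssim O(\varepsilon)\|\nabla\psi\|_{L^2}^2$, so that for $\varepsilon$ small the decompositions persist for $D_{\mu,\varepsilon}$. Differentiating the steady-state equation \eqref{iftm1} in $z$ (the $r$-integral term drops out) gives $\Phi''(\rho_{\mu,\varepsilon})\partial_z\rho_{\mu,\varepsilon}+\partial_z V_{\mu,\varepsilon}=0$, hence $\partial_z\rho_{\mu,\varepsilon}\in\ker L_{\mu,\varepsilon}\cap X_{od}^{\mu,\varepsilon}$; then the decomposition would be transferred from $D_{\mu,\varepsilon}$ to $L_{\mu,\varepsilon}$ exactly as in Lemma \ref{N-Dmukappa}, via the inequality \eqref{quadratic lower bound}, $\langle L_{\mu,\varepsilon}\rho,\rho\rangle\ge\tfrac1{4\pi}\langle D_{\mu,\varepsilon}\psi,\psi\rangle$ with $\psi=\tfrac1{4\pi}\Delta^{-1}\rho$, and Lemma \ref{lemma-equivalent-operators}.

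To pass from $L_{\mu,\varepsilon}$ to $K_{\mu,\varepsilon}$ I would observe that here the velocity $r\omega_0\mathbf e_\theta$ satisfies $\omega_0^2r^4=\varepsilon^2 j^2(m_{\rho_{\mu,\varepsilon}}(r),M_{\mu,\varepsilon})$, so the Rayleigh discriminant satisfies $\Upsilon(r)=O(\varepsilon^2)$, and the extra term in $\mathcal K$, namely $2\pi\int_0^{R_{\mu,\varepsilon}}\Upsilon(r)\frac{(\int_0^r s\int_{-\infty}^{+\infty}\rho(s,z)\,dz\,ds)^2}{r\int_{-\infty}^{+\infty}\rho_{\mu,\varepsilon}(r,z)\,dz}\,dr$, is bounded by $O(\varepsilon^2)\|\rho\|_{L^2_{\Phi''(\rho_{\mu,\varepsilon})}}^2$ by the Hardy-type estimates already carried out in Lemma \ref{udeltarhoinL2} (with Lemma \ref{lemma-rho-int}). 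Since $\int_{-\infty}^{+\infty}\partial_z\rho_{\mu,\varepsilon}\,dz=0$ we also have $\partial_z\rho_{\mu,\varepsilon}\in\ker K_{\mu,\varepsilon}\cap X_{od}^{\mu,\varepsilon}$, so the decompositions carry over to $K_{\mu,\varepsilon}$ for $\varepsilon$ small, giving $n^-(K_{\mu,\varepsilon})=1$ (one negative direction on $X_{ev}^{\mu,\varepsilon}$, none on $X_{od}^{\mu,\varepsilon}$), $\ker K_{\mu,\varepsilon}=span\{\partial_z\rho_{\mu,\varepsilon}\}$, and the stated parity decompositions. For the last assertion I would copy the argument of Lemma \ref{N-Dmukappa}: for $\mu\in[\mu_0,\mu_1]\subset(0,\tilde\mu)$, $\frac{dV_\mu(0,R_\mu)}{d\mu}=-\frac{d}{d\mu}(M_\mu/R_\mu)<-\epsilon_0<0$, and $\bigl|\frac{dV_{\mu,\varepsilon}(R_{\mu,\varepsilon},0)}{d\mu}-\frac{dV_\mu(0,R_\mu)}{d\mu}\bigr|=O(\varepsilon)$ by the $C^1$-dependence of $\rho_{\mu,\varepsilon}$ on $\mu$ and the bound $|\tfrac{d}{d\mu}g_{\zeta_{\mu,\varepsilon}}^{-1}|\le C\varepsilon$ from \cite{SW2017}, so $\frac{dV_{\mu,\varepsilon}(R_{\mu,\varepsilon},0)}{d\mu}<0$ for $\varepsilon$ small.

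The step I expect to need the most care is the bookkeeping of the implicit dependence of the rotation law on the unknown: in contrast with the fixed-angular-velocity case, $j(m_{\rho_{\mu,\varepsilon}}(r),M_{\mu,\varepsilon})$ involves the enclosed and total masses of $\rho_{\mu,\varepsilon}$, so one must check that this does not destroy the $O(\varepsilon)$ and $O(\varepsilon^2)$ smallness used above, nor the $\mu$-derivative bounds; this is precisely what the continuous-dependence conclusions of Theorem \ref{IFTlocaldistribution} and the estimates of \cite{SW2017} are there to guarantee. Apart from this, the proof is a line-by-line transcription of the proof of Lemma \ref{N-Dmukappa}.
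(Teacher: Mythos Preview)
Your proposal is correct and matches the paper's approach exactly: the paper simply states that Lemma~\ref{Le-decom-3.2} follows ``by the same proof of Lemma~\ref{N-Dmukappa},'' and your outline is precisely that transcription, including the passage $D_{\mu,0}\to D_{\mu,\varepsilon}\to L_{\mu,\varepsilon}\to K_{\mu,\varepsilon}$ and the final sign argument for $\frac{dV_{\mu,\varepsilon}(R_{\mu,\varepsilon},0)}{d\mu}$. Your flag about the implicit dependence of the rotation law on $\rho_{\mu,\varepsilon}$ through $m_{\rho_{\mu,\varepsilon}}$ and $M_{\mu,\varepsilon}$ is apt, and indeed the paper relies on Theorem~\ref{IFTlocaldistribution} and the estimates of \cite{SW2017} to absorb this, just as you indicate.
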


By Theorem \ref{Th: rayleigh stable}, we get the following necessary and
sufficient condition for the stability of rotating stars $(\rho_{\mu
,\varepsilon},\varepsilon j(m_{\rho_{\mu,\varepsilon}}(r),M_{\mu,\varepsilon
})/r\mathbf{e}_{\theta}):$
\begin{align*}
&  \langle K_{\mu,\varepsilon}\delta\rho,\delta\rho\rangle=\langle
L_{\mu,\varepsilon}\delta\rho,\delta\rho\rangle\\
&  \quad+2\varepsilon^{2}\pi\int_{0}^{R_{\mu,\varepsilon}}\frac{\partial
_{p}J(m_{\rho_{\mu,\varepsilon}}(r),M_{\mu,\varepsilon})}{r^{3}}\left(
\int_{0}^{r}s\int_{-\infty}^{+\infty}\delta\rho(s,z)dzds\right)  ^{2}dr\geq0,
\end{align*}
for all $\delta\rho\in R(B_{1}^{\mu,\varepsilon})=\left\{  \delta\rho\in
X_{1}^{\mu,\varepsilon}|\int_{\mathbb{R}^{3}}\delta\rho dx=0\right\}  $.

The following Theorem shows that the stability of this family of rotating
stars can only change at the mass extrema.

\begin{theorem}
\label{TTPforfixdistribution} Assume $P(\rho)$ satisfies
\eqref{P1}-\eqref{P2}, and $j(p,q)$ satisfy (\ref{j-smooth}) and $\partial
_{p}(j^{2}\left(  p,q\right)  )>0$. Let $n^{u}(\mu)$ be the number of unstable
modes, namely the total algebraic multiplicities of unstable eigenvalues of
the linearized Euler-Poisson systems at $(\rho_{\mu,\varepsilon},\varepsilon
j(m_{\rho_{\mu,\varepsilon}}(r),M_{\mu,\varepsilon})/r\mathbf{e}_{\theta})$.
Then for any $\mu\in\lbrack\mu_{0},\mu_{1}]\subset(0,\tilde{\mu})$ and
$\varepsilon$ small enough, we have
\[
n^{u}(\mu)=%
\begin{cases}
1\text{, when }\frac{dM_{\mu,\varepsilon}}{d\mu}<0,\\
0\text{, when }\frac{dM_{\mu,\varepsilon}}{d\mu}\geq0.
\end{cases}
\]

\end{theorem}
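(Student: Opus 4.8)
The plan is to follow exactly the template established in the proof of Theorem \ref{Th: stability-fixed velocity} and Theorem \ref{Th: TPP-fixed velocity}, now working with the reduced functional $K_{\mu,\varepsilon}$ directly. The spectral count $n^u(\mu)=n^-\!\left(K_{\mu,\varepsilon}|_{R(B_1^{\mu,\varepsilon})}\right)$ by Theorem \ref{Th: rayleigh stable}. Using the $z$-parity splitting $R(B_1^{\mu,\varepsilon})=\left(X_{od}^{\mu,\varepsilon}\cap R(B_1^{\mu,\varepsilon})\right)\oplus\left(X_{ev}^{\mu,\varepsilon}\cap R(B_1^{\mu,\varepsilon})\right)$ and Lemma \ref{Le-decom-3.2}, the odd part contributes nothing (on $X_{od}^{\mu,\varepsilon}$ one has $K_{\mu,\varepsilon}=L_{\mu,\varepsilon}\geq 0$ after removing the kernel direction $\partial_z\rho_{\mu,\varepsilon}$, which itself satisfies $\int\partial_z\rho_{\mu,\varepsilon}\,dx=0$ so it lies in $R(B_1^{\mu,\varepsilon})$ but is a null direction), so $n^u(\mu)=n^-\!\left(K_{\mu,\varepsilon}|_{X_{ev}^{\mu,\varepsilon}\cap R(B_1^{\mu,\varepsilon})}\right)$. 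By Lemma \ref{Le-decom-3.2}, $K_{\mu,\varepsilon}$ has exactly one negative direction and is uniformly positive on the complement within $X_{ev}^{\mu,\varepsilon}$, with trivial kernel there; hence $n^u(\mu)\in\{0,1\}$ and equals $1$ precisely when the negative cone of $K_{\mu,\varepsilon}$ is not entirely annihilated by the single constraint $\int_{\mathbb{R}^3}\delta\rho\,dx=0$.

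The next step is the index-counting identity for a self-adjoint form with one negative direction restricted to a codimension-one subspace, exactly as used from \cite{LZ2019} in the proof of Theorem \ref{Th: stability-fixed velocity}. Differentiating the steady-state equation \eqref{iftm1} in $\mu$ gives $L_{\mu,\varepsilon}\frac{d\rho_{\mu,\varepsilon}}{d\mu}=-\frac{dc_{\mu,\varepsilon}}{d\mu}$ up to lower-order terms coming from the $\varepsilon^2$ integral; I will need to check that, because $m_{\rho_{\mu,\varepsilon}}$ and $M_{\mu,\varepsilon}$ depend on $\mu$, differentiating produces additional terms, but these are $O(\varepsilon^2)$ and get absorbed so that $K_{\mu,\varepsilon}\frac{d\rho_{\mu,\varepsilon}}{d\mu}=-\frac{dc_{\mu,\varepsilon}}{d\mu}$ plus an $o(\varepsilon^2)$-controlled remainder — alternatively, the cleaner route is to differentiate \eqref{iftm1} and observe that $\frac{d}{d\mu}$ of the rotational term produces exactly the kernel of the quadratic form appearing in $\langle K_{\mu,\varepsilon}\cdot,\cdot\rangle - \langle L_{\mu,\varepsilon}\cdot,\cdot\rangle$ paired against $\frac{d\rho_{\mu,\varepsilon}}{d\mu}$, giving $\langle K_{\mu,\varepsilon}\frac{d\rho_{\mu,\varepsilon}}{d\mu},\delta\rho\rangle = -\frac{dc_{\mu,\varepsilon}}{d\mu}\int\delta\rho\,dx$ for all $\delta\rho$. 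Granting this, for $\delta\rho\in R(B_1^{\mu,\varepsilon})$ one gets $\langle K_{\mu,\varepsilon}\frac{d\rho_{\mu,\varepsilon}}{d\mu},\delta\rho\rangle=0$, so $\frac{d\rho_{\mu,\varepsilon}}{d\mu}$ is $K_{\mu,\varepsilon}$-orthogonal to the whole constraint subspace; and $\langle K_{\mu,\varepsilon}\frac{d\rho_{\mu,\varepsilon}}{d\mu},\frac{d\rho_{\mu,\varepsilon}}{d\mu}\rangle=-\frac{dc_{\mu,\varepsilon}}{d\mu}\frac{dM_{\mu,\varepsilon}}{d\mu}$.

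Now the sign of $\frac{dc_{\mu,\varepsilon}}{d\mu}$ is positive for $\varepsilon$ small: from \eqref{iftm1} evaluated at the boundary, $c_{\mu,\varepsilon}=-V_{\mu,\varepsilon}(R_{\mu,\varepsilon},0)+\varepsilon^2\int_0^{R_{\mu,\varepsilon}}J(\cdots)s^{-3}ds$, and the last lemma statement gives $\frac{dV_{\mu,\varepsilon}(R_{\mu,\varepsilon},0)}{d\mu}<0$, so $\frac{dc_{\mu,\varepsilon}}{d\mu}>0$. Hence $\langle K_{\mu,\varepsilon}\frac{d\rho_{\mu,\varepsilon}}{d\mu},\frac{d\rho_{\mu,\varepsilon}}{d\mu}\rangle$ has the opposite sign to $\frac{dM_{\mu,\varepsilon}}{d\mu}$. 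If $\frac{dM_{\mu,\varepsilon}}{d\mu}<0$, then $\frac{d\rho_{\mu,\varepsilon}}{d\mu}$ is a direction in $X_{ev}^{\mu,\varepsilon}$ on which $K_{\mu,\varepsilon}<0$; since it is $K_{\mu,\varepsilon}$-orthogonal to $R(B_1^{\mu,\varepsilon})$, the one negative direction of $K_{\mu,\varepsilon}$ is exactly transverse to the constraint, so $K_{\mu,\varepsilon}|_{X_{ev}^{\mu,\varepsilon}\cap R(B_1^{\mu,\varepsilon})}\geq 0$ but $K_{\mu,\varepsilon}$ itself has index $1$ — wait, this gives $n^u=0$, which is backwards, so I must be careful: the correct statement is the standard one (cf. Theorem 1.1 in \cite{LZ2019}) that when the negative direction has nonzero projection (in the $K$-pairing sense) onto the constraint normal $\frac{d\rho_{\mu,\varepsilon}}{d\mu}$, restriction kills the negative index; but here $\frac{d\rho_{\mu,\varepsilon}}{d\mu}$ being $K$-orthogonal to $R(B_1)$ means $\frac{d\rho_{\mu,\varepsilon}}{d\mu}$ is itself, up to the positive-definite part, the negative direction, and since $\frac{d\rho_{\mu,\varepsilon}}{d\mu}\notin R(B_1^{\mu,\varepsilon})$ when $\frac{dM_{\mu,\varepsilon}}{d\mu}\neq 0$ (because $\int\frac{d\rho_{\mu,\varepsilon}}{d\mu}dx=\frac{dM_{\mu,\varepsilon}}{d\mu}\neq0$), we get $n^-(K_{\mu,\varepsilon}|_{R(B_1^{\mu,\varepsilon})})=n^-(K_{\mu,\varepsilon})-1=0$ when $\frac{dM_{\mu,\varepsilon}}{d\mu}<0$ and $\ldots$ — the main obstacle is precisely pinning down this index bookkeeping with the right sign, which requires invoking the precise lemma from \cite{LZ2019} (the one used to finish Theorem 1.1 there). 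Following that lemma verbatim: when $\frac{dM_{\mu,\varepsilon}}{d\mu}<0$ we have $\langle K_{\mu,\varepsilon}\frac{d\rho_{\mu,\varepsilon}}{d\mu},\frac{d\rho_{\mu,\varepsilon}}{d\mu}\rangle>0$, so in fact $\frac{d\rho_{\mu,\varepsilon}}{d\mu}$ lies in the positive cone, the negative direction survives restriction to the mass-zero subspace, and $n^u(\mu)=1$; when $\frac{dM_{\mu,\varepsilon}}{d\mu}>0$, $\langle K_{\mu,\varepsilon}\frac{d\rho_{\mu,\varepsilon}}{d\mu},\frac{d\rho_{\mu,\varepsilon}}{d\mu}\rangle<0$ so $\frac{d\rho_{\mu,\varepsilon}}{d\mu}$ is a negative direction not in $R(B_1^{\mu,\varepsilon})$, its $K$-orthogonality to $R(B_1^{\mu,\varepsilon})$ forces $K_{\mu,\varepsilon}\geq 0$ there, giving $n^u(\mu)=0$; and when $\frac{dM_{\mu,\varepsilon}}{d\mu}=0$ the direction $\frac{d\rho_{\mu,\varepsilon}}{d\mu}\in X_{ev}^{\mu,\varepsilon}\cap R(B_1^{\mu,\varepsilon})$ with $\langle K_{\mu,\varepsilon}\frac{d\rho_{\mu,\varepsilon}}{d\mu},\frac{d\rho_{\mu,\varepsilon}}{d\mu}\rangle=0$, and since $\ker K_{\mu,\varepsilon}=\{0\}$ on $X_{ev}^{\mu,\varepsilon}$ the argument from Theorem 1.1 in \cite{LZ2019} again yields $n^u(\mu)=0$, completing all three cases.
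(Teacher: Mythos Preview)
Your overall strategy is correct and matches the paper's approach: reduce to $n^{-}\bigl(K_{\mu,\varepsilon}|_{X_{ev}^{\mu,\varepsilon}\cap R(B_1^{\mu,\varepsilon})}\bigr)$, differentiate the steady-state equation in $\mu$, and run the one-constraint index count. But there is a genuine gap in the key step.

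You claim that differentiating \eqref{iftm1} and combining with the rotational part of $K_{\mu,\varepsilon}$ yields
\[
\Bigl\langle K_{\mu,\varepsilon}\tfrac{d\rho_{\mu,\varepsilon}}{d\mu},\varphi\Bigr\rangle=-\tfrac{dc_{\mu,\varepsilon}}{d\mu}\int\varphi\,dx\quad\text{for all }\varphi.
\]
This is false when $j$ depends on its second argument $M$. Differentiating the rotational integral produces two terms,
\[
\varepsilon^{2}\!\int_{0}^{r}\partial_{p}J\,\tfrac{dm_{\rho_{\mu,\varepsilon}}}{d\mu}\,s^{-3}ds
\;+\;\varepsilon^{2}\tfrac{dM_{\mu,\varepsilon}}{d\mu}\!\int_{0}^{r}\partial_{q}J\,s^{-3}ds.
\]
The first one is exactly what cancels against $\langle(K_{\mu,\varepsilon}-L_{\mu,\varepsilon})\tfrac{d\rho_{\mu,\varepsilon}}{d\mu},\varphi\rangle$ after integration by parts, as you anticipate. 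The second one, $\varepsilon^{2}\tfrac{dM_{\mu,\varepsilon}}{d\mu}\,h_{\mu,\varepsilon}(r)$ with $h_{\mu,\varepsilon}(r)=\int_{0}^{r}\partial_{q}J\,s^{-3}ds$, is a genuinely $r$-dependent function and does \emph{not} reduce to a multiple of $\int\varphi\,dx$. Consequently $\tfrac{d\rho_{\mu,\varepsilon}}{d\mu}$ is \emph{not} $K_{\mu,\varepsilon}$-orthogonal to $R(B_1^{\mu,\varepsilon})$ when $\partial_q J\not\equiv 0$ and $\tfrac{dM_{\mu,\varepsilon}}{d\mu}\neq 0$, and your index argument does not go through as written.

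The paper repairs this by replacing $\tfrac{d\rho_{\mu,\varepsilon}}{d\mu}$ with the corrected vector
\[
\tfrac{d\rho_{\mu,\varepsilon}}{d\mu}+\varepsilon^{2}\tfrac{dM_{\mu,\varepsilon}}{d\mu}\,g_{\mu,\varepsilon},\qquad g_{\mu,\varepsilon}:=K_{\mu,\varepsilon}^{-1}h_{\mu,\varepsilon},
\]
using that $K_{\mu,\varepsilon}$ is invertible on $X_{ev}^{\mu,\varepsilon}$ (Lemma~\ref{Le-decom-3.2}). This element \emph{is} exactly $K_{\mu,\varepsilon}$-orthogonal to $R(B_1^{\mu,\varepsilon})$, and its $K_{\mu,\varepsilon}$-self-pairing equals $\bigl(\tfrac{dV_{\mu,\varepsilon}(R_{\mu,\varepsilon},0)}{d\mu}+O(\varepsilon^{2})\bigr)\tfrac{dM_{\mu,\varepsilon}}{d\mu}$, after which the sign analysis and the three cases proceed exactly as you outlined. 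Your argument is correct in the special case $j=j(m)$ (so $\partial_q J\equiv 0$), but the theorem covers general $j(m,M)$, and there the correction term is essential.
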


\begin{proof}
By the same arguments in the proof of Theorem
\ref{Th: stability-fixed velocity}, we have
\[
n^{u}(\mu)=n^{-}\left(  K_{\mu,\varepsilon}|_{X_{ev}^{\mu,\varepsilon}\cap
R(B_{1}^{\mu,\varepsilon})}\right)  .
\]
Thus it is reduced to find the number of negative modes of the quadratic form
$\left\langle K_{\mu,\varepsilon}\cdot,\cdot\right\rangle $ restricted to the
even subspace of $R(B_{1}^{\mu,\varepsilon})$.

Applying $\frac{d}{d\mu}$ to (\ref{iftm1}), we obtain that
\begin{align}
&  L_{\mu,\varepsilon}\frac{d\rho_{\mu,\varepsilon}}{d\mu}=\varepsilon^{2}%
\int_{0}^{r}\partial_{p}J(m_{\rho_{\mu,\varepsilon}}(s),M_{\mu,\varepsilon
})\frac{dm_{\rho_{\mu,\varepsilon}}}{d\mu}s^{-3}ds\label{L-J}\\
&  \ \ \ \ \ \ \ \ \ \ \ \ \ \ \ \ \ \ \ \ \ +\varepsilon^{2}\int_{0}%
^{r}\partial_{q}J(m_{\rho_{\mu,\varepsilon}}(s),M_{\mu,\varepsilon}%
)\frac{dM_{\mu,\varepsilon}}{d\mu}s^{-3}ds-\frac{dc_{\mu,\varepsilon}}{d\mu
},\nonumber
\end{align}
where
\begin{align*}
\frac{dc_{\mu,\varepsilon}}{d\mu} &  =\frac{d}{d\mu}\left(  -V_{\mu
,\varepsilon}(R_{\mu,\varepsilon},0)+\varepsilon^{2}\int_{0}^{R_{\mu
,\varepsilon}}J(m_{\rho_{\mu,\varepsilon}}(s),M_{\mu,\varepsilon}%
)s^{-3}ds\right)  \\
&  =-\frac{dV_{\mu,\varepsilon}(R_{\mu,\varepsilon},0)}{d\mu}+\varepsilon
^{2}\int_{0}^{R_{\mu,\varepsilon}}\partial_{p}J(m_{\rho_{\mu,\varepsilon}%
}(s),M_{\mu,\varepsilon})\frac{dm_{\rho_{\mu,\varepsilon}}(s)}{d\mu}s^{-3}ds\\
&  \quad+\varepsilon^{2}\frac{dM_{\mu,\varepsilon}}{d\mu}h_{\mu,\varepsilon
}(R_{\mu,\varepsilon})+\varepsilon^{2}J(M_{\mu,\varepsilon},M_{\mu
,\varepsilon})R_{\mu,\varepsilon}^{-3}\frac{dR_{\mu,\varepsilon}}{d\mu}.
\end{align*}
By integration by parts and \eqref{L-J}, we obtain that
\begin{align*}
&  2\pi\int_{0}^{R_{\mu,\varepsilon}}\varepsilon^{2}\left[  \partial
_{p}J(m_{\rho_{\mu,\varepsilon}}(r),M_{\mu,\varepsilon})r^{-3}\right]  \left(
\int_{0}^{r}\int_{-\infty}^{\infty}s\frac{d\rho_{\mu,\varepsilon}}{d\mu
}dzds\right)  \left(  \int_{0}^{r}\int_{-\infty}^{\infty}s\varphi dzds\right)
dr\\
&  =\varepsilon^{2}\left[  \int_{0}^{R_{\mu,\varepsilon}}\partial_{p}%
J(m_{\rho_{\mu,\varepsilon}}(r),M_{\mu,\varepsilon})r^{-3}\frac{dm_{\rho
_{\mu,\varepsilon}}}{d\mu}dr\right]  \int_{\mathbb{R}^{3}}\varphi dx\\
&  \quad-2\pi\int_{0}^{R_{\mu,\varepsilon}}\int_{-\infty}^{\infty}%
\varepsilon^{2}\left[  \int_{0}^{r}\partial_{p}J(m_{\rho_{\mu,\varepsilon}%
}(s),M_{\mu,\varepsilon})\frac{dm_{\rho_{\mu,\varepsilon}(s)}}{d\mu}%
s^{-3}ds\right]  r\varphi dzdr\\
&  =\varepsilon^{2}\left[  \int_{0}^{R_{\mu,\varepsilon}}\partial_{p}%
J(m_{\rho_{\mu,\varepsilon}}(r),M_{\mu,\varepsilon})r^{-3}\frac{dm_{\rho
_{\mu,\varepsilon}}}{d\mu}dr\right]  \int_{\mathbb{R}^{3}}\varphi
dx-\left\langle \frac{dc_{0}}{d\mu},\varphi\right\rangle \\
&  \quad-\left\langle L_{\mu,\varepsilon}\frac{d\rho_{\mu,\varepsilon}}{d\mu
},\varphi\right\rangle -\varepsilon^{2}\frac{dM_{\mu,\varepsilon}}{d\mu
}\left\langle \int_{0}^{r}\partial_{q}J(m_{\rho_{\mu,\varepsilon}}%
(s),M_{\mu,\varepsilon})s^{-3}ds,\varphi\right\rangle \\
&  =\left(  \frac{dV_{\mu,\varepsilon}(R_{\mu,\varepsilon},0)}{d\mu
}-\varepsilon^{2}J(M_{\mu,\varepsilon},M_{\mu,\varepsilon})R_{\mu,\varepsilon
}^{-3}\frac{dR_{\mu,\varepsilon}}{d\mu}-\varepsilon^{2}\frac{dM_{\mu
,\varepsilon}}{d\mu}h_{\mu,\varepsilon}(R_{\mu,\varepsilon})\right)
\int_{\mathbb{R}^{3}}\varphi dx\\
&  \quad-\left\langle L_{\mu,\varepsilon}\frac{d\rho_{\mu,\varepsilon}}{d\mu
},\varphi\right\rangle -\varepsilon^{2}\frac{dM_{\mu,\varepsilon}}{d\mu
}\left\langle K_{\mu,\varepsilon}g_{\mu,\varepsilon},\varphi\right\rangle .
\end{align*}
Here, in the above we used
\[
h_{\mu,\varepsilon}(r)=\int_{0}^{r}\partial_{q}J\left(  m_{\rho_{\mu
,\varepsilon}}(s),M_{\mu,\varepsilon}\right)  s^{-3}ds,
\]
and $g_{\mu,\varepsilon}=K_{\mu,\varepsilon}^{-1}h_{\mu,\varepsilon}$. The
inverse operator
\[
K_{\mu,\varepsilon}^{-1}:\left(  X_{ev}^{\mu,\varepsilon}\right)  ^{\ast
}\subset L_{\frac{1}{\Phi^{\prime\prime}(\rho_{\mu,\varepsilon})}}%
^{2}\rightarrow X_{ev}^{\mu,\varepsilon}%
\]
exists and is bounded by Lemma \ref{Le-decom-3.2}. Since $\frac{1}%
{\Phi^{\prime\prime}(\rho_{\mu,\varepsilon})}$ has compact support and
$\Phi^{\prime\prime}\left(  s\right)  \thickapprox s^{\gamma_{0}-2}\ $for
$s\sim0^{+}$, we have
\[
\left\vert \int g_{\mu,\varepsilon}dx\right\vert \lesssim\Vert g_{\mu
,\varepsilon}\Vert_{L_{\Phi^{\prime\prime}(\rho_{\mu,\varepsilon})}^{2}%
}\lesssim\left\Vert K_{\mu,\varepsilon}^{-1}\right\Vert \Vert h_{\mu
,\varepsilon}\Vert_{L_{\frac{1}{\Phi^{\prime\prime}(\rho_{\mu,\varepsilon})}%
}^{2}}\lesssim\left(  \int\frac{h_{\mu,\varepsilon}^{2}}{\Phi^{\prime\prime
}(\rho_{\mu,\varepsilon})}dx\right)  ^{\frac{1}{2}}<+\infty.
\]

Therefore, we have
\begin{equation}
\left\langle K_{\mu,\varepsilon}\left[  \frac{d\rho_{\mu,\varepsilon}}{d\mu
}+\varepsilon^{2}\frac{dM_{\mu,\varepsilon}}{d\mu}g_{\mu,\varepsilon}\right]
,\varphi\right\rangle =\left(  \frac{dV_{\mu,\varepsilon}(R_{\mu,\varepsilon
},0)}{d\mu}+O(\varepsilon^{2})\right)  \int_{\mathbb{R}^{3}}\varphi
dx,\label{K-inner-product}%
\end{equation}
for any $\varphi\in X_{ev}^{\mu,\varepsilon}$.

By (\ref{K-inner-product}) and the fact that $\frac{dV_{\mu,\varepsilon
}(R_{\mu,\varepsilon},0)}{d\mu}+O(\varepsilon^{2})<0$ when $\mu\in\lbrack
\mu_{0},\mu_{1}]$ and $\varepsilon$ is small, we have
\[
X_{ev}^{\mu,\varepsilon}\cap R(B_{1}^{\mu,\varepsilon})=\left\{  \delta\rho\in
X_{ev}^{\mu,\varepsilon}|\left\langle K_{\mu,\varepsilon}\left(  \frac
{d\rho_{\mu,\varepsilon}}{d\mu}+\varepsilon^{2}\frac{dM_{\mu,\varepsilon}%
}{d\mu}g_{\mu,\varepsilon}\right)  ,\delta\rho\right\rangle =0\right\}  .
\]
On the other hand, we have
\begin{align*}
&  \left\langle K_{\mu,\varepsilon}\left(  \frac{d\rho_{\mu,\varepsilon}}%
{d\mu}+\varepsilon^{2}\frac{dM_{\mu,\varepsilon}}{d\mu}g_{\mu,\varepsilon
}\right)  ,\left(  \frac{d\rho_{\mu,\varepsilon}}{d\mu}+\varepsilon^{2}%
\frac{dM_{\mu,\varepsilon}}{d\mu}g_{\mu,\varepsilon}\right)  \right\rangle \\
&  =\left(  \frac{dV_{\mu,\varepsilon}(R_{\mu,\varepsilon},0)}{d\mu
}+O(\varepsilon^{2})\right)  \int\left(  \frac{d\rho_{\mu,\varepsilon}}{d\mu
}+\varepsilon^{2}\frac{dM_{\mu,\varepsilon}}{d\mu}g_{\mu,\varepsilon}\right)
dx\\
&  =\left(  \frac{dV_{\mu,\varepsilon}(R_{\mu,\varepsilon},0)}{d\mu
}+O(\varepsilon^{2})\right)  \frac{dM_{\mu,\varepsilon}}{d\mu}.
\end{align*}
By Lemma \ref{Le-decom-3.2}, $n^{-}(K_{\mu,\varepsilon}|_{X_{ev}%
^{\mu,\varepsilon}})=1\ $and $\ker$ $K_{\mu,\varepsilon}|_{X_{ev}%
^{\mu,\varepsilon}}=\left\{  0\right\}  $. We consider two cases:

1) $\frac{dM_{\mu,\varepsilon}}{d\mu}\neq0$. A combination of above properties
immediately yields
\[
n^{u}(\mu)=n^{-}\left(  K_{\mu,\varepsilon}|_{X_{ev}^{\mu,\varepsilon}\cap
R(B_{1}^{\mu,\varepsilon})}\right)  =%
\begin{cases}
1\text{ when }\frac{dM_{\mu,\varepsilon}}{d_{\mu}}<0,\\
0\text{ when }\frac{dM_{\mu,\varepsilon}}{d_{\mu}}>0.
\end{cases}
\]

2) When $\frac{dM_{\mu,\varepsilon}}{d\mu}=0$, as in the proof of Theorem
\ref{Th: stability-fixed velocity}, we have
\[
n^{u}(\mu)=n^{-}\left(  K_{\mu,\varepsilon}|_{X_{ev}^{\mu,\varepsilon}\cap
R(B_{1}^{\mu,\varepsilon})}\right)  =0.
\]
This finishes the proof of the theorem.
\end{proof}

\begin{remark}
The above theorem implies that for a family of rotating stars with fixed
angular momentum distribution $j(m,M)$, the transition of stability occurs at
the first extrema of the total mass. That is, the turning point principle
(TPP) is true for this family of rotating stars. This contrasts greatly to
rotating stars of fixed angular velocity, for which case TPP is shown to be
not true (see Theorem \ref{Th: TPP-fixed velocity}).

In the literature, there are three common choices of $j(m,M)$ in the study of
rotating stars.

i) (Fixed angular momentum distribution) The most common one is $j(m,M)=j(m)$.
See for example \cite{AGBR1971,JJMT2019,LS2004,LTSJ2008,LS2009,LTSJ2011};

ii) (Fixed angular momentum distribution per unit mass) $j(m,M)=j(m/M)$. See
for example \cite{OM1968};

iii) (Fixed angular momentum distribution with given total angular momentum)
$j(m,M)=\frac{1}{M}j(m/M)$. See for example \cite{BB1974}. We note that for
this case, the total angular momentum given by
\[
\int\frac{1}{M}j(\frac{m}{M})dm=\int_{0}^{1}j\left(  m^{\prime}\right)
dm^{\prime}\ \ (m^{\prime}=\frac{m}{M}),
\]
is a constant depending only on $j$.
\end{remark}

In the rest of this subsection, we use Theorem \ref{TTPforfixdistribution} to
study two examples of rotating stars with mass extrema points.

\textbf{Example 1. }Asymptotically polytropic rotating stars

Assume $P(\rho)$ satisfies assumptions (\ref{approxiP1})-(\ref{approxiP2}). By
the same arguments as in the case of fixed angular velocity, when
$\varepsilon$ is small enough and $\mu\in\lbrack\mu_{0},\mu_{1}]\subset
(0,\tilde{\mu})$, the mass $M_{\mu,\varepsilon}$ of the rotating stars
$(\rho_{\mu,\varepsilon},\varepsilon j(m_{\rho_{\mu,\varepsilon}}%
(r),M_{\mu,\varepsilon})/r\mathbf{e}_{\theta})$\ has the the first maximum
$\mu_{\ast}^{\varepsilon}\in\left(  \mu_{0},\mu_{1}\right)  $. Then by Theorem
\ref{TTPforfixdistribution}, the rotating stars are stable when $\mu\in
\lbrack\mu_{0},\mu_{\ast}^{\varepsilon}]$ and unstable when $\mu$ goes between
$\mu_{\ast}^{\varepsilon}$ and the next extrema point of $M_{\mu,\varepsilon}$
in $\left(  \mu_{\ast}^{\varepsilon},\mu_{1}\right)  $.

\textbf{Example 2. }Polytropic rotating stars

Consider the polytropic equation of state $P(\rho)=\rho^{\gamma}~\left(
\gamma\in\left(  \frac{6}{5},2\right)  \right)  $. The non-rotating stars
(i.e. Lane-Emden stars) with any center density $\mu$ are stable when
$\gamma\in(4/3,2)$ and are unstable when $\gamma\in(6/5,4/3)$. In particular,
$M_{\mu}=C_{\gamma}\mu^{\frac{1}{2}\left(  3\gamma-4\right)  }$ is a monotone
function when $\gamma\neq\frac{4}{3}\ $and there is no transition point of stability.

However, polytropic rotating stars with fixed angular momentum distribution
$j\left(  m,M\right)  \ $can have mass extrema points, which are also the
transition points of stability. One such example was given in \cite{BB1974}
for $\gamma=\frac{4.03}{3.03}<\frac{4}{3}$ and $j(m,M)=\frac{1}{M}%
[1-(1-\frac{m}{M})^{2/3}]$. With numerical help, it was found (see Figure 1
below taken from \cite{BB1974}) that there is a mass minimum point $\mu^{\ast
}\ $for the total mass $M\left(  \mu\right)  $. This is the first transition
point of stability. In particular, rotating stars with center density $\mu$
beyond $\mu^{\ast}$ become stable. \begin{figure}[h]
\centering\includegraphics[scale=0.3]{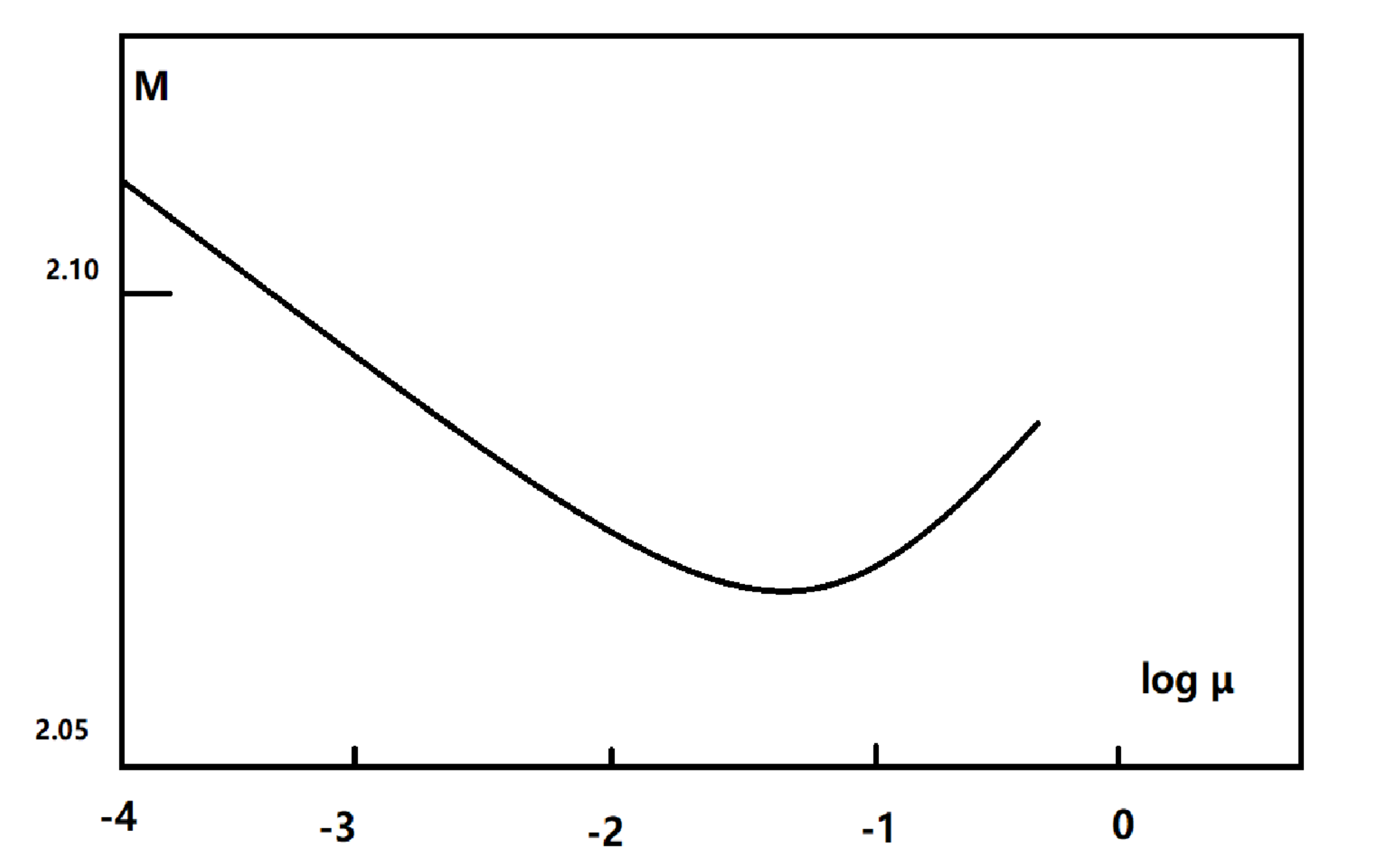}\caption{The dependence of
the mass $M\left(  \mu\right)  $ on the center density $\mu$ for $\gamma
=\frac{4.03}{3.03}$ and the angular momentum distribution $j(m,M)=\frac{1}%
{M}[1-(1-\frac{m}{M})^{2/3}]$. From Bisnovatyi-Kogan and Blinnikov
\cite{BB1974}.}%
\label{fig:3}%
\end{figure}

\begin{remark}
It can also be seen from above Example 2 that the critical index $\gamma
^{\ast}$ for the onset of instability of rotating polytropic stars is not
$\frac{4}{3}$. Ledoux \cite{Ledoux1945}, Chandrasekhar and Lebovitz
\cite{ChandraLebovitz68} indicated that the critical index $\gamma^{\ast}$ is
reduced from $\frac{4}{3}$ to $\gamma^{\ast}=\frac{4}{3}-\frac{2\omega^{2}%
I}{9|W|}$ for small uniform rotating stars, where $I>0$ is the moment of
inertia about the center of mass and $W$ is the gravitational potential
energy. For more discussion about the critical index $\gamma^{\ast}$ of
rotating stars, see \cite{Hazelhurst1994,Kahler1994,Sidorov1982,Sidorov1981}.
\end{remark}

\section{\label{Rayleigh-instability}Instability for Rayleigh Unstable case}

Consider an axi-symmetric rotating star $(\rho_{0},\vec{v}_{0})=\left(
\rho_{0}\left(  r,z\right)  ,\omega_{0}(r)r\mathbf{e}_{\theta}\right)  $,
where the angular velocity $\omega_{0}(r)$ satisfies the Rayleigh instability
condition, that is, there exists a point $r_{0}\in(0,R_{0})$ such that
\begin{equation}
\Upsilon(r_{0})=\frac{\partial_{r}(\omega_{0}^{2}r^{4})}{r^{3}}\bigg|_{r=r_{0}%
}<0. \label{condition-rayleigh}%
\end{equation}
For incompressible Euler equation, it is a classical result by Rayleigh in
1880 \cite{Rayleigh1880} that condition (\ref{condition-rayleigh}) implies
linear instability of the rotating flow $\vec{v}_{0}=\omega_{0}(r)r\mathbf{e}%
_{\theta}$ under axi-symmetric perturbations. In this section, we will show
the axi-symmetric instability of rotating stars with Rayleigh unstable angular velocity.

From the linearized Euler-Poisson system \eqref{linearized-EP}, we get the
following second order equation for $u_{2}=%
\begin{pmatrix}
v_{r}\\
v_{z}%
\end{pmatrix}
$,%
\begin{equation}
\partial_{tt}u_{2}=-\mathbb{\tilde{L}}u_{2}=-(\mathbb{L}_{1}+\mathbb{L}%
_{2})u_{2}, \label{eqn-2nd-order}%
\end{equation}
where $\mathbb{L}_{1},\ \mathbb{L}_{2}\,\ $are operators on $Y=\left(
L_{\rho_{0}}^{2}\right)  ^{2}$ defined by
\[
\mathbb{L}_{1}u_{2}=B_{1}^{\prime}LB_{1}A=\nabla\lbrack\Phi^{\prime\prime
}(\rho_{0})(\nabla\cdot(\rho_{0}u_{2}))-4\pi(-\Delta)^{-1}(\nabla\cdot
(\rho_{0}u_{2})],
\]
and
\[
\mathbb{L}_{2}u_{2}=%
\begin{pmatrix}
\Upsilon(r)v_{r}\\
0
\end{pmatrix}
.
\]

\begin{lemma}
$\mathbb{\tilde{L}}$ is a self-adjoint operator on $(Y,[\cdot,\cdot])$ with
the equivalent inner product $[\cdot,\cdot]=\langle A\cdot,\cdot\rangle$.
\end{lemma}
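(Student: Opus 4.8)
The plan is to split $\tilde{\mathbb{L}}=\mathbb{L}_1+\mathbb{L}_2$ and treat the two pieces separately, peeling a nonnegative ``elliptic part'' off a bounded symmetric remainder so that self-adjointness follows from the representation theorem for closed forms together with the Kato--Rellich theorem. The piece $\mathbb{L}_2$ is immediate: $[\mathbb{L}_2u_2,w_2]=\langle A\mathbb{L}_2u_2,w_2\rangle=\int_{\mathbb{R}^3}\rho_0\,\Upsilon(r)\,v_rw_r\,dx$, and since $\omega_0\in C^1[0,R_0]$ the coefficient $\Upsilon$ is continuous on $[0,R_0]$, so this is a bounded symmetric bilinear form on $Y$ and $\mathbb{L}_2$ is a bounded self-adjoint operator on $(Y,[\cdot,\cdot])$ (recall that $[\cdot,\cdot]=\langle A\cdot,\cdot\rangle$ is equivalent to the norm of $Y$ by (\textbf{G2}); with $A=\rho_0$ it is in fact the $L^2_{\rho_0}$ inner product). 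For $\mathbb{L}_1=B_1'LB_1A$ I would first record symmetry: since $A$ and $L$ are self-dual and $B_1''=B_1$, one has $[\mathbb{L}_1u_2,w_2]=\langle AB_1'LB_1Au_2,w_2\rangle=\langle L(B_1Au_2),B_1Aw_2\rangle$, which is symmetric in $u_2,w_2$; equivalently, after an integration by parts whose boundary terms vanish because $\rho_0=0$ on $\partial\Omega$ (by (\ref{rhonearboundary})), $[\mathbb{L}_1u_2,w_2]=\langle L(\nabla\cdot(\rho_0u_2)),\nabla\cdot(\rho_0w_2)\rangle$.

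Next I would split $L=\Phi''(\rho_0)-4\pi(-\Delta)^{-1}$ and accordingly $\mathbb{L}_1=\mathbb{L}_0+\mathcal{R}_1$, where the form $[\mathbb{L}_0u_2,w_2]=(\nabla\cdot(\rho_0u_2),\nabla\cdot(\rho_0w_2))_{L^2_{\Phi''(\rho_0)}}$ is nonnegative and $\mathcal{R}_1u_2=\mp4\pi\,\nabla(-\Delta)^{-1}(\nabla\cdot(\rho_0u_2))$ is, up to sign, the Helmholtz gradient part of $\rho_0u_2$. By the Calder\'{o}n--Zygmund bound $\|\nabla(-\Delta)^{-1}\nabla\cdot(\rho_0u_2)\|_{L^2}\le\|\rho_0u_2\|_{L^2}$ together with $\|\rho_0u_2\|_{L^2}^2\le\|\rho_0\|_{L^\infty}\|u_2\|_Y^2$ and $\|f\|_Y^2\le\|\rho_0\|_{L^\infty}\|f\|_{L^2}^2$, the operator $\mathcal{R}_1$ is bounded on $Y$, and it is symmetric because $(-\Delta)^{-1}$ is self-dual. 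Hence $\tilde{\mathbb{L}}=\mathbb{L}_0+\mathcal{R}$ with $\mathcal{R}:=\mathcal{R}_1+\mathbb{L}_2$ bounded and symmetric on $(Y,[\cdot,\cdot])$, and the sign convention may be fixed (as forced by Proposition \ref{prop-spectra-l-tilde}) so that $[\mathbb{L}_0u_2,u_2]=\|\nabla\cdot(\rho_0u_2)\|_{L^2_{\Phi''(\rho_0)}}^2\ge0$.

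It then remains to show that $\mathbb{L}_0$ is self-adjoint (and $\ge0$). For this I would apply the first representation theorem for closed nonnegative quadratic forms to $a(u_2,w_2)=(\nabla\cdot(\rho_0u_2),\nabla\cdot(\rho_0w_2))_{L^2_{\Phi''(\rho_0)}}$ with form domain $Z$. This form is densely defined (it contains all smooth fields compactly supported in the interior of $\Omega$) and closed, because its form norm $(\|u_2\|_Y^2+a(u_2,u_2))^{1/2}$ is precisely $\|u_2\|_Z$ and $Z$ is complete: if $u_2^n\to u_2$ in $Y$ and $\nabla\cdot(\rho_0u_2^n)\to g$ in $L^2_{\Phi''(\rho_0)}$, then $\rho_0u_2^n\to\rho_0u_2$ in $L^2$ (as $\rho_0\in L^\infty$), so $\nabla\cdot(\rho_0u_2^n)\to\nabla\cdot(\rho_0u_2)$ in $\mathcal{D}'(\Omega)$, while it also converges to $g$ in $L^2_{\mathrm{loc}}$ of the interior of $\Omega$ (since $\Phi''(\rho_0)$ is bounded below on interior compacts), whence $g=\nabla\cdot(\rho_0u_2)\in L^2_{\Phi''(\rho_0)}$, i.e. $u_2\in Z$. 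Thus $a$ is the form of a unique nonnegative self-adjoint operator on $(Y,[\cdot,\cdot])$, and a distributional integration by parts (again using (\ref{rhonearboundary}) to discard boundary terms) identifies that operator with $\mathbb{L}_0$. Finally, since $\mathcal{R}$ is bounded and symmetric, the Kato--Rellich theorem yields that $\tilde{\mathbb{L}}=\mathbb{L}_0+\mathcal{R}$ is self-adjoint on $D(\mathbb{L}_0)$. (Alternatively one could bypass the representation theorem and verify directly, via Lax--Milgram for $a(\cdot,\cdot)+\lambda[\cdot,\cdot]$ on $Z$ with $\lambda$ large, that $\mathrm{Ran}(\tilde{\mathbb{L}}+\lambda)=Y$, which combined with symmetry and the semiboundedness from Proposition \ref{prop-spectra-l-tilde} gives self-adjointness.)

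The step I expect to be the main obstacle is the closedness of the form $a$ --- equivalently, completeness of $Z$ in $\|\cdot\|_Z$ and closedness of the operator $u_2\mapsto\nabla\cdot(\rho_0u_2)$ from $Y$ to $L^2_{\Phi''(\rho_0)}$ with exactly this domain --- together with the identification of the form operator with the differential operator $\tilde{\mathbb{L}}$. Both require controlling the degenerate weights near $\partial\Omega$, where $\rho_0$ vanishes like $\text{dist}(\cdot,\partial\Omega)^{1/(\gamma_0-1)}$ while $\Phi''(\rho_0)$ blows up like $\text{dist}(\cdot,\partial\Omega)^{(\gamma_0-2)/(\gamma_0-1)}$; this is exactly where hypothesis (\ref{rhonearboundary}) enters and where the boundary terms in the integrations by parts must be shown to vanish.
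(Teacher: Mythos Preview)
Your proposal is correct and follows the same overall architecture as the paper: identify a self-adjoint ``core'' operator and add a bounded symmetric perturbation via Kato--Rellich. The paper's proof is much terser because it applies Kato--Rellich at the level of the split $\tilde{\mathbb{L}}=\mathbb{L}_1+\mathbb{L}_2$, invoking Lemma 2.9 of \cite{LZ2019} to assert that $\mathbb{L}_1=B_1'LB_1A$ is already self-adjoint on $(Y,[\cdot,\cdot])$, and then noting that $\mathbb{L}_2$ is bounded symmetric. You instead decompose one level deeper, writing $\tilde{\mathbb{L}}=\mathbb{L}_0+(\mathcal{R}_1+\mathbb{L}_2)$ with $\mathcal{R}_1$ the Riesz-projector piece, and prove self-adjointness of the nonnegative $\mathbb{L}_0$ from scratch via the first representation theorem for closed forms. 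In effect your argument is a self-contained proof of the cited lemma (the closedness of $a$ on $Z$ and the identification of the form operator with the differential expression are exactly the content absorbed into that citation), so the two approaches agree in substance; yours buys independence from \cite{LZ2019} at the price of the boundary/degeneracy analysis you flag, while the paper's buys brevity by outsourcing that work.
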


\begin{proof}
By Lemma 2.9 in \cite{LZ2019}, $\mathbb{L}_{1}$ is self-adjoint on
$(Y,[\cdot,\cdot])$ with the equivalent inner product $[\cdot,\cdot]:=\langle
A\cdot,\cdot\rangle$. Since $\mathbb{L}_{2}$ is a symmetric bounded operator
on $(Y,[\cdot,\cdot]),$ $\mathbb{\tilde{L}}=\mathbb{L}_{1}+\mathbb{L}_{2}$ is
self-adjoint by Kato-Rellich Theorem.
\end{proof}

The next lemma on the quadratic form of $\mathbb{\tilde{L}\ }$will be used later.

\begin{lemma}
\label{lemma-lower-bound-quadratic}There exists constants $m>0$ such that for
any $u_{2}\in Y$, we have
\[
\left[  \mathbb{\tilde{L}}u_{2},u_{2}\right]  +m\left\Vert u_{2}\right\Vert
_{Y}^{2}\geq\left\Vert \nabla\cdot(\rho_{0}u_{2})\right\Vert _{L_{\Phi
^{\prime\prime}(\rho_{0})}^{2}}^{2}.
\]

\end{lemma}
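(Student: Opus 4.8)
The plan is to expand the quadratic form $[\tilde{\mathbb{L}}u_2,u_2] = [\mathbb{L}_1 u_2,u_2] + [\mathbb{L}_2 u_2,u_2]$ and estimate the two pieces separately. For the first piece, recall $\mathbb{L}_1 u_2 = B_1' L B_1 A u_2$, so that $[\mathbb{L}_1 u_2,u_2] = \langle A \mathbb{L}_1 u_2, u_2\rangle = \langle L B_1 A u_2, B_1 A u_2\rangle = \langle L (\nabla\cdot(\rho_0 u_2)), \nabla\cdot(\rho_0 u_2)\rangle$, using that $B_1 A u_2 = -\nabla\cdot(\rho_0 u_2)$ and $A' = A$. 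Now apply the decomposition of $L$ from Lemma~\ref{lemma-decom-L}: writing $\delta\rho := \nabla\cdot(\rho_0 u_2) \in L^2_{\Phi''(\rho_0)}$ and splitting $\delta\rho = \delta\rho_- + \delta\rho_0 + \delta\rho_+$ along $X_- \oplus \ker L \oplus X_+$, we get $\langle L\delta\rho,\delta\rho\rangle = \langle L\delta\rho_-,\delta\rho_-\rangle + \langle L\delta\rho_+,\delta\rho_+\rangle \geq \delta_0 \|\delta\rho_+\|^2 - C\|\delta\rho_-\|^2$. Since $X_-$ is finite-dimensional and $L$ is bounded, $\|\delta\rho_-\|^2 \lesssim \|\delta\rho\|^2$, hence for a suitable constant $c_1 > 0$ we obtain $[\mathbb{L}_1 u_2,u_2] \geq \|\delta\rho\|^2_{L^2_{\Phi''(\rho_0)}} - c_1 \|\delta\rho\|^2_{L^2_{\Phi''(\rho_0)}}$; but one must be careful because $\|\delta\rho\|_{L^2_{\Phi''(\rho_0)}}$ is exactly the quantity we want on the right-hand side, so instead I would keep the sharper bookkeeping $\langle L\delta\rho,\delta\rho\rangle \ge \|\delta\rho\|_{L^2_{\Phi''(\rho_0)}}^2 - (\text{bounded multiple of } \|\delta\rho\|_{L^2_{\Phi''(\rho_0)}}^2 \text{ coming only from the }X_-\oplus\ker L\text{ part})$, and absorb the latter using the term $m\|u_2\|_Y^2$ after the next step.

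The key observation making this work is that the ``bad'' part of $\langle L\delta\rho,\delta\rho\rangle$ — namely the contribution of $\delta\rho_-$ and $\delta\rho_0$ — is controlled by $\|P_{X_-\oplus\ker L}\,\delta\rho\|^2$, and since $X_- \oplus \ker L$ is finite-dimensional, this in turn is bounded by $C\|u_2\|_Y^2$ once one knows that the map $u_2 \mapsto P_{X_-\oplus\ker L}(\nabla\cdot(\rho_0 u_2))$ is bounded from $Y$ to the finite-dimensional space $X_-\oplus\ker L$. This boundedness holds because $X_-\oplus\ker L$ consists of smooth functions (or at least functions against which $\nabla\cdot(\rho_0 u_2)$ pairs continuously after integration by parts), so $\langle \nabla\cdot(\rho_0 u_2), \phi\rangle = -\langle \rho_0 u_2, \nabla\phi\rangle$ is bounded by $\|u_2\|_Y \|\nabla\phi\|_{L^2_{1/\rho_0}}$ for each fixed basis element $\phi$. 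Thus $[\mathbb{L}_1 u_2, u_2] \geq \|\nabla\cdot(\rho_0 u_2)\|^2_{L^2_{\Phi''(\rho_0)}} - C_1 \|u_2\|_Y^2$ for some $C_1 > 0$.

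For the second piece, $[\mathbb{L}_2 u_2, u_2] = \langle A\mathbb{L}_2 u_2, u_2\rangle = \int \rho_0 \Upsilon(r) v_r^2 \, dx$, and since $\Upsilon(r)$ is continuous on the compact interval $[0,R_0]$ it is bounded below, so $[\mathbb{L}_2 u_2,u_2] \geq -C_2 \|u_2\|_Y^2$ with $C_2 = \|\Upsilon^-\|_\infty$. Combining, $[\tilde{\mathbb{L}}u_2,u_2] \geq \|\nabla\cdot(\rho_0 u_2)\|^2_{L^2_{\Phi''(\rho_0)}} - (C_1 + C_2)\|u_2\|^2_Y$, so taking $m = C_1 + C_2$ finishes the proof. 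The only mildly delicate point — the ``main obstacle'' — is making rigorous the claim that the projection onto $X_-\oplus\ker L$ of $\nabla\cdot(\rho_0 u_2)$ is $Y$-bounded; this is where one needs either the explicit description of $\ker L = \mathrm{span}\{\partial_z \rho_0,\dots\}$ and $X_-$ (as in Lemma~\ref{N-Dmukappa} and the references to \cite{LZ2019}) together with the fact that these functions are regular enough for the integration-by-parts pairing $-\langle \rho_0 u_2, \nabla\phi\rangle$ to make sense with $\nabla\phi \in L^2_{1/\rho_0}$, or alternatively one invokes that $\nabla\cdot(\rho_0 \cdot): Z \to L^2_{\Phi''(\rho_0)}$ has the right mapping properties and composes with the bounded finite-rank projection; either way it is a soft functional-analytic argument rather than a computation.
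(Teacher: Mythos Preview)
Your approach has a genuine gap at the ``sharper bookkeeping'' step. You claim
\[
\langle L\delta\rho,\delta\rho\rangle \ge \|\delta\rho\|_{L^2_{\Phi''(\rho_0)}}^2 - C\,\|P_{X_-\oplus\ker L}\,\delta\rho\|^2,
\]
but this is false. Since $L=\Phi''(\rho_0)-4\pi(-\Delta)^{-1}$ with $4\pi(-\Delta)^{-1}$ a \emph{positive} compact operator on $L^2_{\Phi''(\rho_0)}$, one has $\langle L\rho,\rho\rangle<\|\rho\|_{L^2_{\Phi''(\rho_0)}}^2$ for every nonzero $\rho$. In particular, take any $\delta\rho\in X_+\setminus\{0\}$: then $P_{X_-\oplus\ker L}\delta\rho=0$, yet $\langle L\delta\rho,\delta\rho\rangle<\|\delta\rho\|^2$, contradicting your inequality. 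The spectral decomposition of $L$ only gives $\langle L\delta\rho,\delta\rho\rangle\ge \delta_0\|\delta\rho_+\|^2-C\|\delta\rho_-\|^2$ with some $\delta_0\in(0,1)$, and that constant cannot be upgraded to $1$ by any finite-dimensional correction.

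The paper's argument avoids the spectral decomposition entirely and exploits instead the explicit form of $L$ together with the divergence structure of $\delta\rho$. Writing $\delta\rho=\nabla\cdot(\rho_0 u_2)$ and $-\Delta V=\delta\rho$, one has the exact identity
\[
[\mathbb{L}_1 u_2,u_2]=\langle L\delta\rho,\delta\rho\rangle=\|\delta\rho\|_{L^2_{\Phi''(\rho_0)}}^2-4\pi\int_{\mathbb{R}^3}|\nabla V|^2\,dx,
\]
and then a single integration by parts gives
\[
\int_{\mathbb{R}^3}|\nabla V|^2\,dx=-\int_{\mathbb{R}^3}\rho_0 u_2\cdot\nabla V\,dx\le \|u_2\|_Y\Bigl(\int\rho_0|\nabla V|^2\Bigr)^{1/2}\lesssim \|u_2\|_Y\Bigl(\int|\nabla V|^2\Bigr)^{1/2},
\]
hence $\int|\nabla V|^2\lesssim\|u_2\|_Y^2$. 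This is exactly the step your argument is missing: the gravitational term is controlled by $\|u_2\|_Y^2$ not because of any finite-rank projection, but because $\delta\rho$ is a divergence and one can throw the derivative onto $V$. Your treatment of $\mathbb{L}_2$ is fine and matches the paper.
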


\begin{proof}
Since%
\[
\left[  \mathbb{\tilde{L}}u_{2},u_{2}\right]  =\left[  \mathbb{L}_{1}%
u_{2},u_{2}\right]  +\left[  \mathbb{L}_{2}u_{2},u_{2}\right]  ,
\]
and obviously $\left\vert \left[  \mathbb{L}_{2}u_{2},u_{2}\right]
\right\vert \lesssim\left\Vert u_{2}\right\Vert _{L_{\rho_{0}}^{2}}^{2}$, it
suffices to estimate
\[
\left[  \mathbb{L}_{1}u_{2},u_{2}\right]  =\left\langle LB_{1}Au_{2}%
,B_{1}Au_{2}\right\rangle =\left\Vert \nabla\cdot(\rho_{0}u_{2})\right\Vert
_{L_{\Phi^{\prime\prime}(\rho_{0})}^{2}}^{2}-4\pi\int_{\mathbb{R}^{3}%
}\left\vert \nabla V\right\vert ^{2}dx,
\]
where $-\Delta V=\nabla\cdot(\rho_{0}u_{2})$. By integration by parts,
\[
\int_{\mathbb{R}^{3}}\left\vert \nabla V\right\vert ^{2}dx=-\int
_{\mathbb{R}^{3}}\rho_{0}u_{2}\cdot\nabla Vdx\lesssim\left(  \left\Vert
u_{2}\right\Vert _{Y}^{2}\right)  ^{\frac{1}{2}}\left(  \int\left\vert \nabla
V\right\vert ^{2}dx\right)  ^{\frac{1}{2}},
\]
which implies that $\int\left\vert \nabla V\right\vert ^{2}dx\lesssim
\left\Vert u_{2}\right\Vert _{Y}^{2}$. This finishes the proof of the lemma.
\end{proof}

The study of equation (\ref{eqn-2nd-order}) is reduced to understand the
spectra of the self-adjoint operator $\mathbb{\tilde{L}}$. First, we give a
Helmholtz type decomposition of vector fields in $Y$.

\begin{lemma}
\label{lemma-helom-decomp}There is a direct sum decomposition $Y=Y_{1}\oplus
Y_{2}$, where $Y_{1}$ is the closure of
\[
\left\{  u\in Y\ |\ u=\nabla p,\text{ for some }p\in C^{1}\left(
\Omega\right)  \right\}  ,
\]
in $Y$ and $Y_{2}$ is the closure of
\[
\left\{  u\in\left(  C^{1}\left(  \Omega\right)  \right)  ^{2}\cap
Y\ |\ \nabla\cdot\left(  \rho_{0}u\right)  =0\text{ }\right\}  ,
\]
in $Y$.
\end{lemma}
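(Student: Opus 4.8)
The plan is to realize $Y=Y_{1}\oplus Y_{2}$ as an \emph{orthogonal} decomposition for the inner product $[u,w]=\langle Aw,u\rangle=\int_{\Omega}\rho_{0}\,u\cdot w\,dx$ of $Y=(L_{\rho_{0}}^{2})^{2}$. Since $Y_{1}$ and $Y_{2}$ are closed by definition, it suffices to establish (a) $Y_{1}\perp Y_{2}$, and (b) $Y_{1}^{\perp}\subseteq Y_{2}$. Indeed, (a) gives $Y_{2}\subseteq Y_{1}^{\perp}$, so together with (b) we get $Y_{1}^{\perp}=Y_{2}$; then $Y_{1}\oplus Y_{2}$ is a closed subspace of $Y$ whose orthogonal complement is $(Y_{1}+Y_{2})^{\perp}=Y_{1}^{\perp}\cap Y_{2}^{\perp}=Y_{2}\cap Y_{2}^{\perp}=\{0\}$, hence it equals $Y$.

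For (a): take $p\in C^{1}(\Omega)$ with $\nabla p\in Y$ and $u\in (C^{1}(\Omega))^{2}\cap Y$ with $\nabla\cdot(\rho_{0}u)=0$. Then
\[
[\nabla p,u]=\int_{\Omega}\rho_{0}\,u\cdot\nabla p\,dx=-\int_{\Omega}p\,\nabla\cdot(\rho_{0}u)\,dx+\int_{\partial\Omega}p\,(\rho_{0}u\cdot n)\,dS=0,
\]
where the boundary term vanishes because $\rho_{0}$ vanishes on $\partial\Omega$ at the rate (\ref{rhonearboundary}); the integration by parts is justified by first truncating a $\delta$-neighborhood of $\partial\Omega$, letting $\delta\to0$, and using $\rho_{0}\approx\mathrm{dist}(\cdot,\partial\Omega)^{1/(\gamma_{0}-1)}$ together with Lemma \ref{lemma-rho-int}. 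By continuity of the inner product and density of the generating sets, $[\cdot,\cdot]$ vanishes on $Y_{1}\times Y_{2}$, so $Y_{2}\subseteq Y_{1}^{\perp}$ (and in particular $Y_{1}\cap Y_{2}=\{0\}$).

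For (b): let $w\in Y_{1}^{\perp}$. Testing against $\nabla p$ with $p\in C_{c}^{1}(\Omega)$ shows $\nabla\cdot(\rho_{0}w)=0$ in $\mathcal{D}'(\Omega)$, and it remains to approximate $w$ in $Y$ by $C^{1}$ fields $u$ with $\nabla\cdot(\rho_{0}u)=0$. Working in the meridional half-plane (all fields are axi-symmetric), $\nabla\cdot(\rho_{0}w)=0$ is equivalent to the existence of a stream function $\psi$ on $\Omega\cap\{r>0\}$ with $r\rho_{0}w_{r}=\partial_{z}\psi$, $r\rho_{0}w_{z}=-\partial_{r}\psi$, and $w\in Y$ translates into $\int\!\int |\nabla\psi|^{2}/(r\rho_{0})\,dr\,dz<\infty$. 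Mollifying $\psi$, after an interior cutoff handling the coordinate singularity at $r=0$ and the degeneracy of $\rho_{0}$ near $\partial\Omega$, and setting $u_{\varepsilon}=(\partial_{z}\psi_{\varepsilon},-\partial_{r}\psi_{\varepsilon})/(r\rho_{0})$ produces $C^{1}$ fields with $\nabla\cdot(\rho_{0}u_{\varepsilon})=0$ and $u_{\varepsilon}\to w$ in $Y$; this is the argument of Lemma 3.15 in \cite{LZ2019} adapted to the present two-dimensional weight $\rho_{0}(r,z)$.

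The main obstacle is step (b): controlling the mollified stream function near $\partial\Omega$, where $\rho_{0}$ degenerates like $\mathrm{dist}^{1/(\gamma_{0}-1)}$, and near the axis $r=0$, so that the corrected fields $u_{\varepsilon}$ stay in $Y$ and converge. The boundary estimates of Lemma \ref{lemma-rho-int}, together with Hardy-type inequalities (as in Lemma 3.21 of \cite{LZ2019}), are precisely what makes this quantitative; step (a) and the abstract Hilbert-space bookkeeping are routine.
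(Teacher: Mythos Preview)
Your proposal is correct and follows the same route the paper intends: the paper does not give an independent proof but simply says ``The proof of the above lemma is similar to that of Lemma 3.15 in \cite{LZ2019} and we skip,'' and your outline --- orthogonality via integration by parts for (a), then stream-function mollification for (b) with care near the axis and the degenerate boundary --- is precisely that argument, adapted to the axisymmetric weight $\rho_{0}(r,z)$. You have in fact supplied more detail than the paper itself, and your identification of the approximation step (b) as the only nontrivial point, together with the relevant tools (Lemma \ref{lemma-rho-int} and the Hardy inequalities of \cite{LZ2019}), matches what the cited lemma uses.
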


The proof of above lemma is similar to that of Lemma 3.15 in \cite{LZ2019} and
we skip. Denote $\mathbb{P}_{1}:Y\mapsto Y_{1}$ and $\mathbb{P}_{2}:Y\mapsto
Y_{2}$ to be the projection operators. Then $\left\Vert \mathbb{P}%
_{1}\right\Vert ,\left\Vert \mathbb{P}_{2}\right\Vert \leq1$.

For any $u_{2}\in Y$, let $u_{2}=v_{1}+v_{2}$ where $v_{1}=\mathbb{P}_{1}%
u_{2}\in Y_{1}$ and $v_{2}=\mathbb{P}_{2}u_{2}\in Y_{2}$. Since
\[
\mathbb{\tilde{L}}u_{2}=\mathbb{L}_{1}v_{1}+\mathbb{P}_{1}\mathbb{L}_{2}%
v_{1}+\mathbb{P}_{1}\mathbb{L}_{2}v_{2}+\mathbb{P}_{2}\mathbb{L}_{2}%
v_{1}+\mathbb{P}_{2}\mathbb{L}_{2}v_{2},
\]
the operator $\mathbb{\tilde{L}}:Y\rightarrow Y$ is equivalent to the
following matrix operator on $Y_{1}\times Y_{2}$
\begin{align*}
&
\begin{pmatrix}
\tilde{\mathbb{L}}_{1}, & \mathbb{C}\\
\mathbb{C}^{\ast}, & \tilde{\mathbb{L}}_{2}%
\end{pmatrix}%
\begin{pmatrix}
v_{1}\\
v_{2}%
\end{pmatrix}
\\
&  =\left[
\begin{pmatrix}
\tilde{\mathbb{L}}_{1}, & \mathbb{C}\\
0, & \tilde{\mathbb{L}}_{2}%
\end{pmatrix}
+%
\begin{pmatrix}
0, & 0\\
\mathbb{C}^{\ast}, & 0
\end{pmatrix}
\right]
\begin{pmatrix}
v_{1}\\
v_{2}%
\end{pmatrix}
\\
&  =(T+\mathbb{A})v,
\end{align*}
where
\[
\tilde{\mathbb{L}}_{1}=\mathbb{L}_{1}+\mathbb{P}_{1}\mathbb{L}_{2}%
\mathbb{P}_{1}:Y_{1}\rightarrow Y_{1},\tilde{\ \mathbb{L}}_{2}=\mathbb{P}%
_{2}\mathbb{L}_{2}\mathbb{P}_{2}:Y_{2}\rightarrow Y_{2},
\]%
\[
\mathbb{C}=\mathbb{P}_{1}\mathbb{L}_{2}\mathbb{P}_{2}:Y_{2}\rightarrow
Y_{1},\ \mathbb{C}^{\ast}=\mathbb{P}_{2}\mathbb{L}_{2}\mathbb{P}_{1}%
:Y_{1}\rightarrow Y_{2},
\]
and
\[
T=%
\begin{pmatrix}
\tilde{\mathbb{L}}_{1}, & \mathbb{C}\\
0, & \tilde{\mathbb{L}}_{2}%
\end{pmatrix}
,\ \mathbb{A}=%
\begin{pmatrix}
0, & 0\\
\mathbb{C}^{\ast}, & 0
\end{pmatrix}
:\ Y_{1}\times Y_{2}\rightarrow Y_{1}\times Y_{2}.\
\]

\begin{lemma}
\label{Tcompact} The operator $\mathbb{A}$ is $T$-compact.
\end{lemma}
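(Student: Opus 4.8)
The plan is to reduce the claim to a compact embedding of weighted spaces. Since $\mathbb{L}_{2}$, and hence $\mathbb{A}$, is bounded on $Y_{1}\times Y_{2}$, we have $D(T)\subseteq D(\mathbb{A})$ automatically, so it suffices to show that whenever $\{w_{n}=(v_{1}^{n},v_{2}^{n})\}\subset D(T)$ satisfies $\sup_{n}\big(\|w_{n}\|_{Y}+\|Tw_{n}\|_{Y}\big)<\infty$, the sequence $\mathbb{A}w_{n}=(0,\mathbb{C}^{\ast}v_{1}^{n})=(0,\mathbb{P}_{2}\mathbb{L}_{2}v_{1}^{n})$ has a subsequence converging in $Y$. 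Because $\|\mathbb{P}_{2}\|\leq 1$ and $\mathbb{L}_{2}$ acts on the $r$-component as multiplication by $\Upsilon(r)\in C^{0}[0,R_{0}]$, it is enough to prove that $\{v_{1}^{n}\}$ is precompact in $Y$.

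First I would extract the needed bounds. From $\|Tw_{n}\|_{Y}$ bounded and the upper-triangular form of $T$: the second component $\tilde{\mathbb{L}}_{2}v_{2}^{n}$ is automatically bounded ($\tilde{\mathbb{L}}_{2}$ is a bounded operator), and since $\mathbb{C}$ is bounded and $\{v_{2}^{n}\}$ is bounded, the first component forces $\|\tilde{\mathbb{L}}_{1}v_{1}^{n}\|_{Y_{1}}$ to be bounded; as $\mathbb{P}_{1}\mathbb{L}_{2}\mathbb{P}_{1}$ is bounded this gives $\|\mathbb{L}_{1}v_{1}^{n}\|_{Y}$ bounded. Using the identity established in the proof of Lemma \ref{lemma-lower-bound-quadratic},
\[
[\mathbb{L}_{1}v_{1}^{n},v_{1}^{n}]=\big\|\nabla\cdot(\rho_{0}v_{1}^{n})\big\|_{L_{\Phi^{\prime\prime}(\rho_{0})}^{2}}^{2}-4\pi\int_{\mathbb{R}^{3}}|\nabla V_{n}|^{2}\,dx,\qquad -\Delta V_{n}=\nabla\cdot(\rho_{0}v_{1}^{n}),
\]
together with $\int_{\mathbb{R}^{3}}|\nabla V_{n}|^{2}\,dx\lesssim\|v_{1}^{n}\|_{Y}^{2}$, I obtain that $\|\nabla\cdot(\rho_{0}v_{1}^{n})\|_{L_{\Phi^{\prime\prime}(\rho_{0})}^{2}}$ is bounded. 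By Proposition 12 of \cite{JM2020} (which bounds $\|\nabla p\|_{L_{\rho_{0}}^{2}}$ by $\|\nabla\cdot(\rho_{0}\nabla p)\|_{L_{\Phi^{\prime\prime}(\rho_{0})}^{2}}$), writing $v_{1}^{n}=\nabla p_{n}$, the sequence $\{v_{1}^{n}\}$ is then bounded in $Z\cap Y_{1}$, where $Z$ is the space in \eqref{norm-Z}.

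The core of the proof is the compact embedding $Z\cap Y_{1}\hookrightarrow\hookrightarrow Y$. I would prove it as follows. On every $\Omega^{\prime}\Subset\Omega$ the operator $\nabla\cdot(\rho_{0}\nabla\,\cdot\,)$ is uniformly elliptic with coefficients bounded above and below, so interior elliptic regularity makes $\{p_{n}\}$ bounded in $H^{2}_{loc}(\Omega)$; hence $\{\nabla p_{n}\}$ is bounded in $H^{1}_{loc}(\Omega)$ and, by Rellich--Kondrachov, precompact in $L^{2}_{loc}(\Omega)$. For the boundary layer $\Omega_{\delta}=\{x\in\Omega:\text{dist}(x,\partial\Omega)<\delta\}$ I would use \eqref{rhonearboundary}, the positive curvature of $\partial\Omega$ near $(R_{0},0)$, and a weighted Hardy inequality in the spirit of Lemma 3.21 of \cite{LZ2019}: in boundary-normal coordinates $\rho_{0}\approx d^{1/(\gamma_{0}-1)}$ and $\Phi^{\prime\prime}(\rho_{0})^{-1}\approx\rho_{0}^{2-\gamma_{0}}\approx d^{(2-\gamma_{0})/(\gamma_{0}-1)}$, with $\tfrac{1}{\gamma_{0}-1}-\tfrac{2-\gamma_{0}}{\gamma_{0}-1}=1$; testing the equation $\nabla\cdot(\rho_{0}\nabla p_{n})=f_{n}$ against $\eta^{2}p_{n}$ for a cutoff $\eta$ supported in $\Omega_{2\delta}$ and controlling $p_{n}$ in the weight $\Phi^{\prime\prime}(\rho_{0})^{-1}$ by $\nabla p_{n}$ in the weight $\rho_{0}$ through Hardy, one gets a uniform tail bound
\[
\int_{\Omega_{\delta}}\rho_{0}\,|\nabla p_{n}|^{2}\,dx\ \lesssim\ \delta^{\theta}\,\sup_{m}\|v_{1}^{m}\|_{Z}^{2},\qquad\text{for some }\theta>0 .
\]
Combining interior precompactness with this uniformly small tail, a diagonal argument over $\delta\to0$ extracts a subsequence of $\{v_{1}^{n}\}$ that is Cauchy in $Y$; this proves precompactness and hence the lemma.

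The main obstacle is exactly this compact-embedding step, and within it the uniform boundary-layer tail estimate: the interior part is the classical Rellich argument, but near $\partial\Omega$ the weight $\rho_{0}$ degenerates, the tangential derivatives of $p_{n}$ are not directly controlled by the equation, and one must exploit the precise power law \eqref{rhonearboundary} and the geometry of $\partial\Omega$ via Hardy-type inequalities. If the embedding $Z\cap Y_{1}\hookrightarrow\hookrightarrow Y$ --- equivalently, the compactness of $(\mathbb{L}_{1}|_{Y_{1}}+\lambda)^{-1}$ on $Y_{1}$ --- is already available from \cite{LZ2019}, this step is just a citation. An equivalent alternative is to observe that $\tilde{\mathbb{L}}_{1}=\mathbb{L}_{1}+\mathbb{P}_{1}\mathbb{L}_{2}\mathbb{P}_{1}$ has compact resolvent on $Y_{1}$ --- from the coercivity $[\mathbb{L}_{1}v_{1},v_{1}]+C\|v_{1}\|_{Y}^{2}\gtrsim\|v_{1}\|_{Z}^{2}$ (which combines the identity of Lemma \ref{lemma-lower-bound-quadratic} with Proposition 12 of \cite{JM2020}) together with the same compact embedding, plus boundedness of $\mathbb{P}_{1}\mathbb{L}_{2}\mathbb{P}_{1}$ --- and then to verify directly, using the upper-triangular structure of $T$, that $\mathbb{A}(T+\lambda)^{-1}$ is compact for $\lambda$ in the resolvent set.
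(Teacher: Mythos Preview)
Your proposal is correct and follows the same overall route as the paper: reduce $T$-compactness of $\mathbb{A}$ to precompactness of $\{v_{1}^{n}\}$ in $Y$, establish $\|v_{1}\|_{Z}\lesssim\|v\|_{T}$ via the identity of Lemma~\ref{lemma-lower-bound-quadratic}, and then invoke the compact embedding $(Y_{1},\|\cdot\|_{Z})\hookrightarrow(Y_{1},\|\cdot\|_{Y})$. The only substantive difference is that the paper cites Proposition~12 of \cite{JM2020} directly for this compact embedding (the same proposition you already invoke for the continuity bound $\|\nabla p\|_{L_{\rho_{0}}^{2}}\lesssim\|\nabla\cdot(\rho_{0}\nabla p)\|_{L_{\Phi^{\prime\prime}(\rho_{0})}^{2}}$), rather than reproving it. Your independent sketch via interior Rellich plus a boundary-layer Hardy tail bound is reasonable in spirit, but the claimed uniform $\delta^{\theta}$ tail estimate is not immediate---testing against $\eta^{2}p_{n}$ produces a right-hand side $\int f_{n}\eta^{2}p_{n}$ that must itself be made small in $\delta$, and controlling $p_{n}$ (not just $\nabla p_{n}$) near $\partial\Omega$ in the dual weight requires an additional argument. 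In any case this detour is unnecessary: Proposition~12 of \cite{JM2020} already gives the compactness, so the step you flag as the ``main obstacle'' is in fact just a citation.
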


\begin{proof}
For any $v=\left(  v_{1},v_{2}\right)  \in D\left(  T\right)  $, the graph
norm $\left\Vert v\right\Vert _{T}\ $is defined by
\begin{align*}
\left\Vert v\right\Vert _{T}  &  =\Vert v\Vert_{Y}+\Vert Tv\Vert_{Y}\\
&  \approx\Vert v\Vert_{Y}+\Vert\tilde{\mathbb{L}}_{1}v_{1}\Vert_{Y}%
\approx\Vert v\Vert_{Y}+\Vert\mathbb{L}_{1}v_{1}\Vert_{Y}.
\end{align*}
It is obvious that $D(\mathbb{A})\supset D(T)$. To prove $\mathbb{A}$ is
$T$-compact, we need to prove $\mathbb{A}:(D(\mathbb{A}),\left\Vert
\cdot\right\Vert _{T})\mapsto(Y,\Vert\cdot\Vert_{Y})$ is compact. By the
definition of $\mathbb{A}$, we notice that $\mathbb{A}v=\mathbb{P}%
_{2}\mathbb{L}_{2}v_{1}:Y_{1}\times Y_{2}\mapsto\left\{  0\right\}  \times
Y_{2}$. For $v_{1}=\nabla\xi\in Y_{1}$,
\[
\left\Vert v_{1}\right\Vert _{Z}=\Vert\nabla\cdot(\rho_{0}v_{1})\Vert
_{L_{\Phi^{\prime\prime}(\rho_{0})}^{2}}+\Vert v_{1}\Vert_{Y}=\Vert\nabla
\cdot(\rho_{0}\nabla\xi)\Vert_{L_{\Phi^{\prime\prime}(\rho_{0})}^{2}}%
+\Vert\nabla\xi\Vert_{Y},
\]
as defined in (\ref{norm-Z}). By the proof of Lemma
\ref{lemma-lower-bound-quadratic} we have%
\begin{align*}
\Vert\nabla\cdot(\rho_{0}v_{1})\Vert_{L_{\Phi^{\prime\prime}(\rho_{0})}^{2}%
}^{2}+\Vert v_{1}\Vert_{Y}^{2}  &  \lesssim\langle\mathbb{L}_{1}v_{1}%
,v_{1}\rangle+2m\Vert v_{1}\Vert_{Y}^{2}\\
&  \lesssim\Vert\mathbb{L}_{1}v_{1}\Vert_{Y}^{2}+\Vert v_{1}\Vert_{Y}%
^{2}\approx\left\Vert v\right\Vert _{T}^{2}.
\end{align*}
Thus $\left\Vert v_{1}\right\Vert _{Z}\lesssim\left\Vert v\right\Vert _{T}$.
Since the embedding $(Y_{1},\left\Vert \cdot\right\Vert _{Z})\hookrightarrow
(Y_{1},\Vert\cdot\Vert_{Y})$ is compact by Proposition 12 in \cite{JM2020} and
$\mathbb{P}_{2},\ \mathbb{L}_{2}$ are bounded operators, it follows that
$\mathbb{A}:(D(\mathbb{A}),\left\Vert \cdot\right\Vert _{T})\mapsto
(Y,\Vert\cdot\Vert_{Y})$ is compact.
\end{proof}

The above lemma implies that the essential spectra of $\mathbb{\tilde{L}}$ is
the same as $\tilde{\mathbb{L}}_{2}$.

\begin{lemma}
\label{lemma-equ-essential}$\sigma_{ess}(\mathbb{\tilde{L}})=\sigma
_{ess}(\tilde{\mathbb{L}}_{2})$.
\end{lemma}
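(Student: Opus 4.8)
The plan is to deduce the identity from Lemma \ref{Tcompact} together with the observation that the diagonal block $\tilde{\mathbb{L}}_{1}$ has purely discrete spectrum. I would proceed in three steps: (a) replace $\tilde{\mathbb{L}}$ by the upper triangular operator $T$ without changing the essential spectrum; (b) show $\tilde{\mathbb{L}}_{1}:Y_{1}\to Y_{1}$ has compact resolvent, so $\sigma_{ess}(\tilde{\mathbb{L}}_{1})=\emptyset$; (c) use the triangular structure of $T$ to identify $\sigma_{ess}(T)$ with $\sigma_{ess}(\tilde{\mathbb{L}}_{2})$.

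For (a): since $\mathbb{A}$ is $T$-compact (Lemma \ref{Tcompact}) and $\tilde{\mathbb{L}}=T+\mathbb{A}$, stability of the essential spectrum under relatively compact perturbations (see \cite{Katobook1995}) gives $\sigma_{ess}(\tilde{\mathbb{L}})=\sigma_{ess}(T)$. For (b): the inequality established in the proof of Lemma \ref{Tcompact} reads, for $v_{1}\in D(\tilde{\mathbb{L}}_{1})\subset Y_{1}$,
\[
\|v_{1}\|_{Z}^{2}\lesssim\|\mathbb{L}_{1}v_{1}\|_{Y}^{2}+\|v_{1}\|_{Y}^{2}\lesssim\|\tilde{\mathbb{L}}_{1}v_{1}\|_{Y}^{2}+\|v_{1}\|_{Y}^{2},
\]
the second bound because $\tilde{\mathbb{L}}_{1}-\mathbb{L}_{1}=\mathbb{P}_{1}\mathbb{L}_{2}\mathbb{P}_{1}$ is bounded. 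Since the embedding $(Y_{1},\|\cdot\|_{Z})\hookrightarrow(Y_{1},\|\cdot\|_{Y})$ is compact (Proposition 12 in \cite{JM2020}), bounded subsets of $D(\tilde{\mathbb{L}}_{1})$ in the graph norm are precompact in $Y_{1}$; as $\tilde{\mathbb{L}}_{1}$ is self-adjoint, it therefore has compact resolvent and $\sigma_{ess}(\tilde{\mathbb{L}}_{1})=\emptyset$.

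For (c) I would argue with singular Weyl sequences. Given $\lambda\in\sigma_{ess}(T)$, take a norm-$1$ Weyl sequence $(w_{1}^{n},v_{2}^{n})\in D(T)$ with $(w_{1}^{n},v_{2}^{n})\rightharpoonup0$ and $(T-\lambda)(w_{1}^{n},v_{2}^{n})\to0$; then $(\tilde{\mathbb{L}}_{2}-\lambda)v_{2}^{n}\to0$, and after passing to a subsequence along which $\|v_{2}^{n}\|$ converges, either $\liminf\|v_{2}^{n}\|>0$, so $v_{2}^{n}/\|v_{2}^{n}\|$ is a singular Weyl sequence for $\tilde{\mathbb{L}}_{2}$, or $v_{2}^{n}\to0$ in $Y$, so $\mathbb{C}v_{2}^{n}\to0$, $\|w_{1}^{n}\|\to1$, and $(w_{1}^{n})$ is a singular Weyl sequence for $\tilde{\mathbb{L}}_{1}$, which is impossible by (b). Hence $\sigma_{ess}(T)\subset\sigma_{ess}(\tilde{\mathbb{L}}_{2})$. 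Conversely, if $\lambda\in\sigma_{ess}(\tilde{\mathbb{L}}_{2})$ has a Weyl sequence $v_{2}^{n}\rightharpoonup0$ and $\lambda\notin\sigma(\tilde{\mathbb{L}}_{1})$, I put $w_{1}^{n}=-(\tilde{\mathbb{L}}_{1}-\lambda)^{-1}\mathbb{C}v_{2}^{n}$; since $(\tilde{\mathbb{L}}_{1}-\lambda)^{-1}$ is compact and $\mathbb{C}v_{2}^{n}\rightharpoonup0$, one has $w_{1}^{n}\to0$, and then $(w_{1}^{n},v_{2}^{n})$ is a singular Weyl sequence for $T$ at $\lambda$, so $\lambda\in\sigma_{ess}(T)$.

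The one point requiring care — and the only real obstacle — is the reverse inclusion when $\lambda\in\sigma_{ess}(\tilde{\mathbb{L}}_{2})$ happens to lie in $\sigma(\tilde{\mathbb{L}}_{1})$, which by (b) is a discrete set of finite-multiplicity eigenvalues. If $\lambda$ is not isolated in $\sigma_{ess}(\tilde{\mathbb{L}}_{2})$ one approximates by $\lambda_{k}\to\lambda$ with $\lambda_{k}\in\sigma_{ess}(\tilde{\mathbb{L}}_{2})\setminus\sigma(\tilde{\mathbb{L}}_{1})$ and uses that $\sigma_{ess}(T)$ is closed; if $\lambda$ is isolated in $\sigma_{ess}(\tilde{\mathbb{L}}_{2})$ then $\ker(\tilde{\mathbb{L}}_{2}-\lambda)$ is infinite dimensional, so one can choose an orthonormal sequence $v_{2}^{n}$ in $\ker(\tilde{\mathbb{L}}_{2}-\lambda)$ satisfying the finitely many extra conditions $v_{2}^{n}\perp\mathbb{C}^{\ast}\ker(\tilde{\mathbb{L}}_{1}-\lambda)$, whence $\mathbb{C}v_{2}^{n}$ lies in the closed range of $\tilde{\mathbb{L}}_{1}-\lambda$ and $w_{1}^{n}$ can be defined through the corresponding compact reduced inverse. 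In all cases one obtains $\sigma_{ess}(\tilde{\mathbb{L}}_{2})\subset\sigma_{ess}(T)$, so the chain $\sigma_{ess}(\tilde{\mathbb{L}})=\sigma_{ess}(T)=\sigma_{ess}(\tilde{\mathbb{L}}_{2})$ closes.
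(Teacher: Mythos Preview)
Your proof is correct and follows the same strategy as the paper: use Weyl's theorem with the $T$-compactness of $\mathbb{A}$ to pass from $\tilde{\mathbb{L}}$ to $T$, show $\sigma_{ess}(\tilde{\mathbb{L}}_{1})=\emptyset$ via compact resolvent (the paper phrases this as $\tilde{\mathbb{L}}_{1}$ being a relatively compact perturbation of $\mathbb{L}_{1}$, which has purely discrete spectrum), and then exploit the upper-triangular structure of $T$. The only difference is that the paper simply asserts $\sigma_{ess}(T)=\sigma_{ess}(\tilde{\mathbb{L}}_{1})\cup\sigma_{ess}(\tilde{\mathbb{L}}_{2})$ from the block-triangular form, whereas you supply the detailed Weyl-sequence argument for this step; your case analysis for $\lambda\in\sigma_{ess}(\tilde{\mathbb{L}}_{2})\cap\sigma(\tilde{\mathbb{L}}_{1})$ is more work than strictly needed but is sound.
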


\begin{proof}
We have $\sigma_{ess}(\mathbb{\tilde{L}})=\sigma_{ess}(T+\mathbb{A})$ by the
definition of the operator $T+\mathbb{A}$. By Lemma \ref{Tcompact} and Weyl's
Theorem, we have $\sigma_{ess}(T+\mathbb{A})=\sigma_{ess}(T).$ By Theorem 2.3
v) in \cite{LZ2019} and the compact embedding of $(Y_{1},\left\Vert
\cdot\right\Vert _{Z})\hookrightarrow(Y_{1},\Vert\cdot\Vert_{Y})$, the spectra
of $\mathbb{L}_{1}$ on $Y_{1}$ are purely discrete and $\sigma_{ess}\left(
\mathbb{L}_{1}\right)  =\left\{  \emptyset\right\}  $. By the same arguments
as in the proof of Lemma \ref{Tcompact}, $\tilde{\mathbb{L}}_{1}$ is relative
compact to $\mathbb{L}_{1}$ and as a result $\sigma_{ess}\left(
\tilde{\mathbb{L}}_{1}\right)  =\sigma_{ess}\left(  \mathbb{L}_{1}\right)
=\left\{  \emptyset\right\}  .$ Since the matrix operator $T$ is upper
triangular, it follows that
\[
\sigma_{ess}(T)=\sigma_{ess}\left(  \tilde{\mathbb{L}}_{1}\right)  \cup
\sigma_{ess}\left(  \tilde{\mathbb{L}}_{2}\right)  =\sigma_{ess}\left(
\tilde{\mathbb{L}}_{2}\right)  .
\]

\end{proof}

We study the essential spectra of $\tilde{\mathbb{L}}_{2}$ in the next two
lemmas. By the Rayleigh instability condition (\ref{condition-rayleigh}) and
the fact that $\Upsilon(0)=4\omega_{0}(0)^{2}\geq0$, we know that
$\text{range}\left(  \Upsilon(r)\right)  =[-a,b]$ for some $a>0$, $b\geq0$.

\begin{lemma}
\label{rangofconsp} $\sigma_{ess}(\tilde{\mathbb{L}}_{2})\supset
\text{range}(\Upsilon(r))=[-a,b]$.
\end{lemma}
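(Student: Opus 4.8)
The plan is to produce, for each $\lambda$ in a dense subset of $[-a,b]$, a singular Weyl sequence for the bounded self-adjoint operator $\tilde{\mathbb{L}}_2=\mathbb{P}_2\mathbb{L}_2\mathbb{P}_2$ on $Y_2$, i.e. fields $w_k\in Y_2$ with $\|w_k\|_Y=1$, $w_k\rightharpoonup 0$ weakly in $Y$, and $\|(\tilde{\mathbb{L}}_2-\lambda)w_k\|_Y\to 0$; since $\sigma_{ess}(\tilde{\mathbb{L}}_2)$ is closed, this gives the whole interval. Because $\Upsilon\in C[0,R_0]$ has range $[-a,b]$, for every $\lambda$ outside the (at most two) values $\Upsilon(0),\Upsilon(R_0)$ there is $r_\ast\in(0,R_0)$ with $\Upsilon(r_\ast)=\lambda$ and some $(r_\ast,z_\ast)$ in the interior of $\Omega$; on a small neighborhood $N=\{|r-r_\ast|<\delta_0,\ |z-z_\ast|<\delta_0\}$ of such a point one has $\rho_0\geq c_0>0$. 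These $\lambda$ are dense in $[-a,b]$, so it suffices to treat such $\lambda$.

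The Weyl vectors are high-frequency, $\rho_0$-divergence-free fields supported in $N$. For a scalar $\psi$ put $v_r=(\rho_0 r)^{-1}\partial_z\psi$, $v_z=-(\rho_0 r)^{-1}\partial_r\psi$, so that $\nabla\cdot(\rho_0(v_r,0,v_z))=r^{-1}(\partial_r\partial_z\psi-\partial_z\partial_r\psi)=0$; by the Helmholtz decomposition (Lemma \ref{lemma-helom-decomp}), $u:=(v_r,v_z)\in Y_2$ as soon as $\psi$ is smooth and compactly supported in $N$ (then $u\in(L^2_{\rho_0})^2$ because $\rho_0\geq c_0$ and $r\geq r_\ast/2$ on $\mathrm{supp}\,\psi$, and $\nabla\cdot(\rho_0 u)=0$). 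Take $\psi=\psi_{k,n}(r,z)=a_k(r)\eta(z)\sin(nz)$, with $\eta$ a fixed bump near $z_\ast$ and $a_k$ a bump supported in $(r_\ast-1/k,r_\ast+1/k)$. Since $\partial_z\psi_{k,n}=n\,a_k\eta\cos(nz)+a_k\eta'\sin(nz)$ has leading size $n$ while $\partial_r\psi_{k,n}=a_k'\eta\sin(nz)$ is bounded in $n$, a direct computation in cylindrical coordinates gives, for fixed $k$, $\|v_r\|_Y^2$ of order $n^2$ and $\|v_z\|_Y^2$ bounded in $n$. Hence the normalized field $w_{k,n}:=u_{k,n}/\|u_{k,n}\|_Y\in Y_2$ has $\|w_{k,n}\|_Y=1$, its $v_z$-component has $Y$-norm $O_k(1/n)$, and its $v_r$-component has $Y$-norm $\to1$ as $n\to\infty$.

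For the operator error, note that for $u\in Y_2$ one has $(\tilde{\mathbb{L}}_2-\lambda)u=\mathbb{P}_2(\mathbb{L}_2u-\lambda u)$ with $\|\mathbb{P}_2\|\leq1$, and $\mathbb{L}_2u-\lambda u=((\Upsilon(r)-\lambda)v_r,\,-\lambda v_z)$, so
\[
\|(\tilde{\mathbb{L}}_2-\lambda)u\|_Y\leq\Big(\sup_{|r-r_\ast|\leq 1/k}|\Upsilon(r)-\Upsilon(r_\ast)|\Big)\,\|v_r\|_Y+|\lambda|\,\|v_z\|_Y .
\]
Applied to $w_{k,n}$, the first factor is at most $\varepsilon_k:=\sup_{|r-r_\ast|\leq1/k}|\Upsilon(r)-\Upsilon(r_\ast)|\to0$ as $k\to\infty$ by continuity of $\Upsilon$, and the $v_z$-term is $O_k(1/n)$. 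Choosing $n_k\uparrow\infty$ large enough for each $k$, the diagonal sequence $w_k:=w_{k,n_k}$ satisfies $\|w_k\|_Y=1$ and $\|(\tilde{\mathbb{L}}_2-\lambda)w_k\|_Y\to0$. Finally $w_k\rightharpoonup0$ in $Y$: testing against a continuous $g$, the quantities $\langle\rho_0 w_k,g\rangle$ involve $\sin(n_kz)$ and $\cos(n_kz)$ multiplied by coefficients bounded in $L^2_{\rho_0}$, hence vanish as $n_k\to\infty$ by Riemann–Lebesgue, and density together with $\|w_k\|_Y=1$ gives weak convergence to $0$. Thus $\lambda\in\sigma_{ess}(\tilde{\mathbb{L}}_2)$ for all $\lambda$ in a dense subset of $[-a,b]$, and closedness of the essential spectrum yields $\sigma_{ess}(\tilde{\mathbb{L}}_2)\supseteq[-a,b]$.

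The main difficulty to watch is the tension built into the construction: membership in $Y_2$ forces $\partial_z(\rho_0 r v_r)=-\partial_r(\rho_0 r v_z)$, so one cannot simply localize a purely radial field (which would be a trivial near-eigenfunction of the multiplication operator $\mathbb{L}_2$); instead one must make $v_z$ negligible, and the only way to do so while keeping the field in $Y_2$ is to oscillate rapidly in $z$ — which, fortunately, is exactly the ingredient that also produces the required weak convergence to zero. Balancing the two limits (shrinking the $r$-support so $\Upsilon\approx\lambda$, and increasing the frequency so $v_z$ is negligible and the field oscillates away) is what the diagonal choice of $n_k$ accomplishes; the remaining points — $\rho_0$ bounded below on $N$, and recovering the endpoints $-a,b$ together with any value attained by $\Upsilon$ only at $r=0$ or $r=R_0$ — are handled by working in the interior of $\Omega$ and by the closedness of $\sigma_{ess}$.
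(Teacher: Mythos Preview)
Your proof is correct and follows the same strategy as the paper: build a Weyl sequence for $\tilde{\mathbb{L}}_2$ inside $Y_2$ via a stream-function construction, localize in $r$ near a point where $\Upsilon(r_\ast)=\lambda$, and arrange the divergence-free field so that its $v_z$-component is asymptotically negligible compared to $v_r$; then the error $(\tilde{\mathbb{L}}_2-\lambda)w = \mathbb{P}_2\big((\Upsilon-\lambda)v_r,\,-\lambda v_z\big)$ goes to zero. The only substantive difference is the mechanism used to suppress $v_z$. The paper uses an \emph{anisotropic spatial rescaling}: it takes cutoffs $\varphi\big(\tfrac{r-r_0}{\varepsilon_n}\big)\psi\big(\tfrac{z-z_0}{\varepsilon_n^2}\big)$, so the $z$-support is much thinner than the $r$-support and one computes directly $\|v_z\|_Y^2=O(\varepsilon_n^2)$; weak convergence comes simply from the shrinking support. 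You instead keep the $z$-support fixed and introduce high-frequency oscillation $\sin(nz)$, so that $\partial_z\psi$ gains a factor $n$ while $\partial_r\psi$ does not; this forces a two-parameter diagonal argument in $(k,n)$ and a Riemann--Lebesgue step for weak convergence. Both devices work; the paper's single-parameter version is slightly cleaner (no diagonalization needed, and weak convergence is immediate from shrinking support), while your oscillatory construction makes the reason for weak convergence more explicit. Note, incidentally, that your shrinking $r$-support already gives weak convergence without invoking Riemann--Lebesgue, since $|[w_k,g]|\leq \|\chi_{\{|r-r_\ast|<1/k\}}g\|_Y\to 0$ for any $g\in Y$.
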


\begin{proof}
For any $\lambda\in\left(  -a,b\right)  $, let $r_{0}\in(0,R_{0})$ be such
that $\lambda=\Upsilon(r_{0})$. Choose $(r_{0},z_{0})\in\Omega$ and
$\varepsilon_{0}$ small enough, such that $\left(  r,z\right)  \in\Omega$ when
$\left\vert r-r_{0}\right\vert \leq\varepsilon_{0}$ and $\left\vert
z-z_{0}\right\vert \leq\varepsilon_{0}^{2}$. Choose a sequence $\left\{
\varepsilon_{n}\right\}  _{n=1}^{\infty}\subset\left(  0,\varepsilon
_{0}\right)  $ with $\lim_{n\rightarrow\infty}\varepsilon_{n}=0$. Let
$\varphi(r)$, $\psi(z)\in C_{0}^{\infty}(-1,1)$ be two smooth cutoff functions
such that $\varphi(0)=\psi(0)=1$. Define $\delta v^{\varepsilon_{n}}=(\delta
v_{r}^{\varepsilon_{n}},\delta v_{z}^{\varepsilon_{n}})$ with
\[
\delta v_{z}^{\varepsilon_{n}}=-\frac{\varepsilon_{n}}{A_{\varepsilon_{n}}%
\rho_{0}r}\varphi^{\prime}(\frac{r-r_{0}}{\varepsilon_{n}})\psi(\frac{z-z_{0}%
}{\varepsilon_{n}^{2}}),
\]
and
\[
\delta v_{r}^{\varepsilon_{n}}=\frac{1}{A_{\varepsilon_{n}}\rho_{0}r}%
\varphi(\frac{r-r_{0}}{\varepsilon_{n}})\psi^{\prime}(\frac{z-z_{0}%
}{\varepsilon_{n}^{2}}),
\]
where
\begin{align*}
A_{\varepsilon_{n}}^{2}  &  =\int_{\mathbb{R}^{3}}\rho_{0}\left(  \left\vert
\frac{\varepsilon_{n}}{\rho_{0}r}\varphi^{\prime}(\frac{r-r_{0}}%
{\varepsilon_{n}})\psi(\frac{z-z_{0}}{\varepsilon_{n}^{2}})\right\vert
^{2}+\left\vert \frac{1}{\rho_{0}r}\varphi(\frac{r-r_{0}}{\varepsilon_{n}%
})\psi^{\prime}(\frac{z-z_{0}}{\varepsilon_{n}^{2}})\right\vert ^{2}\right)
dx\\
&  =2\pi\varepsilon_{n}^{3}\int_{-1}^{1}\int_{-1}^{1}\frac{\left(
\varepsilon_{n}^{2}\left\vert \varphi^{\prime}(t)\psi(s)\right\vert
^{2}+\left\vert \varphi(t)\psi^{\prime}(s)\right\vert ^{2}\right)  }{\rho
_{0}r|_{(r,z)=(\varepsilon t+r_{0},\varepsilon_{n}^{2}s+z_{0})}}dtds=O\left(
\varepsilon_{n}^{3}\right)  .
\end{align*}
Then $\Vert\delta v^{\varepsilon_{n}}\Vert_{Y}=1$ and $\delta v^{\varepsilon
_{n}}\in Y_{2}$ owing to
\[
\delta\rho^{\varepsilon_{n}}=B_{1}A\delta v^{\varepsilon_{n}}=\frac{1}%
{r}\partial_{r}(r\rho_{0}\delta v_{r}^{\varepsilon_{n}})+\partial_{z}(\rho
_{0}\delta v_{z}^{\varepsilon_{n}})=0.
\]
We will show that $\left\{  \delta v^{\varepsilon_{n}}\right\}  $ is a Weyl's
sequence for the operator $\tilde{\mathbb{L}}_{2}\ $and therefore $\lambda
\in\sigma_{ess}(\tilde{\mathbb{L}}_{2})$.

First, we check that $\delta v^{\varepsilon_{n}}$ converge to $0$ weakly in
$Y_{2}$. For any $\xi\in Y_{2}$, since $\delta v^{\varepsilon_{n}}$ is
supported in $\Omega_{\varepsilon_{n}}=\left\{  \left\vert r-r_{0}\right\vert
\leq\varepsilon_{n},\left\vert z-z_{0}\right\vert \leq\varepsilon_{n}%
^{2}\right\}  $, we have
\[
\left\vert \langle\delta v^{\varepsilon_{n}},\xi\rangle\right\vert
\leq\left\Vert \delta v^{\varepsilon_{n}}\right\Vert _{Y}\left(  2\pi
\int_{r_{0}-\varepsilon_{n}}^{r_{0}+\varepsilon_{n}}\int_{z_{0}-\varepsilon
_{n}^{2}}^{z_{0}+\varepsilon_{n}^{2}}\rho_{0}|\xi|^{2}rdrdz\right)  ^{\frac
{1}{2}}\rightarrow0,
\]
when $\varepsilon_{n}\rightarrow0$.

Next, we prove that $(\tilde{\mathbb{L}}_{2}-\lambda)\delta v^{\varepsilon
_{n}}$ converge to $0$ strongly in $Y_{2}$. We write
\[
(\tilde{\mathbb{L}}_{2}-\lambda)\delta v^{\varepsilon_{n}}=\mathbb{P}_{2}%
\begin{pmatrix}
\Upsilon(r)\delta v_{r}^{\varepsilon_{n}}\\
0
\end{pmatrix}
-\lambda\delta v^{\varepsilon_{n}}=\mathbb{P}_{2}%
\begin{pmatrix}
\left(  \Upsilon(r)-\Upsilon(r_{0})\right)  \delta v_{r}^{\varepsilon_{n}}\\
-\Upsilon(r_{0})\delta v_{z}^{\varepsilon_{n}}%
\end{pmatrix}
.
\]
Noticing that $\left\Vert \mathbb{P}_{2}\right\Vert \leq1$, and
\[
\left\Vert \delta v_{z}^{\varepsilon_{n}}\right\Vert _{Y}^{2}=\frac{O\left(
\varepsilon_{n}^{5}\right)  }{A_{\varepsilon_{n}}^{2}}=O\left(  \varepsilon
_{n}^{2}\right)  ,
\]
we have
\begin{align*}
&  \ \ \ \ \ \Vert(\tilde{\mathbb{L}}_{2}-\lambda)\delta v^{\varepsilon_{n}%
}\Vert_{Y}^{2}\\
&  \leq\max_{\left(  r,z\right)  \in\Omega_{\varepsilon_{n}}}\left(
\Upsilon(r)-\Upsilon(r_{0})\right)  ^{2}\left\Vert \delta v_{r}^{\varepsilon
_{n}}\right\Vert _{Y}^{2}+\Upsilon(r_{0})^{2}\left\Vert \delta v_{z}%
^{\varepsilon_{n}}\right\Vert _{Y}^{2}\\
&  \leq\max_{\left(  r,z\right)  \in\Omega_{\varepsilon_{n}}}\left(
\Upsilon(r)-\Upsilon(r_{0})\right)  ^{2}+O\left(  \varepsilon_{n}^{2}\right)
\rightarrow0,\text{ }%
\end{align*}
when $\varepsilon_{n}\rightarrow0$. This shows that $\delta v^{\varepsilon
_{n}}$ is a Weyl's sequence for $\tilde{\mathbb{L}}_{2}$ and $\lambda\in
\sigma_{ess}(\tilde{\mathbb{L}}_{2})$. Thus $\left(  -a,b\right)
\subset\sigma_{ess}(\tilde{\mathbb{L}}_{2})$ which implies $\left[
-a,b\right]  \subset\sigma_{ess}(\tilde{\mathbb{L}}_{2})$ since $\sigma
_{ess}(\tilde{\mathbb{L}}_{2})$ is closed.


\end{proof}

\begin{lemma}
\label{lemma-spectra-L2} $\sigma(\tilde{\mathbb{L}}_{2})=\sigma_{ess}%
(\tilde{\mathbb{L}}_{2})=\text{range}\left(  \Upsilon(r)\right)  =\left[
-a,b\right]  $.
\end{lemma}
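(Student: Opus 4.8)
The plan is to prove the two missing inclusions and sandwich all three sets. Lemma~\ref{rangofconsp} already supplies $\mathrm{range}(\Upsilon(r))=[-a,b]\subseteq\sigma_{ess}(\tilde{\mathbb{L}}_{2})$, and $\sigma_{ess}(\tilde{\mathbb{L}}_{2})\subseteq\sigma(\tilde{\mathbb{L}}_{2})$ always holds, so it suffices to establish the upper bound $\sigma(\tilde{\mathbb{L}}_{2})\subseteq[-a,b]$; then the chain $[-a,b]\subseteq\sigma_{ess}(\tilde{\mathbb{L}}_{2})\subseteq\sigma(\tilde{\mathbb{L}}_{2})\subseteq[-a,b]$ forces equality throughout.

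For the upper bound I would first record a two-sided operator inequality for $\mathbb{L}_{2}$ on all of $Y=(L^{2}_{\rho_{0}})^{2}$ in the equivalent inner product $[\cdot,\cdot]=\langle A\cdot,\cdot\rangle$. Writing $u=(v_{r},v_{z})$, one has $[\mathbb{L}_{2}u,u]=\int_{\mathbb{R}^{3}}\rho_{0}\,\Upsilon(r)\,v_{r}^{2}\,dx$. Since $-a\le\Upsilon(r)\le b$ on $[0,R_{0}]$ with $b\ge0$ and $-a\le0$, we get $\int\rho_{0}\Upsilon v_{r}^{2}\le b\int\rho_{0}v_{r}^{2}\le b[u,u]$ and $\int\rho_{0}\Upsilon v_{r}^{2}\ge -a\int\rho_{0}v_{r}^{2}\ge -a[u,u]$ for every $u\in Y$; that is, $-a\,\mathrm{id}_{Y}\le\mathbb{L}_{2}\le b\,\mathrm{id}_{Y}$.

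Next I would transfer this to the compression $\tilde{\mathbb{L}}_{2}=\mathbb{P}_{2}\mathbb{L}_{2}\mathbb{P}_{2}$ on $Y_{2}$. The Helmholtz-type decomposition $Y=Y_{1}\oplus Y_{2}$ of Lemma~\ref{lemma-helom-decomp} is orthogonal for $[\cdot,\cdot]$, since $[\nabla p,u]=-\int p\,\nabla\cdot(\rho_{0}u)\,dx=0$ whenever $\nabla\cdot(\rho_{0}u)=0$; hence $\mathbb{P}_{2}$ is $[\cdot,\cdot]$-self-adjoint and $\tilde{\mathbb{L}}_{2}$ is a bounded self-adjoint operator on $Y_{2}$. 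For $v\in Y_{2}$ we have $\mathbb{P}_{2}v=v$, so $[\tilde{\mathbb{L}}_{2}v,v]=[\mathbb{L}_{2}v,v]\in[-a[v,v],\,b[v,v]]$, i.e. $-a\,\mathrm{id}_{Y_{2}}\le\tilde{\mathbb{L}}_{2}\le b\,\mathrm{id}_{Y_{2}}$. By the spectral theorem for bounded self-adjoint operators this gives $\sigma(\tilde{\mathbb{L}}_{2})\subseteq[-a,b]$, and the sandwich above completes the proof.

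There is no real analytic difficulty here; the only point that needs care is bookkeeping with the weighted inner product $[\cdot,\cdot]$ and checking that the gradient fields $Y_{1}$ and the $\rho_{0}$-divergence-free fields $Y_{2}$ are mutually $[\cdot,\cdot]$-orthogonal, which is a one-line integration by parts. As an alternative to the quadratic-form argument one could instead show directly that $\tilde{\mathbb{L}}_{2}-\lambda$ is boundedly invertible on $Y_{2}$ for every real $\lambda\notin[-a,b]$ (and for non-real $\lambda$ by self-adjointness), but passing the estimate through the compression is the shortest route.
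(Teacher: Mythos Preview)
Your proposal is correct and follows essentially the same approach as the paper: both reduce to showing $\sigma(\tilde{\mathbb{L}}_{2})\subseteq[-a,b]$ by exploiting that $[\tilde{\mathbb{L}}_{2}v,v]=[\mathbb{L}_{2}v,v]$ for $v\in Y_{2}$ and then bounding this quadratic form by $-a$ and $b$. The only cosmetic difference is that you invoke the spectral theorem to pass from the numerical-range bound $-a\,\mathrm{id}\le\tilde{\mathbb{L}}_{2}\le b\,\mathrm{id}$ to the spectral inclusion, whereas the paper writes out the equivalent lower bound $|[(\tilde{\mathbb{L}}_{2}-\lambda)u,u]|\ge c_{1}\|u\|_{Y}^{2}$ for $\lambda\notin[-a,b]$ and concludes invertibility directly---a route you yourself mention as an alternative.
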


\begin{proof}
Fix $\lambda\notin\left[  -a,b\right]  $. For any $u=\left(  u_{r}%
,u_{z}\right)  \in Y_{2}$, we have
\begin{align*}
\lbrack(\tilde{\mathbb{L}}_{2}-\lambda)u,u]  &  =[(\mathbb{L}_{2}%
-\lambda)u,u]\\
&  =[(\Upsilon(r)-\lambda)u_{r},u_{r}]-[\lambda u_{z},u_{z}]\\
&  =\int_{\mathbb{R}^{3}}\rho_{0}(\Upsilon(r)-\lambda)u_{r}^{2}dx+\int
_{\mathbb{R}^{3}}(-\lambda)\rho_{0}u_{z}^{2}dx.
\end{align*}
Since $a>0,b\geq0$, we have
\[
|[(\tilde{\mathbb{L}}_{2}-\lambda)u,u]|\geq c_{1}\left\Vert u\right\Vert
_{Y}^{2},
\]
where $c_{1}=\min\left\{  \left\vert \lambda-b\right\vert ,|a+\lambda
|\right\}  >0$. Thus $\left\Vert \left(  \tilde{\mathbb{L}}_{2}-\lambda
\right)  u\right\Vert \geq c_{1}\left\Vert u\right\Vert _{Y}$, which implies
that $(\tilde{\mathbb{L}}_{2}-\lambda)^{-1}$ is bounded and $\lambda\in
\rho(\tilde{\mathbb{L}}_{2})$. Therefore, $\sigma(\tilde{\mathbb{L}}%
_{2})\subset\left[  -a,b\right]  $. This prove the lemma by combining with
Lemma \ref{rangofconsp}.
\end{proof}

The following proposition gives a complete characterization of the spectra of
$\tilde{\mathbb{L}}$.

\begin{proposition}
\label{prop-spectra-l-tilde}Under the Rayleigh instability condition
(\ref{condition-rayleigh}), it holds:\newline i) $\sigma_{ess}(\tilde
{\mathbb{L}})=\text{range}(\Upsilon(r))=\left[  -a,b\right]  $.\newline ii)
$\sigma(\tilde{\mathbb{L}})\cap(-\infty,-a)$ consists of at most finitely many
negative eigenvalues of finite multiplicity.\newline iii) $\sigma
(\tilde{\mathbb{L}})\cap(b,+\infty)$ consists of a sequence of positive
eigenvalues tending to infinity.
\end{proposition}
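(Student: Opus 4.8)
The plan is as follows. Part (i) is immediate from the preceding lemmas: $\sigma_{ess}(\tilde{\mathbb{L}})=\sigma_{ess}(\tilde{\mathbb{L}}_{2})$ by Lemma~\ref{lemma-equ-essential}, and $\sigma_{ess}(\tilde{\mathbb{L}}_{2})=\text{range}(\Upsilon(r))=[-a,b]$ by Lemma~\ref{lemma-spectra-L2}. Since then $\sigma_{ess}(\tilde{\mathbb{L}})=[-a,b]$, the spectrum of $\tilde{\mathbb{L}}$ in $(-\infty,-a)$ and in $(b,+\infty)$ consists automatically of isolated eigenvalues of finite multiplicity; the content of (ii) and (iii) is to count them. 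For both I would compare $\tilde{\mathbb{L}}=\mathbb{L}_{1}+\mathbb{L}_{2}$ with $\mathbb{L}_{1}$, using two facts. First, $\mathbb{L}_{2}$ is bounded and $-a\,[u,u]\le[\mathbb{L}_{2}u,u]\le b\,[u,u]$ for all $u\in Y$, since $[\mathbb{L}_{2}u,u]=\int_{\mathbb{R}^{3}}\rho_{0}\Upsilon(r)v_{r}^{2}\,dx$ and $\Upsilon(r)\in[-a,b]$. Second, as recalled in the proof of Lemma~\ref{lemma-equ-essential}, $\mathbb{L}_{1}$ vanishes on $Y_{2}$, while $\mathbb{L}_{1}|_{Y_{1}}$ has purely discrete spectrum, is bounded below (by the estimate in the proof of Lemma~\ref{lemma-lower-bound-quadratic}), and has eigenvalues $\mu_{1}\le\mu_{2}\le\cdots\to+\infty$ with an $[\cdot,\cdot]$-orthonormal eigenbasis $\{e_{k}\}$ of $Y_{1}$; only finitely many of the $\mu_{k}$, say $n_{1}$ of them, are negative. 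Note also that $\tilde{\mathbb{L}}$ is self-adjoint and bounded below by Lemma~\ref{lemma-lower-bound-quadratic}, so the min--max principle applies to it.

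For (ii) I would use min--max. Let $\lambda_{n}(\tilde{\mathbb{L}})$ denote the $n$-th min--max value of $\tilde{\mathbb{L}}$; it suffices to prove $\lambda_{n_{1}+1}(\tilde{\mathbb{L}})\ge-a$, since $\lambda_{n}\le\inf\sigma_{ess}(\tilde{\mathbb{L}})=-a$ always, so this forces $\lambda_{n_{1}+1}=-a=\inf\sigma_{ess}(\tilde{\mathbb{L}})$ and then the min--max principle gives at most $n_{1}$ eigenvalues of $\tilde{\mathbb{L}}$ below $-a$, each of finite multiplicity by (i). Let $V\subset D(\tilde{\mathbb{L}})=D(\mathbb{L}_{1})$ with $\dim V=n_{1}+1$. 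Since the quadratic form $[\mathbb{L}_{1}\cdot,\cdot]$ on $Y$ is negative definite on no subspace of dimension $>n_{1}$ (its negative part lives in $Y_{1}$ and has dimension $n_{1}$, and it vanishes on $Y_{2}$), there is a nonzero $u\in V$ with $[\mathbb{L}_{1}u,u]\ge0$; hence $[\tilde{\mathbb{L}}u,u]=[\mathbb{L}_{1}u,u]+[\mathbb{L}_{2}u,u]\ge-a\,[u,u]$, so $\sup_{0\ne u\in V}[\tilde{\mathbb{L}}u,u]/[u,u]\ge-a$. Taking the infimum over all such $V$ yields $\lambda_{n_{1}+1}(\tilde{\mathbb{L}})\ge-a$, as needed.

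For (iii) I would run a reverse version of the same comparison. Fix $N\ge1$. Since $\mu_{k}\to+\infty$, pick $k$ with $\mu_{k}\ge a+b+1$ and set $V_{N}=\mathrm{span}\{e_{k},\dots,e_{k+N-1}\}\subset Y_{1}\subset D(\tilde{\mathbb{L}})$, a space of dimension $N$. For $u=\sum_{j=k}^{k+N-1}c_{j}e_{j}$ we have $[\mathbb{L}_{1}u,u]=\sum_{j}\mu_{j}|c_{j}|^{2}\ge\mu_{k}[u,u]\ge(a+b+1)[u,u]$, hence $[\tilde{\mathbb{L}}u,u]\ge(a+b+1)[u,u]-a\,[u,u]=(b+1)[u,u]$ for every $u\in V_{N}$. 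Let $E$ be the spectral projection of $\tilde{\mathbb{L}}$ onto $(-\infty,b+1)$ and $E^{\perp}=I-E$ the one onto $[b+1,+\infty)$. For any nonzero $u$ in the range of $E$ one has $[\tilde{\mathbb{L}}u,u]=\int_{(-\infty,b+1)}\lambda\,d[E_{\lambda}u,u]<(b+1)[u,u]$ strictly, so $V_{N}$ meets the range of $E$ only in $0$; therefore $E^{\perp}$ is injective on $V_{N}$ and the range of $E^{\perp}$ has dimension $\ge N$. Since $\sigma_{ess}(\tilde{\mathbb{L}})=[-a,b]$, this range is spanned by eigenvectors of $\tilde{\mathbb{L}}$ with eigenvalues in $[b+1,+\infty)$, so $\tilde{\mathbb{L}}$ has at least $N$ eigenvalues (with multiplicity) there. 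As $N$ is arbitrary, $\tilde{\mathbb{L}}$ has infinitely many eigenvalues in $(b,+\infty)$; being isolated of finite multiplicity, they accumulate only at $+\infty$.

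The routine ingredients --- self-adjointness and semiboundedness of $\tilde{\mathbb{L}}$, the two-sided bound on $[\mathbb{L}_{2}\cdot,\cdot]$, and the spectral structure of $\mathbb{L}_{1}$ --- are all supplied by the earlier lemmas and by Theorem~2.3 of \cite{LZ2019}, so the only genuine point is the argument for (iii): eigenvalues above $\sup\sigma_{ess}$ are invisible to ordinary min--max from below, and the device above --- producing arbitrarily large finite-dimensional subspaces on which the quadratic form exceeds $b$, using the unboundedness of $\mathbb{L}_{1}|_{Y_{1}}$ --- is what resolves it. The point requiring care throughout is that $\mathbb{L}_{1}$ is identically zero on the infinite-dimensional subspace $Y_{2}$, so every comparison with $\mathbb{L}_{1}$ must be carried out on $Y_{1}$ (through the eigenbasis $\{e_{k}\}$), never as a global spectral estimate on $Y$.
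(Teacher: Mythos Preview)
Your proof is correct and rests on the same comparison as the paper's: the two-sided bound $-a\,[u,u]\le[\mathbb{L}_{2}u,u]\le b\,[u,u]$, together with the fact that $\mathbb{L}_{1}$ has only finitely many negative directions but eigenvalues tending to $+\infty$. The paper packages this comparison as two short contradiction arguments: for (ii), assuming infinitely many eigenvalues below $-a$ gives $n^{-}(\tilde{\mathbb{L}}+aI)=\infty$, while $\tilde{\mathbb{L}}+aI\ge\mathbb{L}_{1}$ forces $n^{-}(\mathbb{L}_{1})=\infty$, contradicting $n^{-}(\mathbb{L}_{1})\le n^{-}(L)<\infty$; for (iii), assuming $\sigma(\tilde{\mathbb{L}})\le\lambda_{\max}$ gives $\mathbb{L}_{1}=\tilde{\mathbb{L}}-\mathbb{L}_{2}\le(\lambda_{\max}+a)I$, contradicting the unboundedness of $\mathbb{L}_{1}$. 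Your direct min--max argument and spectral-projection construction carry the same information but are more explicit --- in particular your (ii) yields the quantitative bound that the number of eigenvalues below $-a$ is at most $n_{1}=n^{-}(\mathbb{L}_{1}|_{Y_{1}})$, which the paper's contradiction does not state. The only place to be slightly careful is your final sentence in (iii): the infinitely many eigenvalues you produce all lie in $[b+1,\infty)$, so they cannot accumulate at $b$ and must tend to $+\infty$; this is implicit in what you wrote but worth saying plainly.
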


\begin{proof}
The conclusion in i) follows from Lemmas \ref{lemma-equ-essential} and
\ref{lemma-spectra-L2}. This implies that any $\lambda\in\sigma(\tilde
{\mathbb{L}})$ in $(-\infty,-a)$ or $(b,+\infty)$ must be a discrete
eigenvalue of finite multiplicity.

Proof of ii): Suppose otherwise. Then there exists an infinite dimensional
eigenspace for negative eigenvalues in $(-\infty,-a)$. We notice that
\[
\tilde{\mathbb{L}}+aI=\mathbb{L}_{1}+\mathbb{L}_{2}+aI\geq\mathbb{L}_{1},
\]
since $\mathbb{L}_{2}+aI$ is nonnegative. It follows that $n^{-}\left(
\mathbb{L}_{1}\right)  =\infty$ since $n^{-}\left(  \tilde{\mathbb{L}%
}+aI\right)  =\infty$. This is in contradiction to that $n^{-}\left(
\mathbb{L}_{1}\right)  \leq n^{-}\left(  L\right)  <\infty$.

Proof of iii): Suppose otherwise. Then there exists an upper bound of
$\sigma(\tilde{\mathbb{L}})$, denoted by $\lambda_{max}\geq b$. Thus
$\tilde{\mathbb{L}}\leq\lambda_{max}I$ which implies that
\[
\mathbb{L}_{1}\leq-\mathbb{L}_{2}+\lambda_{max}I\leq\left(  a+\lambda
_{max}\right)  I.
\]
Consequently the eigenvalues of $\mathbb{L}_{1}$ cannot exceed $a+\lambda
_{max}$. This is in contradiction to the fact that $\mathbb{L}_{1}$ has a
sequence of positive eigenvalues tending to infinity.
\end{proof}

Now we can prove Theorem \ref{th: rayleigh unstable}.

\begin{proof}
[Proof of Theorem \ref{th: rayleigh unstable}]Denote $\mathbb{\pi}_{\lambda}$
$\in L\left(  X\right)  \ \left(  \lambda\in\mathbf{R}\right)  $ to be the
spectral family of the self-adjoint operator $\tilde{\mathbb{L}}$. Let
$\left\{  \mu_{i}\right\}  _{i=1}^{\infty}$ be the eigenvalues of
$\tilde{\mathbb{L}}\mathbb{\ }$in $\left(  b,\infty\right)  $. If
$\sigma(\tilde{\mathbb{L}})\cap(-\infty,-a)\neq\varnothing$, we denote the
eigenvalues in $\left(  -\infty,-a\right)  $ by $\nu_{1}<\cdots<\nu_{K}$ where
$K=\dim\left(  R\left(  \pi_{-a}\right)  \right)  $. For $1\leq i<\infty
,\ 1\leq j\leq K$, let $P_{i}^{+}$ $=\mathbb{\pi}_{\mu_{i}+}-\mathbb{\pi}%
_{\mu_{i}-}\ $and $P_{j}^{-}$ $=\mathbb{\pi}_{\nu_{j}+}-\mathbb{\pi}_{\nu
_{j}-}\ $be the projections to $\ker\left(  \tilde{\mathbb{L}}-\mu
_{i}I\right)  $ and $\ker\left(  \tilde{\mathbb{L}}-\nu_{j}I\right)
\ \ $respectively, and $P_{0}=\mathbb{\pi}_{0+}-\mathbb{\pi}_{0-}$ be the
projection to $\ker\tilde{\mathbb{L}}$. By Proposition
\ref{prop-spectra-l-tilde}, we have
\[
\tilde{\mathbb{L}}=\int\lambda d\pi_{\lambda}=\sum_{i=1}^{\infty}\mu_{i}%
P_{i}^{+}+\sum_{j=1}^{K}\nu_{j}P_{j}^{-}+\int_{-a}^{b}\lambda d\pi_{\lambda}.
\]
For any initial data $\left(  u_{2}\left(  0\right)  ,u_{2t}\left(  0\right)
\right)  \in Z\times Y$, the solution to the second order equation
(\ref{eqn-2nd-order}) can be written as%

\begin{align}
u_{2}\left(  t\right)   &  =\sum_{i=1}^{\infty}\left[  \cos(\sqrt{\mu_{i}%
}t)P_{i}^{+}u_{2}\left(  0\right)  +\frac{1}{\sqrt{\mu_{i}}}\sin(\sqrt{\mu
_{i}}t)P_{i}^{+}u_{2t}\left(  0\right)  \right] \label{formula-u-2-t}\\
&  +\sum_{j=1}^{K}\left[  \cosh\left(  \sqrt{-\nu_{j}}t\right)  P_{j}^{-}%
u_{2}\left(  0\right)  +\frac{1}{\sqrt{-\nu_{j}}}\sinh\left(  \sqrt{-\nu_{j}%
}t\right)  P_{j}^{-}u_{2t}\left(  0\right)  \right] \nonumber\\
&  +\int_{0}^{b}\cos(\sqrt{\lambda}t)d\pi_{\lambda}u_{2}(0)+\int_{0}^{b}%
\frac{1}{\sqrt{\lambda}}\sin(\sqrt{\lambda}t)d\pi_{\lambda}u_{2t}%
(0)\nonumber\\
&  +\int_{-a}^{0}\cosh(\sqrt{-\lambda}t)d\pi_{\lambda}u_{2}(0)+\int_{-a}%
^{0}\frac{1}{\sqrt{-\lambda}}\sinh(\sqrt{-\lambda}t)d\pi_{\lambda}%
u_{2t}(0)\nonumber\\
&  +P_{0}u_{2}(0)+tP_{0}u_{2t}(0).\nonumber
\end{align}
If $\sigma(\tilde{\mathbb{L}})\cap(-\infty,-a)=\varnothing$, the solution
$u_{2}\left(  t\right)  $ is obtained by removing the second term above.

Denote the minimum of $\lambda\in\sigma(\tilde{\mathbb{L}})$ by $\eta_{0}$,
that is,
\begin{align*}
\eta_{0}  &  =\min_{\Vert\psi\Vert_{Y}=1}[\tilde{\mathbb{L}}\psi
,\psi]\label{eta}\\
&  =%
\begin{cases}
-a,\quad\text{ if }\sigma(\tilde{\mathbb{L}})\cap(-\infty,-a)=\varnothing,\\
\nu_{1},\text{ \ if }\sigma(\tilde{\mathbb{L}})\cap(-\infty,-a)=\{\nu
_{1}<\cdots<\nu_{K}\}.
\end{cases}
\end{align*}
By the formula (\ref{formula-u-2-t}), it is easy to see that $\Vert
u_{2}(t)\Vert_{Y}\lesssim e^{\sqrt{-\eta_{0}}t}$ for $t>0$. To estimate $\Vert
u_{2}(t)\Vert_{Z}$, we note that by Lemma \ref{lemma-lower-bound-quadratic}
\begin{equation}
\Vert u_{2}\Vert_{Z}^{2}\approx\left[  \mathbb{\tilde{L}}u_{2},u_{2}\right]
+2m\Vert u_{2}\Vert_{Y}^{2}. \label{equivalence-norm-Z}%
\end{equation}
By using (\ref{formula-u-2-t}), we have
\begin{align*}
\left[  \mathbb{\tilde{L}}u_{2}\left(  t\right)  ,u_{2}\left(  t\right)
\right]   &  \lesssim\sum_{i=1}^{\infty}\left[  \mu_{j}\left\Vert P_{i}%
^{+}u_{2}\left(  0\right)  \right\Vert _{Y}^{2}+P_{i}^{+}\left\Vert
u_{2t}\left(  0\right)  \right\Vert _{Y}^{2}\right] \\
&  +e^{-\eta_{0}t}\sum_{j=1}^{K}\left[  \left\Vert P_{j}^{-}u_{2}\left(
0\right)  \right\Vert _{Y}^{2}+\left\Vert P_{j}^{-}u_{2t}\left(  0\right)
\right\Vert _{Y}^{2}\right] \\
&  +\int_{0}^{b}d\left(  \pi_{\lambda}u_{2}(0),u_{2}(0)\right)  +\int_{0}%
^{b}d\left(  \pi_{\lambda}u_{2t}(0),u_{2t}(0)\right) \\
&  +e^{-\eta_{0}t}\left[  \int_{-a}^{0}d\left(  \pi_{\lambda}u_{2}%
(0),u_{2}(0)\right)  +\int_{-a}^{0}d\left(  \pi_{\lambda}u_{2t}(0),u_{2t}%
(0)\right)  \right] \\
&  \lesssim e^{-\eta_{0}t}\left(  \left(  \mathbb{\tilde{L}}u_{2}\left(
0\right)  ,u_{2}\left(  0\right)  \right)  +m\Vert u_{2}\left(  0\right)
\Vert_{Y}^{2}+\Vert u_{2t}\left(  0\right)  \Vert_{Y}^{2}\right) \\
&  \lesssim e^{-\eta_{0}t}\left(  \Vert u_{2}\left(  0\right)  \Vert_{Z}%
^{2}+\Vert u_{2t}\left(  0\right)  \Vert_{Y}^{2}\right)  .
\end{align*}
This implies
\[
\Vert u_{2}\left(  t\right)  \Vert_{Z}\lesssim e^{\sqrt{-\eta_{0}}t}\left(
\Vert u_{2}\left(  0\right)  \Vert_{Z}+\Vert u_{2t}\left(  0\right)  \Vert
_{Y}\right)  ,
\]
by using (\ref{equivalence-norm-Z}) and the estimate for $\Vert u_{2}%
(t)\Vert_{Y}$. Since
\begin{align*}
u_{2t}\left(  t\right)   &  =\sum_{i=1}^{\infty}\left[  -\sqrt{\mu_{i}}%
\sin(\sqrt{\mu_{i}}t)P_{i}^{+}u_{2}\left(  0\right)  +\cos(\sqrt{\mu_{i}%
}t)P_{i}^{+}u_{2t}\left(  0\right)  \right] \\
&  +\sum_{j=1}^{K}\left[  \sqrt{-\nu_{j}}\sinh\left(  \sqrt{-\nu_{j}}t\right)
P_{j}^{-}u_{2}\left(  0\right)  +\cosh\left(  \sqrt{-\nu_{j}}t\right)
P_{j}^{-}u_{2t}\left(  0\right)  \right] \\
&  +\int_{0}^{b}-\sqrt{\lambda}\sin(\sqrt{\lambda}t)d\pi_{\lambda}%
u_{2}(0)+\int_{0}^{b}\cos(\sqrt{\lambda}t)d\pi_{\lambda}u_{2t}(0)\\
&  +\int_{-a}^{0}\sqrt{-\lambda}\sinh(\sqrt{-\lambda}t)d\pi_{\lambda}%
u_{2}(0)+\int_{-a}^{0}\cosh(\sqrt{-\lambda}t)d\pi_{\lambda}u_{2t}%
(0)+P_{0}u_{2t}(0),
\end{align*}
by similar estimates as above for $\Vert u_{2}\left(  t\right)  \Vert_{Z}%
,\ $we obtain
\[
\left\Vert u_{2t}\left(  t\right)  \right\Vert _{Y}\lesssim e^{\sqrt{-\eta
_{0}}t}\left(  \Vert u_{2}\left(  0\right)  \Vert_{Z}+\Vert u_{2t}\left(
0\right)  \Vert_{Y}\right)  .
\]
This finishes the proof of the upper bound estimate
(\ref{estimate-upper-growth}). It is straightforward to show that the energy
$E(u_{2},u_{2t})$ defined in (\ref{energy}) is conserved for solutions of
(\ref{eqn-2nd-order}).

Next, we prove the lower bound estimate (\ref{estimate-lower-growth}) in two cases.

Case 1: $\sigma(\tilde{\mathbb{L}})\cap(-\infty,-a)\neq\varnothing$. We choose
$u_{2}(0)=\psi_{1}$ and $u_{2t}(0)=\sqrt{-\nu_{1}}\psi_{1}$ where $\psi_{1}\in
Z$ is the eigenfunction of $\tilde{\mathbb{L}}$ corresponding to the smallest
eigenvalue $\nu_{1}$ in $(-\infty,-a)$. Then
\[
\left(  u_{2}(t),u_{2t}(t)\right)  =\left(  e^{\sqrt{-\nu_{1}}t}\psi_{1}%
,\sqrt{-\nu_{1}}e^{\sqrt{-\nu_{1}}t}\psi_{1}\right)  ,
\]
which clearly implies $\left\Vert u_{2}(t)\right\Vert _{Y}\gtrsim
e^{\sqrt{-\eta_{0}}t}\left\Vert u_{2}\left(  0\right)  \right\Vert _{Z}$.

Case 2: $\sigma(\tilde{\mathbb{L}})\cap(-\infty,-a)=\varnothing$. Since
$\sigma_{ess}(\tilde{\mathbb{L}})=[-a,b]$, for any $\varepsilon>0$ small there
exists a nonzero function $\phi\in R(\pi_{-a+\varepsilon}-\pi_{-a})\subset Z$.
Choose the initial data $u_{2}(0)=\phi$ and $u_{2t}(0)=0$. Then the solution
$u_{2}\left(  t\right)  $ for the equation (\ref{eqn-2nd-order}) is given by
\[
u_{2}(t)=\int_{-a}^{-a+\varepsilon}\cosh(\sqrt{-\lambda}t)d\pi_{\lambda}\phi.
\]
Thus
\begin{align*}
\Vert u_{2}(t)\Vert_{Y}^{2}  &  =\int_{-a}^{-a+\varepsilon}\cosh^{2}%
(\sqrt{-\lambda}t)d\left(  \pi_{\lambda}\phi,\phi\right)  \gtrsim
e^{\sqrt{-\eta_{0}+\varepsilon}t}\int_{-a}^{-a+\varepsilon}d\left(
\pi_{\lambda}\phi,\phi\right) \\
&  \gtrsim e^{\sqrt{-\eta_{0}+\varepsilon}t}\left\Vert \phi\right\Vert _{Z}.
\end{align*}
This finishes the proof of the theorem.
\end{proof}

\begin{remark}
By Theorem \ref{th: rayleigh unstable}, the maximal growth rate of unstable
rotating stars can be due to either discrete or continuous spectrum. Consider
a family of slowly rotating stars $\left(  \rho_{\varepsilon},\vec
{v_{\varepsilon}}=\varepsilon r\omega_{0}\left(  r\right)  \mathbf{e}_{\theta
}\right)  \ $near a non-rotating star $\left(  \rho_{0}\left(  \left\vert
x\right\vert \right)  ,\vec{v_{0}}=\vec{0}\right)  $ with $\omega_{0}\left(
r\right)  $ satisfying the Rayleigh instability condition
(\ref{condition-rayleigh}). If the non-rotating star is linearly stable, then
for sufficiently small $\varepsilon$, the linear instability of $\left(
\rho_{\varepsilon},\vec{v_{\varepsilon}}\right)  $ is due to the continuous
spectrum. On the other hand, if the the non-rotating star is linearly
unstable, then for sufficiently small $\varepsilon$, $\left(  \rho
_{\varepsilon},\vec{v_{\varepsilon}}\right)  $ remains unstable and the
maximal growth rate is due to the discrete eigenvalue perturbed from the
unstable eigenvalue of the non-rotating star.
\end{remark}

\begin{remark}
In \cite{LNR1970}, Lebovitz indicated that for slowly rotating stars with any
angular velocity profile $\omega_{0}(r)$, discrete unstable modes cannot be
perturbed from neutral modes of non-rotating stars. More precisely, Lebovitz
showed the stabilizing influence of rotation on the fundamental mode
(corresponding to the first eigenvalue of the operator $\mathbb{\tilde{L}}$ in
(\ref{eqn-2nd-order})) even when $\omega_{0}(r)$ does not satisfy the Rayleigh
stability condition. However, this does not imply the stability of the
rotating stars since the unstable continuous spectrum was not considered in
\cite{LNR1970}.
\end{remark}

\textbf{Acknowledgments:} This work is supported partly by the NSF grants
DMS-1715201 and DMS-2007457 (Lin) and the China Scholarship Council
No.201806310066(Wang). \bigskip


\end{document}